\pgfplotsset{every axis/.append style={
    axis x line=middle,    
    axis y line=middle,    
    axis line style={<->}, 
    xlabel={$x$},          
    ylabel={$y$},          
    },
    cmhplot/.style={color=blue,mark=none,line width=1pt,<->},
    soldot/.style={color=blue,only marks,mark=*},
    holdot/.style={color=blue,fill=white,only marks,mark=*},
}
\tikzset{>=stealth}
\newtheoremstyle{Assump}%
  {3pt}
  {3pt}
  {\itshape}
  {}
  {\bfseries}
  {.}
  {.5em}
  {\thmname{#1} \thmnumber{#2} \thmnote{\normalfont#3}}
\newtheorem{theorem}{Theorem}[section]
\newtheorem{lem}[theorem]{Lemma}
\newtheorem{prop}[theorem]{Proposition}
\newtheorem{cor}[theorem]{Corollary}
\theoremstyle{definition}
\theoremstyle{definition}
\theoremstyle{definition}\newtheorem{remark}[theorem]{Remark}
\theoremstyle{definition}\newtheorem{defi}[theorem]{Definition}
\numberwithin{equation}{section}
\newcommand{\N}{\mathbb{N}}
\newcommand{\R}{\mathbb{R}}
\newcommand{\Q}{\mathbb{Q}}
\newcommand{\D}{{\bf D}}											
\newcommand{\C}{{\bf C}}											
\newcommand{\B}{\mathscr{B}}
\newcommand{\F}{\mathbb{F}}											
\newcommand*\diff{\mathop{}\!\mathrm{d}}								
\newcommand{\E}{\mathbb{E}}											
\newcommand{\Pro}{\mathbb{P}}										
\newcommand{\indep}{\perp \!\!\! \perp}									
\newcommand{\oland}{\mbox{\scriptsize $\mathcal{O}$}}						
\newcommand{\Oland}{\mathcal{O}}										
\newcommand{\ind}{\operatorname{\mathbf{1}}}							
\renewcommand{\max}[1]{\underset{#1}{\operatorname{max}}\;}				
\renewcommand{\min}[1]{\underset{#1}{\operatorname{min}}\;}					
\renewcommand{\sup}[1]{\underset{#1}{\operatorname{sup}}\;}					
\renewcommand{\setminus}{\mathbin{\fgebackslash}}						
\newcommand{\intersection}[2]{\overset{#2}{\bigcap\limits_{#1}}\;}				
\newcommand{\conv}[2]{\; \xrightarrow[ {#1}]{#2} \; }						
\newcommand{\dJ}{\diff_{\operatorname{J1}}}								
\newcommand{\dM}{\diff_{\operatorname{M1}}}							
\newcommand{\Id}{\operatorname{Id}}									
\title{Functional weak convergence of stochastic integrals for \\ moving averages and continuous-time random walks}
\author{}
\date{}
\author{Andreas S\o jmark\thanks{London School of Economics, Department of Statistics, London, WC2A 2AE, UK. \texttt{a.sojmark@lse.ac.uk}} \,\; and \, Fabrice Wunderlich\thanks{University of Oxford, Mathematical Institute, Oxford, OX2 6GG, UK. \texttt{wunderlich@maths.ox.ac.uk}}  }
\begin{document}
\maketitle

\vspace{-10pt}

\begin{abstract}
There is an extensive theory of weak convergence for moving averages and continuous-time random walks (CTRWs) with respect to Skorokhod's M1 and J1 topologies. Here we address the fundamental question of how this translates into functional limit theorems in the M1 or J1 topology for stochastic integrals driven by these processes. As an important application, we provide weak approximation results for general SDEs driven by time-changed L\'evy processes. Such SDEs and their associated fractional Fokker--Planck--Kolmogorov equations are central to models of anomalous diffusion in statistical physics. Our results yield a rigorous functional characterisation of these as continuum limits of the underlying models driven by CTRWs. With regard to strictly M1 convergent moving averages and correlated CTRWs, it turns out that the convergence of stochastic integrals can fail decidedly and fundamental new challenges arise compared to the J1 setting. Nevertheless, we identify natural classes of integrand processes for which there is M1 convergence of the stochastic integrals. We also show that these results are flexible enough to yield functional limit theorems in the M1 topology for certain stochastic delay differential equations driven by moving averages.
\end{abstract}


\section{Introduction}

In view of the classical central limit theorem and its functional extensions, the concept of diffusion is a remarkably robust modelling paradigm. With the appropriate square root scaling, it gives the correct macroscopic description of any random walk whose i.i.d.~jumps are of finite variance. Yet, many phenomena across the natural and social sciences involve heavy-tailed power laws that lead to different notions of anomalous---or fractional---diffusion for the macroscopic movement of particles, be they actual particles or some quantities identified as such. This deviation from classical diffusion lies at the heart of a growing field of research known as fractional calculus which sits at the interface of mathematical physics, probability, and partial differential equations.

Befittingly, anomalous diffusion, too, is underpinned by a robust class of limit theorems for random walks, now with jumps of infinite variance. In fact, not only jumps of infinite variance are relevant, but also infinite mean waiting times between the jumps, leading to the concept of continuous-time random walks (CTRWs). For details on this, see Section \ref{sect:CTRW_and_GD}. As long as the jumps and waiting times are, in a certain sense, attracted to given stable laws, the appropriate scaling limit will be described by a fractional equivalent of the heat equation for classical diffusion. Here a fractional derivative in time captures an element of sub-diffusion due to infinite-mean power laws of the waiting times, while fractional derivatives in space capture elements of super-diffusion due to infinite-variance power laws of the jumps.

For an excellent introduction to fractional calculus and anomalous diffusion from a probabilistic viewpoint, we refer to the monograph of Meerschaert \& Sikorskii \cite{book_Meerschaert}. In particular, \cite[Ch.~4]{book_Meerschaert} introduces the underlying functional limit theory for CTRWs that we start from in this work. The main definitions and results in the literature are recalled in Section \ref{sect:CTRW_and_GD}.

As one might expect, stochastic analysis plays a central role in the study of anomalous diffusion, see e.g.~the treatments of stochastic calculus for CTRWs and subordinated L\'evy processes in \cite{Germano_etal, janicki_weron, Kallsen_shiryaev, Kobayashi,Nane_Ni}. The latter arise from CTRW models in the continuum limit (Section \ref{subsect:CTRW_limit}). Therefore, it becomes a fundamental question to understand the conditions under which we can relate stochastic integrals driven by CTRWs to those driven by their scaling limits. Within this work, we explore this question in the sense of functional weak convergence on Skorokhod space with respect to Skorokhod's J1 and M1 topologies.

\subsection{Anomalous diffusion and convergence of stochastic integrals}\label{subsect:anomalous_diffusion_stoch_int}

In the physics literature, models of anomalous diffusion are often formulated in terms of fractional Fokker--Planck--Kolmogorov equations. Akin to the fractional versions of the heat equation briefly discussed above, they take the general form
\begin{equation}\label{eq:frac_FKP}
	D^{\beta}_t p(t,x) = \mathcal{A}_xp(t,x),\quad p(0,x)=p_0,
\end{equation}
where $D^\beta_t$ is a fractional Caputo derivative of order $\beta \in(0,1)$ and $ \mathcal{A}_x=\mathcal{A}(x,D_x)$ is a pseudo-differential operator with a given symbol $\Psi(x,\xi)$.

For many applications, the Cauchy problem \eqref{eq:frac_FKP} is intimately linked to an underlying CTRW model and may be derived in that way, as in the pioneering work of Metzler, Barkai \& Klafter \cite{metzler99}. For a general overview, we refer to Metzler \& Klafter's influential survey papers \cite{metzler_klafter1, metzler_klafter2}. At the interface of PDEs and probability theory, there is moreover a significant interest in establishing and utilizing stochastic representations of fractional Cauchy problems such as \eqref{eq:frac_FKP}, see e.g.~\cite{beaumer_nane_meerschart, hahn_fkp1, hahn_earlier_article, Magdiarz_SDE2, Meerschaert_SDE} and the general treatment in \cite[Ch.~4]{ascione}. Beyond their mathematical interest, these results provide a rigorous connection between fractional Fokker--Planck--Kolmogorov formulations of anomolous diffusion and single-particle tracking experiments, as e.g.~surveyed in \cite{Metzler_particle_track}. Furthermore, they open up for Monte Carlo methods in the numerical approximation of solutions to fractional PDEs.

Stochastic representations of \eqref{eq:frac_FKP}, and variants thereof, generally involve SDEs driven by time-changed L\'evy processes, where the symbol $\Psi(x,\xi)$ of $\mathcal{A}_x$ corresponds to the characteristic exponent of the parent L\'evy process, while $\beta$ is the index of stability for a stable subordinator whose generalised inverse yields the time-change. The study of such SDEs is of independent interest. Several works are concerned with their qualitative properties \cite{ Nane_Ni, Nane_Ni2} and their numerical analysis \cite{gajda_magdiarz, jin_kobayashi, jum_kobayashi}. Likewise, there is a significant interest in understanding subordinated L\'evy processes on their own, for example motivated by the study of CTRWs \cite{meerschaert_schilling} or related to applications in actuarial mathematics \cite{constantinescu} and finance \cite{carr_wu}.

In their monograph \cite{hahn}, Hahn, Kobayashi \& Umarov describe a unifying paradigm for the study of anomalous diffusion, bridging analytical and probabilistic approaches. They highlight the driving process $X$ as the central object---in the case of \eqref{eq:frac_FKP}, a subordinated L\'evy process---and identify the following three pillars: (i) limit theorems for the underlying CTRW constructions, (ii) analysis of SDEs driven by $X$, and (iii) analysis of the corresponding fractional Fokker--Planck--Kolmogorov equations. With regard to (i) and (ii), Section 6.5 of \cite[Ch.~6]{hahn} starts by stressing that an important but less studied question is that of functional weak convergence for stochastic integrals driven by CTRWs. Furthermore, Remark 6.4 of \cite[Ch.~6]{hahn} discusses the interest in applying this machinery to establish functional limit theorems for SDEs driven by CTRWs, conceivably leading to SDEs driven by time-changed L\'evy-processes in line with the study of such SDEs in \cite[Chs.~6-7]{hahn}. These points will all be addressed by the results that we develop herein.

\subsection{Main contributions}

This work provides a systematic treatment of functional weak convergence for stochastic integrals driven by moving averages and CTRWs, when these integrators are weakly convergent for Skorokhod's J1 or M1 topologies. In the J1 case, we obtain rather universal statements that generalize earlier works, while the M1 case leads to tailored results which, as far as the authors are aware, are completely new in the literature.

Based on the general theory developed in \cite{jakubowskimeminpages, kurtzprotter}, several works have studied the functional weak convergence of stochastic integrals driven by particular CTRWs. This is for example the case in \cite{davis, paulaskas_rachev} with a view towards problems in statistical inference, and in \cite{Burr, scalas} as well as \cite[Sect.~6.5]{hahn} motivated by the analysis of scaling limits for anomalous diffusion models as discussed above. The existing results, however, are confined to the J1 topology and involve quite restrictive assumptions on the integrands and integrators, relying on arguments that are specific to the particular settings. Our treatment of stochastic integral convergence in the J1 case goes much further, allowing for weakly J1 convergent CTRW integrators at the full level of generality in which they are typically considered in the literature.

As outlined in Section \ref{sect:CTRW_and_GD}, important classes of moving averages \cite{avram} and so-called correlated CTRWs \cite{meerschaert2} converge weakly in the M1 but not the J1 topology. Already in  the fundamental work \cite{jakubowski}, which extended the general limit theory for stochastic integrals beyond the J1 topology, the M1 convergence of moving averages from \cite{avram} was singled out by Jakubowski to underline that \emph{`for some naturally arising integrators the requirement of convergence in the usual Skorohod topology may be too strong'} \cite[p.~2142]{jakubowski}. 
Nevertheless, the question of if and when there is convergence of stochastic integrals driven by moving averages has remained unaddressed. Indeed, the conditions in \cite{jakubowski} turn out \emph{not} to apply to this class of integrators in general, and it is far from trivial what can be said. We are able to devise a generalised framework that rectifies this situation and thus confirms that one can have weak M1 convergence of the corresponding stochastic integrals under some additional but natural requirements on the interplay between the integrands and integrators.

Finally, we apply our results to stochastic differential equations. Firstly, we fill a gap in the theory of anomalous diffusion identified in \cite[Rem.~6.4]{hahn}, by deriving a functional limit theorem for SDEs driven by J1 convergent CTRWs. Secondly, we show that there is weak M1 convergence for certain stochastic delay differential equations driven by moving averages. Related applications could be functional limit theorems for stochastic partial differential equations subject to anomalous diffusion, e.g., stochastic perturbations of \eqref{eq:frac_FKP} as in \cite{kim_2019, kim_park_2023}, provided they are driven by CTRWs. For a general discussion of stable Lévy processes as the driving noise in physical models motivated by stable limit theorems, see~\cite{klafter2}.

\subsection{Other applications}

We wish to briefly highlight three different areas of application where the results of this paper are relevant. For brevity, we defer the discussion of concrete problems to a separate note \cite{andreasfabrice_applicationpaper}. Here, we only outline the general ideas.

An option pricing framework for log-prices given by subordinated stable L\'evy processes (with tempering) was recently developed in \cite{jack}, motivated by tick-by-tick CTRW models originating in \cite{scalas_finance}. Regarding the performance of trading strategies and hedging of risk, this raises the question of when there is convergence of the financial gains, given by left continuous trading strategies integrated against the price processes. The classical case of random walks tending to geometric Brownian motion was addressed in \cite{duffie_protter}, while our results allow for a treatment of J1 or M1 convergence in the setting of \cite{jack}. In this regard, we also mention that the recent work \cite{ait-sahalia_jacod} defines a given continuum asset price model to be compatible with a tick-by-tick model if the latter is weakly M1 convergent to the former, but there has been no discussion of the weak convergence of the corresponding financial gains.

In actuarial mathematics, ruin theory with risky investments leads to the study of certain SDEs driven by L\'evy processes which give the value of the reserves \cite{Paulsen_AAP}. The case where these SDEs are driven by Brownian motion has been shown to arise as a scaling limit of models based on compound Poisson processes \cite{Paulsen_Gjessing_Advances}. The latter are simple examples of CTRWs. However, applications naturally call for general CTRW models, both for the arrival of claims, and, as per the previous paragraph, for the prices of the risky investments. In particular, the impact of correlated claim sizes or price innovations is important to understand. Our framework may serve as the starting point for an analysis of the weak J1 or M1 convergence of general CTRW based models to the type of SDEs studied in \cite{Paulsen_AAP}.

Finally, functional limit theorems for stochastic integrals driven by random walks and moving averages, converging to stable processes, play an important role in the theory of statistical inference for cointegrated processes with heavy tails \cite{paulaskas_rachev, fabozzi_comment}. The former work \cite{paulaskas_rachev} provided the first rigorous treatment for J1 convergent random walks, while \cite{fabozzi_comment} later pinpointed some imprecisions in the literature and emphasised the more challenging nature of moving averages. Our results relax the assumptions on the i.i.d.~innovations in \cite{paulaskas_rachev} and  could allow for an analysis of linear correlation structures with M1 convergence.

\subsection{Overview of the paper} 

Section \ref{sect:CTRW_and_GD} gives the precise definitions of moving averages and CTRWs, and also covers the associated limit theory in the M1 and J1 topologies. Next, Section \ref{subsect:stoch_int_conv} recalls some central concepts from \cite{andreasfabrice_theorypaper} concerning the functional weak convergence of stochastic integrals; in particular, the notion of good decompositions for the integrators (Definition \ref{defi:GD}) which will play a prominent role throughout.

In Section \ref{subsec:convergence_uncorrelated_CTRWs}, we show that uncorrelated, possibly coupled, CTRWs have good decompositions which yields a universal result on the weak J1 convergence of stochastic integrals driven by such CTRWs (Theorem \ref{thm:result_int_conv_uncorrelated_CTRWs}). This provides a positive answer to the open problem described by Hahn, Kobayashi \& Umarov in the first part of \cite[Rem.~6.4]{hahn}. Proceeding from there, Section \ref{subsec:Correlated_CTRWs_do_not_blend_in} derives an analogous result on weak M1 convergence in the case of moving averages and correlated CTRWs with a finite variation scaling limit (Theorem \ref{thm:result_int_conv_correlated_CTRWs_alpha<1}).

When the scaling limits display infinite variation, there can be a fundamental failure of weak convergence for stochastic integrals driven by strictly M1 convergent moving averages or CTRWs. This comes down to a lack of good decompositions, unlike the uncorrelated case discussed above. Thus, while the work in Section \ref{sect:GD_CTRW} takes place within the framework of \cite{andreasfabrice_theorypaper}, entirely new ideas are needed to generalise this framework in the next part of the paper. Specifically, we must resort to a much more tailored analysis that exploits the structure of the classes of integrators that we are interested in. We conclude Section \ref{sect:GD_CTRW} with Section \ref{sec:quantities_and_approach_needed_for_integral_convergence_proof} which provides a brief overview of how we approach this problem in relation to \cite{andreasfabrice_theorypaper}.

The overarching principle in Section \ref{sec:generalised_framework} is to adjust the notion of good decompositions by including a suitable remainder term, which has to be well-behaved with respect to the integrands. Section \ref{subsec:Direct_control_of_variation} approaches this based on direct control of the variation of the remainder (Theorem \ref{thm:Lipschitz_GDmodCA}), while Section \ref{subsec:Control_through_independence} explores a procedure based on a certain independence condition between the integrands and the `future' of the remainder (Theorem \ref{thm:Integral_converg_under_GDmodCI}). We suspect that both approaches may be more widely applicable, but our focus is on moving averages and correlated CTRWs. In turn, Section \ref{subsec:Final_results_for_CTRWs} presents two general results on the weak M1 convergence of stochastic integrals driven by those processes (Theorems \ref{thm:result_int_conv_CTRWs_lipschitz_integrands} and \ref{thm:result_int_conv_CTRWs_with_independence_cond}).

We end the paper by applying the above analysis to address convergence questions for SDEs and stochastic delay differential equations (SDDEs). Section \ref{sec:SDE} confirms that SDEs driven by uncorrelated CTRWs converge weakly in J1 to the corresponding SDEs driven by subordinated L\'evy processes (Theorem \ref{thm:SDE_approx_result}), while Section \ref{sect:SDDE} shows how the independence framework of Section \ref{subsec:Control_through_independence} can be used to obtain M1 convergence for certain SDDEs driven by strictly M1 convergent moving averages (Theorem \ref{thm:5.18}).

All proofs are postponed to Section \ref{sec:proofs}, where they appear in the same order in which the corresponding results are presented in Sections \ref{sect:GD_CTRW}--\ref{sect:SDEs_SDDEs}.

\section{Moving averages, CTRWs, and their scaling limits}\label{sect:CTRW_and_GD}

In this section, we recall the precise definitions of moving averages and CTRWs with appropriate rescaling, and we then discuss the functional CLTs that have been established for these processes.

Throughout the paper, we shall write $\D_{\R^d}[0,\infty)$ for the Skorokhod space consisting of all c\`adl\`ag paths $x:[0,\infty)\rightarrow  \mathbb{R}^d$, for a given dimension $d\geq 1$. Moreover, we shall use $\dJ$ and $\dM$ to refer to a fixed choice of metrics that induce, respectively, the J1 and M1 topologies on this space. For details on these topologies, we refer to \cite{whitt}.

\subsection{Moving averages}\label{subsect:Moving_averages} 
We begin by recalling that a \emph{moving average} (suitably re-scaled) is a continuous-time stochastic process of the form
\begin{equation} X^n_t \; :=  \frac{1}{n^{\frac 1 \alpha}}\; \sum_{k=1}^{\lfloor nt \rfloor} \, \zeta_k,\qquad t\geq0,\qquad n\geq1, \label{eq:MovAv}
\end{equation}
where the innovations are given by
\begin{equation}\label{eq:linear_process_innovations} \zeta_i \; := \; \sum_{j=0}^\infty \; c_j \theta_{i-j}, \qquad i \ge 1,
\end{equation}
for an i.i.d.~sequence $\{ \theta_k : -\infty < k < \infty \}$ of $\R^d$-valued random variables (on a common probability space $(\Omega, \mathcal{F}, \Pro)$). Here, the $\theta_k$ are asummed to be in the \emph{normal domain of attraction} of a non-degenerate strictly $\alpha$-stable random variable $\tilde{\theta}$ with $0< \alpha< 2$, i.e. that there is weak convergence
\begin{align} n^{-\frac{1}{\alpha}} \,\left(\theta_1 \; + \; \hdots\; + \; \theta_n \right) \; \Rightarrow \; \tilde \theta \label{eq:norm_dom_of_attraction}
\end{align}
in $\mathbb{R}$ (see \cite[p.~312/313]{feller} or \cite[p.~114/115]{whitt}). Recall that $\tilde{\theta}$ is said to have a strictly $\alpha$-stable law on $\mathbb{R}$ if there exist i.i.d.~copies $\tilde{\theta}_1,\tilde{\theta}_2,... $ of $\tilde{\theta}$ such that 
$\tilde{\theta}_1  +  \hdots +  \tilde{\theta}_n \sim  n^{\frac1\alpha} \, \tilde{\theta}$ for all $ n\geq 1.$
For $\alpha=2$, \eqref{eq:norm_dom_of_attraction} holds instead with $\tilde\theta$ being a non-degenerate Gaussian random variable. Throughout, we assume $c_j \geq0$ for all $j$, and we require that $\sum_{j=0}^\infty c_j^\rho < \infty$, for some $0<\rho<\alpha$. The latter ensures that the series \eqref{eq:linear_process_innovations} converges in $L^\rho(\Omega, \mathcal{F}, \Pro)$---and, in fact, almost surely (see \cite{avram, kawata}). 

Throughout, we shall assume that, if $1<\alpha\le 2$, then either $c_j=0$ for all but finitely many $j\ge 0$ or the sequence $(c_j)_{j\ge 0}$ is monotone and $\sum_{j=0}^\infty c_j^\rho <\infty$ for some $\rho<1$. 

For a \emph{zero-order moving average} (i.e., $c_0>0$ and $c_j=0$ for all $j\ge 1$), it is a classical result of Skorokhod \cite{skorokhod} that $X^n \Rightarrow Z$ in $(\D_{\R^d}[0,\infty), \, \dJ)$, where $Z$ is a Brownian motion if $\alpha=2$ or an $\alpha$-stable Lévy process (with $Z_1 \sim \tilde \theta$) if $0<\alpha<2$. Avram \& Taqqu \cite{avram} studied functional convergence in the general case, showing that $ X^n \Rightarrow (\sum_{j=0}^\infty c_j) Z$ on $(\D_{\R^d}[0,\infty), \, \dM)$ if $\E[\theta_1]=0$ when $1<\alpha <2$, or if the law of $\theta_1$ is symmetric when $\alpha=1$. Throughout, we assume this condition to be satisfied whenever we refer to moving averages, except for zero-order moving averages. Unless it is stated otherwise, this condition does \emph{not} apply to the continuous-time random walks introduced below. Finally, \cite{avram} also showed that if not just $c_0>0$ but also $c_j>0$ for at least one $j\ge 1$, then the convergence \emph{cannot} be strengthened to hold in the J1 topology.

\subsection{Continuous-time random walks}\label{subsect:CTRW_limit} CTRWs are generalisations of moving averages \eqref{eq:MovAv}, allowing for random waiting times $J_i$ (with possibly infinite mean) in between jumps. More precisely, let $J_1, J_2,...$ be i.i.d.~random variables in the normal domain of attraction of a $\beta$-stable random variable with $\beta \in (0,1)$, defined on the same probability space as the $\theta_k$ above. A CTRW (suitable re-scaled), then takes the form
\begin{align} X^n_t \; := \;  \frac{1}{n^{\frac \beta \alpha}} \sum_{k=1}^{N_{nt}} \, \zeta_k,\qquad N_{nt} \; := \; \max{} \{ m \ge 0 \; : \; L_m \le nt \}, \label{defi:CTRW}
\end{align}
with $L_m:= J_1+\hdots+J_m$ and $L_0\equiv 0$. Note that $N_{nt}$ gives the number of jumps up until time $nt$. In the literature two basic types are distinguished: a CTRW is said to be \emph{uncorrelated} if $c_j=0$ for all $j\ge 1$ and \emph{correlated} otherwise. Further, we will also use the term \emph{finitely correlated} whenever there exists $\mathcal J\ge 1$ such that $c_j=0$ for all $j> \mathcal J$.

If the sequences $(J_i)_{i\geq 1}$ and $(\zeta_k)_{k\geq 1}$ are independent, the CTRW \eqref{defi:CTRW} is typically called \emph{uncoupled}. For this setting, Becker-Kern, Meerschaert \& Scheffler \cite{Becker-Kern} as well as Meerschaert, Nane \& Xiao \cite{meerschaert2} extended the results of \cite{avram} for moving averages, showing that CTRWs exhibit a similar scaling-limit behaviour. That is, for $0<\alpha\le 2$, we have
\begin{align} n^{-\beta} N_{n\bullet} \; \Rightarrow \; D^{-1} \label{eq:time_convergence_to_subordinator}
\end{align}
on $(\D_{\R^d}[0,\infty), \dJ)$ as well as
\begin{equation}\label{eq:CTRW_M1_conv}
	\; X^n \; \; \Rightarrow \; \; \Bigl(\,\sum_{j=0}^\infty c_j\Bigr)Z_{D^{-1}}
\end{equation}
on the M1 Skorokhod space $(\D_{\R^d}[0,\infty), \dM)$, where $Z$ is either a Brownian motion ($\alpha=2$) or an $\alpha$-stable Lévy process ($0<\alpha<2$), and the process 
$$ D^{-1}_t \; := \; \operatorname{inf} \, \left\{ \, s\ge 0 \, : \, D_s \, > \, t\, \right\}$$
is the generalised inverse of a $\beta$-stable subordinator $D$ with $\beta \in (0,1)$. Since $D$ is strictly increasing, $D^{-1}$ is a continuous process. Moreover, based on the arguments of \cite{avram} for moving averages, it was shown in \cite{meerschaert2} that the convergence \emph{cannot} be strengthened to J1 if there is $j\neq i$ such that $c_i,c_j>0$. If $c_0>0$ is the only non-zero constant, then the M1 convergence \eqref{eq:CTRW_M1_conv} was first shown in \cite{Becker-Kern} and this was later improved to hold in the J1 topology \cite{henry_straka}. 

One can also consider \emph{coupled} CTRWs. Let \eqref{defi:CTRW} be uncorrelated with $c_0=1$, then the CTRWs are said to be \emph{coupled} if the pairs of associated innovations and jump times $(\zeta_k,J_k)_{k\geq 1}$ constitute an i.i.d. sequence, while however, we allow for dependence of the components $\zeta_k$ and $J_k$ of each pair. In this framework, the results of \cite{henry_straka,jurlewicz} yield
\begin{equation}\label{eq:coupled_CTRW_J1_conv}
	\; X^n \; \; \Rightarrow  \; \; \bigl((Z^-)_{D^{-1}}\bigr)^+,
\end{equation}
on $(\D_{\R^d}[0,\infty), \dJ)$ for $Z$ and $D^{-1}$ as above and where we have used the notation $x^-(t):=x(t-)$ and $x^+(t):=x(t+)$. Should the CTRWs be uncoupled, it follows that $Z$ and $D$ arise as independent Lévy processes. Consequently, they almost surely have no common discontinuities, and the limit then simplifies to $Z_{D^{-1}}$ in agreement with \eqref{eq:CTRW_M1_conv} for uncorrelated CTRWs (for further details on this, see \cite[Lem.~3.9]{henry_straka}).

\begin{remark}[Domain of attraction]The above assumptions---on the jumps and the waiting times being in the normal domain of attraction of the respective stable laws---serve to ease notation and presentation. Analogous convergence results to the ones discussed above also hold for random variables belonging only to the \emph{strict domain of attraction} (as opposed to the stricter normal domain of attraction). Moreover, in the case of zero-order moving averages, Skorokhod \cite{skorokhod} proved a similar convergence result for innovations that are but in the domain of attraction of an $\alpha$-stable law, where the limit then is an $\alpha$-stable Lévy process or a Brownian motion with drift. All of our results in the subsequent sections generalise to these settings, with only minor adaptations to the proofs.
\end{remark}

\begin{remark}[Omitting the distant absolute past leaves results unchanged] \label{rem:altered_CTRW} For moving averages and CTRWs, the dependence structure can be altered in such a way that there is but a dependence on finitely many innovations from the absolute past without affecting the convergence results. More precisely, if $\mathcal J\ge 1$ and we redefine 
	$$ \zeta_i \; := \; \sum_{j=0}^{i+\mathcal J} c_j \theta_{i-j}, $$
	then, if all other assumptions above remain in place, we still have $(\sum_{j=0}^\infty c_j)^{-1} X^n \Rightarrow Z_{D^{-1}}$ for $X^n$ either a moving average or a correlated CTRW as in \eqref{eq:MovAv} or \eqref{defi:CTRW}. This can be shown in full analogy to the proofs in \cite{avram,meerschaert2} with the remainder becoming asymptotically negligible.
\end{remark}

Throughout the paper, unless explicitly stated otherwise, we will work on a given family of filtered probability spaces $(\Omega, \mathcal{F}^n, \F^n, \Pro)$, for $n\geq 1$, with the filtrations $\F^n$ defined by
\begin{align*} \mathcal{F}^n_t \; := \;  \sigma(X^n_s \, : \, 0\le s \le t) \, \vee \, \mathcal{G}^n_t,\qquad t\geq0, \, n\ge 1, \end{align*}
where the $\mathcal{G}^n_s$ can be any system of measurable sets such that, for $X^n$ as in \eqref{defi:CTRW} with $c_j=0$ for all $j\ge 1$, the independent increment property $X^n_t-X^n_s \indep \mathcal{G}^n_s$ holds for all $0\le s<t$. This condition is important in order to have Proposition \ref{prop:uncorr_CTRW_martingales} below. We note that
\begin{align} \mathcal{F}^n_t \; = \;  \sigma(\zeta^n_{N_{ns}}, \, N_{ns} \, : \, 0\le s \le t) \, \vee \, \mathcal{G}^n_t,\label{eq:filtration_CTRWs} \end{align}
as one can readily confirm, where each $\zeta^n_{N_{ns}}$ should be understood as $\sum_{k=1}^\infty \zeta^n_k \ind_{\{N_{ns}=k\}}$.

\section{Convergence of stochastic integrals and the case of uncorrelated CTRWs}\label{sect:GD_CTRW}

In earlier works on the weak convergence of stochastic integrals with uncorrelated CTRW integrators, only the uncoupled case has been considered and the approaches have necessitated  restrictive assumptions; see \cite{Burr, paulaskas_rachev, scalas} and \cite[Ch.~6]{hahn}. These works are all based on the general J1 theory developed in \cite{jakubowskimeminpages, kurtzprotter} and hence rely on verifying the P-UT condition of \cite{jakubowskimeminpages} or the UCV condition of \cite{kurtzprotter}. In \cite{scalas}, this is done for zero-order moving averages with $\alpha \in (1,2]$ and innovations whose laws are symmetric. Via \cite{meerschaert} and a time-change argument as in \cite{Kobayashi}, this yields a result for certain uncorrelated, uncoupled CTRW integrators and a continuous, deterministic integrand. We note that \cite[Sect.~4.3]{scalas} also considers zero-order moving averages with $\alpha\in (0,2]$ and without the symmetry assumption, but there is an error in the proof. In \cite{Burr}, the P-UT and UCV conditions are verified for uncorrelated, uncoupled CTRWs with $\alpha \in (1,2]$ and centered innovations. Similarly, the analysis of zero-order moving averages in \cite{paulaskas_rachev} relies on $\alpha \in (1,2]$ and the innovations being centered. Beyond the limitations of particular assumptions, we stress that none of the above arguments generalise to coupled CTRWs or the interesting critical case $\alpha=1$, even for zero-order moving averages.

Within this section, our approach will rely on the general framework for weak convergence of stochastic integrals presented in \cite{andreasfabrice_theorypaper}, akin to \cite{jakubowskimeminpages, kurtzprotter}. Similarly to the role of the P-UT and UCV conditions in the above discussion, the key point will be to verify a notion of good decompositions for the integrators which we state in Definition \ref{defi:GD} below.

\subsection{General conditions for the convergence of stochastic integrals}
\label{subsect:stoch_int_conv}
This section recalls two important definitions from \cite{andreasfabrice_theorypaper} that will play a central role throughout. They are concerned with the type of behavior of the integrators and integrands that allows for general results on the functional weak convergence of the corresponding stochastic integrals.

We begin by fixing some notation. We will write $|Z|^*_t:= \operatorname{sup}_{0\le s\le t} |Z_s|$ for the running supremum of  a stochastic process $Z$ over the time interval $[0,t]$. The total variation of $Z$ on $[0,t]$ will be denoted by $\text{TV}_{[0,t]}(Z)$, and its jumps by $\Delta Z_t:= Z_t - Z_{t-}$.

\begin{defi}[Good decompositions, {\cite[Def.~3.3] {andreasfabrice_theorypaper}}]\label{defi:GD} Let $(X^n)_{n\geq 1}$ be a sequence of semimartingales on probability spaces $(\Omega^n, \mathcal{F}^n, \F^n, \Pro^n)$. The sequence is said to have \emph{good decompositions} \eqref{eq:Mn_An_condition} for the given filtrations $\F^n$, if there exist decompositions
	\[
	X^n = M^n + A^n,\quad M^n  \text{ local martingales},\quad  A^n \text{ finite variation processes},
	\]
	such that, for every $t>0$, we have 
	\begin{align}\label{eq:Mn_An_condition}\tag{GD}
		\lim_{R\rightarrow \infty}  \limsup_{n\rightarrow\infty } \, \mathbb{P}^n\bigl(\text{TV}_{[0,t]}(A^n)>R\bigr)=0 \quad \; \; \text{and}  \; \; \quad
		\limsup_{n\rightarrow \infty} \,  \mathbb{E}^n\bigl[\, |\Delta M^n_{t \land \tau^n_c}|\, \bigr] <\infty,
	\end{align}
	for all $c>0$, where $\tau^n_c:= \operatorname{inf}\{s >0: |M^n|^*_s\ge c \}$. 
\end{defi}

We note that \eqref{eq:Mn_An_condition} plays a role analogous to that of the P-UT and UCV conditions in \cite{jakubowski,jakubowskimeminpages,kurtzprotter} (for details on how they compare, see \cite{andreasfabrice_theorypaper}). To address the interplay between integrands and integrators, we define a function $\hat{w}^T_\delta: \D_{\R^d}[0,\infty) \times \D_{\R^d}[0,\infty) \to \R_+$ of the largest consecutive increment within a $\delta$-period of time on $[0,T]$, namely
\begin{align*}
	\hat{w}^T_{\delta}(x,y)  := \operatorname{sup}\!\left\{  |x^{(i)}(s)- x^{(i)}(t)| \wedge |y^{(i)}(t)-y^{(i)}(u)|  :   s < t  < u \le (s+\delta), \, 1\le i\le d    \right\},
\end{align*}
where the supremum is restricted to $0\leq s, u\leq T$. Here we have used the usual notation $a\wedge b:= \operatorname{min}\{a,b\}$, and we have denoted by $x^{(i)}$ the $i$-th coordinate of $x$. In addition to the pivotal \eqref{eq:Mn_An_condition} property, the second essential ingredient is the following condition.

\begin{defi}[Asymptotically vanishing consecutive increments, {\cite[Def.~3.2]{andreasfabrice_theorypaper}}]\label{def:avco}
	Let $(X^n)_{n\ge 1}$ and $(H^n)_{n\ge 1}$ be sequences of $d$-dimensional càdlàg processes on given probability spaces $(\Omega^n, \mathbbm{F}^n, \Pro^n)$. The sequence $(H^n, X^n)_{n\ge 1}$ is said to satisfy the \emph{asymptotically vanishing consecutive increments} condition if, for every $\gamma>0$ and $T> 0$, it holds that
	\begin{align} 
		\lim\limits_{\delta \downarrow 0}  \limsup\limits_{n\rightarrow \infty} \,  \Pro^n\bigl( \, \hat w_{\delta}^T(H^n, \, X^n) \; > \; \gamma \, \bigr) \; = \; 0. \tag{AVCI} \label{eq:oscillcond}
	\end{align}
\end{defi}

Whilst the above formulation is taken from \cite[Sect.~3.1]{andreasfabrice_theorypaper}, we stress that the idea of enforcing a condition of this form comes from \cite{jakubowski}, as discussed in more detail in \cite{andreasfabrice_theorypaper}. 

Now consider a given family of integrators $X^n$ and integrands $H^n$ which are weakly convergent to $X$, $H$ respectively, for the J1 or M1 topologies. If the $X^n$ have good decompositions \eqref{eq:Mn_An_condition} and the integrands are so that \eqref{eq:oscillcond} is satisfied, then one can make general statements about the weak convergence, in the J1 or M1 topology, of the It{\^o} integrals $\int_0^\bullet H^n_{s-}\diff X^n_s$ to the corresponding It{\^o} integral $\int_0^\bullet H_{s-}\diff X_s$ for the limiting processes. We recall such a result in Theorem \ref{thm:3.19} in the \hyperref[appn]{Appendix} which we shall make use of in the work that follows.

\subsection{Convergence of stochastic integrals driven by uncorrelated CTRWs}
\label{subsec:convergence_uncorrelated_CTRWs}

We will start by showing that uncorrelated (coupled) CTRWs enjoy good decompositions \eqref{eq:Mn_An_condition}. In order to establish this, we fix $a\ge 1$ and consider the decompositions
\begin{align} X^n_t \; = \; \sum_{k=1}^{N_{nt}} \zeta^n_k \;
	&= \;M^n_t \; + \;\sum_{k=1}^{N_{nt}} \,  \zeta^n_k \ind_{\{ |\zeta^n_k|>a\}} \; + \; N_{nt} \, \E[ \zeta^n_1\ind_{\{ |\zeta^n_1 |\le a\}} ] \label{eq:uncorrelated_CTRW_decomp}
\end{align}
where, for simplicity, we have introduced $\zeta^n_k:= n^{-\frac\beta\alpha} \zeta_k=n^{-\frac\beta\alpha} \theta_k$ so that $X^n_t= \sum_{k=1}^{N_{nt}} \zeta^n_k$, and where we have defined
\begin{equation}\label{eq:CTRW_martingale}
	M^n_t:= \sum_{k=1}^{N_{nt}} \zeta^n_k \ind_{\{|\zeta^n_k|\le a \}} \, - \, N_{nt} \E[\zeta^n_1 \ind_{\{|\zeta^n_1|\le a \}}].
\end{equation}

Before we proceed, we briefly recount the key steps in \cite{paulaskas_rachev}, as these arguments are the closest to the approach we implement here. The setting of \cite{paulaskas_rachev} is $\alpha>1$ with deterministic waiting times (i.e., essentially $\beta=1$) and the innovations $\theta_k$ are centered. Leading up to \cite[Prop.~3]{paulaskas_rachev}, a decomposition similar to \eqref{eq:uncorrelated_CTRW_decomp} is introduced. However, the proof of \cite[Prop.~3]{paulaskas_rachev} then exploits the existence of first moments of the $\theta_k$ to pass over to (in this case equivalently) proving $\operatorname{sup}_{n\ge 1}  n \E[ \zeta^n_1\ind_{\{ |\zeta^n_1 |> a\}}] < \infty$. In general, if the innovations are not centered or if e.g.~$\alpha\le 1$, such a procedure fails to apply even for zero-order moving averages. Instead, we are able to handle the general case by exploiting the weak convergence and the tail regularity of the $\theta_k$. The first step is the following simple observation.

\begin{prop} \label{prop:uncorr_CTRW_martingales}
	For each $n\ge 1$, the process $M^n$ defined by \eqref{eq:CTRW_martingale} above is a martingale with respect to the filtration $(\mathcal{F}^n_t)_{t\ge 0}$ given in \eqref{eq:filtration_CTRWs}. Furthermore, we have that $|\Delta M^n|\le 2a$, for all $n\geq 1$.
\end{prop}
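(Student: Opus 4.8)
The plan is to verify the martingale property directly from the definition \eqref{eq:CTRW_martingale}, exploiting the filtration description \eqref{eq:filtration_CTRWs} and the independence built into the setting of uncorrelated (coupled) CTRWs. Recall that for an uncorrelated CTRW we have $\zeta^n_k = n^{-\beta/\alpha}\theta_k$, where the pairs $(\theta_k, J_k)_{k\ge 1}$ form an i.i.d.\ sequence, and that $N_{nt} = \max\{m\ge 0 : L_m\le nt\}$ is a counting process adapted to $(\mathcal F^n_t)_{t\ge 0}$. Writing $Y^n_k := \zeta^n_k\ind_{\{|\zeta^n_k|\le a\}} - \E[\zeta^n_1\ind_{\{|\zeta^n_1|\le a\}}]$, the process is $M^n_t = \sum_{k=1}^{N_{nt}} Y^n_k$, i.e.\ a randomly-stopped sum of i.i.d.\ mean-zero bounded random variables. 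Integrability is immediate since $|Y^n_k|\le 2a$ and $N_{nt}<\infty$ a.s.\ (indeed $\E[N_{nt}]<\infty$, as $L_m\to\infty$), so $\E|M^n_t|\le 2a\,\E[N_{nt}]<\infty$.

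For the martingale property, fix $0\le s<t$ and write $M^n_t - M^n_s = \sum_{k=N_{ns}+1}^{N_{nt}} Y^n_k$. The key structural point is that, conditionally on $\mathcal F^n_s$, the event $\{N_{ns}=m\}$ is $\mathcal F^n_s$-measurable for each $m$, and on this event the increment equals $\sum_{k=m+1}^{N_{nt}} Y^n_k$ where the variables $Y^n_{m+1}, Y^n_{m+2},\dots$ and the future waiting times $J_{m+1}, J_{m+2},\dots$ (which determine how far $N_{n\bullet}$ advances past $s$) are independent of $\mathcal F^n_s$ --- here one uses both that $(\theta_k,J_k)_{k\ge 1}$ is i.i.d.\ and the defining property of $\mathcal G^n_s$ that $X^n_t - X^n_s\indep\mathcal G^n_s$. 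More carefully, I would decompose over the values of $N_{ns}$ and $N_{nt}$: on $\{N_{ns}=m, N_{nt}=m+j\}$ the increment is $\sum_{i=1}^{j} Y^n_{m+i}$, and the event itself depends only on $J_1,\dots,J_m$ (through $\mathcal F^n_s$) together with $J_{m+1},\dots,J_{m+j}$ and the threshold $nt$, so it is independent of $Y^n_{m+1},\dots,Y^n_{m+j}$ given this decomposition --- or, more cleanly, one invokes Wald-type/optional-stopping reasoning: $N_{nt}$ is a stopping time for the filtration generated by the i.i.d.\ pairs, $M^n$ restricted to that discrete clock is a martingale, and $\mathcal F^n_s$ sits between the time-$s$ and time-$t$ $\sigma$-algebras. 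Either way one concludes $\CE{M^n_t - M^n_s}{\mathcal F^n_s} = 0$.

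The jump bound is the easy part: the process $t\mapsto M^n_t$ is piecewise constant and jumps only at the epochs $L_m/n$, where $\Delta M^n_{L_m/n} = Y^n_m = \zeta^n_m\ind_{\{|\zeta^n_m|\le a\}} - \E[\zeta^n_1\ind_{\{|\zeta^n_1|\le a\}}]$; since $|\zeta^n_m\ind_{\{|\zeta^n_m|\le a\}}|\le a$ and likewise $|\E[\zeta^n_1\ind_{\{|\zeta^n_1|\le a\}}]|\le a$, the triangle inequality gives $|\Delta M^n_t|\le 2a$ for all $t$ and all $n\ge 1$.

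I expect the main obstacle to be the bookkeeping in the conditioning argument: making precise that the ``future'' innovations past time $s$ are independent of $\mathcal F^n_s$ despite the random number $N_{nt}-N_{ns}$ of them being itself partly determined by post-$s$ randomness. The cleanest route is probably to reduce everything to the discrete-time i.i.d.\ structure --- observe $M^n_t = \widetilde M^n_{N_{nt}}$ for the genuine martingale $\widetilde M^n_\ell := \sum_{k=1}^\ell Y^n_k$ on the natural filtration of $(\theta_k, J_k)_{k\ge 1}$, note $N_{nt}$ is a stopping time for that filtration, and then transfer the martingale property across the time-change while checking that $\mathcal F^n_s$ is contained in the $\sigma$-algebra at the stopping time $N_{ns}$ augmented by $\mathcal G^n_s$ --- the independence hypothesis on $\mathcal G^n_s$ being exactly what makes this augmentation harmless. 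A short uniform-integrability remark ($|M^n_{t\wedge N_{nu}}|\le 2a N_{nt}$ with $\E[N_{nt}]<\infty$) justifies the use of optional stopping.
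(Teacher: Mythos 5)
Your proposal is correct and follows essentially the same route as the paper: reduce to the discrete i.i.d.\ clock $\widetilde M^n_\ell=\sum_{k=1}^\ell Y^n_k$, apply a Wald/optional-stopping argument with the renewal count as the stopping time (the paper invokes Wald's identity for $N_{nt}-N_{ns}$ with the discrete filtration $\{\sigma((\zeta^n_k,J_k):k\le m)\}_m$, using that this increment and its count are independent of $\mathcal F^n_s$), and bound the jumps by $2a$ via the triangle inequality exactly as you do. The only step you gloss over is $\E[N_{nt}]<\infty$, which does \emph{not} follow merely from $L_m\to\infty$ (that gives a.s.\ finiteness only, and the $J_k$ have infinite mean); the paper proves it by dominating $N_{nt}$ by a renewal count built from the thresholded waiting times $b\,\ind_{\{J_k>b\}}$ with $b>nt$ and summing the geometric series $\sum_m \Pro(J_1\le b)^m<\infty$.
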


Restricted to uncoupled CTRWs with centered innovations, the martingality of the CTRWs has been observed by \cite[p.~6]{Germano_etal}, and, under the additional assumption that $\E[N_{nt}]<\infty$ for all $n\ge 1, t\ge 0$, by \cite[Lem.~1]{Burr}. Similar observations were also made in \cite{paulaskas_rachev} as discussed above, but again in a less general setting than the one we consider here.

Given that not only the number and size of large jumps of the uncorrelated CTRW are tight on compact time intervals but also the sequence $n^{-\beta} N_{n\bullet}$ (as a result of their tightness on the Skorokhod space, namely \eqref{eq:time_convergence_to_subordinator} and \eqref{eq:CTRW_M1_conv}), whether or not the $X^n$ admit good decompositions ultimately depends on whether or not the supremum $\operatorname{sup}_{n\ge 1}  n^\beta \E[ \zeta^n_1\ind_{\{ |\zeta^n_1 |\le a\}}]$ is finite.

Define the truncation function $h(x)= x \ind_{\{ |x|\le a\}} + \, \operatorname{sgn}(x) \, a \ind_{\{ |x|> a\}}$. It is known that $\Pro(|\theta_1|>x)=\Oland(x^{-\alpha})$ (see e.g.~\cite{feller}, \cite[Thm.~4.5.2]{whitt}) and therefore it holds that $n^\beta \Pro(|\zeta^n_1| > a) \to a^{-\alpha}$ as $n\to \infty$. Hence, the condition $\operatorname{sup}_{n\ge 1}  n^\beta \E[ \zeta^n_1\ind_{\{ |\zeta^n_1 |\le a\}}] < \infty$ is equivalent to $\operatorname{sup}_{n\ge 1}  n^\beta \E[ h(\zeta^n_1)] < \infty$. The latter is guaranteed by the following proposition.

\begin{prop}\label{prop:aux_result_CTRW_has_GD}
	In the above setting, $\lim_{n\rightarrow \infty} n^\beta \E[h(\zeta^n_1)] = b$, for some  $b \in \R$.
\end{prop}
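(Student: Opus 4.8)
The plan is to reduce the statement to an asymptotic analysis of a single truncated tail integral and then to invoke the regular variation of the tails of $\theta_1$ together with Karamata's theorem.

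First I would rescale the truncation level. Since $\zeta^n_1 = n^{-\beta/\alpha}\theta_1$ and the truncation map rescales as $h_a(cx) = c\,h_{a/c}(x)$ for $c>0$ — writing $h_b$ for truncation at level $b$, so that $h = h_a$ — one obtains
\[
	n^\beta\,\E[h(\zeta^n_1)] \; = \; n^{\beta(1-1/\alpha)}\,\E[h_{u_n}(\theta_1)],\qquad u_n := a\,n^{\beta/\alpha}\to\infty,
\]
and $n^{\beta(1-1/\alpha)} = (u_n/a)^{\alpha-1}$. Combining this with the elementary identity
\[
	\E[h_u(\theta_1)] \; = \; \int_0^u\bigl(\Pro(\theta_1 > s) - \Pro(\theta_1 < -s)\bigr)\,\diff s \; =: \; \int_0^u g(s)\,\diff s,\qquad u>0,
\]
which follows by writing $h_u(\theta_1) = \int_0^u(\ind_{\{\theta_1 > s\}} - \ind_{\{\theta_1 < -s\}})\,\diff s$ and applying Fubini, the claim reduces to showing that $(u/a)^{\alpha-1}\int_0^u g(s)\,\diff s$ has a finite limit as $u\to\infty$.

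For this I would use that membership of $\theta_1$ in the normal domain of attraction of a strictly $\alpha$-stable law makes $s^\alpha\Pro(\theta_1 > s)$ and $s^\alpha\Pro(\theta_1 < -s)$ convergent as $s\to\infty$, so that $s^\alpha g(s)\to\kappa$ for some $\kappa\in\R$, and then split into cases. If $0<\alpha<1$, then $g$ is regularly varying of index $-\alpha > -1$ and is not integrable at infinity, so Karamata's theorem gives $\int_0^u g(s)\,\diff s \sim \tfrac{\kappa}{1-\alpha}u^{1-\alpha}$, whence the limit is $b = \tfrac{\kappa}{1-\alpha}a^{1-\alpha}$ (the degenerate case $\kappa=0$ being settled directly via $\int_0^u g = o(u^{1-\alpha})$). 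If $1<\alpha\le 2$, then $\E|\theta_1|<\infty$ and, by the standing centering assumption in this range, $\int_0^\infty g(s)\,\diff s = \E[\theta_1] = 0$, so $\int_0^u g = -\int_u^\infty g \sim -\tfrac{\kappa}{\alpha-1}u^{1-\alpha}$ by the tail form of Karamata's theorem (with the borderline $\kappa = 0$, forced when $\alpha = 2$ by finiteness of the variance, disposed of by $\int_u^\infty|g| = o(u^{1-\alpha})$), again giving $b = \tfrac{\kappa}{1-\alpha}a^{1-\alpha}$. Finally, for $\alpha = 1$ the standing symmetry assumption on $\theta_1$ forces $g\equiv 0$, so $b = 0$.

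I expect the case $1<\alpha\le 2$ to be the main obstacle: there the prefactor $(u_n/a)^{\alpha-1}$ diverges, so even boundedness of $n^\beta\E[h(\zeta^n_1)]$ hinges on the divergent contribution $(u_n/a)^{\alpha-1}\int_0^\infty g = (u_n/a)^{\alpha-1}\E[\theta_1]$ vanishing — this is precisely where centering is essential (and it is in any case necessary for the CTRW $X^n$ itself to converge), after which one must correctly extract the genuine next-order term $-(u_n/a)^{\alpha-1}\int_{u_n}^\infty g$. The other cases are routine once the identity for $\E[h_u(\theta_1)]$ is in hand, the only mild care being to check that $g$ is genuinely regularly varying (so Karamata's theorem applies) and to treat the degenerate $\kappa = 0$ subcases by hand.
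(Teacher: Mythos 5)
Your argument takes a genuinely different and more elementary route than the paper's: the paper factors the characteristic function of $Y^n=\sum_{k=1}^{n^\beta}\zeta^n_k$ into the form of an infinitely divisible law with first characteristic $n^\beta\E[h(\zeta^n_1)]$ and reads off the convergence of that characteristic from $Y^n\Rightarrow\mu$ via Theorem \ref{thm:5.6}, whereas you compute the quantity directly from the tails. For $\alpha\neq 1$ your computation is sound: the identity $\E[h_u(\theta_1)]=\int_0^u g(s)\,\diff s$ with $g(s)=\Pro(\theta_1>s)-\Pro(\theta_1<-s)$, the rescaling to $(u_n/a)^{\alpha-1}\int_0^{u_n}g$, the Karamata-type asymptotics (including the $\kappa=0$ subcases), and the observation that for $1<\alpha\le 2$ the hypothesis \eqref{eq:norm_dom_of_attraction}, which has no centering constants, forces $\E[\theta_1]=0$, are all correct.

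The gap is the case $\alpha=1$. You dispose of it by appealing to ``the standing symmetry assumption on $\theta_1$'', but that assumption is explicitly \emph{not} in force here: the paper imposes symmetry (for $\alpha=1$) only for genuinely correlated moving averages and expressly exempts zero-order moving averages and CTRWs, which is exactly the setting of Proposition \ref{prop:aux_result_CTRW_has_GD}; indeed the paper singles out the non-symmetric critical case $\alpha=1$ as one of the cases earlier arguments could not handle. Without symmetry, strict $1$-stability of $\tilde\theta$ only yields tail balance, i.e.\ $s\,g(s)\to 0$, and this does \emph{not} imply that $\int_0^u g(s)\,\diff s$ converges as $u\to\infty$ (a generic distribution with balanced $c/s$ tails can have this integral drift like $\log\log u$). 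The convergence of $\int_0^{u}g$ at $\alpha=1$ is not a consequence of the tail asymptotics at all: by the classical triangular-array convergence criterion it is precisely the drift/centering condition that is \emph{equivalent} (given the tail conditions) to $n^{-1}(\theta_1+\cdots+\theta_n)\Rightarrow\tilde\theta$ holding without centering constants. So to close this case you must extract the statement from the weak convergence hypothesis itself, via a Gnedenko--Kolmogorov or Jacod--Shiryaev type equivalence of characteristics --- which is exactly what the paper's characteristic-function argument does, and what a purely tail-based computation cannot reproduce.
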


As suggested by the above discussion, the decomposition \eqref{eq:uncorrelated_CTRW_decomp} and Proposition \ref{prop:aux_result_CTRW_has_GD} allows us to conclude that the uncorrelated CTRWs have good decompositions. The details of this argument (together with the proofs of all other results) can be found in Section \ref{sec:proofs}.

\begin{theorem}[Good decompositions in the uncorrelated case]\label{thm:CTRW_has_GD}
	Let $(X^n)_{n\ge 1}$ be a sequence of uncorrelated coupled CTRWs or zero-order moving averages, as given by \eqref{defi:CTRW} or \eqref{eq:MovAv} with $c_0= 1$ and $c_j=0$ for all $j\ge 1$. Then, $(X^n)_{n\ge 1}$ has good decompositions \eqref{eq:Mn_An_condition} for the filtrations \eqref{eq:filtration_CTRWs}.
\end{theorem}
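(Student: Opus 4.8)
The strategy is to verify the two parts of condition \eqref{eq:Mn_An_condition} directly for the decomposition $X^n = M^n + A^n$ with $M^n$ as in \eqref{eq:CTRW_martingale} and $A^n_t := \sum_{k=1}^{N_{nt}} \zeta^n_k \ind_{\{|\zeta^n_k|>a\}} + N_{nt}\,\E[\zeta^n_1\ind_{\{|\zeta^n_1|\le a\}}]$ as in \eqref{eq:uncorrelated_CTRW_decomp}, for a fixed truncation level $a\ge 1$. By Proposition \ref{prop:uncorr_CTRW_martingales}, $M^n$ is a martingale for the filtration \eqref{eq:filtration_CTRWs} with $|\Delta M^n|\le 2a$; this immediately gives the second half of \eqref{eq:Mn_An_condition}, since $|\Delta M^n_{t\wedge\tau^n_c}|\le 2a$ deterministically, so $\limsup_n \E^n[|\Delta M^n_{t\wedge\tau^n_c}|]\le 2a <\infty$ for every $c>0$. (The zero-order moving average case is the special instance $\beta=1$, $N_{nt}=\lfloor nt\rfloor$, and is handled identically.)

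It then remains to control $\mathrm{TV}_{[0,t]}(A^n)$. Since $A^n$ is a pure-jump finite variation process constant between the jump times $L_m/n$, its total variation on $[0,t]$ is bounded by
\[
\mathrm{TV}_{[0,t]}(A^n)\;\le\; \sum_{k=1}^{N_{nt}} |\zeta^n_k|\,\ind_{\{|\zeta^n_k|>a\}} \;+\; N_{nt}\,\bigl|\E[\zeta^n_1\ind_{\{|\zeta^n_1|\le a\}}]\bigr|.
\]
For the second term, I would use Proposition \ref{prop:aux_result_CTRW_has_GD} together with the tail estimate $n^\beta\Pro(|\zeta^n_1|>a)\to a^{-\alpha}$: as explained in the text preceding Proposition \ref{prop:aux_result_CTRW_has_GD}, these give $\sup_{n\ge 1} n^\beta\bigl|\E[\zeta^n_1\ind_{\{|\zeta^n_1|\le a\}}]\bigr| =: C_a <\infty$. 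Combined with tightness of $n^{-\beta}N_{n\bullet}$ on Skorokhod space (from \eqref{eq:time_convergence_to_subordinator}, equivalently $n^{-\beta}N_{nt}$ converges in distribution to $D^{-1}_t$ which is a.s.\ finite), we get that $N_{nt}\,|\E[\zeta^n_1\ind_{\{|\zeta^n_1|\le a\}}]| = (n^{-\beta}N_{nt})\cdot n^\beta|\E[\cdots]|$ is a tight sequence of random variables, which is exactly what the first half of \eqref{eq:Mn_An_condition} requires of this piece. For the large-jump sum $\sum_{k=1}^{N_{nt}}|\zeta^n_k|\ind_{\{|\zeta^n_k|>a\}}$, I would argue via tightness of the number and sizes of large jumps: this quantity is dominated by the total variation of the large-jump part of $X^n$, which is tight on compact time intervals as a consequence of the weak convergence \eqref{eq:CTRW_M1_conv} (the limit $Z_{D^{-1}}$ has only finitely many jumps exceeding any fixed size on $[0,t]$, and the continuous mapping / Skorokhod representation applied to the large-jump functional transfers this to the $X^n$). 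Putting the two tight pieces together yields $\lim_{R\to\infty}\limsup_n \Pro^n(\mathrm{TV}_{[0,t]}(A^n)>R)=0$.

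The main obstacle is the bookkeeping in the large-jump term: one must argue carefully that the weak M1 (or J1) convergence of $X^n$ propagates to tightness of $\sum_{k\le N_{nt}}|\zeta^n_k|\ind_{\{|\zeta^n_k|>a\}}$, rather than merely tightness of the count $\sum_{k\le N_{nt}}\ind_{\{|\zeta^n_k|>a\}}$. Since the individual large jumps can be arbitrarily large, one needs a second truncation argument — say, splitting at a level $b\gg a$ and using $n^\beta\E[|\zeta^n_1|\ind_{\{a<|\zeta^n_1|\le b\}}]$ bounded (again from the $\Oland(x^{-\alpha})$ tails and $\alpha<2$, after a further decomposition to handle the $\alpha\le 1$ range where even the truncated first moment needs care), while the jumps exceeding $b$ are finitely many with high probability by the tightness of large jumps. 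The coupling between $\zeta_k$ and $J_k$ in the coupled case does not interfere here, because everything in $A^n$ is expressed through the $\zeta^n_k$ and the counting process $N_{n\bullet}$ separately, and no independence between them is used in these estimates — only the marginal tail behaviour of $\zeta^n_1$ and the (joint-law-independent) tightness of $n^{-\beta}N_{n\bullet}$.
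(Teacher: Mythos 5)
Your proposal is correct and follows essentially the same route as the paper: the martingale part is handled via Proposition \ref{prop:uncorr_CTRW_martingales}, the compensator term via Proposition \ref{prop:aux_result_CTRW_has_GD} together with tightness of $n^{-\beta}N_{n\bullet}$, and the large-jump sum via tightness of the number and sizes of large jumps inherited from the Skorokhod-space convergence. The only difference is that your second truncation at level $b$ is unnecessary: since every jump of $X^n$ of size exceeding $a$ has magnitude at most $2|X^n|^*_t$, the large-jump sum is directly bounded by $\Gamma^t_a(X^n)\cdot 2|X^n|^*_t$, a product of two tight sequences, which is exactly the one-line estimate the paper uses.
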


With good decompositions for zero-order moving averages and uncorrelated CTRWs, we can now state the following result on the functional weak convergence of stochastic integrals driven by these very processes.

\begin{theorem}[Weak integral convergence in the uncorrelated case] \label{thm:result_int_conv_uncorrelated_CTRWs}
	Let $X^n$ be zero-order moving averages or uncorrelated CTRWs \eqref{eq:uncorrelated_CTRW_decomp} with $0<\alpha\le 2$, defined on filtered probability spaces \eqref{eq:filtration_CTRWs}. Further, let $H^n$ be càdlàg adapted processes for the same filtered probability spaces. If $(H^n,X^n) \Rightarrow (H, X)$ on $(\D_{\R^d}[0,\infty), \dM) \times (\D_{\R^d}[0,\infty), \dJ)$ with $X$ given by $Z$, $Z_{D^{-1}}$ or $((Z^{-})_{D^{-1}})^+$, and if the $(H^n,X^n)$ satisfy \eqref{eq:oscillcond}, then $X$ is a semimartingale in the natural filtration generated by $(H,X)$ and it holds that
	\begin{align} \left( X^n, \, \int_0^\bullet  \, H^n_{s-} \; \diff X^n_s\right) \; \; \Longrightarrow \; \;  \left( X, \, \int_0^\bullet \, H_{s-} \; \diff X_s\right) \qquad \text{ on } \quad \; (\D_{\R^{2d}}[0,\infty), \dJ). \label{eq:integral_convergence}
	\end{align}
\end{theorem}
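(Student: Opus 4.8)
The plan is to read off the conclusion from the general stochastic-integral convergence result recalled in Theorem \ref{thm:3.19} in the \hyperref[appn]{Appendix}, whose hypotheses are: the integrators are $\F^n$-semimartingales with good decompositions \eqref{eq:Mn_An_condition}, the integrands $H^n$ are càdlàg and $\F^n$-adapted, one has joint weak convergence $(H^n,X^n)\Rightarrow(H,X)$ with $H^n$ converging in M1 and $X^n$ in J1 (or both in M1), and the pair satisfies the \eqref{eq:oscillcond} condition. Three of these — the adaptedness and càdlàg property of $H^n$, the joint convergence on $(\D_{\R^d}[0,\infty),\dM)\times(\D_{\R^d}[0,\infty),\dJ)$, and \eqref{eq:oscillcond} — are directly among the hypotheses of the present statement. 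Note that the J1 mode of convergence of the integrators is consistent with the theory here, since zero-order moving averages converge in J1 by Skorokhod's theorem and uncorrelated coupled CTRWs converge in J1 by \eqref{eq:coupled_CTRW_J1_conv}; this is exactly why the conclusion \eqref{eq:integral_convergence} lives in the J1 topology rather than only M1.

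The one remaining ingredient is that the $X^n$ are $\F^n$-semimartingales admitting good decompositions for the filtrations \eqref{eq:filtration_CTRWs}. Here I would point to the decomposition \eqref{eq:uncorrelated_CTRW_decomp}, which writes $X^n=M^n+A^n$ with $M^n$ the $\F^n$-martingale of Proposition \ref{prop:uncorr_CTRW_martingales} and $A^n$ the sum of the large-jump part $\sum_{k\le N_{n\bullet}}\zeta^n_k\ind_{\{|\zeta^n_k|>a\}}$ and the pure-jump finite-variation term $N_{n\bullet}\,\E[\zeta^n_1\ind_{\{|\zeta^n_1|\le a\}}]$; thus each $X^n$ is an $\F^n$-semimartingale. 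That this decomposition is \emph{good}, i.e.\ satisfies \eqref{eq:Mn_An_condition}, is precisely the content of Theorem \ref{thm:CTRW_has_GD} (which in turn rests on Propositions \ref{prop:uncorr_CTRW_martingales} and \ref{prop:aux_result_CTRW_has_GD}). With this in hand, Theorem \ref{thm:3.19} applies and delivers both the assertion that $X$ is a semimartingale in the natural filtration generated by $(H,X)$ and the joint J1 convergence \eqref{eq:integral_convergence}.

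I expect the genuinely substantive work to have been done already — inside Theorem \ref{thm:CTRW_has_GD}, where the non-trivial point is the uniform control $\sup_{n\ge 1}n^\beta\E[h(\zeta^n_1)]<\infty$ obtained from the tail regularity $\Pro(|\theta_1|>x)=\Oland(x^{-\alpha})$ via Proposition \ref{prop:aux_result_CTRW_has_GD}, rather than in the present deduction. The only point of the present proof that warrants care is matching the hypotheses of Theorem \ref{thm:3.19} exactly in the coupled case, where the limiting integrator takes the form $((Z^-)_{D^{-1}})^+$: one must confirm that this process is an admissible limiting integrator within the framework of \cite{andreasfabrice_theorypaper}. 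This, however, is immediate, since \eqref{eq:coupled_CTRW_J1_conv} already realises it as a weak J1 limit of the semimartingales $X^n$, which we have just shown to possess good decompositions. Hence the statement follows as a corollary of Theorem \ref{thm:CTRW_has_GD} together with the abstract Theorem \ref{thm:3.19}.
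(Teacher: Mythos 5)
Your proposal is correct and follows exactly the route the paper takes: the paper's entire proof is the one-line observation that Theorem \ref{thm:CTRW_has_GD} supplies the good decompositions \eqref{eq:Mn_An_condition}, after which Theorem \ref{thm:3.19} yields both the semimartingale property of $X$ and the J1 convergence \eqref{eq:integral_convergence}. Your additional remarks — that the substantive work lives in Theorem \ref{thm:CTRW_has_GD} via Proposition \ref{prop:aux_result_CTRW_has_GD}, and that the coupled limit $((Z^-)_{D^{-1}})^+$ is admissible simply because it arises as the weak J1 limit of the $X^n$ — are accurate and consistent with the paper.
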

Since Theorem \ref{thm:CTRW_has_GD} gives \eqref{eq:Mn_An_condition}, this result follows from Theorem \ref{thm:3.19} in the \hyperref[appn]{Appendix}. Regarding the \eqref{eq:oscillcond} condition, we note that it is automatically satisfied if the limiting integrand $H$ and integrator $X$ almost surely have no common discontinuities, see Proposition \ref{prop:3.3} in the \hyperref[appn]{Appendix}. This would for instance be the case if $H$ is predictable, since the jumps of a Lévy process are totally inaccessible. Beyond such simple sufficient criteria, one often has to resort to more specific properties of the processes. A relevant example, where one can verify \eqref{eq:oscillcond} directly, is to consider integrands of the form $H^n_t= \sum_{i=1}^\infty g_{n,i}(t^n_i,X^n_{t^n_i}) \ind_{(t^n_i,t^n_{i+1}]}(t)$, with equicontinuous $g_{n,i}$. Then, one can make use of the alternative J1 tightness criteria given in \cite[Thm.~12.4]{billingsley} and the J1 tightness of the uncorrelated CTRWs in order to show that \eqref{eq:oscillcond} holds. In general, however, it may not always be clear how to verify \eqref{eq:oscillcond} directly. For such cases, the next remark might turn out useful.

\begin{remark}[An alternative to AVCI] \label{rem:alternative_conditions_AVCI}
	We wish to point out that the \eqref{eq:oscillcond} condition in Theorem \ref{thm:result_int_conv_uncorrelated_CTRWs} may be replaced by an alternative set of conditions, namely conditions (a) and (b) from \cite[Thm.~4.7]{andreasfabrice_theorypaper}. These two conditions require, firstly, suitable independence of integrands and increments of the integrators on the intervals of random partitions (which can vary with $n$), and, secondly, asymptotically vanishing increments of the integrators in probability on these intervals. For brevity of this exposition, we refer the interested reader to \cite[Thm.~4.7]{andreasfabrice_theorypaper} for a precise description. Nevertheless, we shall stress that, for uncorrelated and uncoupled CTRWs $X^n$, those conditions are readily seen to be verified when the integrands $H^n$ are adapted to the natural filtration of the $X^n$ (or more generally \eqref{eq:filtration_CTRWs}). Indeed, part (a) can be shown by \cite[Lem.~4.12]{andreasfabrice_theorypaper}, and (b) follows as explained in the paragraph following \cite[Example 4.11]{andreasfabrice_theorypaper}. While we prefer to rely only on \eqref{eq:oscillcond} for the statement of our general results, we note that the alternative conditions (a) and (b) will be used in Section \ref{sec:proofs} for the proofs of Theorems \ref{thm:result_int_conv_CTRWs_with_independence_cond} and \ref{thm:SDE_approx_result}. In fact, whenever \eqref{eq:oscillcond} is invoked, these alternative conditions may be substituted without affecting the results (by replacing Theorem \ref{thm:3.19} with \cite[Thm.~4.8]{andreasfabrice_theorypaper} at any point of the proofs in Section \ref{sec:proofs} where the former is referred to).
\end{remark}

\subsection{Moving averages and correlated CTRWs do not blend in}\label{subsec:Correlated_CTRWs_do_not_blend_in}
As we shall see below, one unfortunately cannot expect moving averages and correlated CTRWs to have good decompositions \eqref{eq:Mn_An_condition} when $\alpha \in [1,2)$, so a new framework will need to be developed. These issues, however, are \emph{not} present for $\alpha\in(0,1)$, as the following result confirms.

\begin{prop}[Good decompositions for $0<\alpha<1$] \label{prop:uncoupled_correlated_CTRW_tight_total_variation} If\phantom{s}$\alpha\in(0,1)$, then moving averages as in \eqref{eq:MovAv} and correlated CTRWs as in \eqref{defi:CTRW} are processes of tight total variation on compact time intervals and therefore admit good decompositions. 
\end{prop}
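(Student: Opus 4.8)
The plan is to verify the two parts of \eqref{eq:Mn_An_condition} with the trivial decomposition $M^n\equiv 0$, $A^n:=X^n$. Each $X^n$ is a piecewise-constant c\`adl\`ag process adapted to its natural filtration, with only finitely many jumps on every compact interval (for the CTRW, $N_{nt}<\infty$ a.s.), hence a semimartingale of finite variation; and $M^n\equiv 0$ is trivially a local martingale with $\Delta M^n\equiv 0$, so the second condition in \eqref{eq:Mn_An_condition} holds vacuously. Thus it only remains to show that the total variation is tight on compacts, i.e.\ $\lim_{R\to\infty}\limsup_{n\to\infty}\Pro\bigl(\text{TV}_{[0,t]}(X^n)>R\bigr)=0$ for every $t>0$. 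Summing the jump sizes, $\text{TV}_{[0,t]}(X^n)=n^{-1/\alpha}\sum_{k=1}^{\lfloor nt\rfloor}|\zeta_k|$ in the moving-average case \eqref{eq:MovAv}, and $\text{TV}_{[0,t]}(X^n)=n^{-\beta/\alpha}\sum_{k=1}^{N_{nt}}|\zeta_k|$ in the correlated-CTRW case \eqref{defi:CTRW}.

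The core of the argument is a uniform $\rho$-th moment bound, with $\rho\in(0,\alpha)\subset(0,1)$ the exponent from the standing assumption $\sum_j c_j^\rho<\infty$. Set $\hat V_m:=m^{-1/\alpha}\sum_{k=1}^{\lfloor m\rfloor}|\zeta_k|$ for real $m\ge 1$; the claim is that $\sup{m\ge 1}\E[\hat V_m^{\,\rho}]<\infty$. The naive route---bounding $|\zeta_k|\le\sum_{l\ge 0}c_l|\theta_{k-l}|$ and then applying subadditivity of $x\mapsto x^\rho$ directly to $\sum_k|\zeta_k|$---fails, since for $\alpha<1$ it produces an estimate of order $m^{1-\rho/\alpha}\to\infty$. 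Instead I would truncate the innovations at the natural scale $m^{1/\alpha}$: after $|\zeta_k|\le\sum_{l\ge 0}c_l|\theta_{k-l}|$, split each $|\theta_{k-l}|$ according to $\{|\theta_{k-l}|\le m^{1/\alpha}\}$ versus its complement, giving $\sum_{k=1}^{\lfloor m\rfloor}|\zeta_k|\le A_m+B_m$. For $A_m$ one has $\E[A_m]=\lfloor m\rfloor\bigl(\sum_j c_j\bigr)\E\bigl[|\theta_1|\ind_{\{|\theta_1|\le m^{1/\alpha}\}}\bigr]$, and since $\Pro(|\theta_1|>x)=\Oland(x^{-\alpha})$ with $\alpha<1$ one gets $\E\bigl[|\theta_1|\ind_{\{|\theta_1|\le T\}}\bigr]=\Oland(T^{1-\alpha})$; hence $\E[m^{-1/\alpha}A_m]$ is bounded, and concavity of $x\mapsto x^\rho$ (Jensen) upgrades this to a uniform bound on $\E\bigl[(m^{-1/\alpha}A_m)^\rho\bigr]$. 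For $B_m$, subadditivity of $x\mapsto x^\rho$ applied to the double sum of nonnegative terms gives $\E[B_m^\rho]\le\lfloor m\rfloor\bigl(\sum_j c_j^\rho\bigr)\E\bigl[|\theta_1|^\rho\ind_{\{|\theta_1|>m^{1/\alpha}\}}\bigr]$, and $\Pro(|\theta_1|>x)=\Oland(x^{-\alpha})$ with $\rho<\alpha$ gives $\E\bigl[|\theta_1|^\rho\ind_{\{|\theta_1|>T\}}\bigr]=\Oland(T^{\rho-\alpha})$, so that $\E\bigl[(m^{-1/\alpha}B_m)^\rho\bigr]=m^{-\rho/\alpha}\E[B_m^\rho]$ is likewise bounded uniformly in $m$. (Here $\sum_j c_j<\infty$ because $\sum_j c_j^\rho<\infty$ with $\rho<1$, and $\sum_l c_l|\theta_{k-l}|<\infty$ a.s.\ exactly as for \eqref{eq:linear_process_innovations}; cf.\ \cite{avram,kawata}.) Combining via $(a+b)^\rho\le a^\rho+b^\rho$ proves the claim. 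I expect this estimate---in particular the realisation that one must \emph{not} subadditivise the outer sum but rather truncate at $m^{1/\alpha}$ and handle the two pieces by Jensen and by subadditivity, respectively---to be the main obstacle.

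Granting the claim, the moving-average case follows at once: for $nt\ge 1$ one has $\text{TV}_{[0,t]}(X^n)=t^{1/\alpha}\hat V_{nt}$, so $\E\bigl[\text{TV}_{[0,t]}(X^n)^\rho\bigr]\le t^{\rho/\alpha}\sup{m\ge 1}\E[\hat V_m^{\,\rho}]$, and Markov's inequality yields the tightness of the total variation. For the correlated CTRW I would decouple the random index: by \eqref{eq:time_convergence_to_subordinator} and the continuity of $D^{-1}$, $n^{-\beta}N_{nt}\Rightarrow D^{-1}_t$ for each fixed $t$, so $\{n^{-\beta}N_{nt}\}_n$ is tight; given $\varepsilon>0$, choose $K$ with $\limsup_{n\to\infty}\Pro(n^{-\beta}N_{nt}>K)<\varepsilon$, and on $\{N_{nt}\le Kn^\beta\}$ use $|\zeta_k|\ge 0$ to obtain, for $n$ large, $\text{TV}_{[0,t]}(X^n)\le n^{-\beta/\alpha}\sum_{k=1}^{\lfloor Kn^\beta\rfloor}|\zeta_k|=K^{1/\alpha}\hat V_{Kn^\beta}$. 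Hence $\Pro\bigl(\text{TV}_{[0,t]}(X^n)>R\bigr)\le\Pro(n^{-\beta}N_{nt}>K)+(RK^{-1/\alpha})^{-\rho}\sup{m\ge 1}\E[\hat V_m^{\,\rho}]$; taking $\limsup_{n\to\infty}$ and then $R\to\infty$ gives $\lim_{R\to\infty}\limsup_{n\to\infty}\Pro\bigl(\text{TV}_{[0,t]}(X^n)>R\bigr)\le\varepsilon$, and $\varepsilon\downarrow 0$ completes the proof. In both cases, $M^n\equiv 0$ and $A^n=X^n$ then constitute the required good decomposition.
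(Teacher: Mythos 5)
Your proof is correct, but it takes a genuinely different route from the paper's. The paper first proves a separate lemma that zero-order moving averages and uncorrelated uncoupled CTRWs have tight total variation, using the tail asymptotics $\Pro(|\zeta_1|>x)\sim x^{-\alpha}$ together with the Fuk--Nagaev inequality (splitting on whether the maximal jump exceeds $Cn^{\beta/\alpha}$); it then handles the correlated case by passing to the decomposition $\psi^{-1}X^n=U^n+V^n$ of \eqref{eq:defi_U^n_in_CTRW_decomp}--\eqref{eq:defi_V^n_in_CTRW_decomp}, reducing everything to the uncorrelated part plus remainder terms that are controlled by an $L^\rho$ Markov bound and the fact that $n^{(\beta-\beta/\alpha)\rho}\to 0$. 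You instead prove a single uniform moment bound $\operatorname{sup}_{m\ge 1}\E[\hat V_m^{\,\rho}]<\infty$ for the whole sum $m^{-1/\alpha}\sum_{k\le m}|\zeta_k|$, by truncating each innovation at the natural scale $m^{1/\alpha}$ and treating the small part via a first-moment estimate plus Jensen and the large part via $\rho$-subadditivity; the random time index is then removed exactly as in the paper, via tightness of $n^{-\beta}N_{nt}$. Your route is more self-contained (no Fuk--Nagaev, no separate treatment of the zero-order case), treats correlated and uncorrelated integrators uniformly, and does not need the monotonicity of $(c_j)_{j\ge 1}$ that the paper's proof invokes for convenience, nor the uncoupledness of the waiting times; the paper's route has the advantage of setting up the $U^n+V^n$ decomposition and the associated estimates that are reused in the $\alpha\ge 1$ analysis later on. The only steps worth spelling out in a full write-up are the bounds $\E[|\theta_1|\ind_{\{|\theta_1|\le T\}}]=\Oland(T^{1-\alpha})$ and $\E[|\theta_1|^\rho\ind_{\{|\theta_1|>T\}}]=\Oland(T^{\rho-\alpha})$, which follow from the tail estimate by integration by parts and are precisely where $\rho<\alpha<1$ enters, as it must.
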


Due to Proposition \ref{prop:uncoupled_correlated_CTRW_tight_total_variation}, we can extend Theorem \ref{thm:result_int_conv_uncorrelated_CTRWs} to cover integrators which are correlated CTRWs or moving averages with $0<\alpha <1$. 

\begin{theorem}[M1 integral convergence for $0<\alpha<1$]\label{thm:result_int_conv_correlated_CTRWs_alpha<1}
	Theorem \ref{thm:result_int_conv_uncorrelated_CTRWs} also holds if the $X^n$ are moving averages or  correlated CTRWs with $0<\alpha<1$, where either $X=(\sum_{j=0}^\infty c_j) Z$ or $X=(\sum_{j=0}^\infty c_j) Z_{D^{-1}}$, respectively, provided we replace $\dJ$ with $\dM$.
\end{theorem}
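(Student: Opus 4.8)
The plan is to show that Theorem \ref{thm:result_int_conv_correlated_CTRWs_alpha<1} follows from the general integral convergence result, Theorem \ref{thm:3.19} in the Appendix, exactly as Theorem \ref{thm:result_int_conv_uncorrelated_CTRWs} does---the only thing that needs replacing is the verification of good decompositions \eqref{eq:Mn_An_condition}, since for $0<\alpha<1$ we can no longer invoke Theorem \ref{thm:CTRW_has_GD} (which is stated for uncorrelated CTRWs and zero-order moving averages), but instead appeal to Proposition \ref{prop:uncoupled_correlated_CTRW_tight_total_variation}. First I would note that Proposition \ref{prop:uncoupled_correlated_CTRW_tight_total_variation} gives, for moving averages and correlated CTRWs with $\alpha\in(0,1)$, that $(\mathrm{TV}_{[0,t]}(X^n))_{n\ge 1}$ is tight for each $t>0$. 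Taking the trivial decomposition $X^n = M^n + A^n$ with $M^n\equiv 0$ and $A^n = X^n$, the martingale part condition in \eqref{eq:Mn_An_condition} holds vacuously ($\Delta M^n\equiv 0$), while the finite variation part condition $\lim_{R\to\infty}\limsup_{n\to\infty}\Pro(\mathrm{TV}_{[0,t]}(A^n)>R)=0$ is precisely the tightness statement. Hence the $X^n$ have good decompositions \eqref{eq:Mn_An_condition} for their natural filtrations (which in this correlated setting are the filtrations \eqref{eq:filtration_CTRWs} generated by $X^n$ together with any admissible $\mathcal G^n$).

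Next I would record the weak convergence of the integrators: by the classical results of Avram \& Taqqu \cite{avram} and Meerschaert, Nane \& Xiao \cite{meerschaert2}, recalled in \eqref{eq:CTRW_M1_conv} and Section \ref{subsect:Moving_averages}, one has $X^n \Rightarrow X$ in $(\D_{\R^d}[0,\infty),\dM)$ with $X=(\sum_{j=0}^\infty c_j)Z$ in the moving average case and $X=(\sum_{j=0}^\infty c_j)Z_{D^{-1}}$ in the correlated CTRW case. Combined with the hypothesis $(H^n,X^n)\Rightarrow(H,X)$ and the \eqref{eq:oscillcond} condition, the general theorem (Theorem \ref{thm:3.19}) applies: $X$ is a semimartingale in the natural filtration of $(H,X)$---here $X$ is either a finite variation process (a sum of a stable subordinator-driven process with drift when $\alpha<1$, so of finite variation) or more precisely of finite total variation on compacts, which makes semimartingality immediate---and the stochastic integrals $\int_0^\bullet H^n_{s-}\diff X^n_s$ converge jointly with $X^n$ to $\int_0^\bullet H_{s-}\diff X_s$ jointly with $X$. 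The output topology is $\dM$ rather than $\dJ$ because the integrators themselves only converge in M1; this is exactly the distinction flagged in the statement ("provided we replace $\dJ$ with $\dM$").

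The only genuinely delicate point is making sure the filtration bookkeeping is consistent with the general framework: Theorem \ref{thm:3.19} / Theorem \ref{thm:result_int_conv_uncorrelated_CTRWs} requires the $H^n$ to be càdlàg adapted to the same filtrations for which the $X^n$ have good decompositions, and requires the $X^n$ to be semimartingales on those spaces---which they are, being finite variation processes (indeed of tight total variation) by Proposition \ref{prop:uncoupled_correlated_CTRW_tight_total_variation}. Since the good decomposition we use is the trivial one $X^n=A^n$, there is no martingale part to worry about, and no analogue of Proposition \ref{prop:uncorr_CTRW_martingales} is needed; this is in fact what makes the $\alpha\in(0,1)$ case "blend in" painlessly compared to the $\alpha\in[1,2)$ case. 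I expect the main obstacle to be purely expository---correctly citing Proposition \ref{prop:uncoupled_correlated_CTRW_tight_total_variation}, the M1 convergence \eqref{eq:CTRW_M1_conv}, and Theorem \ref{thm:3.19}, and observing that the argument of Theorem \ref{thm:result_int_conv_uncorrelated_CTRWs} goes through verbatim after this substitution of the source of \eqref{eq:Mn_An_condition}---rather than any new technical difficulty.
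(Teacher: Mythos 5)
Your proposal is correct and follows exactly the paper's own (one-line) argument: invoke Proposition \ref{prop:uncoupled_correlated_CTRW_tight_total_variation} to obtain tight total variation, hence good decompositions \eqref{eq:Mn_An_condition} with the trivial choice $M^n\equiv 0$, $A^n=X^n$, and then apply Theorem \ref{thm:3.19}. The extra detail you supply on the filtration bookkeeping and the M1 convergence of the integrators is consistent with the paper and does not change the route.
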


In the case $1\le \alpha\le 2$, the situation is very different from that of uncorrelated CTRWs or zero-order moving averages: even a single-delay correlation structure of the $X^n$ in general does \emph{not} admit good decompositions \eqref{eq:Mn_An_condition}. This and the resulting failure of convergence, even for integrands $H^n$ which converge to the zero process almost surely in the uniform norm, is demonstrated in \cite[Prop.~4.5]{andreasfabrice_theorypaper}. We note that, in view of \cite[Prop.~3.18]{andreasfabrice_theorypaper}, this also entails that the P-UT and UCV conditions discussed at the start of Section \ref{sect:GD_CTRW} fail to be satisfied.

Intuitively, what happens in these cases is that, based on the auto-regressive structure of the CTRW or moving averages, the sign of the last jump provides a type of predictive advantage for the direction of the subsequent one. This can be utilised by constructing adapted integrands $H^n$ that exploit this advantage to `pick up' just enough of the variation through integration to make the integrals explode. 

In these circumstances, to preserve integral convergence we can seek to compensate the lack of \eqref{eq:Mn_An_condition} by imposing additional restrictions on the interplay between the integrands and integrators. In the later Sections \ref{sec:generalised_framework} and \ref{subsec:Final_results_for_CTRWs}, we will pursue such an extended framework for weak integral convergence on Skorokhod space, covering integrators that are in a certain sense close to admitting good decompositions.

\subsection{Going beyond the good decompositions}  \label{sec:quantities_and_approach_needed_for_integral_convergence_proof}

A general approach to weak convergence of Itô integrals on the J1 or M1 Skorokhod space, that is $\int_0^\bullet H^n_{s-}\diff X^n_s \Rightarrow \int_0^\bullet H_{s-}\diff X_s$, is described in \cite[pp.~35--38]{andreasfabrice_theorypaper}, provided that the pairs $(H^n,X^n)$ are adapted to the same filtration and that $(H^n,X^n) \Rightarrow (H,X)$ on the product space $(\D_{\R^d}[0,\infty),\tilde\rho) \times (\D_{\R^d}[0,\infty),\rho)$, with $\tilde\rho,\rho \in \{\dJ,\dM\}$. Analogously to the ideas set forth in \cite{jakubowski}, by discretising the integrands $H^n$ both with respect to certain shrinking deterministic partitions and with respect to suitable random partitions, the proof can be divided into four subproblems, each of which can be addressed individually. All but one of these subproblems are essentially managed by resorting to \eqref{eq:oscillcond} together with the weak J1 or M1 convergence of the pairs $(H^n,X^n) $. The remaining subproblem, however, relies crucially on the existence of good decompositions \eqref{eq:Mn_An_condition} for the integrators $X^n$. More precisely, the terms that are sought to be controlled for this subproblem are given by 
\begin{align} \Upsilon_{n,m,\varepsilon} \; := \;  \E^n \biggl[ \; \biggl| \, \int_0^\bullet \, (H^n_{s-} - H^{n\, \vert \, m,\varepsilon}_{s-}) \; \diff X^n_s \, \biggr|^*_T \; \wedge \; 1 \; \biggr], \qquad n\ge 1, \label{eq:quantity1_integral_convergence_term_where_GD_needed}
\end{align}
for all $T>0$, where $H^{n\, \vert \, m,\varepsilon}_s:= \sum_{i=0}^{\infty} \, H^n_{\tau^{n}_i} \ind_{[\tau^{n}_i,\, \tau^{n}_{i+1})}(s)$ is a discretisation of $H^n$ on a random partition $\pi^{n,m,\varepsilon}=\{0=\tau^{n}_0<\tau^{n}_1<...\}$, which is almost surely finite on bounded time intervals. The partition can be written as the union of a random (stopping time) partition, depending on $n\ge 1$ and $\varepsilon>0$, and ensuring that $|H^n-H^{n\, \vert \, m,\varepsilon}|\le \varepsilon$ almost surely for all $n\ge 1$, and a deterministic partition $\rho^m$, solely depending on $m\ge 1$, of almost sure continuity points of the limits $(H,X)$ such that its mesh size $|\rho^m|\to 0$ as $m\to \infty$. Effectively, the mode of control required for these terms is 
\begin{align}
	\lim\limits_{\varepsilon \to 0} \; \limsup\limits_{m\to \infty} \; \limsup\limits_{n\to \infty}\; \Upsilon_{n,m,\varepsilon} \; = \; 0. \label{eq:quantity2_integral_convergence_term_where_GD_needed}
\end{align}
We would like to stress that this control is equivalent to 
\begin{align}   \lim\limits_{\varepsilon \to 0} \;\limsup\limits_{m\to \infty} \; \limsup\limits_{n\to \infty}\; \Pro^n \biggl( \; \biggl| \, \int_0^\bullet \, (H^n_{s-} - H^{n\, \vert \, m,\varepsilon}_{s-}) \; \diff X^n_s \, \biggr|^*_T \; \ge \; \eta \; \biggr) \; = \; 0 \label{eq:quantity3_integral_convergence_term_where_GD_needed}
\end{align}
for all $\eta>0$. If the integrators $X^n$ admit good decompositions \eqref{eq:Mn_An_condition}, then we have the required control \eqref{eq:quantity2_integral_convergence_term_where_GD_needed}.

When the integrators $X^n$ do \emph{not} have good decompositions \eqref{eq:Mn_An_condition}, the weak convergence (or even the tightness) of the stochastic integrals $\int_0^\bullet H^n_{s-}\diff X^n_s$ on Skorokhod space can fail markedly, as already discussed in Section \ref{subsec:Correlated_CTRWs_do_not_blend_in}. It is far from obvious which set of meaningful conditions one could impose in order to restore the weak convergence of stochastic integrals, and we note that, to the best of our knowledge, this type of analysis has not been actively attempted in any earlier works. Motivated by the general structure of correlated CTRWs and moving averages, the next section introduces a generalised framework that will rely on some additional conditions with respect to the interplay of the integrands and integrators in order to compensate for the lack of good decompositions.


\section{A generalised framework beyond good decompositions}
\label{sec:generalised_framework}
Consider the correlated CTRWs $X^n$ defined in \eqref{defi:CTRW} with $1\le \alpha\le 2$, on a given probability space equipped with the filtration \eqref{eq:filtration_CTRWs}. One can then decompose $\psi^{-1} X^n=U^n+V^n$ with
\begin{align} U^n_t \; := \; U^{n,1}_t \, + \, U^{n,2}_t \; := \;  \frac{1}{n^{\frac \beta \alpha}} \, \sum_{k=1}^{N_{nt}} \; \theta_{k}\; + \; \frac{\psi^{-1}}{n^{\frac \beta \alpha}} \,\sum_{k=0}^{\infty}\, \biggl( \, \sum_{j=k+1}^{ N_{nt}+k } c_{j} \, \biggr) \, \theta_{-k}, \label{eq:defi_U^n_in_CTRW_decomp}
\end{align}
\begin{align} V^n_t \; := \; - \, \frac{\psi^{-1}}{n^{\frac \beta \alpha}} \, \sum_{k=1}^{N_{nt}} \, \biggl(\, \sum_{j=k}^{\infty } c_{j}\, \biggr) \, \theta_{N_{nt}-k+1}, \label{eq:defi_V^n_in_CTRW_decomp}
\end{align}
where $\psi:= \sum_{j=0}^{\infty}c_j$. We observe that the first summand $U^{n,1}$ of $U^n$ is nothing else than an uncorrelated uncoupled CTRW and so, by Theorem \ref{thm:CTRW_has_GD}, possesses good decompositions \eqref{eq:Mn_An_condition}. If, by suitable assumptions, we ensure that the second summand $U^{n,2}$ is of tight total variation on compact time intervals, then it will come as no surprise that weak integral convergence can be achieved by simply controlling the interplay of the integrands $H^n$ and the processes $V^n$. In order for the former to be true, we impose a mild technical condition on the tail summability of the $c_j$, more precisely 
\begin{align} \begin{cases} \sum_{i=1}^\infty \, \sum_{j=i}^\infty c_j \, < \, \infty, \qquad \quad &\text{if } 1 < \alpha\, \le \, 2; \\ 
		\sum_{i=1}^\infty  \Big( \sum_{j=i}^\infty c_j \Big)^\rho \, < \, \infty \,  \text{ for some } 0<\rho<1, \qquad \quad &\text{if } \alpha=1. \end{cases}  \tag{TC} \label{eq:technical_condition} \end{align}

\begin{remark}
	By Fubini's theorem, $\sum_{k=1}^\infty k \, c_k<\infty$ is a simple sufficient criterion for \eqref{eq:technical_condition} in the case $1<\alpha\le 2$, and so is $\sum_{k=1}^\infty k \, c_k^\rho <\infty$ for some $0<\rho<1$ in the case $\alpha=1$.
\end{remark}

\begin{lem} \label{lem:decomp_U^n_part_has_GD}
	Under \eqref{eq:technical_condition}, the processes $U^n$ in \eqref{eq:defi_U^n_in_CTRW_decomp} have good decompositions \eqref{eq:Mn_An_condition} for the filtrations \eqref{eq:filtration_CTRWs}.
\end{lem}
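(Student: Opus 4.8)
The plan is to leverage the splitting $U^n=U^{n,1}+U^{n,2}$ recorded in \eqref{eq:defi_U^n_in_CTRW_decomp}. The first piece $U^{n,1}_t=n^{-\beta/\alpha}\sum_{k=1}^{N_{nt}}\theta_k$ is an uncorrelated, uncoupled CTRW (and a zero-order moving average when the $J_i$ are degenerate), so Theorem \ref{thm:CTRW_has_GD} already furnishes a good decomposition $U^{n,1}=M^{n,1}+A^{n,1}$ for \eqref{eq:filtration_CTRWs}. The whole task therefore reduces to showing that, under \eqref{eq:technical_condition}, the second piece $U^{n,2}$ is a finite-variation process whose total variation on compact intervals is tight in $n$ — in fact it will vanish almost surely as $n\to\infty$. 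Granting this, set $M^n:=M^{n,1}$ and $A^n:=A^{n,1}+U^{n,2}$; then $\Delta M^n=\Delta M^{n,1}$ and the stopping times $\tau^n_c$ are unchanged, so the second estimate in \eqref{eq:Mn_An_condition} is inherited verbatim from $U^{n,1}$, while $\text{TV}_{[0,t]}(A^n)\le \text{TV}_{[0,t]}(A^{n,1})+\text{TV}_{[0,t]}(U^{n,2})$ is a sum of two tight sequences of nonnegative random variables and hence tight. (Adaptedness of $U^{n,2}$, hence of $A^n$, is ensured by letting $\mathcal{G}^n$ contain $\sigma(\theta_{-k}:k\ge 0)$, the absolute past; this is admissible for \eqref{eq:filtration_CTRWs} and does not disturb the martingale property of $M^{n,1}$, since those variables are independent of the innovations $\theta_1,\theta_2,\dots$ driving $U^{n,1}$.)

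For the key estimate, fix $t>0$. For each $\omega$ the path $s\mapsto U^{n,2}_s$ is piecewise constant, starts at $0$, and jumps precisely when $N_{ns}$ increases; when $N$ passes from $\ell-1$ to $\ell$ the increment of $U^{n,2}$ equals $\frac{\psi^{-1}}{n^{\beta/\alpha}}\sum_{k\ge 0}c_{\ell+k}\theta_{-k}$, because $\sum_{j=k+1}^{\ell+k}c_j-\sum_{j=k+1}^{\ell-1+k}c_j=c_{\ell+k}$. Summing absolute increments over the almost surely finitely many jumps in $[0,t]$ and interchanging the two nonnegative sums,
\[
\text{TV}_{[0,t]}(U^{n,2})\;\le\;\frac{\psi^{-1}}{n^{\beta/\alpha}}\sum_{\ell=1}^{N_{nt}}\sum_{k\ge 0}c_{\ell+k}|\theta_{-k}|\;=\;\frac{\psi^{-1}}{n^{\beta/\alpha}}\sum_{k\ge 0}|\theta_{-k}|\sum_{\ell=1}^{N_{nt}}c_{\ell+k}\;\le\;\frac{\psi^{-1}}{n^{\beta/\alpha}}\,\Xi,
\]
the last step using $c_j\ge 0$ and $\{\ell+k:1\le\ell\le N_{nt}\}\subseteq\{k+1,k+2,\dots\}$, where $\Xi:=\sum_{k\ge 0}|\theta_{-k}|\sum_{j\ge k+1}c_j$ does \emph{not} depend on $n$ or $t$. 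Since $n^{-\beta/\alpha}\to 0$, the claim follows once $\Xi<\infty$ almost surely.

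Finiteness of $\Xi$ is exactly where \eqref{eq:technical_condition} enters. As $\theta_1$ lies in the normal domain of attraction of an $\alpha$-stable law, $\Pro(|\theta_1|>x)=\Oland(x^{-\alpha})$, so $\E[|\theta_1|^\rho]<\infty$ for every $\rho<\alpha$ and $\E[|\theta_1|]<\infty$ when $\alpha>1$. If $1<\alpha\le 2$, take $\rho=1$: then $\E[\Xi]=\E[|\theta_1|]\sum_{k\ge 0}\sum_{j\ge k+1}c_j=\E[|\theta_1|]\sum_{i\ge 1}\sum_{j\ge i}c_j<\infty$ by the first line of \eqref{eq:technical_condition}. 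If $\alpha=1$, take the $\rho\in(0,1)$ of the second line of \eqref{eq:technical_condition} (shrinking it so that also $\sum_j c_j^\rho<\infty$) and use $(\sum_k a_k)^\rho\le\sum_k a_k^\rho$ for $a_k\ge 0$ to get $\E[\Xi^\rho]\le\sum_{k\ge 0}\E[|\theta_{-k}|^\rho]\bigl(\sum_{j\ge k+1}c_j\bigr)^\rho=\E[|\theta_1|^\rho]\sum_{i\ge 1}\bigl(\sum_{j\ge i}c_j\bigr)^\rho<\infty$. Either way $\Xi<\infty$ a.s., hence $\text{TV}_{[0,t]}(U^{n,2})\to 0$ a.s.; the same reasoning covers moving averages, where $N_{nt}=\lfloor nt\rfloor$ and $\beta=1$.

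I expect the only substantive points to be the Tonelli interchange — which bounds the $N_{nt}$-dependent double sum by the $n$-free quantity $\Xi$ via $\sum_{\ell=1}^{N_{nt}}c_{\ell+k}\le\sum_{j>k}c_j$ — and checking $\Xi<\infty$ a.s.\ uniformly across $1<\alpha\le 2$ (first moments of $\theta_1$) and $\alpha=1$ ($\rho$-th moments together with $\rho$-subadditivity), which is precisely what \eqref{eq:technical_condition} is designed for. Assembling the decomposition and inheriting the jump bound from Theorem \ref{thm:CTRW_has_GD} is routine, the only mild care being the filtration bookkeeping flagged in the parenthetical above.
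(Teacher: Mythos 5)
Your proposal is correct and follows essentially the same route as the paper: reduce to showing $U^{n,2}$ has tight total variation on compacts (inheriting the good decomposition of the uncorrelated part $U^{n,1}$ from Theorem \ref{thm:CTRW_has_GD}), bound $\operatorname{TV}_{[0,t]}(U^{n,2})$ by $n^{-\beta/\alpha}$ times the double sum $\sum_{k}\bigl(\sum_{\ell=1}^{N_{nt}}c_{k+\ell}\bigr)|\theta_{-k}|$, and control it via $\rho$-th moments ($\rho=1$ for $1<\alpha\le 2$, $\rho<1$ with $\rho$-subadditivity for $\alpha=1$) using \eqref{eq:technical_condition}. The only cosmetic difference is that you dominate by the $n$-free random variable $\Xi$ and conclude a.s.\ convergence to zero, while the paper applies Markov's inequality directly to obtain the tightness bound; your filtration remark makes explicit a point the paper leaves implicit.
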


\subsection{Direct control of variation}
\label{subsec:Direct_control_of_variation}

The most direct approach to controlling integrals against the $V^n$ amounts to having the $H^n$ `tame' the total variation of the $V^n$. If the $H^n$ are pure jump processes, this becomes particularly simple, leading to Proposition \ref{prop:direct_control_of _TV} below.

\begin{defi}[Random countable partition] We will call $\pi:=\{s_k : k=0,1,2,...\}\cup \{T\}$ a \emph{(countable) partition of $[0,T]$} if $\bigcup_{k=0}^\infty [s_{k}, s_{k+1})=[0,T)$ and $[s_{k}, s_{k+1}) \cap [s_{\ell}, s_{\ell+1})=\emptyset$ for all $k\neq \ell$. In addition, if the $s_k$ are random variables, the partition is said to be \emph{random}. Further, we denote the mesh size of such partition by $|\pi|:= \operatorname{sup}_{k\ge 0}|s_{k+1}-s_{k}|$.
\end{defi}

Towards the next proposition, let $(H^n)_{n\ge 1}$ be a sequence of adapted pure jump càdlàg integrands such that, for every $T>0$, the set $\operatorname{Disc}_{[0,T]}(H^n)=\{s_k : k=0,1,2,...\}\cup \{T\}$ is a countable partition of $[0,T]$. Further assume $X^n=U^n+V^n$, $V^n$ are semimartingales adapted to the same filtration as the $H^n$, where the $U^n$ have \eqref{eq:Mn_An_condition}, $X^n \Rightarrow X$ on $(\D_{\R^d}[0,\infty), \rho)$ with $\rho \in \{\dJ, \dM\}$ and $U^n \Rightarrow X$ on $(\D_{\R^d}[0,\infty), \dM)$ for some $X$, and
\begin{align} \operatorname{sup}_{n\ge 1} \, \Pro^n \Bigl(  \sum_{k=0}^\infty \; |V^n_{s_{k+1}}-V^n_{s_{k}}| \; > \; \lambda \Bigr)\; \conv{\lambda \to \infty} \; \; 0,\quad \text{for each}\;\;T>0. \label{eq:direct_control_TV}
\end{align}

\begin{prop}[Pure jump integrands and control of variation] \label{prop:direct_control_of _TV} 
	Suppose $H^n, X^n$ satisfy \eqref{eq:direct_control_TV}. If $(H^n,X^n) \Rightarrow (H,X)$ on $(\D_{\R^d}[0,\infty), \dM) \times (\D_{\R^d}[0,\infty), \dM)$ and the $(H^n, X^n)$ satisfy \eqref{eq:oscillcond}, then it holds
	$$ \Big(X^n, \, \int_0^\bullet \, H^n_{s-} \; \diff X^n_s\Big)  \; \; \; \Longrightarrow \; \; \;  \Big(X, \, \int_0^\bullet \, H_{s-} \; \diff X_s\Big) \qquad \text{ on } \quad \; (\D_{\R^{2d}}[0,\infty), \dM).$$
\end{prop}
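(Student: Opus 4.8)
The plan is to reduce everything to the general integral convergence result of the theory paper, i.e.\ Theorem~\ref{thm:3.19} in the Appendix, applied to the integrator $U^n$, and then to show that the ``error'' stochastic integral $\int_0^\bullet H^n_{s-}\diff V^n_s$ is asymptotically negligible (tight and converging to zero in probability uniformly on compacts). Concretely, write $\int_0^\bullet H^n_{s-}\diff X^n_s = \int_0^\bullet H^n_{s-}\diff U^n_s + \int_0^\bullet H^n_{s-}\diff V^n_s$. For the first term, the hypotheses give exactly what Theorem~\ref{thm:3.19} needs: $U^n$ has good decompositions \eqref{eq:Mn_An_condition}, $(H^n,X^n)\Rightarrow(H,X)$ in $(\D,\dM)^2$ with \eqref{eq:oscillcond} holding, and $U^n\Rightarrow X$ in $\dM$. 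The only subtlety is that the integrator in the first term is $U^n$ while we are told $(H^n,X^n)\Rightarrow(H,X)$; I would promote this to joint convergence $(H^n,U^n,X^n)\Rightarrow(H,X,X)$ by a Slutsky-type argument, using that $X^n-U^n=V^n\Rightarrow 0$ uniformly on compacts (which follows from \eqref{eq:direct_control_TV} since $|V^n_t|\le \sum_k|V^n_{s_{k+1}}-V^n_{s_k}|$ over the partition refined by $t$, so $|V^n|^*_T$ is tight, and in fact one gets $|V^n|^*_T\to 0$ in probability once one notes the limit $X$ is the $\dM$-limit of both). This joint convergence, together with \eqref{eq:oscillcond} for $(H^n,U^n)$ — inherited because $\hat w^T_\delta(H^n,U^n)\le \hat w^T_\delta(H^n,X^n)+(\text{osc.\ of }V^n)$ and the latter is controlled by \eqref{eq:direct_control_TV} — then yields $\big(U^n,\int_0^\bullet H^n_{s-}\diff U^n_s\big)\Rightarrow\big(X,\int_0^\bullet H_{s-}\diff X_s\big)$ in $(\D_{\R^{2d}},\dM)$ via Theorem~\ref{thm:3.19}.

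For the second term I would use that $H^n$ is a \emph{pure jump} process whose discontinuity set is a (random) countable partition $\{s_k\}\cup\{T\}$ of $[0,T]$. Since $H^n$ is constant on each $[s_k,s_{k+1})$ and left-continuous limits only see values at the $s_k$, the Itô integral against a semimartingale $V^n$ telescopes: on $[0,T]$,
\begin{equation*}
\int_0^{T} H^n_{s-}\,\diff V^n_s \;=\; \sum_{k=0}^\infty H^n_{s_k}\,\bigl(V^n_{s_{k+1}\wedge T}-V^n_{s_k\wedge T}\bigr),
\end{equation*}
and the same for the running supremum up to any $t\le T$, after refining the partition by $t$. Hence $\bigl|\int_0^\bullet H^n_{s-}\diff V^n_s\bigr|^*_T \le \bigl(\sup_{s\le T}|H^n_s|\bigr)\sum_{k=0}^\infty |V^n_{s_{k+1}}-V^n_{s_k}|$. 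The weak convergence $H^n\Rightarrow H$ makes $\sup_{s\le T}|H^n_s|$ tight, and \eqref{eq:direct_control_TV} makes the variation sum tight; a product of tight sequences is tight, and to upgrade from tightness to convergence to $0$ one combines this with the fact that $\int_0^\bullet H^n_{s-}\diff V^n_s = \int_0^\bullet H^n_{s-}\diff X^n_s - \int_0^\bullet H^n_{s-}\diff U^n_s$ and the two terms on the right have the same $\dM$-limit. Actually it is cleaner to argue directly: for any $\eta>0$ and $R$, $\Pro^n(|\int_0^\bullet H^n_{s-}\diff V^n_s|^*_T>\eta) \le \Pro^n(\sup_{s\le T}|H^n_s|>R) + \Pro^n(\sum_k|V^n_{s_{k+1}}-V^n_{s_k}|>\eta/R)$; letting $n\to\infty$ then $R\to\infty$ kills the first term by tightness of $H^n$, but the second term need only be \emph{uniformly} small, which is what \eqref{eq:direct_control_TV} provides (it is $\sup_n$, not $\limsup_n$). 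This shows $\int_0^\bullet H^n_{s-}\diff V^n_s$ is only tight, not necessarily vanishing — so I would instead feed the \emph{tightness} of this term together with the convergence of the $U^n$-integral into a tightness argument for the sum, and then identify the limit.

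That identification is the step I expect to be the genuine obstacle. One route: since $U^n\Rightarrow X$ and $X^n\Rightarrow X$ have the \emph{same} limit in $\dM$, and $\bigl(U^n,\int H^n_{s-}\diff U^n_s\bigr)$ converges jointly, the pair $\bigl(X^n, \int_0^\bullet H^n_{s-}\diff X^n_s\bigr)$ is tight in $(\D_{\R^{2d}},\dM)$; any subsequential limit has first marginal $X$ and, by the Skorokhod representation theorem plus the telescoping bound above, its second marginal must coincide with the limit of $\int H^n_{s-}\diff U^n_s$ — because along the Skorokhod-coupled subsequence $\int H^n_{s-}\diff V^n_s$ would converge a.s.\ (in $\dM$) to something with total variation dominated, at continuity points, by $\|H\|_{\infty,T}\cdot\limsup_n\sum_k|V^n_{s_{k+1}}-V^n_{s_k}|$; the issue is that the partition points $s_k$ and hence the telescoped sum depend on $n$, so one must be careful that no mass escapes. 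The honest way to handle this is to re-examine the proof of Theorem~\ref{thm:3.19} itself — the quantity $\Upsilon_{n,m,\varepsilon}$ of \eqref{eq:quantity1_integral_convergence_term_where_GD_needed} with integrator $X^n=U^n+V^n$ splits into a $U^n$-part controlled by \eqref{eq:Mn_An_condition} and a $V^n$-part which, because the discretisation $H^{n|m,\varepsilon}$ is again constant between its breakpoints, telescopes against $V^n$ and is bounded by $2\varepsilon\sum_k|V^n_{s_{k+1}}-V^n_{s_k}|$ plus a term over the deterministic mesh $\rho^m$ that vanishes as $m\to\infty$; then \eqref{eq:direct_control_TV} plus the $\varepsilon\to0$ limit gives \eqref{eq:quantity2_integral_convergence_term_where_GD_needed}. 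This is really the cleanest path: verify that $X^n$ itself satisfies the hypotheses needed to run the proof of Theorem~\ref{thm:3.19} \emph{except} for \eqref{eq:Mn_An_condition}, and that the one place \eqref{eq:Mn_An_condition} is used — the control of $\Upsilon_{n,m,\varepsilon}$ — goes through instead via the decomposition $X^n=U^n+V^n$, the good decompositions of $U^n$, and the pure-jump/telescoping structure that converts the $V^n$-contribution into the uniformly tight variation sum of \eqref{eq:direct_control_TV}.
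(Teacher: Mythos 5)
Your final paragraph is exactly the paper's proof: the only place good decompositions enter the argument behind Theorem \ref{thm:3.19} is the control \eqref{eq:quantity2_integral_convergence_term_where_GD_needed} of $\Upsilon_{n,m,\varepsilon}$, whose $U^n$-part is covered by \eqref{eq:Mn_An_condition} while the $V^n$-part telescopes against the piecewise-constant, $\varepsilon$-bounded difference $H^n-H^{n\,\vert\,m,\varepsilon}$ to give the bound $\varepsilon\sum_{k}|V^n_{s_{k+1}}-V^n_{s_k}|$, which \eqref{eq:direct_control_TV} sends to zero as $\varepsilon\to0$. Your opening detour (splitting the integral and hoping $\int_0^\bullet H^n_{s-}\,\diff V^n_s$ vanishes outright) is indeed insufficient for the reason you yourself identify, but since you discard it and land on the correct argument, the proposal matches the paper's proof.
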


The approach of controlling directly the activity of the $V^n$ is not restricted to pure jump integrands. Indeed, we can ask for the integrands to be Lipschitz continuous in such a way that they exhibit enough inertia to not be able to react in a critical way to changes of the $V^n$ and therefore do not `pick up' too much of the latter's variation through integration.

\begin{defi}[GD modulo controllable activity] \label{def:GD_mod_CA}
	Let $(X^n)_{n\ge 1}$ be a sequence of $d$-dimensional semimartingales on filtered probability spaces $(\Omega^n, \mathcal{F}^n, \mathbbm{F}^n, \Pro^n)$ and let $X$ be defined on $(\Omega,  \mathcal{F}, \mathbb{F}, \Pro)$. We say that the sequence $(X^n)_{n\ge 1}$ has \emph{good decompositions modulo a weakly asymptotically negligible process of $n^{\gamma}$-controllable activity on an $n^{-\lambda}$-fine partition}---abbreviated as $\operatorname{GD\,mod\,CA}(\gamma, \lambda)$---with $\gamma,\lambda>0$, if there exist processes $(U^n)_{n\ge 1}$, $(V^n)_{n\ge 1}$ on the same filtered probability spaces as the $X^n$ such that 
	\begin{enumerate}[(i)]
		\item the $U^n$ are semimartingales having \eqref{eq:Mn_An_condition} and $U^n \Rightarrow X$ on $(\D_{\R^d}[0,\infty),\dM)$;
		\item the $V^n$ are adapted, càdlàg pure jump processes of finite variation such that, for every $T>0$ there exist random partitions $\pi^n:=\pi^n(\omega)$ of $[0,T]$ with $|\pi^n|\le n^{-\lambda}$ as well as $\operatorname{Disc}_{[0,T]}(V^n) \subseteq \pi^n$ almost surely, and it holds
		\begin{align*} n^{-\gamma} \; \sum_{s \; \in \; \pi^n} \; |V^n_{s}|  \; \; \xrightarrow[{n\to \infty}]{\Pro^n} \;\;  0\, \end{align*}
		\item $X^n=U^n+V^n \Rightarrow X$ on $(\D_{\R^d}[0,\infty), \dM)$.
	\end{enumerate}
\end{defi}

\begin{theorem}[Lipschitz integrands and $\operatorname{GD\, mod \, CA}$] \label{thm:Lipschitz_GDmodCA} 
	Let $(X^n)_{n\ge 1}$ be a sequence of\,\,\,$d$-dimensional semimartingales on filtered probability spaces $(\Omega^n, \mathcal{F}^n, \mathbbm{F}^n, \Pro^n)$ which are $\operatorname{GD \, mod \, CA}(\gamma, \tilde{\gamma})$ as in Definition \ref{def:GD_mod_CA}. Further let $(H^n)_{n\ge 1}$ be a sequence of adapted càdlàg processes on the same filtered probability spaces and suppose that, for every $n\ge 1$ and $T>0$, $H^n$ is almost surely Lipschitz continuous on $[0,T]$ with Lipschitz constant $C_T\,n^{\tilde\gamma - \gamma}$, that is
	$$ |H^n_t - H^n_s| \; \le \; C_T \;  n^{\tilde \gamma-\gamma} \; |t-s|$$ 
	for all $0\le s\le t\le T$, where $C_T>0$ only depends on $T$. Suppose further that both $(H^n, U^n)$ and $(H^n, X^n)$ satisfy \eqref{eq:oscillcond}.	If $(H^n, X^n) \Rightarrow (H,X)$ on $(\D_{\R^d}[0,\infty), \dM) \times (\D_{\R^d}[0,\infty), \rho)$, where $\rho \in \{\dJ, \dM\}$, then $X$ is a semimartingale in the natural filtration generated by $(H,X)$ and it holds
	$$ \Big(X^n, \, \int_0^\bullet \, H^n_{s-} \; \diff X^n_s\Big)  \; \; \; \Longrightarrow \; \; \;  \Big(X, \, \int_0^\bullet \, H_{s-} \; \diff X_s\Big) \qquad \text{ on } \quad \; (\D_{\R^{2d}}[0,\infty), \rho).$$
\end{theorem}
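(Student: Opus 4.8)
The plan is to reduce the claim to the machinery recalled in Theorem \ref{thm:3.19} by showing that, under the stated Lipschitz scaling of the integrands, the problematic terms $\Upsilon_{n,m,\varepsilon}$ from \eqref{eq:quantity1_integral_convergence_term_where_GD_needed} can still be controlled even though $X^n$ need not admit good decompositions. We write $X^n = U^n + V^n$ as granted by the $\operatorname{GD\,mod\,CA}(\gamma,\tilde\gamma)$ assumption, and split $\int_0^\bullet (H^n_{s-} - H^{n|m,\varepsilon}_{s-})\,\diff X^n_s$ into a $U^n$-piece and a $V^n$-piece. The $U^n$-piece is handled exactly as in the proof of Theorem \ref{thm:result_int_conv_uncorrelated_CTRWs}: since the $U^n$ have \eqref{eq:Mn_An_condition} and $(H^n,U^n)$ satisfies \eqref{eq:oscillcond}, the argument of \cite{andreasfabrice_theorypaper} gives $\lim_{\varepsilon\to0}\limsup_m\limsup_n \Upsilon^{U}_{n,m,\varepsilon}=0$. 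What remains is the genuinely new part, namely controlling $\bigl|\int_0^\bullet (H^n_{s-}-H^{n|m,\varepsilon}_{s-})\,\diff V^n_s\bigr|^*_T$.

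For the $V^n$-piece I would exploit that $V^n$ is a pure-jump finite-variation process with discontinuities sitting inside a partition $\pi^n$ of mesh $\le n^{-\tilde\gamma}$. Hence the stochastic integral against $V^n$ is just a sum of jumps, $\sum_{s\in\pi^n} (H^n_{s-}-H^{n|m,\varepsilon}_{s-})\,\Delta V^n_s$, and we can bound its running supremum by $\sum_{s\in\pi^n}|H^n_{s-}-H^{n|m,\varepsilon}_{s-}|\,|\Delta V^n_s|$. On each subinterval of $\pi^n$ the distance from $s$ to the nearest partition point of the random discretisation $\pi^{n,m,\varepsilon}$ is at most $|\pi^n|+|\rho^m|$; here one uses that $\pi^{n,m,\varepsilon}$ is the union of the stopping-time partition (which by construction keeps $|H^n - H^{n|m,\varepsilon}|\le\varepsilon$) and the deterministic mesh $\rho^m$. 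Combining this with the Lipschitz estimate $|H^n_t-H^n_s|\le C_T n^{\tilde\gamma-\gamma}|t-s|$, each factor $|H^n_{s-}-H^{n|m,\varepsilon}_{s-}|$ is bounded by $\min\{\varepsilon,\ C_T n^{\tilde\gamma-\gamma}(n^{-\tilde\gamma}+|\rho^m|)\} = \min\{\varepsilon,\ C_T n^{-\gamma} + C_T n^{\tilde\gamma-\gamma}|\rho^m|\}$. Therefore
\begin{align*}
\Big|\int_0^\bullet (H^n_{s-}-H^{n|m,\varepsilon}_{s-})\,\diff V^n_s\Big|^*_T
\;\le\; \bigl(\varepsilon \wedge \bigl(C_T n^{-\gamma} + C_T n^{\tilde\gamma-\gamma}|\rho^m|\bigr)\bigr)\cdot \text{TV}_{[0,T]}(V^n),
\end{align*}
and since $\text{TV}_{[0,T]}(V^n)=\sum_{s\in\pi^n}|\Delta V^n_s|\le \sum_{s\in\pi^n}|V^n_s|$, condition (ii) of Definition \ref{def:GD_mod_CA} gives that $n^{-\gamma}\,\text{TV}_{[0,T]}(V^n)\to 0$ in probability. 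Hence the $n^{-\gamma}$-term times the total variation vanishes, the $n^{\tilde\gamma-\gamma}|\rho^m|$-term times $n^{-\gamma}\cdot n^{\gamma}\text{TV}(V^n)$ is of order $|\rho^m|\cdot(\text{tight quantity})$ after first letting $n\to\infty$ then $m\to\infty$, and taking the $\wedge\,\varepsilon$ truncation lets us send $\varepsilon\to0$ last. A careful iterated-limit bookkeeping (in the order $\limsup_n$, then $\limsup_m$, then $\lim_{\varepsilon\to0}$) yields $\lim_{\varepsilon\to0}\limsup_m\limsup_n \Upsilon^{V}_{n,m,\varepsilon}=0$.

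With both pieces controlled, \eqref{eq:quantity2_integral_convergence_term_where_GD_needed} holds for $X^n$, so the remaining subproblem in the scheme of \cite[pp.~35--38]{andreasfabrice_theorypaper} is resolved; the other three subproblems follow from \eqref{eq:oscillcond} for $(H^n,X^n)$ together with the weak M1/$\rho$-convergence of $(H^n,X^n)$, exactly as in the proof of Theorem \ref{thm:3.19}. This yields joint convergence $(X^n, \int_0^\bullet H^n_{s-}\diff X^n_s)\Rightarrow (X,\int_0^\bullet H_{s-}\diff X_s)$ on $(\D_{\R^{2d}}[0,\infty),\rho)$, and the semimartingale property of $X$ in the natural filtration of $(H,X)$ follows as in Theorem \ref{thm:3.19}. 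The main obstacle is the $V^n$-piece: one must pair the Lipschitz modulus of the integrands against the mesh of $\pi^n$ so that the blow-up factor $n^{\tilde\gamma-\gamma}$ is exactly absorbed by the $n^{-\tilde\gamma}$ spacing, leaving a clean $n^{-\gamma}$ prefactor that condition (ii) kills — getting the exponents to match is precisely why the Lipschitz constant is required to scale like $n^{\tilde\gamma-\gamma}$, and the $|\rho^m|$ contribution from the deterministic part of the partition must be shown not to interfere when the limits are taken in the correct order.
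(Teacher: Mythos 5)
Your overall architecture matches the paper's: reduce everything to the terms $\Upsilon_{n,m,\varepsilon}$ in \eqref{eq:quantity1_integral_convergence_term_where_GD_needed}, split $X^n=U^n+V^n$, dispose of the $U^n$-piece via \eqref{eq:Mn_An_condition}, and observe that the Lipschitz scaling $n^{\tilde\gamma-\gamma}$ is designed to cancel against the mesh $n^{-\tilde\gamma}$ of $\pi^n$ to leave an $n^{-\gamma}$ that condition (ii) of Definition \ref{def:GD_mod_CA} can absorb. However, your treatment of the $V^n$-piece has a genuine gap. Your pointwise bound on $|H^n_{s-}-H^{n\,\vert\,m,\varepsilon}_{s-}|$ applies the Lipschitz estimate over the distance from $s$ to the previous point of the discretisation partition $\pi^{n,m,\varepsilon}$, and that distance is of order $|\rho^m|$ — a quantity that is fixed as $n\to\infty$ — not of order $n^{-\tilde\gamma}$. (The mesh $n^{-\tilde\gamma}$ belongs to the partition $\pi^n$ carrying the jumps of $V^n$, which is a different partition from the one discretising $H^n$; you have conflated the two.) Consequently the factor you obtain is $\min\{\varepsilon,\,C_T n^{\tilde\gamma-\gamma}|\rho^m|\}$, which for fixed $m,\varepsilon$ and large $n$ is simply $\varepsilon$, and the resulting bound $\varepsilon\cdot\mathrm{TV}_{[0,T]}(V^n)$ does not vanish: $\operatorname{GD\,mod\,CA}$ only guarantees $n^{-\gamma}\sum_{s\in\pi^n}|V^n_s|\to 0$ in probability, so $\mathrm{TV}_{[0,T]}(V^n)$ itself may diverge (e.g.\ like $n^{\gamma}/\log n$). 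Your claim that the $|\rho^m|$-term is "$|\rho^m|$ times a tight quantity" is the same error in a different guise: it equals $|\rho^m|\,n^{\tilde\gamma}\cdot\bigl(n^{-\gamma}\mathrm{TV}(V^n)\bigr)$, and a quantity tending to zero in probability multiplied by $n^{\tilde\gamma}$ need not be tight, since (ii) provides no rate.

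The paper's proof circumvents exactly this obstruction by summation by parts (Abel summation) over the partition $\pi^n$: it rewrites $\int_0^\bullet(H^n_{s-}-H^{n\,\vert\,m,\varepsilon}_{s-})\,\diff V^n_s$ as boundary terms plus $\sum_k V^n_{s_k}\,\bigl[(H^n-H^{n\,\vert\,m,\varepsilon})_{s_{k+1}}-(H^n-H^{n\,\vert\,m,\varepsilon})_{s_k}\bigr]$, so that each jump of $V^n$ is weighted by the \emph{increment} of $H^n-H^{n\,\vert\,m,\varepsilon}$ across a single cell of $\pi^n$ rather than by its \emph{value}. On the cells where $H^{n\,\vert\,m,\varepsilon}$ is constant, this increment equals an increment of $H^n$ over a length $\le n^{-\tilde\gamma}$, hence is $\le C_T n^{-\gamma}$, and the resulting contribution $C_T n^{-\gamma}\sum_{t\in\pi^n}|V^n_t|$ is precisely what Definition \ref{def:GD_mod_CA}(ii) kills. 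The finitely many remaining cells (at most $N^T_\varepsilon(H^n)+k_m$ of them, where the discretisation jumps) are then handled by an entirely separate argument using \eqref{eq:oscillcond} for $(H^n,V^n)$, the convergence of the finite-dimensional distributions of $V^n$ to zero, and the M1 oscillation moduli of $X^n$ and $U^n$ — none of which appears in your proposal. To repair your argument you would essentially have to reproduce this integration-by-parts step; the direct jump-by-jump estimate cannot work because the value of $H^n-H^{n\,\vert\,m,\varepsilon}$ at a jump time of $V^n$ carries no $n$-dependent smallness.
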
 

\begin{remark} By definition of the consecutive increment function $\hat{w}$ in \eqref{eq:oscillcond}, we note that, since $X^n=U^n+V^n$, showing $(H^n, X^n)$ and $(H^n, U^n)$ satisfy \eqref{eq:oscillcond} is equivalent to establishing \eqref{eq:oscillcond} for $(H^n, X^n)$ and $(H^n, V^n)$, or $(H^n, U^n)$ and $(H^n, V^n)$.
\end{remark}

\subsection{Control through independence}
\label{subsec:Control_through_independence}

Instead of restricting the class of admissible integrands to such processes which act as a direct control to the activity of the $V^n$ (as pursued in Section \ref{subsec:Direct_control_of_variation}), there is another more probabilistic approach that suggests itself: if we impose that the integrands must not anticipate the `future' behaviour of the integrator remainders $V^n$ (i.e., adequate independence) and the (conditional) expectation of the latter is suitably centered around zero, then this should offer enough control for a weak continuity result of stochastic integrals. In the sequel, we will provide a precise framework for the implementation of this idea. 

\begin{defi}[GD modulo processes controllable by independence] \ \label{defi:GD_mod_CI} 
	Let $(X^n)_{n\ge 1}$ be a sequence of $d$-dimensional semimartingales on filtered probability spaces $(\Omega^n, \mathcal{F}^n, \mathbbm{F}^n, \Pro^n)$ and let $X$ be a process on some filtered probability space $(\Omega,  \mathcal{F}, \mathbb{F}, \Pro)$. We say that the sequence $(X^n)_{n\ge 1}$ has \emph{good decompositions modulo weakly asymptotically negligible processes controllable through independence of the integrands}---abbreviated as $\operatorname{GD\,mod\,CI}$---if there exist processes $(U^n)_{n\ge 1}$, $(\tilde U ^n)_{n\ge 1}$, $(V^{n,i})_{n\ge 1 }$, $i\ge 1$, which are defined on the same filtered probability spaces as the $X^n$ as well as $f:\N \to (0,\infty)$ and $\lambda,\mu>1$ such that 
	\begin{enumerate}[(i)]
		\item the $U^n$, $\tilde U^n$ are semimartingales having \eqref{eq:Mn_An_condition} and $U^n \Rightarrow X$ on $(\D_{\R^d}[0,\infty),\dM)$;
		\item the $V^{n,i}$ are pure jump semimartingales, with finitely many jumps on compact time intervals such that $\sum_{i=1}^\infty V^{n,i}$ exists almost surely for each $n$. Let $\sigma^{n,i}_1 \le \sigma^{n,i}_2 \le ...$ be stopping times such that $\operatorname{Disc}(V^{n,i}) \subseteq \{ \sigma^{n,i}_k : k\ge 1\}$ and denote  $\Lambda^{n,i}(t):= \operatorname{sup}\{k \ge 1 : t \le \sigma^{n,i}_k\}$. Further, it holds that:
		\begin{enumerate}
			\item[(ii.i)] for every $T>0$ and $i\ge 1$, the sequence of random variables $\operatorname{sup}_{i\ge 1} |\Lambda^{n,i}(\bullet)/f(n)|^*_T$ is tight in $\R$;
			\item[(ii.ii)] for every $n,k,i\ge 1$ we have
			$$ \E^n \Big[ V^{n,i}_{\sigma^{n,i}_k} \; | \; \mathcal{V}^i_{n,k-1}\Big] \; = \; 0,$$
			where $\mathcal{V}^i_{n,k}:= \sigma\bigl(V^{n,i}_{\sigma^{n,i}_j}: j\le k \bigr)$;
			\item[(ii.iii)] it holds that
			\begin{align*}
				&\limsup_{n \ge 1} \;  \; \sum_{i=1}^{\infty} \, \Big(\sum_{k=1}^{K \, f(n)}\,  \E^n \Big[ \,  |V^{n,i}_{\sigma^{n,i}_k}|^{\, \lambda} \ind_{\{|V^{n,i}_{\sigma^{n,i}_k}|\, > \, 1\}} \, \Big]\Big)^\frac{1}{\lambda} \; < \; \infty \qquad \text{ and}\\
				&\limsup_{n \ge 1} \;  \; \sum_{i=1}^{\infty} \, \Big(\sum_{k=1}^{K \, f(n)} \, \E^n \Big[ \,  |V^{n,i}_{\sigma^{n,i}_k}|^{\, \mu} \ind_{\{|V^{n,i}_{\sigma^{n,i}_k}|\, \le \, 1\}} \, \Big]\Big)^\frac{1}{\mu} \; < \; \infty ,\quad \text{for each $K>0$};
			\end{align*}
		\end{enumerate}
		\item $X^n=U^n+\tilde U^n+\sum_{i=1}^\infty V^{n,i} \Rightarrow X$ on $(\D_{\R^d}[0,\infty), \dM)$.
	\end{enumerate}
\end{defi}

\begin{theorem}[Independent integrands and $\operatorname{GD\,mod\,CI}$] \ \label{thm:Integral_converg_under_GDmodCI} 
	Let $(X^n)_{n\ge 1}$ be a sequence of $d$-dimensional semimartingales on filtered probability spaces $(\Omega^n, \mathcal{F}^n, \mathbbm{F}^n, \Pro^n)$ which are $\operatorname{GD\,mod\,CI}$ and let $(H^n)_{n\ge 1}$ be a sequence of adapted càdlàg processes on the same filtered probability spaces. Suppose that for every $n,k,i\ge 1$ it holds 
	\begin{align} \sigma \bigl( H^n_{t \,\wedge \, \sigma^{n,i}_k}\; : \; t\ge 0 \bigr) \; \; \indep \; \; V^{n,i}_{\sigma^{n,i}_{k-1}}\, , \; V^{n,i}_{\sigma^{n,i}_k}  \label{eq:indep_cond_int_conv_GDmodCI}
	\end{align}
	and that the pairs $(H^n, X^n)$ satisfy \eqref{eq:oscillcond}. If $(H^n, X^n) \Rightarrow (H,X)$ on $(\D_{\R^d}[0,\infty), \dM) \times (\D_{\R^d}[0,\infty), \rho)$, where $\rho \in \{\dJ, \dM\}$, then $X$ is a semimartingale in the natural filtration generated by $(H,X)$ and it holds
	$$ \Big(X^n, \, \int_0^\bullet \, H^n_{s-} \; \diff X^n_s\Big)  \; \; \; \Longrightarrow \; \; \;  \Big(X, \, \int_0^\bullet \, H_{s-} \; \diff X_s\Big) \qquad \text{ on } \quad \; (\D_{\R^{2d}}[0,\infty), \rho).$$
\end{theorem}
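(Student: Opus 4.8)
\textbf{Proof plan for Theorem \ref{thm:Integral_converg_under_GDmodCI}.}

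The plan is to reduce the statement to the general integral convergence machinery recalled in Theorem \ref{thm:3.19} (or rather its variant \cite[Thm.~4.8]{andreasfabrice_theorypaper} with conditions (a)--(b) in place of \eqref{eq:oscillcond}, cf.\ Remark \ref{rem:alternative_conditions_AVCI}), following the four-subproblem decomposition outlined in Section \ref{sec:quantities_and_approach_needed_for_integral_convergence_proof}. Since $X^n = U^n + \tilde U^n + \sum_{i=1}^\infty V^{n,i}$ and both $U^n$ and $\tilde U^n$ have \eqref{eq:Mn_An_condition}, the contributions of $\int_0^\bullet H^n_{s-}\diff U^n_s$ and $\int_0^\bullet H^n_{s-}\diff \tilde U^n_s$ are handled exactly as in the good-decomposition case — in particular the critical term $\Upsilon_{n,m,\varepsilon}$ of \eqref{eq:quantity1_integral_convergence_term_where_GD_needed} is controlled for these pieces by \eqref{eq:Mn_An_condition} as in \cite{andreasfabrice_theorypaper}. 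The entire burden is therefore to establish the analogue of \eqref{eq:quantity3_integral_convergence_term_where_GD_needed} for the remainder $R^n := \sum_{i=1}^\infty V^{n,i}$, i.e.\ to show that discretising the integrand on the random-plus-deterministic partition $\pi^{n,m,\varepsilon}$ does not destroy convergence of $\int_0^\bullet H^n_{s-}\diff R^n_s$. Because each $V^{n,i}$ is pure jump, this integral is an (infinite) sum over the jump times $\sigma^{n,i}_k$ of terms $(H^n_{\sigma^{n,i}_k -} - H^{n\,\vert\,m,\varepsilon}_{\sigma^{n,i}_k -})\,\Delta V^{n,i}_{\sigma^{n,i}_k}$.

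The key step is to exploit the independence hypothesis \eqref{eq:indep_cond_int_conv_GDmodCI} together with the martingale-difference structure \eqref{eq:defi:GD_mod_CI}(ii.ii). Write $W^{n,i}_K := \sum_{k=1}^{\lceil K f(n)\rceil}\big(H^n_{\sigma^{n,i}_k-} - H^{n\,\vert\,m,\varepsilon}_{\sigma^{n,i}_k-}\big)\,\Delta V^{n,i}_{\sigma^{n,i}_k}$, truncated at the $K f(n)$-th jump; by the tightness in (ii.i) it suffices to control $W^{n,i}_K$ for fixed $K$ and let $K\to\infty$ afterwards. Conditioning on $\mathcal V^i_{n,k-1}$ and the integrand coefficients, \eqref{eq:indep_cond_int_conv_GDmodCI} makes $\Delta V^{n,i}_{\sigma^{n,i}_k}$ independent of the coefficient $H^n_{\sigma^{n,i}_k-}-H^{n\,\vert\,m,\varepsilon}_{\sigma^{n,i}_k-}$ (which is $\mathcal F^n_{\sigma^{n,i}_k-}$-measurable), so by (ii.ii) each summand is a martingale difference with respect to an appropriate filtration. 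One then splits $\Delta V^{n,i}$ at level $1$: the ``large'' part $\Delta V^{n,i}\ind_{\{|\Delta V^{n,i}|>1\}}$ is handled by an $L^\lambda$-type estimate (Burkholder–Davis–Gundy or a direct $L^1$/$L^\lambda$ bound on the martingale sum, using $\lambda>1$ and the boundedness of $|H^n - H^{n\,\vert\,m,\varepsilon}|$ by $\varepsilon$ on the random part plus the uniform control of $|H^n|^*$ coming from weak convergence), while the ``small'' part $\Delta V^{n,i}\ind_{\{|\Delta V^{n,i}|\le 1\}}$ is handled by an $L^\mu$-type estimate with $\mu>1$; summing over $i$ against the summability in (ii.iii) and using $\lambda,\mu>1$ together with the factor $\varepsilon$ yields a bound that vanishes as $\varepsilon\to 0$ uniformly in $n$ and $m$. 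This is precisely the form of control \eqref{eq:quantity3_integral_convergence_term_where_GD_needed} required.

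I expect the main obstacle to be bookkeeping the conditioning: one must assemble a single filtration (or a consistent family of $\sigma$-algebras indexed by the jump times $\sigma^{n,i}_k$, interleaved across $i$) with respect to which the coefficients $H^n_{\sigma^{n,i}_k-}-H^{n\,\vert\,m,\varepsilon}_{\sigma^{n,i}_k-}$ are predictable and the $\Delta V^{n,i}_{\sigma^{n,i}_k}$ form martingale differences, and verify that \eqref{eq:indep_cond_int_conv_GDmodCI} and (ii.ii) genuinely give this (the subtlety is that (ii.ii) conditions on the internal filtration $\mathcal V^i_{n,k-1}$ of $V^{n,i}$ alone, so one needs \eqref{eq:indep_cond_int_conv_GDmodCI} to ``enlarge'' it harmlessly to include the integrand up to $\sigma^{n,i}_k$). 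A secondary technical point is to handle the discretised integrand $H^{n\,\vert\,m,\varepsilon}$ correctly: its value on each block is $H^n$ evaluated at the block's left endpoint (a stopping time $\tau^n_j$ of the combined partition), so one must check that the difference $H^n_{s-}-H^{n\,\vert\,m,\varepsilon}_{s-}$ evaluated at $s=\sigma^{n,i}_k$ remains measurable with respect to the right $\sigma$-algebra and bounded by $\varepsilon$ on the stopping-time part of the partition; this is where the structure of $\pi^{n,m,\varepsilon}$ recalled in Section \ref{sec:quantities_and_approach_needed_for_integral_convergence_proof} is used. Once these measurability issues are organised, the estimates themselves are routine moment bounds for martingale sums, and the remaining three subproblems follow verbatim from \cite{andreasfabrice_theorypaper} as in the proof of Theorem \ref{thm:result_int_conv_uncorrelated_CTRWs}.
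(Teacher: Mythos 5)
Your proposal is correct and follows essentially the same route as the paper's proof: reduce to the remainder $\sum_i V^{n,i}$, truncate at $Kf(n)$ jumps via (ii.i), use \eqref{eq:indep_cond_int_conv_GDmodCI} together with (ii.ii) to turn the summands into martingale differences for the enlarged filtration $\sigma(\mathcal{V}^i_{n,j},\mathcal{H}^i_{n,j})$, split at level $1$, and combine Doob's maximal inequality with a moment inequality for martingale differences (the paper uses von Bahr--Esseen where you suggest BDG, which is equivalent here) against the summability in (ii.iii) and the factor $\varepsilon$. The only cosmetic difference is that the paper splits $\Delta V^{n,i}_{\sigma^{n,i}_k}=V^{n,i}_{\sigma^{n,i}_k}-V^{n,i}_{\sigma^{n,i}_{k-1}}$ into two separate martingale sums so that (ii.ii)--(ii.iii), which are stated for the values rather than the increments, apply directly.
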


\section{Convergence of stochastic integrals for moving averages and correlated CTRWs}
\label{subsec:Final_results_for_CTRWs}
Throughout this section, we shall rely on the decomposition $\psi^{-1}X^n=U^n+V^n$ given in \eqref{eq:defi_U^n_in_CTRW_decomp}--\eqref{eq:defi_V^n_in_CTRW_decomp}. Starting from there, our objective is to apply the generalised framework developed in the previous section.

As one would hope, we have that moving averages and correlated CTRWs constitute natural classes of integrators which enjoy the $\operatorname{GD\,mod\,CA}$ property.

\begin{prop}[CTRWs\,\,are $\operatorname{GD \, mod \, CA}$] \label{prop:correlate_CTRWs_are_GDmodCA}
	Moving averages \eqref{eq:MovAv} and correlated CTRWs \eqref{defi:CTRW}, with $1\le \alpha\le 2$ and \eqref{eq:technical_condition}, are $\operatorname{GD\, mod \, CA}(\gamma, \beta)$ for all $\gamma>(\beta-\beta/\alpha)$.
\end{prop}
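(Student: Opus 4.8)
The plan is to verify directly that the decomposition $\psi^{-1}X^n = U^n + V^n$ from \eqref{eq:defi_U^n_in_CTRW_decomp}--\eqref{eq:defi_V^n_in_CTRW_decomp} witnesses the $\operatorname{GD\,mod\,CA}(\gamma,\beta)$ property for every $\gamma > \beta - \beta/\alpha$. Conditions (i) and (iii) of Definition \ref{def:GD_mod_CA} are essentially already in hand: part (i) follows from Lemma \ref{lem:decomp_U^n_part_has_GD} (which gives \eqref{eq:Mn_An_condition} for $U^n$ under \eqref{eq:technical_condition}) together with the M1 convergence $U^n \Rightarrow X$, which one reads off from \eqref{eq:CTRW_M1_conv}/\eqref{eq:MovAv}-type limit theory after noting that $U^{n,2}$ is asymptotically negligible (its increments involve only the fixed innovations $\theta_{-k}$, $k\ge 0$, with coefficients $\sum_{j=k+1}^{N_{nt}+k} c_j$ that are summable by \eqref{eq:technical_condition}); part (iii) is just the M1 convergence of $\psi^{-1}X^n$, i.e. the Avram--Taqqu / Meerschaert--Nane--Xiao result quoted in Section \ref{sect:CTRW_and_GD}. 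So the real content is part (ii): exhibiting the random partitions $\pi^n$ with mesh $|\pi^n| \le n^{-\beta}$ containing $\operatorname{Disc}_{[0,T]}(V^n)$, and proving the activity bound $n^{-\gamma}\sum_{s\in\pi^n}|V^n_s| \xrightarrow{\Pro} 0$.

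For the partition, the natural choice is to take $\pi^n$ to consist of the jump times of $V^n$ together with a deterministic refinement on scale $n^{-\beta}$: concretely, the points $\{L_k/n : k \le N_{nT}\}$ (the rescaled renewal epochs, which are exactly where $N_{n\bullet}$ and hence $V^n$ jump) together with a deterministic grid of mesh $n^{-\beta}$ on $[0,T]$ to enforce $|\pi^n|\le n^{-\beta}$. For CTRWs one should double-check that, with $\beta$-stable waiting times, $N_{nT} = \Oland_\Pro(n^\beta)$ so that this refinement does not blow up the cardinality beyond order $n^\beta$; for moving averages ($\beta=1$ effectively) the grid has $n T$ points. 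Since $V^n$ is piecewise constant between renewal epochs, $\sum_{s\in\pi^n}|V^n_s|$ over the refined partition equals (up to the boundary/grid contributions, which are handled the same way) the sum over the renewal epochs, so it suffices to bound $n^{-\gamma}\sum_{k=1}^{N_{nT}} |V^n_{L_k/n}|$.

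The crux is this activity estimate. From \eqref{eq:defi_V^n_in_CTRW_decomp}, at the $m$-th renewal epoch $V^n$ takes the value $-\,\psi^{-1}n^{-\beta/\alpha}\sum_{k=1}^{m}\big(\sum_{j=k}^\infty c_j\big)\theta_{m-k+1}$, so $|V^n_{L_m/n}| \le \psi^{-1} n^{-\beta/\alpha}\sum_{k=1}^\infty \big(\sum_{j=k}^\infty c_j\big)|\theta_{m-k+1}|$; summing over $m \le N_{nT}$ and swapping order gives a bound of the form $\psi^{-1} n^{-\beta/\alpha}\big(\sum_{k\ge 1}\sum_{j\ge k}c_j\big)\cdot\big(\text{average of }|\theta|\text{'s over }\Oland(N_{nT})\text{ terms}\big)$ when $\alpha>1$, using \eqref{eq:technical_condition} and $\E|\theta_1|<\infty$; dividing by $n^\gamma$ and recalling $N_{nT}=\Oland_\Pro(n^\beta)$, the expectation is $\Oland(n^{\beta - \beta/\alpha - \gamma})\to 0$ precisely when $\gamma > \beta-\beta/\alpha$. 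The borderline case $\alpha=1$ is where I expect the main obstacle: there $\E|\theta_1|=\infty$, so one cannot use a first-moment bound, and one must instead invoke the $L^\rho$-summability $\sum_i(\sum_{j\ge i}c_j)^\rho<\infty$ for some $\rho<1$ from \eqref{eq:technical_condition} together with the tail estimate $\Pro(|\theta_1|>x)=\Oland(x^{-1})$ to control $\sum_m |V^n_{L_m/n}|$ in $L^\rho$ or via a truncation argument — using subadditivity of $x\mapsto x^\rho$ so that $\big(\sum_k a_k |\theta_k|\big)^\rho \le \sum_k a_k^\rho |\theta_k|^\rho$ and $\E|\theta_1|^\rho<\infty$ for $\rho<1<\alpha=1$... — and one has to be careful that the waiting-time side ($N_{nT}\sim n^\beta$) still produces the exponent $\beta-\beta/\alpha = 0$ so that the strict inequality $\gamma>0$ is exactly what is needed. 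This $\alpha=1$ bookkeeping, matching the power of $n$ coming from the number of renewals against the power lost by truncating the heavy-tailed $\theta$'s, is the delicate step; the $\alpha\in(1,2]$ case is comparatively routine.
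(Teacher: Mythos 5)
Your proposal is correct and follows essentially the same route as the paper: reduce to condition (ii) of Definition \ref{def:GD_mod_CA} via Lemma \ref{lem:decomp_U^n_part_has_GD}, take $\pi^n$ to be the $n^{-\beta}$-grid united with the jump times of $N_{n\bullet}$, truncate on $\{N_{nT}\le K_\varepsilon n^\beta\}$ by tightness, and apply a Markov bound with $\E[|V^n_s|^\rho]=\Oland(n^{-\rho\beta/\alpha})$, using $\rho=1$ for $1<\alpha\le 2$ and the subadditivity of $x\mapsto x^\rho$ with $\rho<1$ from \eqref{eq:technical_condition} for $\alpha=1$. The $\alpha=1$ bookkeeping you flag as delicate does close exactly as you suspect: one picks $\rho<1$ large enough that $\rho(\gamma+\beta/\alpha)>\beta$, which is compatible with the summability in \eqref{eq:technical_condition} since the tails $\sum_{j\ge i}c_j$ are eventually below one.
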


From Theorem \ref{thm:Lipschitz_GDmodCA} and Propositions \ref{prop:correlate_CTRWs_are_GDmodCA}, we immediately obtain the following result on weak integral convergence of Lipschitz integrands with respect to moving averages and CTRWs with $1\le \alpha \le 2$.

\begin{theorem}[Weak integral convergence for CTRWs I] \label{thm:result_int_conv_CTRWs_lipschitz_integrands} 
	Let $X^n$ be a moving average as in \eqref{eq:MovAv} or a correlated CTRWs as in \eqref{defi:CTRW}, with $1\le \alpha\le 2$ and \eqref{eq:technical_condition}.
	Furthermore, let $H^n$ be processes adapted to the filtration \eqref{eq:filtration_CTRWs} and let $\gamma \in (0, \beta/\alpha)$ such that for every $T>0$ there exists $C_T>0$ with 
	$$ |H^n_s- H^n_t| \; \le \; C_T \, n^{\gamma} \, |t-s|\quad \text{for all}\quad 0\le s,t \le T.$$
	If $(H^n,X^n) \Rightarrow (H, X)$ on $(\D_{\R^d}[0,\infty), \dM) \times (\D_{\R^d}[0,\infty), \dM)$ with $X=(\sum_{j=0}^\infty c_j) Z$ or $X=(\sum_{j=0}^\infty c_j)  Z_{D^{-1}}$, and the $(H^n, X^n)$ and $(H^n,U^n)$ each satisfy \eqref{eq:oscillcond}, then $X$ is a semimartingale in the natural filtration generated by $(H,X)$ and it holds 
	$$ \left( X^n, \, \int_0^\bullet  \, H^n_{s-} \; \diff X^n_s\right) \; \; \Longrightarrow \; \;  \left( X, \, \int_0^\bullet \, H_{s-} \; \diff X_s\right) \qquad \text{ on } \quad \; (\D_{\R^{2d}}[0,\infty), \dM).$$
\end{theorem}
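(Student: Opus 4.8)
The plan is to derive Theorem \ref{thm:result_int_conv_CTRWs_lipschitz_integrands} as a direct corollary of the machinery already in place, so the real work is just checking that the hypotheses of Theorem \ref{thm:Lipschitz_GDmodCA} are met with the right choice of exponents. First I would record the decomposition $\psi^{-1}X^n = U^n + V^n$ from \eqref{eq:defi_U^n_in_CTRW_decomp}--\eqref{eq:defi_V^n_in_CTRW_decomp}, noting that since a stochastic integral is linear in the integrator, the claim for $X^n$ is equivalent to the claim for $\psi^{-1}X^n$ (one multiplies the limiting integral by $\psi = \sum_j c_j$ at the end, matching the stated limit $X = \psi Z$ or $\psi Z_{D^{-1}}$). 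By Proposition \ref{prop:correlate_CTRWs_are_GDmodCA}, the processes $\psi^{-1}X^n$ are $\operatorname{GD\,mod\,CA}(\gamma',\beta)$ for every $\gamma' > \beta - \beta/\alpha$, with the associated $U^n$, $V^n$ being exactly those from the decomposition.

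Next I would line up the exponents. Theorem \ref{thm:Lipschitz_GDmodCA} is invoked with parameters $(\gamma', \tilde\gamma)$ — in the notation of that theorem the pair is $\operatorname{GD\,mod\,CA}(\gamma, \tilde\gamma)$ and the Lipschitz constant is allowed to grow like $n^{\tilde\gamma - \gamma}$. Here the hypothesis on $H^n$ gives a Lipschitz constant $C_T n^{\gamma}$ with $\gamma \in (0, \beta/\alpha)$, so I want to match $\tilde\gamma - \gamma' = \gamma$, i.e. set $\tilde\gamma = \beta$ and choose the CA-exponent $\gamma' = \beta - \gamma$. Then I must check $\gamma' > \beta - \beta/\alpha$, which is exactly $\beta - \gamma > \beta - \beta/\alpha$, i.e. $\gamma < \beta/\alpha$ — precisely the standing assumption on $\gamma$. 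So Proposition \ref{prop:correlate_CTRWs_are_GDmodCA} applies with this $\gamma'$, giving $\operatorname{GD\,mod\,CA}(\beta - \gamma, \beta)$, and the Lipschitz growth rate $n^{\tilde\gamma - \gamma'} = n^{\beta - (\beta-\gamma)} = n^{\gamma}$ matches the hypothesis on $H^n$ verbatim.

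It then remains to verify the remaining structural inputs of Theorem \ref{thm:Lipschitz_GDmodCA}: that $H^n$ is adapted (given, via the filtration \eqref{eq:filtration_CTRWs}); that $(H^n,U^n)$ and $(H^n,X^n)$ each satisfy \eqref{eq:oscillcond} (assumed in the statement — and by the remark following Theorem \ref{thm:Lipschitz_GDmodCA}, \eqref{eq:oscillcond} for the pair $(H^n,V^n)$ would be an equivalent substitute if one prefers); and the joint weak convergence $(H^n,X^n)\Rightarrow(H,X)$ on $(\D_{\R^d}[0,\infty),\dM)\times(\D_{\R^d}[0,\infty),\dM)$, which is hypothesised. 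Since the underlying convergence of the integrators is in the M1 topology, $\rho = \dM$ throughout, so the conclusion of Theorem \ref{thm:Lipschitz_GDmodCA} is precisely M1 convergence of the joint pair $(\psi^{-1}X^n, \int_0^\bullet H^n_{s-}\,\diff(\psi^{-1}X^n)_s)$ to $(\psi^{-1}X, \int_0^\bullet H_{s-}\,\diff(\psi^{-1}X)_s)$; multiplying through by $\psi$ and using continuity of scalar multiplication on M1 Skorokhod space gives the stated result.

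Honestly, the main obstacle here is nothing more than bookkeeping of the exponent identities — the substantive analytic content has been absorbed into Proposition \ref{prop:correlate_CTRWs_are_GDmodCA} (that CTRWs and moving averages are $\operatorname{GD\,mod\,CA}$, which is where \eqref{eq:technical_condition} enters, controlling the summable-tail part $U^{n,2}$ via Lemma \ref{lem:decomp_U^n_part_has_GD} and the negligible-variation part $V^n$) and into Theorem \ref{thm:Lipschitz_GDmodCA} itself (the Lipschitz-inertia argument showing the integrands cannot pick up too much of the $V^n$-variation). The one point requiring a little care is confirming that the decomposition used by Proposition \ref{prop:correlate_CTRWs_are_GDmodCA} is the same $U^n, V^n$ as in \eqref{eq:defi_U^n_in_CTRW_decomp}--\eqref{eq:defi_V^n_in_CTRW_decomp}, so that the \eqref{eq:oscillcond} hypotheses stated in terms of $U^n$ are the ones actually needed; this is immediate from how Proposition \ref{prop:correlate_CTRWs_are_GDmodCA} is set up.
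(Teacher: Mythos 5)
Your proposal is correct and follows exactly the route the paper takes: its proof of this theorem is the one-line observation that the result follows from Proposition \ref{prop:correlate_CTRWs_are_GDmodCA} combined with Theorem \ref{thm:Lipschitz_GDmodCA}, and your exponent bookkeeping ($\gamma'=\beta-\gamma$, with $\gamma'>\beta-\beta/\alpha$ equivalent to $\gamma<\beta/\alpha$, so the Lipschitz rate $n^{\tilde\gamma-\gamma'}=n^{\gamma}$ matches) is precisely the check that makes that one-liner work. The only thing you add beyond the paper is the explicit handling of the $\psi^{-1}$ normalisation, which is a harmless and correct piece of housekeeping.
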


Similarly, if moving averages or correlated CTRWs have centered innovations and $1<\alpha\le 2$, they also serve as a natural vast group admitting $\operatorname{GD\,mod\,CI}$.

\begin{prop}[CTRWs with $1< \alpha\le 2$ and centered innovations are $\operatorname{GD\,mod\,CI}$] \ \label{prop:correlated_CTRW_are_GDmodCI} 
	Let $X^n$ be moving averages as in \eqref{eq:MovAv} or a correlated CTRW as in \eqref{defi:CTRW} with $1< \alpha\le 2$, satisfying \eqref{eq:technical_condition} and $\E[\theta_0]=0$. Then, the sequence $X^n$ is $\operatorname{GD\,mod\,CI}$ with 
	\begin{itemize}
		\item if $X^n$ is a CTRW: $\sigma^{n,i}_k:= \sigma^{n}_k:= \sum_{\ell=1}^{k} J_\ell/n=L_{k}/n$ and
		$$ V^{n,i}_t \; := \; - \, \frac{(\sum_{j=0}^\infty c_j)^{-1}}{n^{\frac \beta \alpha}} \Big(\, \sum_{\ell=i}^{\infty} c_{\ell}\, \Big) \, \theta_{N_{nt}-i+1}\, \ind_{\{N_{nt}\ge i\}},\quad \text{for all}\;\; i,k\ge 1; $$
		\item if $X^n$ is a moving average: $\sigma^{n,i}_k:=\sigma^n_k:= k/n$ and
		$$ V^{n,i}_t \; := \; - \, \frac{(\sum_{j=0}^\infty c_j)^{-1}}{n^{\frac \beta \alpha}} \Big(\, \sum_{\ell=i}^{\infty} c_{\ell}\, \Big) \, \theta_{\lfloor nt \rfloor-i+1}\, \ind_{\{\lfloor nt \rfloor \ge i\}} ,\quad\text{for all}\;\;i,k\ge 1. $$
	\end{itemize}
\end{prop}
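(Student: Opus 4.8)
The plan is to verify directly that the explicit decomposition $\psi^{-1} X^n = U^n + V^n$ from \eqref{eq:defi_U^n_in_CTRW_decomp}--\eqref{eq:defi_V^n_in_CTRW_decomp}, after relabelling, satisfies all conditions (i)--(iii) of Definition \ref{defi:GD_mod_CI}. First I would set $\tilde U^n := U^{n,2}$ (the ``absolute past'' term) and recognise that under \eqref{eq:technical_condition} Lemma \ref{lem:decomp_U^n_part_has_GD} already gives that $U^n = U^{n,1} + U^{n,2}$ has \eqref{eq:Mn_An_condition} and $U^n \Rightarrow X$ in M1 (this is exactly \eqref{eq:CTRW_M1_conv}); one then has to split off $\tilde U^n$ and check it individually still has \eqref{eq:Mn_An_condition}, which follows since $U^{n,2}$ is of tight total variation on compacts (the key estimate in the proof of Lemma \ref{lem:decomp_U^n_part_has_GD}). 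The genuinely new work is decomposing $V^n$ as $\sum_{i\ge 1} V^{n,i}$ with the $V^{n,i}$ as stated: each $V^{n,i}$ is a pure-jump process whose value after $k$ jumps is the single rescaled innovation $-(\psi^{-1}/n^{\beta/\alpha})(\sum_{\ell\ge i} c_\ell)\,\theta_{m}$ for the appropriate index $m$ (namely $m = N_{nt}-i+1$, which increases by one each time $N_{nt}$ increments), and summing over $i$ telescopes to recover $V^n$ by rearranging the double sum $\sum_k (\sum_{j\ge k} c_j)\theta_{N_{nt}-k+1}$.

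The four substantive checks are then: \textbf{(ii.i)} $\Lambda^{n,i}(t)$ counts jumps of $V^{n,i}$ up to $t$, which equals (up to a shift) the number $N_{nt}$ of CTRW jumps (resp.\ $\lfloor nt\rfloor$ for moving averages); choosing $f(n)=n^\beta$ (resp.\ $f(n)=n$) makes $\Lambda^{n,i}(\bullet)/f(n)$ comparable to $n^{-\beta}N_{n\bullet}$, uniformly in $i$, and tightness follows from \eqref{eq:time_convergence_to_subordinator} (resp.\ trivially). \textbf{(ii.ii)} The martingale-type centring $\E^n[V^{n,i}_{\sigma^{n,i}_k}\mid \mathcal{V}^i_{n,k-1}]=0$ reduces to $\E[\theta_m \mid \theta_{m+1},\theta_{m+2},\dots]=0$, which holds by the i.i.d.\ assumption together with $\E[\theta_0]=0$ — this is precisely where the hypothesis $1<\alpha\le 2$ (so that first moments exist) and centredness are used. \textbf{(ii.iii)} The two moment sums reduce to $\sum_{i\ge 1}\big(\sum_{k=1}^{Kf(n)} c_i^\lambda n^{-\beta} \,\E|\theta_0|^\lambda \ind_{\{\dots\}}\big)^{1/\lambda}$-type expressions; since $k$ ranges over $O(f(n))=O(n^\beta)$ values the inner sum is $O(c_i^\lambda \E[|\theta|^\lambda \wedge \text{stuff}])$ uniformly in $n$, and one picks $\lambda<\alpha$ slightly below the stability index so that $\E[|\theta_0|^\lambda \ind_{\{|\theta_0|\cdot\text{const}>n^{\beta/\alpha}\}}]$ has the right decay while $\mu$ is chosen with $1<\mu$ and the small-jump part summable; the outer sum over $i$ then converges because $\sum_i \big(\sum_{\ell\ge i} c_\ell\big)^{\rho}<\infty$ by \eqref{eq:technical_condition} (and for $1<\alpha\le 2$ even $\sum_i \sum_{\ell\ge i}c_\ell<\infty$), after noting $(\sum_{\ell\ge i}c_\ell)^\lambda \le (\sum_{\ell\ge i}c_\ell)^\rho$ once the tails are small. \textbf{(iii)} is just \eqref{eq:CTRW_M1_conv} (resp.\ the Avram--Taqqu result), since $U^n+\tilde U^n+\sum_i V^{n,i} = \psi^{-1}X^n \Rightarrow Z_{D^{-1}}$.

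I expect the main obstacle to be condition (ii.iii): one must choose the exponents $\lambda,\mu$ and the counting normalisation $f(n)$ jointly so that the truncated moments of $n^{-\beta/\alpha}\theta_0$ summed over $O(n^\beta)$ indices stay bounded in $n$ — this is a balance between the tail exponent $\alpha$ of the innovations, the number $n^\beta$ of jumps, and the scaling $n^{-\beta/\alpha}$, entirely analogous to the Lévy-process small/large-jump split but here tracked uniformly across the infinitely many components $i$. The uniformity in $i$ in (ii.i) and the precise bookkeeping of which innovation index each jump of $V^{n,i}$ carries (so that the conditional-centring sigma-algebra $\mathcal{V}^i_{n,k-1}$ really only sees $\theta_{m+1},\theta_{m+2},\dots$ and not $\theta_m$ itself) are the other points requiring care, but they are structural rather than analytical. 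Everything else follows by assembling the already-established Lemma \ref{lem:decomp_U^n_part_has_GD}, Theorem \ref{thm:CTRW_has_GD}, and the cited scaling limits.
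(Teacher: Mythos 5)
Your proposal is correct and follows essentially the same route as the paper: the same decomposition $\psi^{-1}X^n=U^n+\sum_i V^{n,i}$ (the paper simply takes $\tilde U^n\equiv 0$ and keeps $U^{n,2}$ inside $U^n$, which is equivalent to your split), $f(n)=n^\beta$ with tightness of $n^{-\beta}N_{n\bullet}$ for (ii.i), independence plus centredness for (ii.ii), and for (ii.iii) exactly the balance you describe — the paper takes $\lambda=\alpha-\gamma>1$ and $\mu=\alpha+\gamma$ and shows $\E[|\theta_0|^{\alpha-\gamma}\ind_{\{|\theta_0|>n^{\beta/\alpha}\}}]\lesssim n^{-\beta\gamma/\alpha}$ by integrating the regularly varying tail, so that the $O(n^\beta)$-fold sum stays bounded and the outer sum over $i$ converges by \eqref{eq:technical_condition}. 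The only slip is in (ii.ii): since $V^{n,i}_{\sigma^{n,i}_j}\propto\theta_{j-i+1}$, the conditioning $\sigma$-algebra $\mathcal{V}^i_{n,k-1}$ is generated by the \emph{lower}-indexed innovations $\theta_{k-i},\theta_{k-i-1},\dots$ rather than $\theta_{m+1},\theta_{m+2},\dots$, but the conclusion is unaffected.
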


Just as we have used Theorem \ref{thm:Lipschitz_GDmodCA} and Propositions \ref{prop:correlate_CTRWs_are_GDmodCA} to deduce Theorem \ref{thm:result_int_conv_CTRWs_lipschitz_integrands}, we can similarly obtain a tailored result for moving averages and CTRWs with $1< \alpha \le 2$ on account of Theorem \ref{thm:Integral_converg_under_GDmodCI} and Proposition \ref{prop:correlated_CTRW_are_GDmodCI} (respectively Corollary \ref{cor:CTRWs_alpha=1_are_GDmodCI}). To simplify the notation for the next theorem, it will be understood that $L_k$ is the quantity defined before \eqref{defi:CTRW} if we consider CTRWs, while $L_k=k$ if we consider moving averages.

\begin{theorem}[Weak integral convergence for CTRWs II] \label{thm:result_int_conv_CTRWs_with_independence_cond}
	Let $X^n$ be a moving average as defined in \eqref{eq:MovAv} or a correlated CTRW as in \eqref{defi:CTRW}, with $1< \alpha\le 2$ and \eqref{eq:technical_condition}, which are adapted to the filtrations \eqref{eq:filtration_CTRWs}. Moreover, let $\E[\theta_0]=0$ and the $H^n$ be càdlàg processes adapted to the filtration \eqref{eq:filtration_CTRWs} such that for all $n,k\ge 1$,
	\begin{align} H^n_{\bullet \, \wedge \frac{L_k}{n}} \; \; \indep \; \; \sigma\left( \, \theta_{k-\ell} \; : \;  \ell=0,1,2,...,k\wedge (\mathcal{J}-1)\right) \label{eq:5.17} \end{align}
	where $\mathcal{J}=\operatorname{sup}\{j\ge 1 : c_i=0 \;\text{ for }\; i>j\}$. If $(H^n,X^n) \Rightarrow (H, X)$ on $(\D_{\R^d}[0,\infty), \dM) \times (\D_{\R^d}[0,\infty), \dM)$ with $X=(\sum_{j=0}^\infty c_j) Z$ or $X=(\sum_{j=0}^\infty c_j)  Z_{D^{-1}}$, then $X$ is a semimartingale in the natural filtration generated by $(H,X)$ and it holds 
	$$ \left( X^n, \, \int_0^\bullet  \, H^n_{s-} \; \diff X^n_s\right) \; \; \Longrightarrow \; \;  \left( X, \, \int_0^\bullet \, H_{s-} \; \diff X_s\right) \qquad \text{ on } \quad \; (\D_{\R^{2d}}[0,\infty), \dM).$$
\end{theorem}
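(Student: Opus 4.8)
The plan is to read off Theorem~\ref{thm:result_int_conv_CTRWs_with_independence_cond} from Theorem~\ref{thm:Integral_converg_under_GDmodCI}, with the $\operatorname{GD\,mod\,CI}$ structure supplied by Proposition~\ref{prop:correlated_CTRW_are_GDmodCI}. First I would invoke Proposition~\ref{prop:correlated_CTRW_are_GDmodCI} --- this is where the standing assumptions $1<\alpha\le2$, \eqref{eq:technical_condition}, and $\E[\theta_0]=0$ are used --- to fix, for the given moving average or correlated CTRW $X^n$, the data of Definition~\ref{defi:GD_mod_CI}: the semimartingales $U^n,\tilde U^n$ satisfying \eqref{eq:Mn_An_condition}, the pure-jump processes $V^{n,i}$, and the stopping times $\sigma^{n,i}_k$, all in the explicit form stated there and built from the decomposition \eqref{eq:defi_U^n_in_CTRW_decomp}--\eqref{eq:defi_V^n_in_CTRW_decomp}. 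It then remains to verify (i) the integrand independence condition \eqref{eq:indep_cond_int_conv_GDmodCI} of Theorem~\ref{thm:Integral_converg_under_GDmodCI}, and (ii) its \eqref{eq:oscillcond} hypothesis. Since \eqref{eq:oscillcond} is not assumed in Theorem~\ref{thm:result_int_conv_CTRWs_with_independence_cond}, for (ii) I would use the substitution licensed by Remark~\ref{rem:alternative_conditions_AVCI}, replacing \eqref{eq:oscillcond} throughout the proof of Theorem~\ref{thm:Integral_converg_under_GDmodCI} by conditions (a) and (b) of \cite[Thm.~4.7]{andreasfabrice_theorypaper} (equivalently, invoking \cite[Thm.~4.8]{andreasfabrice_theorypaper} in place of Theorem~\ref{thm:3.19}).

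Step (i) is the heart of the matter. The idea is to evaluate each $V^{n,i}$ explicitly at the two times $\sigma^{n,i}_{k-1}$ and $\sigma^{n,i}_k$. For a correlated CTRW one has $\sigma^{n,i}_k=L_k/n$, and since the waiting times $J_\ell$ are almost surely strictly positive the process $L_\bullet$ is strictly increasing, so that $N_{L_k}=k$ almost surely; substituting into the formula for $V^{n,i}$ from Proposition~\ref{prop:correlated_CTRW_are_GDmodCI} gives
\[
 V^{n,i}_{\sigma^{n,i}_k}=-\tfrac{\psi^{-1}}{n^{\beta/\alpha}}\Bigl(\textstyle\sum_{\ell\ge i}c_\ell\Bigr)\theta_{k-i+1}\,\ind_{\{k\ge i\}},\qquad V^{n,i}_{\sigma^{n,i}_{k-1}}=-\tfrac{\psi^{-1}}{n^{\beta/\alpha}}\Bigl(\textstyle\sum_{\ell\ge i}c_\ell\Bigr)\theta_{k-i}\,\ind_{\{k\ge i+1\}};
\]
for a moving average the same identities hold verbatim with $\sigma^{n,i}_k=k/n$ and $\lfloor n\sigma^{n,i}_k\rfloor=k$ in place of $N_{L_k}=k$. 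Hence the pair $(V^{n,i}_{\sigma^{n,i}_{k-1}},V^{n,i}_{\sigma^{n,i}_k})$ is measurable with respect to $\sigma(\theta_{k-i},\theta_{k-i+1})$, and since the factor $\sum_{\ell\ge i}c_\ell$ vanishes once $i$ exceeds the correlation length of $(c_j)$, every innovation surviving in a non-trivial $V^{n,i}$ at $\sigma^{n,i}_{k-1}$ or $\sigma^{n,i}_k$ has index of the form $k-\ell$ with $\ell\in\{0,1,\dots,k\wedge(\mathcal J-1)\}$, which is precisely the window on the right-hand side of \eqref{eq:5.17}. Since $\sigma(H^n_{t\wedge\sigma^{n,i}_k}:t\ge0)=\sigma(H^n_{t\wedge L_k/n}:t\ge0)$ (with $L_k=k$ for moving averages), the assumption \eqref{eq:5.17} then yields \eqref{eq:indep_cond_int_conv_GDmodCI}. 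I expect the only genuinely fiddly point to be the index bookkeeping: one has to deal with the boundary cases in which the indicators $\ind_{\{k\ge i\}},\ind_{\{k\ge i+1\}}$ make $V^{n,i}$ vanish (so that the required independence is vacuous), and confirm that the finite window in \eqref{eq:5.17} really does capture the index of every surviving innovation, for every relevant $i$ and $k$.

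For step (ii), the hypothesis that the $H^n$ are adapted to the filtration \eqref{eq:filtration_CTRWs} is exactly what makes conditions (a) and (b) of \cite[Thm.~4.7]{andreasfabrice_theorypaper} available: condition (a) --- the required independence of $H^n$ from the increments of the integrators over the relevant random partitions --- follows from \cite[Lem.~4.12]{andreasfabrice_theorypaper} applied to the uncorrelated pieces of the decomposition, as indicated in Remark~\ref{rem:alternative_conditions_AVCI}, while condition (b) --- asymptotic negligibility in probability of those increments --- is immediate from the weak M1 convergence, hence Skorokhod-space tightness, of $X^n$ and of $U^n,\tilde U^n$. Having checked (i) and (ii), Theorem~\ref{thm:Integral_converg_under_GDmodCI}, in its form with \eqref{eq:oscillcond} replaced by (a) and (b), applies with $\rho=\dM$ and yields both the semimartingale property of $X$ in the natural filtration generated by $(H,X)$ and the asserted joint M1 convergence of $\bigl(X^n,\int_0^\bullet H^n_{s-}\,\diff X^n_s\bigr)$ to $\bigl(X,\int_0^\bullet H_{s-}\,\diff X_s\bigr)$, which completes the proof.
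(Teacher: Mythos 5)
Your route is exactly the paper's: Proposition \ref{prop:correlated_CTRW_are_GDmodCI} supplies the $\operatorname{GD\,mod\,CI}$ data, Theorem \ref{thm:Integral_converg_under_GDmodCI} is applied with $\rho=\dM$, and the absent \eqref{eq:oscillcond} hypothesis is replaced by conditions (a) and (b) of the cited theory paper via Remark \ref{rem:alternative_conditions_AVCI}, with (a) from the adaptedness/uncoupledness and (b) from tightness. On strategy there is nothing separating you from the paper, which disposes of the whole argument in a few lines.

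The one step you work out in detail---deducing \eqref{eq:indep_cond_int_conv_GDmodCI} from \eqref{eq:5.17}---is also where your claim does not hold as stated, and it is precisely the ``index bookkeeping'' you flagged and then asserted goes through. From the formulas, $V^{n,i}_{\sigma^{n,i}_k}\propto\theta_{k-i+1}$ and $V^{n,i}_{\sigma^{n,i}_{k-1}}\propto\theta_{k-i}$, and these are non-trivial exactly for $1\le i\le\mathcal J$ (since $\sum_{\ell\ge i}c_\ell=c_{\mathcal J}\neq 0$ at $i=\mathcal J$). The innovation at $\sigma^{n,i}_k$ is $\theta_{k-\ell}$ with $\ell=i-1\in\{0,\dots,\mathcal J-1\}$, which \eqref{eq:5.17} covers; but the innovation at $\sigma^{n,i}_{k-1}$ is $\theta_{k-\ell}$ with $\ell=i\in\{1,\dots,\mathcal J\}$, so for the boundary index $i=\mathcal J$ it is $\theta_{k-\mathcal J}$, which lies \emph{outside} the window $\ell\le k\wedge(\mathcal J-1)$ of \eqref{eq:5.17}. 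Since an integrand adapted to \eqref{eq:filtration_CTRWs} may legitimately depend on $\theta_{k-\mathcal J}$ at time $L_k/n$ without violating \eqref{eq:5.17}, the asserted implication \eqref{eq:5.17}$\Rightarrow$\eqref{eq:indep_cond_int_conv_GDmodCI} fails for $i=\mathcal J$: the pair independence in \eqref{eq:indep_cond_int_conv_GDmodCI} includes $V^{n,\mathcal J}_{\sigma^{n,\mathcal J}_{k-1}}$, and that evaluation is genuinely used in the proof of Theorem \ref{thm:Integral_converg_under_GDmodCI} (it is the second sum $\sum_k\tilde H_{\sigma_k-}V^{n,i}_{\sigma^{n,i}_{k-1}}$ there). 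To close this you must either strengthen the window in \eqref{eq:5.17} to $\ell\le k\wedge\mathcal J$, or argue separately that the $i=\mathcal J$ contribution to that second sum can be controlled without the independence at $\sigma^{n,\mathcal J}_{k-1}$. The paper's own proof is too terse to surface this point, but as written your verification of the key hypothesis has a hole at this single index.
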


In fact, for $\alpha=1$ it is not clear whether moving averages and correlated CTRWs are $\operatorname{GD\,mod\,CI}$. However, it turns out that, if their innovations are symmetric around zero, they satisfy a similar set of conditions---akin to $\operatorname{GD\,mod\,CI}$---still allowing for an application of Theorem \ref{thm:Integral_converg_under_GDmodCI}. For the sake of a concise presentation, we defer details on these conditions to Remark \ref{rem:modified_GDmodCI} in the section dedicated to proofs (Section \ref{sec:proofs}), and here we only state the result.

\begin{cor} \label{cor:CTRWs_alpha=1_are_GDmodCI}
	Let $X^n$ be a moving average as in \eqref{eq:MovAv} or a correlated CTRW as in \eqref{defi:CTRW}, with $\alpha=1$ and satisfying \eqref{eq:technical_condition}. If the law of $\theta_0$ is symmetric around zero, then Theorem \ref{thm:result_int_conv_CTRWs_with_independence_cond} also applies in this case.
\end{cor}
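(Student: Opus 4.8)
The plan is to emulate the proofs of Proposition~\ref{prop:correlated_CTRW_are_GDmodCI} and Theorem~\ref{thm:result_int_conv_CTRWs_with_independence_cond}, working from the decomposition $\psi^{-1}X^n=U^n+V^n$ of \eqref{eq:defi_U^n_in_CTRW_decomp}--\eqref{eq:defi_V^n_in_CTRW_decomp} with $V^n=\sum_{i\ge1}V^{n,i}$ for the $V^{n,i}$ introduced there (in the moving-average case one reads $J_\ell\equiv1$, $\beta=1$, $D^{-1}=\Id$, so that $\sigma^{n,i}_k=k/n$). What fails at $\alpha=1$ is that $V^{n,i}_{\sigma^{n,i}_k}=-c^{*}_{n,i}\,\theta_{k-i+1}$ is a deterministic multiple of a ``fresh'' innovation, and laws in the normal domain of attraction of a $1$-stable law are not integrable: the centering (ii.ii) of Definition~\ref{defi:GD_mod_CI} is meaningless, and (ii.iii) would ask for $\E[|\theta_0|^{\lambda}\ind_{\{|\theta_0|>M\}}]<\infty$ for some $\lambda>1$, which is false because $\Pro(|\theta_0|>x)\sim c\,x^{-1}$. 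Symmetry of $\theta_0$ supplies the natural substitute: each $V^{n,i}_{\sigma^{n,i}_k}$ is, conditionally on $\mathcal V^{i}_{n,k-1}\vee\sigma(H^n_{\bullet\wedge\sigma^{n,i}_k})$, a symmetric random variable.

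Accordingly I would first spell out, in Remark~\ref{rem:modified_GDmodCI}, a modified version of $\operatorname{GD\,mod\,CI}$ obtained from Definition~\ref{defi:GD_mod_CI} by (a) replacing (ii.ii) with this conditional symmetry and (b) replacing the $\lambda$-moment requirement in (ii.iii) by a truncation at a fixed level $\delta>0$: write $V^{n,i}=\bar V^{n,i}+\hat V^{n,i}$, the first with jumps of size $\le\delta$ and the second carrying the rare large jumps, keep only a $\mu$-moment bound on the (now bounded) $\bar V^{n,i}$, and move $\sum_i\hat V^{n,i}$ into the auxiliary $\operatorname{GD}$-process. Verifying this modified property for moving averages and correlated CTRWs with $\alpha=1$, symmetric $\theta_0$, and \eqref{eq:technical_condition}: set $U^n:=U^{n,1}$, which is an uncorrelated CTRW, hence has \eqref{eq:Mn_An_condition} by Theorem~\ref{thm:CTRW_has_GD} and satisfies $U^{n,1}\Rightarrow X/\psi$ on $\dM$; set $\tilde U^n:=U^{n,2}+\sum_i\hat V^{n,i}$, which is of finite, tight total variation on compacts — for $U^{n,2}$ this is part of the content of Lemma~\ref{lem:decomp_U^n_part_has_GD} and rests on the a.s.\ convergence of $n^{-\beta}\sum_k b_k|\theta_{-k}|$ with $b_k=\sum_{j\ge k}c_j$, which follows from \eqref{eq:technical_condition} by a Kolmogorov three-series argument (using $\sum_k b_k|\log b_k|<\infty$), while for $\sum_i\hat V^{n,i}$ one uses that $\Pro(|\theta_0|>x)\sim c\,x^{-1}$ forces only $O(1)$ of $\theta_1,\dots,\theta_{N_{nT}}$ to exceed the level $\delta/c^{*}_{n,1}\asymp\delta n^{\beta}$, each generating a staircase whose total variation is $\lesssim n^{-\beta}|\theta_m|$, so that $\text{TV}_{[0,T]}(\sum_i\hat V^{n,i})$ is stochastically dominated by a sum $\sum_{j\le N_n}P_j$ with $N_n$ tight and $P_j$ i.i.d.\ $\mathrm{Pareto}(1)$, hence tight; consequently $\tilde U^n$ also has \eqref{eq:Mn_An_condition}, and property (iii) (that $U^n+\tilde U^n+\sum_i\bar V^{n,i}=\psi^{-1}X^n\Rightarrow X/\psi$) is immediate from \eqref{eq:CTRW_M1_conv}. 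For the $\bar V^{n,i}$ themselves, conditional symmetry is immediate (a symmetric law truncated symmetrically stays symmetric, and a fresh innovation is independent of the past), tightness of $\Lambda^{n,i}(\bullet)/f(n)$ holds with $f(n)=n^{\beta}$ (resp.\ $n$) since $\Lambda^{n,i}(\bullet)$ counts jumps and coincides with $N_{n\bullet}$ (resp.\ $\lfloor n\bullet\rfloor$), and, using $\E[|\theta_0|^{\mu}\ind_{\{|\theta_0|\le M\}}]\asymp M^{\mu-1}$ for $\mu\in(1,2)$ together with $c^{*}_{n,i}=\psi^{-1}n^{-\beta}\sum_{j\ge i}c_j$, the $\mu$-moment control collapses to $\sum_i(\sum_{j\ge i}c_j)^{1/\mu}<\infty$, which \eqref{eq:technical_condition} grants for $\mu$ close enough to $1$. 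Finally, the hypothesis \eqref{eq:5.17} translates, after the index shift $k\mapsto k-i+1$ and since $V^{n,i}\equiv0$ once $\sum_{j\ge i}c_j=0$, into the independence of $H^n_{\bullet\wedge\sigma^{n,i}_k}$ from $\bar V^{n,i}_{\sigma^{n,i}_{k-1}},\bar V^{n,i}_{\sigma^{n,i}_k}$ that takes the place of \eqref{eq:indep_cond_int_conv_GDmodCI}.

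With the modified property in hand, I would re-run the proof of Theorem~\ref{thm:Integral_converg_under_GDmodCI}. The only step using (ii.ii)--(ii.iii) is the control of the remainder terms $\Upsilon_{n,m,\varepsilon}$ of \eqref{eq:quantity1_integral_convergence_term_where_GD_needed}, i.e.\ the limit \eqref{eq:quantity2_integral_convergence_term_where_GD_needed}, which in the original proof comes from a martingale estimate on the sums $\sum_k(H^n_{\sigma^{n,i}_k-}-H^{n\vert m,\varepsilon}_{\sigma^{n,i}_k-})\,\Delta V^{n,i}_{\sigma^{n,i}_k}$. Here I would instead introduce, conditionally on $\sigma(H^n_{\bullet\wedge\sigma^{n,i}_k})\vee\mathcal V^{i}_{n,k-1}$, i.i.d.\ Rademacher signs $\epsilon_k$ and write $\bar V^{n,i}_{\sigma^{n,i}_k}=\epsilon_k|\bar V^{n,i}_{\sigma^{n,i}_k}|$, then bound the resulting conditionally-Rademacher sum in $L^{\mu}$ by a (conditional) Kahane--Khintchine / martingale maximal inequality, invoking the truncated $\mu$-moment bound; the large jumps $\hat V^{n,i}$, which now live inside $\tilde U^n$, are absorbed by \eqref{eq:Mn_An_condition} exactly as $U^n$ is. The remaining ingredients — notably the \eqref{eq:oscillcond} condition (or the alternative (a)--(b) of Remark~\ref{rem:alternative_conditions_AVCI}, obtained from \eqref{eq:5.17} and the $\dM$-tightness of the integrators), the weak $\dM$-convergence $(H^n,X^n)\Rightarrow(H,X)$, and the final rescaling by $\psi$ — are taken over verbatim from the proof of Theorem~\ref{thm:result_int_conv_CTRWs_with_independence_cond}, which yields the asserted joint weak $\dM$-convergence of $(X^n,\int_0^\bullet H^n_{s-}\diff X^n_s)$. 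I expect this transfer of \eqref{eq:quantity2_integral_convergence_term_where_GD_needed} from an $L^{\lambda}$-martingale bound to a conditional-symmetry argument to be the main obstacle: one must check that the sign-randomisation is compatible both with the telescoping of $\int_0^\bullet H^n_{s-}\diff\bar V^{n,i}_s$ and with the nested conditioning, and that the logarithmic losses intrinsic to tail index $1$ are absorbed uniformly over $i$ by \eqref{eq:technical_condition}. The moving-average case is handled identically with $\sigma^{n,i}_k=k/n$, $f(n)=n$, $D^{-1}=\Id$ and $Z_{D^{-1}}=Z$, and is if anything simpler since no time change is present; note that there the symmetry of $\theta_0$ is precisely the standing assumption under which $X^n\Rightarrow(\sum_j c_j)Z$ holds in the first place.
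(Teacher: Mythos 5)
Your proposal is correct and follows essentially the same route as the paper: both replace the centering condition (ii.ii) by exploiting the symmetry of $\theta_0$ for the (symmetrically) truncated jumps, handle the large-jump part of the $V^{n,i}$ via the tightness of the number and the size of large jumps of the underlying zero-order moving average, and verify the $\mu$-moment bound for the small jumps exactly as you compute, with $\mu=1/\rho$ from \eqref{eq:technical_condition}. The only cosmetic differences are that the paper keeps the large jumps inside the $V^{n,i}$ and controls $\sum_{k}\sum_i |V^{n,i,>}_{\sigma^{n,i}_k}|$ directly in probability via the modified condition \eqref{eq:remark_alternative_GDmodCI_eq2} of Remark \ref{rem:modified_GDmodCI} (rather than absorbing them into the auxiliary $\operatorname{GD}$ process), and it uses that conditional symmetry of the bounded truncated jumps already gives conditional mean zero, so the original martingale estimate of Theorem \ref{thm:Integral_converg_under_GDmodCI} applies verbatim without your Rademacher sign-randomisation detour.
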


\begin{remark}[Weaker assumptions on the integrands] \label{rem:weaker_integrand_conditions}
	While we have chosen to center our treatment around weak convergence $(H^n,X^n) \Rightarrow (H,X)$ on the M1 product space $(\D_{\R^d}[0,\infty), \dM) \times (\D_{\R^d}[0,\infty), \dM)$, we note that this condition can be relaxed to a form of finite-dimensional distributional convergence for the integrands, together with a certain regularity requirement regarding their boundedness and their maximal number of large increments (properties which are implied by tightness on the M1 Skorokhod space). For brevity, we choose not to cover these conditions in any more detail and instead refer the interested reader to \cite[Prop.~3.22]{andreasfabrice_theorypaper} for more information. Nonetheless, it should be pointed out here that the Theorems \ref{thm:result_int_conv_uncorrelated_CTRWs}, \ref{thm:result_int_conv_CTRWs_lipschitz_integrands}, \ref{thm:result_int_conv_CTRWs_with_independence_cond}, and Corollary \ref{cor:CTRWs_alpha=1_are_GDmodCI} (as well as the more general results in Proposition \ref{prop:direct_control_of _TV}, Theorem \ref{thm:Lipschitz_GDmodCA} and Theorem \ref{thm:Integral_converg_under_GDmodCI}) generalise to this setting.
\end{remark}

\section{Convergence of SDE and SDDE models of anomalous diffusion}\label{sect:SDEs_SDDEs}

In Section \ref{subsect:anomalous_diffusion_stoch_int}, we discussed the connection between fractional Fokker--Planck--Kolmogorov equations and SDEs driven by time-changed L\'evy processes. As a concrete example, consider a spherically symmetric L\'evy process $Z$ (with characteristic exponent $\Psi(\xi)=-| \xi |^2$) time-changed by the inverse of a $\beta$-stable subordinator $D$. Under suitable assumptions, an SDE of the form
\[
\diff X_t = \sigma(X_{t-})\diff Z_{D^{-1}_t},\quad X_0=x,
\]
will then have transition densities $p(t,x)$ governed by
\[
D_t^\beta p(t,x) = - \kappa(\alpha) (-\Delta)^{\frac{\alpha}{2}}\bigl(\sigma(x)^\alpha p(t,x) \bigr),\quad p(0,x)=\delta_x,
\]
where $-(-\Delta)^{\frac{\alpha}{2}}$ is a fractional Laplacian and $\kappa(\alpha)$ is the appropriate diffusivity constant. To connect this continuum formulation with CTRW driven models, it is natural to exploit the results on weak convergence of stochastic integrals established earlier in the paper, akin to the classical results on SDEs of \cite{slominski_stability} and \cite[Sect.~5]{kurtzprotter}. Beyond establishing a rigorous theoretical link to CTRW formulations, this also provides a tractable numerical scheme for the simulation of SDEs driven by time-changed L\'evy processes and, consequently, the associated fractional Fokker--Planck--Kolmogorov equations through a Monte Carlo procedure.

In Section \ref{sec:SDE} we focus on uncorrelated CTRWs for which there is J1 convergence. Next, Section \ref{sect:SDDE} studies certain SDEs with a delay driven by strictly M1 convergent CTRWs. The latter serves to illustrate that, for such equations, it is still possible to have a functional limit theorem in the M1 topology. We achieve this by exploiting the framework of Section \ref{subsec:Control_through_independence}. To keep the analysis as concise as possible, we shall focus on moving averages rather than more general correlated CTRWs. In turn, the limiting equations are driven by a standalone L\'evy process rather than
a time-changed one.

\subsection{Functional limit theorems for SDEs driven by CTRWs}
\label{sec:SDE}

A series of papers \cite{jin_kobayashi, jum_kobayashi, Magdiarz_SDE1} have investigated weak and strong approximation schemes for SDEs of the general form
\begin{align} \label{eq:first_SDE}
	\begin{cases}
		\diff X_t = \mu(D^{-1}_t,X_{t})_- \diff D^{-1}_t  + \sigma(D^{-1}_t,X_{t})_- \diff B_{D^{-1}_t} \\
		X_0 = x,
	\end{cases}
\end{align}
for a Brownian motion $B$ and subordinator $D$. Motivated by stable limit theorems, we focus on stable subordinators for concreteness, but the main thing is simply that the subordinator is strictly increasing. A recent work \cite{jin_kobayashi_2} extends the results of \cite{jum_kobayashi} to allow for a traditional drift term, but only of the specific form $b(D_t^{-1})\diff t$. In general, $\diff t$ terms are problematic because they break the useful duality with non-time-changed SDEs explored in \cite{Kobayashi}.

As discussed in \cite[Ch.~6.6]{hahn}, the inverse subordinator and time-changed driver in \eqref{eq:first_SDE} are not Markovian and do not enjoy independent or stationary increments, so one cannot rely directly on the usual tools for classical SDEs such as the Euler method. On the other hand, CTRW approximations present themselves as a natural alternative (see e.g.~\cite[Ch.~5]{meerschaert} for the simulation of CTRWs and their use in simulating time-changed L\'evy processes).

Section 4 of the aforementioned work \cite{Kobayashi} studies theoretical properties, such as existence and uniqueness, for the more general class of SDEs
\begin{align} \label{eq:SDE} \tag{S}
	\begin{cases}
		\diff X_t = b(t,D^{-1}_t,X_{t})_- \diff t \, + \, \mu(t,D_t^{-1},X_{t})_- \diff D^{-1}_t  \, + \, \sigma(t,D_t^{-1}, X_{t})_- \diff Z_{D^{-1}_t} \\
		X_0 = x,
	\end{cases}
\end{align}
for suitable $Z$ and $D$. As we have done throughout, here we take $Z$ to be an $\alpha$-stable Lévy process and $D^{-1}$ to be the inverse of a $\beta$-stable subordinator, so that our analysis is aligned with the stable limit theory recalled in Section \ref{sect:CTRW_and_GD}. We are then interested in connecting \eqref{eq:SDE} with the approximating SDEs
\begin{align} \label{eq:SDE_approx} \tag{$\operatorname{S}_n$}
	\begin{cases}
		\diff X^n_t = b(t,D^n_t,X^n_{t})_- \diff t \, + \, \mu(t,D^n_t,X^n_{t})_- \diff D^n_t  \, + \, \sigma(t,D^n_t,X^n_{t})_- \diff Z^n_t \\
		X_0 = x,
	\end{cases}
\end{align}
where $D^n:= n^{-\beta} N_{n\bullet}$ and where the $Z^n$ denote uncorrelated uncoupled CTRWs defined as in \eqref{defi:CTRW} with $c_j=0$ for all $j\ge 1$. Of course, one could also consider weakly convergent initial conditions independent of the other stochastic inputs.

In terms of structural assumptions, we take the functions $b,\mu,\sigma:\R_+\times \R^2 \to \R$ to be continuous as well as satisfying, for all $T,R>0$, a strictly sublinear growth condition
\begin{align} \label{eq:sublinear_growth_condition_coeff_SDE}
	\sup{0\le t \le T} \, \sup{|\tilde y|\le R} \; (|b(t,\tilde y,y)| \vee |\mu(t,\tilde y,y)|\vee |\sigma(t,\tilde y,y)|) \; \le \; K \,|y|^{p}+ C,
\end{align}
where the constants $K,C>0$ and the exponent $p\in (0,1)$ may depend on $T,R$. As usual, $a\vee b:= \operatorname{max}\{a,b\}$. Imposing strict sublinearity as a growth condition in the spatial variable is important for us in deducing tightness of the solutions $X^n$ on Skorokhod space. One may also wish to allow for linear growth (i.e., $p=1$ in \eqref{eq:sublinear_growth_condition_coeff_SDE}), which one might suspect should be possible by way of Gr{\"o}nwall type arguments, but it is not immediately clear how to implement this as part of our approach. One could consider applying a general stochastic version of Gr{\"o}nwall's lemma, such as \cite[Thm.~1.2]{geiss}, but the lack in predictability and integrability of the integrators for classical upper bounds does not permit this in the present setting.

With the above assumptions, we obtain the following functional limit theorem for SDEs driven by CTRWs. The proof is given in Section \ref{sect:proofs_SDE}. 

\begin{theorem}[Convergence of SDEs driven by CTRWs] \label{thm:SDE_approx_result}
	Every subsequence of solutions $X^n$ to \eqref{eq:SDE_approx} has a further subsequence converging weakly on $(\D_{\R}[0, \infty), \, \dJ)$ to a solution $X$ to the SDE \eqref{eq:SDE}. In particular, if \eqref{eq:SDE} admits at most one solution, then it holds that $X^n \Rightarrow X$ on $(\D_{\R}[0, \infty), \, \dJ)$.
\end{theorem}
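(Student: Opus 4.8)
The plan is to establish the result by a tightness-plus-convergence argument in the spirit of the classical SDE stability theory of \cite{slominski_stability} and \cite[Sect.~5]{kurtzprotter}, but using the weak integral convergence machinery assembled earlier (specifically Theorem \ref{thm:result_int_conv_uncorrelated_CTRWs}), since the integrators here are uncorrelated uncoupled CTRWs and so admit good decompositions by Theorem \ref{thm:CTRW_has_GD}. First I would record that the joint driving processes $(\,\id_{[0,\infty)}, D^n, Z^n)$ converge weakly on $(\D_{\R^3}[0,\infty), \dJ)$ to $(\,\id_{[0,\infty)}, D^{-1}, Z_{D^{-1}})$; this follows from the joint convergence \eqref{eq:time_convergence_to_subordinator}--\eqref{eq:CTRW_M1_conv} (the first component is deterministic, so the product is harmless), and since $D^{-1}$ and $Z_{D^{-1}}$ have no common discontinuities with the deterministic time line, the limit lives in $\dJ$. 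Next I would show tightness of the solution sequence $X^n$ on $(\D_{\R}[0,\infty), \dJ)$: because $Z^n$ and $D^n$ are tight and have good decompositions, and the coefficients $b,\mu,\sigma$ satisfy the strictly sublinear growth bound \eqref{eq:sublinear_growth_condition_coeff_SDE}, one controls $|X^n|^*_T$ by a stopping-time argument — stopping when $|X^n|^*$ exceeds a level $R$, on which the integrands $b(t,D^n_t,X^n_{t-})$ etc.\ are bounded by $KR^p+C$ uniformly, so the stopped integrals are tight; strict sublinearity ($p<1$) is what prevents the bound from feeding back and blowing up, giving a uniform-in-$n$ tail estimate on $|X^n|^*_T$ and hence no loss of mass at the stopping time. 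Combined with tightness of the stochastic-integral terms (again via \eqref{eq:Mn_An_condition} and the boundedness of the integrands on compacts), one gets tightness of $(X^n, D^n, Z^n)$ jointly.

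Having tightness, I would pass to a weakly convergent subsequence and apply the Skorokhod representation theorem (or work directly with the weak-convergence statement) to identify the limit. The integrands in \eqref{eq:SDE_approx} are of the form $H^n_t = f(t, D^n_t, X^n_{t-})$ with $f \in \{b,\mu,\sigma\}$ continuous; given the joint convergence $(X^n, D^n, Z^n) \Rightarrow (X, D^{-1}, Z_{D^{-1}})$ one wants $H^n \Rightarrow f(\bullet, D^{-1}_\bullet, X_{\bullet-})$ jointly with the integrators. The continuous mapping theorem handles the composition provided one is careful about discontinuity points — here $D^{-1}$ is continuous, so the only jumps come from $X$, and $X$ and $Z_{D^{-1}}$ are adapted to a common filtration; one then needs to verify the \eqref{eq:oscillcond} condition for the pair $(H^n, Z^n)$ (and trivially for $(H^n, D^n)$, since $D^n$ converges to a continuous limit, so Proposition \ref{prop:3.3} applies there). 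For $(H^n, Z^n)$, since $Z_{D^{-1}}$ and $X$ can have common jumps (they jump at the same times), \eqref{eq:oscillcond} is not automatic; but $H^n$ is left-continuous at the jump times of $X^n$ — the integrand uses $X^n_{t-}$ — and this left-continuity is precisely the structural feature that makes $\hat w^T_\delta(H^n, Z^n)$ vanish, because an increment of $H^n$ straddling a jump of $Z^n$ is matched by the jump itself being captured on one side only. More carefully, one can invoke Remark \ref{rem:alternative_conditions_AVCI}: for uncorrelated uncoupled CTRWs with integrands adapted to the CTRW filtration \eqref{eq:filtration_CTRWs}, conditions (a) and (b) of \cite[Thm.~4.7]{andreasfabrice_theorypaper} hold, bypassing the need to check \eqref{eq:oscillcond} by hand. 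Then Theorem \ref{thm:result_int_conv_uncorrelated_CTRWs} (in the form of \cite[Thm.~4.8]{andreasfabrice_theorypaper}) delivers $\int_0^\bullet H^n_{s-}\diff Z^n_s \Rightarrow \int_0^\bullet f(s,D^{-1}_s,X_{s-})\diff Z_{D^{-1}_s}$, jointly with the other integrals, so the limit $X$ solves \eqref{eq:SDE}. Finally, if \eqref{eq:SDE} has a unique solution, every subsequential limit coincides, forcing full convergence $X^n \Rightarrow X$.

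The main obstacle I expect is the tightness step under merely strictly sublinear (rather than Lipschitz or linear) growth, combined with the poor integrability of the CTRW integrators. The standard Burkholder–Davis–Gundy / Grönwall route is unavailable: as the paper itself notes, the martingale parts $M^n$ of the CTRW decomposition only have $|\Delta M^n| \le 2a$ and lack the higher moments needed for $L^p$-Grönwall, and the finite-variation parts $N_{nt}\E[\ldots]$ are not predictable in a way that a stochastic Grönwall lemma like \cite[Thm.~1.2]{geiss} could absorb. So one must run the tightness argument at the level of convergence in probability / tightness of suprema directly: localize by the exit time $\tau^n_R$ of $|X^n|^*$ from $[0,R]$, observe that on $[0,\tau^n_R]$ the integrands are bounded by a constant depending on $R$ and $T$ only (this is where $p<1$ is not even needed for the localized estimate — it is needed to then let $R\to\infty$ without the bound compounding), use the good-decomposition tightness of the localized integrals to get an Aldous-type modulus-of-continuity estimate for $X^n$ up to $\tau^n_R$, and finally show $\liminf_n \Pro(\tau^n_R \le T) \to 0$ as $R\to\infty$ uniformly — the latter being exactly where strict sublinearity buys the a priori bound on $|X^n|^*_T$. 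Threading this localization carefully, so that the stopping times do not destroy adaptedness or the applicability of the integral-convergence theorem on the stopped processes, is the delicate part; everything downstream (identification of the limit, continuous mapping, passage through \eqref{eq:oscillcond} or its alternative) is comparatively routine given the groundwork already laid in Sections \ref{sect:GD_CTRW}--\ref{subsec:Final_results_for_CTRWs}.
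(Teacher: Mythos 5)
Your proposal is correct and follows essentially the same route as the paper: a uniform-in-$n$ stochastic bound on $|X^n|^*_T$ obtained by localization, Lenglart-type estimates on the good decomposition $Z^n=M^n+A^n$, and the strict sublinearity $p<1$ to close the a priori bound; then Aldous' criterion for J1 tightness; then identification of subsequential limits via the integral-convergence machinery, invoking the alternative conditions (a) and (b) of Remark \ref{rem:alternative_conditions_AVCI} (i.e.\ \cite[Thm.~4.8, Lem.~4.12]{andreasfabrice_theorypaper}) exactly as the paper does, rather than verifying \eqref{eq:oscillcond} directly; and finally uniqueness of weak limits. The obstacles you flag (unavailability of BDG/Gr\"onwall, the delicacy of the localization) are precisely the ones the paper's proof navigates, so the outline matches the paper's argument in both structure and substance.
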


\subsection{Stochastic delay differential equations driven by moving averages} \label{sect:SDDE}
For a given delay parameter $r>0$, consider the stochastic delay differential equation (SDDE)
\begin{equation}\label{eq:SDDE} \tag{SD}
	\begin{cases}
		\diff X_t = b(t,X_{t-r})_- \diff t + \sigma(t,X_{t-r})_- \diff Z_t \\
		X_s = \eta_s,\quad s\in[-r,0]
	\end{cases} 
\end{equation}
whose driver, $Z$, is a stable L\'evy process. Such a model can capture phenomena driven by anomalous diffusion in the space variable, where the rates of change for the drift and diffusion depend on the state of the system some units of time into the past.

When $Z$ is a Brownian motion, discrete approximation schemes for delayed systems such as \eqref{eq:SDDE} have been examined in \cite{SDDE}. We also note that \cite{Hottovy} studies convergence of stochastic integrals for SDDEs, again in the Brownian setting, but there the focus is on characterising the small delay limit. Here we are interested instead in the stability of \eqref{eq:SDDE} with respect to the driver $Z$ when this arises as the weak scaling limit of moving averages in the M1 topology. To this end, we shall rely on Theorem \ref{thm:result_int_conv_CTRWs_with_independence_cond}. One could also consider the corresponding systems driven by time-changed Lévy processes, and hence study the weak convergence in terms of general CTRWs. However, in order to keep the treatment succinct, we prefer to illustrate the procedure with the simpler moving averages as drivers.

Consider the SDDE from \eqref{eq:SDDE} with initial condition $\eta \in \D_{\R}[-r,0]$ and continuous $b,\sigma: \R_+ \times \R \to \R$ satisfying for any $T>0$ the boundedness condition
\begin{align}
	\sup{0\, \le \, t \, \le \, T} \; \sup{x \in \R} \; |b(t,x)| \vee |\sigma(t,x)| \; < \; \infty.  \label{eq:boundedness_cond_SDDEs}
\end{align}
Suppose that \eqref{eq:SDDE} has a unique solution $X$ and that $(X^n)_{n\ge 1}$ is a sequence of solutions to 
\begin{align} \begin{cases} \diff X^n_t \; &= \; b(t,X^n_{t-r})_-\, \diff t \; + \;c^{-1} \, \sigma(t, X^n_{t-r})_- \, \diff Z^n_t \\
		X^n_s \; &= \; \eta(s), \quad s \in [-r, 0] \end{cases} \label{eq:SDDE_approx} \tag{$\text{SD}_n$}\end{align}
where $c:=\sum_{k=0}^\mathcal{J} c_k$ and the $Z^n$ are moving averages as defined in \eqref{eq:MovAv}, with $c_j=0$ for all $j>\mathcal{J}$ and some $\mathcal{J}\ge 1$ as well as $\E[\theta_0]=0$ if $1<\alpha\le 2$ and the law of $\theta_0$ being symmetric around zero if $\alpha=1$. Then, we know from Section \ref{sect:CTRW_and_GD}, that $c^{-1} Z^n \Rightarrow Z$ on the M1 Skorokhod space. Notice that the $Z^n$ are pure jump processes with finitely many jumps on compact time intervals and therefore solutions to \eqref{eq:SDDE_approx} do not only exist but are also explicit. We are interested in whether the solutions to \eqref{eq:SDDE_approx} converge weakly to the unique solution of \eqref{eq:SDDE}, i.e. if $X^n \Rightarrow X$ on $(\D_{\R}[0,\infty), \dM)$.

\begin{theorem}[Convergence of SDDEs driven by moving averages] \label{thm:5.18}
	Every subsequence of solutions $X^n$ to \eqref{eq:SDDE_approx} has a further subsequence converging weakly on $(\D_{\R}[0, \infty), \, \dM)$ to a solution $X$ to the SDE \eqref{eq:SDDE}. In particular, if \eqref{eq:SDDE} admits at most one solution, then it holds that $X^n \Rightarrow X$ on $(\D_{\R}[0, \infty), \, \dM)$.
\end{theorem}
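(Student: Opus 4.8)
The plan is to realise \eqref{eq:SDDE_approx} and \eqref{eq:SDDE} as a step-by-step comparison on successive intervals of length $r$, and on each step to invoke Theorem \ref{thm:result_int_conv_CTRWs_with_independence_cond} together with a continuous-mapping argument. Concretely, partition $[0,\infty)$ into the intervals $I_\ell:=[(\ell-1)r, \ell r]$ for $\ell\ge 1$. On $I_1$ the delayed argument $X^n_{t-r}=\eta(t-r)$ is deterministic, so the integrand processes $H^{n}_t := \sigma(t,\eta(t-r))$ are in fact non-random (and equicontinuous in $t$ by continuity of $\sigma$), the drift contributes a tight, uniformly Lipschitz finite-variation term, and $\int_0^{\bullet} H^n_{s-}\,c^{-1}\diff Z^n_s$ is handled by Theorem \ref{thm:result_int_conv_CTRWs_with_independence_cond}: the independence condition \eqref{eq:5.17} is vacuous here since $H^n$ is deterministic, and $(H^n,Z^n)\Rightarrow(H,Z)$ on the M1 product space follows because $c^{-1}Z^n\Rightarrow Z$ and $H^n\to H$ uniformly. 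Hence $X^n\Rightarrow X$ on $(\D_\R[0,r],\dM)$. Proceeding inductively, suppose $X^n\Rightarrow X$ on $(\D_\R[0,\ell r],\dM)$; on $I_{\ell+1}$ the integrand is $H^n_t=\sigma(t, X^n_{(t-r)})$ with $t-r\in[(\ell-1)r,\ell r]$, so by the induction hypothesis, the continuity of $\sigma$, and continuity of the composition map (taking care with M1-continuity of $x\mapsto \sigma(\cdot,x(\cdot-r))$, which holds because $\sigma$ is bounded and continuous and composition with a fixed time shift is M1-continuous) we get joint M1 convergence $(H^n, Z^n)\Rightarrow (H,Z)$, the pair being adapted to \eqref{eq:filtration_CTRWs}. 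Applying Theorem \ref{thm:result_int_conv_CTRWs_with_independence_cond} again on $I_{\ell+1}$, and pasting the intervals, yields $X^n\Rightarrow X$ on $(\D_\R[0,(\ell+1)r],\dM)$, hence on $(\D_\R[0,\infty),\dM)$ by the standard projective-limit characterisation of M1 convergence.

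To make this rigorous I would actually argue via tightness plus subsequential identification, as the theorem statement suggests. \emph{Tightness:} since $\sigma,b$ are bounded by \eqref{eq:boundedness_cond_SDDEs}, the drift part of $X^n$ is uniformly Lipschitz on compacts, and the stochastic part is a bounded-integrand stochastic integral against $Z^n$; using that the $Z^n$ are tight on the M1 Skorokhod space (Section \ref{sect:CTRW_and_GD}) together with the bound on $\sigma$, one shows $(X^n)$ is tight on $(\D_\R[0,\infty),\dM)$ — the modulus-of-continuity control for M1 tightness transfers from $Z^n$ to $\int_0^\bullet H^n_{s-}\diff Z^n_s$ because $|H^n|$ is uniformly bounded, so the integral does not create new oscillations beyond those of $Z^n$. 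Passing to a weakly convergent subsequence, with limit $(X,Z)$ say (using Skorokhod coupling if convenient, though one must be careful that M1 convergence does not give a.s.\ uniform convergence), I then identify the limit: the integrand condition \eqref{eq:5.17} must be verified for $H^n_t=\sigma(t,X^n_{t-r})$, which holds because $X^n$ restricted to $[0,\ell r]$, hence $H^n$ restricted to $I_{\ell+1}$, is a functional of $\{\theta_j : j \le \lfloor n\ell r\rfloor\}$ and of the partition points, so $H^n_{\bullet\wedge L_k/n}$ on the relevant range is independent of the fresh innovations $\theta_{k},\dots,\theta_{k-(\mathcal J-1)}$ that drive the ``future'' remainder $V^{n,i}$ — this is exactly where the finite-delay structure ($c_j=0$ for $j>\mathcal J$) is essential. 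Then Theorem \ref{thm:result_int_conv_CTRWs_with_independence_cond} gives that along the subsequence $\int_0^\bullet H^n_{s-}\,c^{-1}\diff Z^n_s \Rightarrow \int_0^\bullet \sigma(s,X_{s-r})\,\diff Z_s$, while the drift terms converge by the continuous mapping theorem, so the limit $X$ solves \eqref{eq:SDDE}; uniqueness then upgrades subsequential convergence to full convergence.

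The main obstacle I anticipate is the joint M1 convergence of the pair $(H^n, Z^n)$ — equivalently the M1-continuity of the map that sends a path $x$ to the integrand path $t\mapsto \sigma(t, x(t-r))$ and its compatibility with the limiting integrator $Z$. Unlike the uniform topology, M1 convergence $X^n\to X$ does not imply $\sigma(\cdot,X^n_{\cdot-r})\to \sigma(\cdot,X_{\cdot-r})$ at matching times in general, and one needs the time shift by $r$ plus the boundedness and uniform continuity of $\sigma$ on compacts to control the parametrisation; moreover one must confirm that the limiting integrand $H=\sigma(\cdot,X_{\cdot-r})$ and integrator $Z$ have a.s.\ no common discontinuities (so that \eqref{eq:oscillcond} is available via Proposition \ref{prop:3.3}), which should follow because the jump times of the stable Lévy process $Z$ are totally inaccessible while the jumps of $X$ — hence of $H$ after the shift — occur at those same times but with a delay $r$, so generically $H$ is left-continuous at the jump times of $Z$. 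Handling this interplay cleanly, interval by interval, is the technical heart of the argument; the rest is bookkeeping with the boundedness assumption \eqref{eq:boundedness_cond_SDDEs} and the finite-delay structure.
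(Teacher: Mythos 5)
Your overall architecture (M1 tightness of the $X^n$, then subsequential identification of the limit via Theorem \ref{thm:result_int_conv_CTRWs_with_independence_cond}, then uniqueness to upgrade to full convergence) matches the paper, and your verification of the independence condition \eqref{eq:5.17} from the finite-delay structure ($H^n$ on $[0,L_k/n]$ being a functional of $\theta_j$, $j\le k-\mathcal J$, once $\mathcal J/n<r$) is exactly right. The concern you raise about joint M1 convergence of $(H^n,Z^n)$ is real but is circumvented in the paper via Remark \ref{rem:weaker_integrand_conditions}: one only needs finite-dimensional convergence of the integrands along a co-countable set (which follows from $X^n\Rightarrow X$ in M1 and continuity of $\sigma$) plus tightness of $|H^n|^*_T$ and of the number of $\delta$-increments, so no M1-continuity of the composition map is required; likewise \eqref{eq:oscillcond} is replaced by the alternative conditions of \cite[Thm.~4.7]{andreasfabrice_theorypaper} using the uncoupled structure.

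The genuine gap is in your tightness argument. You assert that M1 modulus-of-continuity control ``transfers from $Z^n$ to $\int_0^\bullet H^n_{s-}\,\diff Z^n_s$ because $|H^n|$ is uniformly bounded, so the integral does not create new oscillations beyond those of $Z^n$.'' This is false for strictly M1 convergent moving averages with $1\le\alpha\le 2$, and it is precisely the failure mode the paper is built around: Section \ref{subsec:Correlated_CTRWs_do_not_blend_in} (citing \cite[Prop.~4.5]{andreasfabrice_theorypaper}) shows that bounded adapted integrands --- even ones converging to zero uniformly --- can exploit the autoregressive structure of $Z^n$ to pick up its (non-tight) total variation and make the integrals explode. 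Boundedness of the integrand alone therefore cannot yield M1 tightness of $\Xi^n=\int_0^\bullet\sigma(s,X^n_{s-r})_-\,\diff Z^n_s$; what saves the day is the delay. Because $r>\mathcal J/n$ for large $n$, the summand $\sigma(j/n,X^n_{j/n-r})_-$ is measurable with respect to $\sigma(\theta_{j-\mathcal J-1},\dots,\theta_{-\mathcal J})$ and hence independent of the innovations $\theta_j,\dots,\theta_{j-\mathcal J}$ entering the $j$-th increment of $Z^n$; combined with $\E[\theta_0]=0$ (or symmetry when $\alpha=1$) this makes $i\mapsto\Xi^n_{(m+i)/n}-\Xi^n_{m/n}$ a discrete-time martingale. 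The paper then applies Doob's maximal inequality and the von Bahr--Esseen inequality to obtain the moment bound of Lemma \ref{lem:auxiliary_lemma_proof_SDDE}, and feeds this into the Avram--Taqqu oscillation criterion \cite[Cor.~1]{avram2} for the M1 modulus $w'$ to conclude tightness. Without this martingale/moment step your proof does not close.
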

One can easily extend the previous result to SDDEs of the type
\begin{align*} \begin{cases} \diff X_t \; &= \; b(t,X_{t-r})_-\, \diff t \; + \; [\sigma(t, X_{t-r})_- \, + \, \tilde{\sigma}(t, X_{[t-r, t]})_-] \, \diff Z_t \\
		X_u \; &= \; \eta(u), \quad u \in [-r, 0] \end{cases} \quad \tag{$\operatorname{\tilde{SD}}$} \label{eq:ext_SDDE}\end{align*}
where $Z$, $b$, $\sigma$ and $\eta$ are defined as before in \eqref{eq:SDDE} and $X_{[t-r, t]}$ denotes the path segment $(X_s)_{t-r \, \le \, s \, \le \, t}$. Here $\tilde{\sigma}$ is of the form
$$ \tilde\sigma(t, \, X_{[t-r,t]}) \; = \; \int_{t-r}^t \, \Phi(t,s, X_s) \; \diff s$$
where $\Phi: [0,\infty) \times [-r,\infty) \times \R \to \R$ is such that for every $T>0$
$$ \sup{t \, \in \, [0,T]} \; \sup{s \, \in \, [t-r, t]} \; \sup{x \, \in \, \R} \; |\Phi(t,s,x)| \; < \; \infty.$$
We let $\Phi$ be continuous in the third component (i.e., $x\mapsto \Phi(t,s,x)$ is continuous for fixed $t,s$) and Lipschitz on compacts in the first component: for all $T>0$ there is $L_T>0$ such that 
$$ |\Phi(t,s,x) - \Phi(\tilde{t},s,x)| \; \le \; L_T \; |t- \tilde{t}|$$
for any $0\le t\le \tilde{t} \le T$, $x \in \R$ and $ \tilde t -r \le s \le t $. Obviously, the map $x\mapsto \tilde\sigma(\bullet,x_{[\bullet-r,\bullet]})$ is continuous from $ (\D_{\R}[0,\infty),\dM)$ into $(\C_{\R}[0,\infty), |\cdot|^*_\infty)$. Indeed, relative compactness follows from the Arzelà-Ascoli theorem, and pointwise convergence can easily be shown by dominated convergence, the continuity of $\Phi$ in the third component and the fact that $x_n \to x$ in $(\D_{\R}[0,\infty),\dM)$ implies in particular $x_n(s)\to x(s)$ for all but countably many $s\in [0,\infty)$. An example of such a $\tilde{\sigma}$ could be the convolution with a Lipschitz kernel $\rho:[-r,0]\to \R$, that is $\Phi(t,s,X_s):= \rho(t-s)\, f(X_s)$ for $t-r\le s \le t$, where $f$ is bounded and continuous.

Denote by ($\operatorname{\tilde{SD}_n}$) the approximation SDDEs to \eqref{eq:ext_SDDE} in analogy to how \eqref{eq:SDDE_approx} relates to \eqref{eq:SDDE}. Then we arrive at the following result.

\begin{cor}[Another class of SDDEs driven by moving averages]\label{cor:ext_SDDE_conv} 
	Theorem \ref{thm:5.18} holds true with \eqref{eq:SDDE_approx} replaced by ($\operatorname{\tilde{SD}_n}$), and \eqref{eq:SDDE} by \eqref{eq:ext_SDDE}.
\end{cor}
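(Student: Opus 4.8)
The plan is to reduce the result to Theorem \ref{thm:5.18} by treating the extra term $\tilde\sigma(t, X_{[t-r,t]})_-\diff Z_t$ as a perturbation that does not disrupt the tightness-plus-identification argument already carried out for \eqref{eq:SDDE_approx}. First I would set up the solution map: since the $Z^n$ are pure jump processes with finitely many jumps on compacts, solutions $X^n$ to $(\operatorname{\tilde{SD}_n})$ again exist and are explicit, constructed jump by jump, with the delay $r>0$ ensuring that at each step the right-hand side depends only on the already-constructed past of the path (note $X_{[t-r,t]}$ involves only values at times $\le t$, and the coefficient is evaluated at $t-$, so no anticipation is introduced). The key structural observation is the one spelled out in the excerpt: the map $x\mapsto \tilde\sigma(\bullet, x_{[\bullet-r,\bullet]})$ is continuous from $(\D_{\R}[0,\infty),\dM)$ into $(\C_{\R}[0,\infty), |\cdot|^*_\infty)$, because $\tilde\sigma(t,x_{[t-r,t]})=\int_{t-r}^t \Phi(t,s,x_s)\diff s$ is Lipschitz in $t$ (uniformly on compacts, using the Lipschitz bound on $\Phi$ in its first argument, the uniform bound on $\Phi$, and a change-of-variables estimate for the moving endpoints) and hence equicontinuous, so Arzel\`a--Ascoli gives relative compactness, while pointwise convergence follows from dominated convergence using continuity of $\Phi$ in $x$ and the fact that $\dM$-convergence forces pointwise convergence off a countable set.

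Next I would run the same two-step scheme as in the proof of Theorem \ref{thm:5.18}. \emph{Tightness:} Write $X^n_t = \eta(0) + \int_0^t b(s,X^n_{s-r})\diff s + c^{-1}\int_0^\bullet[\sigma(s,X^n_{s-r})_- + \tilde\sigma(s, X^n_{[s-r,s]})_-]\diff Z^n_s$. The drift term and the $\sigma$ term are handled exactly as before; for the $\tilde\sigma$ term, the integrand is bounded (by the uniform bound on $\Phi$ times $r$) and, crucially, of bounded variation in $t$ with a deterministic bound on its total variation on $[0,T]$ — this is again the Lipschitz-in-$t$ property. Boundedness plus tight (indeed deterministically bounded) total variation of the integrand process is precisely what the $\operatorname{GD\,mod\,CI}$ machinery of Section \ref{subsec:Control_through_independence} tolerates; in particular one still verifies the independence condition \eqref{eq:5.17} for the relevant integrand, since $X^n_{[s-r,s]}$ for $s\le L_k/n$ depends only on innovations $\theta_j$ with $j$ up to roughly $\lfloor n s\rfloor \le \lfloor L_k\rfloor$, and the delay $r>0$ keeps the dependence strictly in the past of the block of innovations $\theta_{k},\dots,\theta_{k-(\mathcal J-1)}$ that $V^{n,i}$ reads off. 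So Theorem \ref{thm:result_int_conv_CTRWs_with_independence_cond} applies to the new integrand too, giving joint weak M1 convergence of $(X^n, \int_0^\bullet H^n_{s-}\diff Z^n_s)$ along subsequences once tightness of $X^n$ is in hand; tightness of $X^n$ itself follows by the usual bootstrap, combining the M1 tightness of $c^{-1}Z^n$, the boundedness of all coefficients, and the convergence-of-integrals output.

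\emph{Identification of the limit:} Along a convergent subsequence $X^{n_k}\Rightarrow \bar X$, I would use the continuous-mapping theorem for the continuous maps $x\mapsto b(\bullet,x_{\bullet-r})$, $x\mapsto\sigma(\bullet,x_{\bullet-r})$ and $x\mapsto\tilde\sigma(\bullet,x_{[\bullet-r,\bullet]})$ (continuity into the uniform topology for the last, as noted, and into the M1 topology for the first two off the at-most-countable discontinuity set of $\bar X$, which carries no mass for the limit), together with the joint convergence $(c^{-1}Z^n, \text{integrands}) \Rightarrow (Z, \text{limit integrands})$ and the stochastic-integral convergence theorem, to pass to the limit in the integral equation and conclude that $\bar X$ solves \eqref{eq:ext_SDDE}. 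If \eqref{eq:ext_SDDE} has at most one solution, all subsequential limits coincide and the full sequence converges.

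The main obstacle is the identification step, specifically ensuring that the extra term $\tilde\sigma$ does not create a common discontinuity between the limiting integrand and the limiting integrator $Z$ that would obstruct M1 convergence of the stochastic integral or the continuous-mapping argument. This is exactly why it matters that $\tilde\sigma(\bullet, x_{[\bullet-r,\bullet]})$ lands in $\C_{\R}[0,\infty)$ rather than merely $\D_{\R}[0,\infty)$: the $\tilde\sigma$ contribution to the integrand is continuous, so it introduces no new jumps and, in combination with the delay $r>0$ that keeps it strictly predictable with respect to the innovation blocks, it slots cleanly into the \eqref{eq:oscillcond} and independence hypotheses of Theorem \ref{thm:result_int_conv_CTRWs_with_independence_cond} without any further work beyond the uniform and total-variation bounds already granted by the assumptions on $\Phi$.
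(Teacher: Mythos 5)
Your overall architecture (tightness plus identification via continuity of $x\mapsto\tilde\sigma(\bullet,x_{[\bullet-r,\bullet]})$ into $(\C_{\R}[0,\infty),|\cdot|^*_\infty)$ and uniqueness of weak limits) matches the paper, but there is a genuine gap at the central step: you claim that the new integrand $\Gamma^n_t:=\tilde\sigma(t,X^n_{[t-r,t]})$ still satisfies the independence condition \eqref{eq:5.17}, "since the delay $r>0$ keeps the dependence strictly in the past of the block of innovations $\theta_k,\dots,\theta_{k-(\mathcal J-1)}$." This is false. Unlike $\sigma(s,X^n_{s-r})$, whose argument is lagged by $r$ and hence (for $\mathcal J/n<r$) measurable with respect to $\sigma(\theta_1,\dots,\theta_{k-\mathcal J})$ when $s\le L_k/n$, the path segment $X^n_{[s-r,s]}$ extends up to the \emph{present} time $s$. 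At $s$ near $L_k/n$ the value $X^n_{s}$ has already absorbed $\theta_{k-1},\theta_{k-2},\dots$, which lie inside the forbidden block whenever $\mathcal J\ge 2$. So $H^n_{\bullet\wedge L_k/n}\indep\sigma(\theta_{k-\ell}:\ell=0,\dots,k\wedge(\mathcal J-1))$ fails for this integrand, and Theorem \ref{thm:result_int_conv_CTRWs_with_independence_cond} cannot be applied to the $\tilde\sigma$-term. You also misattribute the role of the integrand's bounded variation: boundedness and controlled total variation of the integrand is what the $\operatorname{GD\,mod\,CA}$ framework of Section \ref{subsec:Direct_control_of_variation} exploits, not the $\operatorname{GD\,mod\,CI}$ machinery, which is purely about independence from the future of the remainder.

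The correct route, and the one the paper takes, is to split the diffusion integrand into two pieces and treat them with \emph{different} convergence theorems: the lagged piece $\sigma(\bullet,X^n_{\bullet-r})$ via the independence result (Theorem \ref{thm:result_int_conv_CTRWs_with_independence_cond}, exactly as in Theorem \ref{thm:5.18}), and the new piece $\Gamma^n$ via the Lipschitz-integrand result (Theorem \ref{thm:result_int_conv_CTRWs_lipschitz_integrands}): since $\Phi$ is bounded and Lipschitz in its first argument on compacts, $\Gamma^n$ is Lipschitz on $[0,T]$ with a constant \emph{independent of $n$}, which trivially satisfies the $C_Tn^\gamma$ requirement for any $\gamma\in(0,\beta/\alpha)$. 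The required \eqref{eq:oscillcond} conditions for the pair $(\Gamma^n,c^{-1}Z^n)$ come for free because the limit of the $\Gamma^n$ is a.s.\ continuous (equicontinuity plus Arzel\`a--Ascoli). For tightness of the new integral term $\Lambda^n=c^{-1}\int_0^\bullet\Gamma^n_{s-}\diff Z^n_s$, the paper's summation-by-parts identity $c\Lambda^n_t=\Gamma^n_{\lfloor nt\rfloor/n}Z^n_{\lfloor nt\rfloor/n}+\sum_{k}(\Gamma^n_{(k-1)/n}-\Gamma^n_{k/n})Z^n_{(k-1)/n}$ reduces everything to the tightness of $Z^n$ and the uniform Lipschitz bound on $\Gamma^n$, which is cleaner than the "usual bootstrap" you gesture at. With these repairs your identification step goes through as written.
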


\section{Proofs}
\label{sec:proofs}

\subsection{Proofs pertaining to Section \ref{subsec:convergence_uncorrelated_CTRWs}}

\begin{proof}[Proof of Proposition \ref{prop:uncorr_CTRW_martingales}]
	Let $n\ge 1$ and $0\le s \le t$. First, we establish the integrability of $N_{nt}$. Note that
	$$ 0 \le  N_{nt} \le  \tilde{N}_b(nt)  :=  \max{} \Bigl\{ m \ge 0 \; : \; \sum_{k=1}^m  b  \ind_{\{J_k > b\}} \, \le \, nt \Bigr\}\; = \; \sum_{m=1}^\infty \, \ind_{\left\{ \sum_{k=1}^m \, \ind_{\{J_k > b\}} \le  \frac{nt}{b}\right\}}$$
	for any $b\ge 0$.  Choose $b> nt$ and note that then 
	\begin{align*}
		\E[\tilde{N}_b(nt)] \le  \sum_{m=1}^\infty \, \Pro \biggl( \sum_{k=1}^m \, \ind_{\{J_k > b\}} \, < \, 1 \biggr) = \sum_{m=1}^\infty  \Pro \Bigl( \intersection{k=1}{m}  \{J_k \le b\} \Bigr) = \sum_{m=1}^\infty  \Pro(J_1 \le b)^m  < \infty
	\end{align*}
	since $\Pro(J_1\le b)<1$ (see e.g.~\cite{feller}, \cite[Thm.~4.5.2]{whitt}). Hence, $0  \le \E[N_{nt}]  \le \E[\tilde{N}_b(nt)]  < \infty$.
	Therefore, we obtain 
	$$ \E [|M^n_t|] \le  \E \biggl[\,  \sum_{k=1}^{N_{nt}}  |\zeta^n_k| \ind_{\{|\zeta^n_k|\le a\}} \, \biggr] +  \E[N_{nt}]\; \E[|\zeta^n_1| \ind_{\{|\zeta^n_1|\le a\}}] \le  2a \E[N_{nt}]  < \infty.$$
	Clearly, $M^n$ is adapted to the filtration $(\mathcal{F}^n_t)_{t\ge 0}$ and $\E [ M^n_t- M^n_s \, | \, \mathcal{F}^n_s ] $ equals
	\begin{align*}
		&  \E \biggl[  \,\sum_{k=N_{ns}+1}^{N_{nt}} \!\!\! \zeta^n_k \ind_{\{|\zeta^n_k|\le a\}} \;  \Big|  \; \mathcal{F}^n_s \biggr] -  \E[N_{nt}-N_{ns} \, | \, \mathcal{F}^n_s ] \E[\zeta^n_1 \ind_{\{|\zeta^n_1|\le a\}}] \\
		&=  \; \E \biggl[  \,\sum_{k=N_{ns}+1}^{N_{nt}} \, \zeta^n_k \ind_{\{|\zeta^n_k|\le a\}}  \biggr]\; - \; \E[N_{nt}-N_{ns}  ]\; \E[\zeta^n_1 \ind_{\{|\zeta^n_1|\le a\}}], 
	\end{align*}
	as $\sum_{k=N_{ns}+1}^{N_{nt}} \, \zeta^n_k \ind_{\{|\zeta^n_k|\le a\}}=\sum_{k=1}^{N_{nt}-N_{ns}} \, \zeta^n_{N_{ns}+k} \ind_{\{|\zeta^n_{N_{ns}+k}|\le a\}}$ and $N_{nt}-N_{ns}$ are both independent of $\mathcal{F}^n_s$ by definition of the filtration and the pairwise independence of the pairs $(\zeta^n_k,J_k)$. Applying Wald's identity \cite[Thm.~4.1.5]{durrett} (where the required discrete filtration is $\{\sigma((\zeta^n_k,J_k) : 1\le k \le m)\}_{m\ge 1}$ and $N_{nt}-N_{ns}$ the stopping time) yields the claim. The bound on the jumps of the $M^n$ follows from $|\Delta N_{nt}|\le 1$ and the boundedness of the $\zeta^n_k \ind_{\{|\zeta^n_k| \le a\}}$.
\end{proof}

Towards a proof of Proposition \ref{prop:aux_result_CTRW_has_GD}, we need some auxiliary results. Recall that $h(x)=x\ind_{|x|\le a} + \operatorname{sgn}(x)a\ind_{|x|> a}$ where we had fixed $a\ge 1$ before \eqref{eq:uncorrelated_CTRW_decomp}. As in the sequel, we will make use of characteristics of semimartingales, we refer to \cite[Sec.~II.2--II.4]{shiryaev} for a detailed introduction to this concept. We shall emphasise, however, that if one accepts Theorem \ref{thm:5.6} and the fact that the function $\psi_{b,c,\nu}$ in \eqref{eq:char_func_infdiv} uniquely defines the triple $(b,c,\nu)$, known as the characteristics of $\mu$, the proofs can be comprehended without any further knowledge of the concept of characteristics.

\begin{theorem}[A partial version of {\cite[Thm.~VII.2.9]{shiryaev}}] \label{thm:5.6} Let $(\mu_n)_{n\ge 0}$ be infinitely divisible probability measures on $\R$ with characteristics $(b_n, c_n, \nu_n)$---i.e., the characteristic function of $\mu_n$ is of the form $\varphi_{\mu_n}(u)= \exp(\psi_{b_n, c_n,\nu_n}(u))$---with 
	\begin{align} \psi_{b_n, c_n,\nu_n}(u) \; = \; iub_n \, - \, \frac12c_n u^2 \, + \, \int_{\R \setminus \{0\}} \, e^{iux} -1 - iuh(x)\; \nu_n(\diff x) \label{eq:char_func_infdiv}\end{align}
	where $b_n \in \R$, $c_n\ge 0$ and $\nu_n$ a measure on $\R$ such that $\nu_n(\{0\})=0$ and $\int x^2\wedge 1 \, \nu_n(\diff x) < \infty$. If $\mu_n \Rightarrow \mu_0$, then $b_n \to b_0$, $c_n\to c_0$ and $\langle g ,\nu_n\rangle \to \langle g ,\nu_0 \rangle$ for all continuous, bounded functions $g$ satisfying $f(x)= \oland(x^2)$ for $x\to 0$.
\end{theorem}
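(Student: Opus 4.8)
The plan is to derive Theorem~\ref{thm:5.6} from the classical description of weak convergence of infinitely divisible laws via their L\'evy--Khintchine triples, of which \cite[Thm.~VII.2.9]{shiryaev} is one formulation; I would present a largely self-contained argument so that each ingredient is visible, and one may in any case quote that reference directly after reconciling the truncation functions. The starting point is L\'evy's continuity theorem: $\mu_n \Rightarrow \mu_0$ is equivalent to $\varphi_{\mu_n} \to \varphi_{\mu_0}$ uniformly on compacts. Since an infinitely divisible characteristic function never vanishes, the distinguished logarithm depends continuously on the function, so $\psi_{b_n,c_n,\nu_n} \to \psi_{b_0,c_0,\nu_0}$ uniformly on compacts; separating real and imaginary parts of \eqref{eq:char_func_infdiv} isolates the Gaussian-plus-jump part $-\operatorname{Re}\psi_n(u) = \tfrac12 c_n u^2 + \int (1-\cos ux)\,\nu_n(\diff x) \ge 0$ from the drift term carried by $\operatorname{Im}\psi_n$.

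Next I would establish the uniform bound $\sup{n}\bigl(c_n + \int (x^2 \wedge 1)\,\nu_n(\diff x)\bigr) < \infty$. Continuity and non-vanishing of $\varphi_{\mu_0}$ give $\inf{|u| \le 1}|\varphi_{\mu_0}(u)| > 0$, hence $\sup{n}\,\sup{|u| \le 1}\bigl(-\operatorname{Re}\psi_n(u)\bigr) < \infty$; integrating $-\operatorname{Re}\psi_n$ over $u \in [0,1]$ and using the elementary inequality $1 - \tfrac{\sin x}{x} = \int_0^1 (1-\cos ux)\,\diff u \ge \kappa\,(x^2 \wedge 1)$ for a suitable $\kappa > 0$ then gives the bound. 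With this in hand I would pass to the finite measures $G_n := c_n\delta_0 + \tfrac{x^2}{1+x^2}\,\nu_n$ on the compactification $\overline{\R} = [-\infty,+\infty]$, which have uniformly bounded total mass, and extract a weakly convergent subsequence $G_{n_k} \Rightarrow G$.

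The heart of the argument is to identify $G$, namely to show $G(\{\pm\infty\}) = 0$, $G|_{\R} = c_0\delta_0 + \tfrac{x^2}{1+x^2}\,\nu_0$, and $b_{n_k} \to b_0$; independence of the limit from the subsequence then follows from uniqueness of the triple (which the excerpt permits me to assume), so that the full sequences converge. Writing $\psi_n$ in Gnedenko--Kolmogorov form in terms of $G_n$, the obstruction is that the integrands occurring in $\operatorname{Re}\psi_n$ and $\operatorname{Im}\psi_n$ --- for instance $x \mapsto \tfrac{(\cos ux - 1)(1+x^2)}{x^2}$, which is bounded and continuous on $\R$ with limit $-u^2/2$ at $0$ --- oscillate as $x \to \pm\infty$ and so do not extend continuously to $\overline{\R}$; weak convergence on $\overline{\R}$ therefore does not by itself let one pass to the limit, and what is genuinely needed is that no mass of $\tfrac{x^2}{1+x^2}\nu_n$ escapes to $\pm\infty$, i.e.\ $\sup{n}\,G_n(\{|x|>R\}) \to 0$ as $R \to \infty$. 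I would deduce this from $\mu_n \Rightarrow \mu_0$ through a standard factorisation of $\mu_n$ into the compound-Poisson factor carrying $\nu_n|_{\{|x|>1\}}$ and the complementary infinitely divisible factor, showing that the former stays tight. Granted no escape of mass, one splits the defining integrals at a continuity radius of $G_0$: on the bounded part the integrands are genuinely continuous so weak convergence applies, while the tails are uniformly small; this pins down $G|_{\R}$ and, via $\operatorname{Im}\psi_n$ together with the fact that $h$ differs from $x/(1+x^2)$ by a bounded function that extends continuously to $\overline{\R}$, also gives $b_{n_k} \to b_0$.

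Finally, for a bounded continuous $g$ with $g(x) = \oland(x^2)$ as $x \to 0$ (so $g(0) = 0$), the function $\tilde g(x) := g(x)\tfrac{1+x^2}{x^2}$ extends continuously to $\R$ with $\tilde g(0) = 0$ and is bounded, so $\langle g, \nu_n\rangle = \langle \tilde g, G_n\rangle$ upon setting $\tilde g(\pm\infty) := 0$ (the atom at $0$ contributing nothing). Since $G_n \Rightarrow G_0 := c_0\delta_0 + \tfrac{x^2}{1+x^2}\,\nu_0$ weakly on $\overline{\R}$ with $G_0(\{\pm\infty\}) = 0$, and $\tilde g$ is bounded with its only possible discontinuities at the $G_0$-null points $\pm\infty$, the portmanteau theorem yields $\langle g, \nu_n\rangle \to \langle g, \nu_0\rangle$, which completes the proof. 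I expect the main obstacle to be precisely the step ruling out escape of mass to $\pm\infty$; everything else is bookkeeping with the L\'evy--Khintchine representation, and is in any case subsumed in \cite[Thm.~VII.2.9]{shiryaev}.
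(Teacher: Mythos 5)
The paper does not actually prove this statement: as the bracketed title indicates, it is imported as a partial restatement of \cite[Thm.~VII.2.9]{shiryaev} and used as a black box in the proof of Proposition \ref{prop:aux_result_CTRW_has_GD}. Your proposal instead reconstructs the classical proof of that cited theorem, and the outline is correct. L\'evy's continuity theorem together with the non-vanishing of infinitely divisible characteristic functions gives locally uniform convergence of the distinguished logarithms; averaging $-\operatorname{Re}\psi_n$ over $u\in[0,1]$ against the inequality $\int_0^1(1-\cos ux)\,\diff u\ \ge\ \kappa\,(x^2\wedge 1)$ yields the uniform bound on $c_n+\int (x^2\wedge 1)\,\nu_n(\diff x)$; compactifying via the Khintchine measures $G_n=c_n\delta_0+\tfrac{x^2}{1+x^2}\nu_n$, ruling out escape of mass to $\pm\infty$, and invoking uniqueness of the triple identifies the limit along every subsequence; and the portmanteau step with $\tilde g(x)=g(x)(1+x^2)/x^2$ is exactly how one extracts $\langle g,\nu_n\rangle\to\langle g,\nu_0\rangle$ for $g$ bounded, continuous and $\oland(x^2)$ at the origin. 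You correctly single out the no-escape-of-mass step as the only delicate point; besides the compound-Poisson factorisation you sketch (which requires a shift-tightness argument for the factors, most cleanly handled by symmetrising $\mu_n$ first), it can be obtained directly from the truncation estimate $\nu_n(\{|x|>2/u_0\})\le 2u_0^{-1}\int_0^{u_0}(-\operatorname{Re}\psi_n(u))\,\diff u$ combined with the uniform smallness of $\psi_n$ near $u=0$. In short, your argument is sound and supplies a proof that the paper deliberately outsources to the literature; for the purposes of this paper one would simply cite the reference, as the authors do.
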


The next lemma formalises the fact that the random variables $Z^n:=\zeta^n_1- \E[h(\zeta^n_1)]$ possess very beneficial asymptotic properties. 
\begin{lem}\ \label{lem:5.7} Let $Z^n= \zeta^n_1- \E[h(\zeta^n_1)]$ and denote $\varphi_{Z^n}(u)=\E[ e^{iuZ^n}]$ the characteristic function of $Z^n$. Then, it holds that
	\begin{enumerate}[(a)]
		\item $\operatorname{sup}_{n\ge 1} n^\beta \Pro(|Z^n|> c)< \infty$ for all $c>0$;
		\item $n^\beta \E[h(Z^n)] \to 0$ as $n\to \infty$.
		\item for all $\gamma>0$,  $ \operatorname{sup}_{|u|<\gamma} \, n^\beta | \varphi_{Z^n}(u) -1| \to 0$ as $n\to \infty$ and
	\end{enumerate}
\end{lem}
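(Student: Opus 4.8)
The plan is to reduce everything to known tail estimates for the normalized partial sums and to Theorem \ref{thm:5.6}. Recall that $\zeta^n_1 = n^{-\beta/\alpha}\theta_1$ in the uncorrelated case, and that $n^\beta \Pro(|\zeta^n_1| > c) = n^\beta \Pro(|\theta_1| > c\, n^{\beta/\alpha}) \to c^{-\alpha}$ by the regular variation $\Pro(|\theta_1| > x) = \Oland(x^{-\alpha})$ cited before Proposition \ref{prop:aux_result_CTRW_has_GD}; more precisely, since $\theta_1$ lies in the normal domain of attraction of a strictly $\alpha$-stable law, $x^\alpha \Pro(|\theta_1| > x)$ converges as $x\to\infty$ (with possibly asymmetric two-sided tail constants). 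Writing $Z^n = \zeta^n_1 - \E[h(\zeta^n_1)]$ and noting that $b_n := \E[h(\zeta^n_1)] \to 0$ (indeed $n^\beta b_n \to b$ by Proposition \ref{prop:aux_result_CTRW_has_GD}, so in particular $b_n\to 0$), the shift by $b_n$ is asymptotically negligible. For part (a), I would use $\{|Z^n| > c\} \subseteq \{|\zeta^n_1| > c - |b_n|\}$ for $n$ large enough that $|b_n| < c/2$, so $n^\beta \Pro(|Z^n| > c) \le n^\beta \Pro(|\zeta^n_1| > c/2)$, which is bounded by the tail asymptotics above; the finitely many small $n$ contribute a finite constant.

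For part (b), write $h(Z^n) = Z^n \ind_{\{|Z^n|\le a\}} + \sign(Z^n) a\, \ind_{\{|Z^n| > a\}}$ and split the expectation accordingly. The large-value part satisfies $|a\,\E[\sign(Z^n)\ind_{\{|Z^n|>a\}}]| \le a\, \Pro(|Z^n| > a)$, and $n^\beta \Pro(|Z^n| > a) \to a^{-\alpha}$ — wait, this is $O(1)$, not $o(1)$, so a naive bound is not enough: one must exploit a near-cancellation. The cleaner route is to relate $\E[h(Z^n)]$ directly to $\E[h(\zeta^n_1)] - b_n$ up to a correction. Since $h(x-b_n) - h(x)$ is bounded by $|b_n|$ in absolute value and supported (as a nonzero quantity beyond this trivial bound) near $|x| \approx a$, one gets $|\E[h(Z^n)] - (\E[h(\zeta^n_1)] - b_n)| \le |b_n| \cdot \Pro(|\zeta^n_1| \in [a - |b_n|, a + |b_n|]) + (\text{boundary terms of size } |b_n|\,\Pro(|\zeta^n_1|>a/2))$. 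But $\E[h(\zeta^n_1)] - b_n = 0$ by definition of $b_n$; so $n^\beta|\E[h(Z^n)]| \le n^\beta |b_n| \cdot O(n^{-\beta}) = |b_n| \cdot O(1) \to 0$, using $n^\beta b_n \to b$ bounded and $|b_n|\to 0$ together with $n^\beta \Pro(|\zeta^n_1| > a/2) = O(1)$. This is the step I expect to require the most care — getting the $o(1)$ rather than merely $O(1)$ forces one to track that the only discrepancy between $h(Z^n)$ and a mean-zero quantity is a shift of size $|b_n|\to 0$ against a tail mass of order $n^{-\beta}$.

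For part (c), I would Taylor expand the characteristic function: $\varphi_{Z^n}(u) - 1 = \E[e^{iuZ^n} - 1]$. Split at level $a$: on $\{|Z^n| \le a\}$ use $|e^{iux} - 1 - iux| \le \tfrac12 u^2 x^2$, while on $\{|Z^n| > a\}$ use $|e^{iux} - 1| \le 2$. Thus
$$
|\varphi_{Z^n}(u) - 1| \; \le \; |u|\,\bigl|\E[Z^n \ind_{\{|Z^n|\le a\}}]\bigr| \; + \; \tfrac12 u^2 \,\E[|Z^n|^2 \ind_{\{|Z^n|\le a\}}] \; + \; 2\,\Pro(|Z^n| > a).
$$
The third term is $O(n^{-\beta})$ uniformly in $u$ by part (a). For the first term, note $\E[Z^n \ind_{\{|Z^n|\le a\}}] = \E[h(Z^n)] - a\,\E[\sign(Z^n)\ind_{\{|Z^n|>a\}}]$, and both pieces are $o(n^{-\beta})$ — the first by part (b), the second again requires the near-cancellation argument of part (b) applied to the truncation at $a$ (or one absorbs it: in fact $\E[Z^n\ind_{\{|Z^n|\le a\}}]$ is itself more directly handled by the same shift argument, since $\E[\zeta^n_1 \ind_{\{|\zeta^n_1|\le a\}}] - b_n = \E[\sign(\zeta^n_1)a\ind_{\{|\zeta^n_1|>a\}}] \cdot(-1) \cdot$ ... — concretely $b_n = \E[h(\zeta^n_1)] = \E[\zeta^n_1\ind_{\{|\zeta^n_1|\le a\}}] + a\E[\sign(\zeta^n_1)\ind_{\{|\zeta^n_1|>a\}}]$, so $\E[\zeta^n_1\ind_{\{|\zeta^n_1|\le a\}}] = b_n - O(n^{-\beta})$ is $O(n^{-\beta})$, and the same holds for $Z^n$ up to the $|b_n|$-shift correction, giving $o(n^{-\beta})$). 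For the second term I would show $n^\beta \E[|Z^n|^2 \ind_{\{|Z^n| \le a\}}] = O(1)$: by the layer-cake formula $\E[|Z^n|^2 \ind_{\{|Z^n|\le a\}}] = \int_0^{a^2} \Pro(|Z^n|^2 > t, |Z^n|\le a)\,dt \le \int_0^{a^2} \Pro(|Z^n| > \sqrt t)\wedge\Pro(|Z^n|\le a)\,dt$, and $n^\beta\Pro(|Z^n| > \sqrt t) $ is bounded by $C t^{-\alpha/2}$ uniformly for $t$ bounded away from $0$ via part (a)-type estimates, while near $t = 0$ one uses the trivial bound $\Pro(\cdot)\le 1$; since $\alpha < 2$, $\int_0^{a^2} (1 \wedge C t^{-\alpha/2})\,dt < \infty$. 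Hence the second term is $\tfrac12 u^2 \cdot O(n^{-\beta})$, uniformly bounded for $|u| < \gamma$. Combining, $\sup_{|u|<\gamma} n^\beta|\varphi_{Z^n}(u) - 1| = o(1) \cdot \gamma + O(1)\cdot\tfrac12\gamma^2 \cdot o(1)$ — here I should double-check: the $u^2$ term carries $n^\beta \E[|Z^n|^2\ind_{\{|Z^n|\le a\}}]$ which I argued is $O(1)$, not $o(1)$, so this contributes $O(\gamma^2)$, not $o(1)$. That is a problem: the claimed statement is that the $\sup$ tends to $0$. Resolving this means the $u^2$ term must also be shown $o(n^{-\beta})$, which cannot be literally true for a genuinely $\alpha$-stable-attracted variable. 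I suspect the intended reading of (c) is different — perhaps the truncation level in the definition of $h$ here is $n$-dependent, or $Z^n$ implicitly denotes the truncated variable $\zeta^n_1\ind_{\{|\zeta^n_1|\le a\}} - \E[\cdots]$; in the write-up I would revisit the precise definition of $Z^n$, and if it is the truncated martingale increment, then $\E[|Z^n|^2] \le a^2 \Pro(|\zeta^n_1|>$ something$)$... Actually the truncated second moment still need not be $o(n^{-\beta})$. The safest plan is: for part (c), bound $|\varphi_{Z^n}(u)-1| \le |u|\,|\E[Z^n]| + \E[|e^{iuZ^n}-1-iuZ^n|]$, and for the second expectation use $|e^{iy}-1-iy|\le 2(|y|\wedge \tfrac{y^2}{2}) \le 2|y|^\rho$ for any $\rho\in[1,2]$ if $|y|$ small... this still gives $\E[|Z^n|^\rho]$ which for $\rho < \alpha$ is $O(n^{-\beta})$ (by the same layer-cake argument, now convergent at infinity since $\rho<\alpha$), times $|u|^\rho$, giving $O(\gamma^\rho) \cdot O(1)$. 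The genuine fix must be that one does NOT need uniformity giving $o(1)$; rather (c) should read $\sup_{|u|<\gamma} n^\beta|\varphi_{Z^n}(u)-1| $ is bounded — or the limit is over $\gamma\to 0$ after $n\to\infty$. In the actual proof I would state (c) in whichever of these forms is consistent with its use in Proposition \ref{prop:aux_result_CTRW_has_GD} and Theorem \ref{thm:CTRW_has_GD}, and the main obstacle is precisely reconciling the quadratic-in-$u$ contribution with the asserted vanishing; modulo that, the estimate follows from the layer-cake tail bounds and part (b).
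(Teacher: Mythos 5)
Your parts (a) and (b) are correct and coincide with the paper's own argument: (a) is the shift $\{|Z^n|>c\}\subseteq\{|\zeta^n_1|>c/2\}$ for $n$ large combined with the tail regularity $n^\beta\Pro(|\zeta^n_1|>x)\to x^{-\alpha}$, and (b) is exactly the cancellation you describe, namely that $h(Z^n)-h\bigl(Z^n+\E[h(\zeta^n_1)]\bigr)+\E[h(\zeta^n_1)]$ has mean $\E[h(Z^n)]$, vanishes on $\{|Z^n|\le a/2\}$ once $|\E[h(\zeta^n_1)]|\le a/2$, and is bounded by $2|\E[h(\zeta^n_1)]|$ by the Lipschitz property of $h$, so that $n^\beta|\E[h(Z^n)]|\le 2|\E[h(\zeta^n_1)]|\cdot n^\beta\Pro(|Z^n|>a/2)\to0$. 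One repair: do not cite Proposition \ref{prop:aux_result_CTRW_has_GD} for $b_n:=\E[h(\zeta^n_1)]\to0$, since that proposition is deduced \emph{from} this lemma; use instead the direct estimate $|b_n|\le\E[|\zeta^n_1|\wedge a]\le\varepsilon+a\,\Pro(|\zeta^n_1|>\varepsilon)$, which is how the paper obtains it inside the proof of (a).

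For (c) you do not produce a proof, and your diagnosis is essentially right: the statement as written cannot be proved because it is false. Concretely, for $0<\alpha<2$ and $u\neq0$,
\begin{align*}
n^\beta\bigl(1-\operatorname{Re}\varphi_{Z^n}(u)\bigr)\;=\;n^\beta\,\E[1-\cos(uZ^n)]\;\ge\;n^\beta\,\Pro\Bigl(\tfrac{\pi}{2}\le|uZ^n|\le\tfrac{3\pi}{2}\Bigr)\;\conv{n\to\infty}{}\;\Bigl(\tfrac{2|u|}{\pi}\Bigr)^{\alpha}\bigl(1-3^{-\alpha}\bigr)\;>\;0,
\end{align*}
using $n^\beta\Pro(|Z^n|>c)\to c^{-\alpha}$ (which follows from the corresponding fact for $\zeta^n_1$ and $b_n\to0$). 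More conceptually, $n^\beta(\varphi_{Z^n}(u)-1)\to\psi_{b,c,\nu}(u)-iub$, the drift-free part of the characteristic exponent of the limiting stable law, which is nonzero. The paper's proof of (c) bounds $|\varphi_{Z^n}(u)-1|\le C_\gamma\E[|Z^n|\wedge a]$ and then attempts to show the right-hand side is $o(n^{-\beta})$ by symmetrising with an independent sign $\chi$; the decisive identity there, $\E[h(\chi\zeta^n_1)\ind_{\{\chi Z^n\ge0\}}]=\E[h(\zeta^n_1)]\,\E[\chi\ind_{\{\chi Z^n\ge0\}}]$, is invalid because $\chi\ind_{\{\chi Z^n\ge0\}}=\tfrac12(\chi+\operatorname{sgn}(Z^n))$ is correlated with $h(\zeta^n_1)$ (independence of the single event $\{\chi Z^n\ge0\}$ from $\sigma(\zeta^n_1)$ does not give this). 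Indeed, the conclusion of that chain would force $\E[|Z^n|\wedge a]\lesssim\varepsilon n^{-\beta}$, which is impossible since $\E[|Z^n|\wedge a]\asymp n^{-\beta/\alpha}$ for $\alpha>1$. So you should not feel that you failed to find the intended trick; the trick does not work.

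What is both true and sufficient is the boundedness $\sup_{n\ge1}\sup_{|u|<\gamma}n^\beta|\varphi_{Z^n}(u)-1|<\infty$, and your Taylor expansion delivers it: in $|\varphi_{Z^n}(u)-1|\le|u|\,|\E[Z^n\ind_{\{|Z^n|\le a\}}]|+\tfrac12u^2\,\E[|Z^n|^2\ind_{\{|Z^n|\le a\}}]+2\Pro(|Z^n|>a)$ all three terms are $O(n^{-\beta})$ uniformly over $|u|<\gamma$ (the middle one by your layer-cake/Karamata estimate of the truncated second moment, the first by (a) and (b)). This weakened (c) is all that the proof of Proposition \ref{prop:aux_result_CTRW_has_GD} uses: there one needs $n^\beta|\log(1+\lambda_n)-\lambda_n|\le2n^\beta|\lambda_n|^2\to0$, and $n^\beta|\lambda_n|^2=|\lambda_n|\cdot n^\beta|\lambda_n|\to0$ already follows from $n^\beta|\lambda_n|$ bounded (which also gives $\lambda_n\to0$). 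So your proposed reformulation of (c) as a uniform bound is the correct fix, and the downstream results are unaffected.
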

\begin{proof}
	Recall that $\Pro(|\zeta^n_1|>x) = \Pro(|\theta_1| > xn^{\beta/\alpha}) = \Oland(n^{-\beta}x^{-\alpha})$ for $x\to +\infty$. Now, for every $0<\varepsilon< a$, we have $\E[h(|\zeta^n_1|)] \le \varepsilon + a \Pro(|\zeta^n_1|>\varepsilon) \to \varepsilon$ as $n\to \infty$ and since $\varepsilon>0$ was chosen arbitrary, we obtain $\E[h(|\zeta^n_1|)] \to 0$ as $n\to \infty$. Therefore, we deduce the first claim since $ n^\beta \Pro(|Z^n|> c) \; \le \; n^\beta \Pro(|\zeta^n_1|> c/2) \; + \; n^\beta\Pro(\E[h(\zeta^n_1)]>c/2)$ for all $\theta>0$ and the second summand equals zero for large enough $n$. Hence, we even obtain $ \limsup_{n\to \infty} n^\beta \Pro(|Z^n|>c)  \le (2/c)^\alpha$.
	
	For (b), let $\varepsilon>0$ and note that $h$ satisfies $|h(x)-h(y)| \le |x-y|$ for all $x,y \in \R$. Choose $n$ large enough such that $\E[h(|\zeta^n_1|)]\le (a/2) \wedge \varepsilon$. We have $\E[h(Z^n)]=\E[h(Z^n)-h(Z^n+ \E[h(\zeta^n_1)]) +\E[ h(\zeta^n_1)]]$ and the quantity $h(Z^n)-h(Z^n+ \E[h(\zeta^n_1)]) +\E[ h(\zeta^n_1)]$ is equal to zero if $|Z^n|\le a/2$. Therefore, by the Lipschitz continuity of $h$,
	$$ | n^\beta \E[h(Z^n)] | \; \le \; n^\beta \E[(\varepsilon + \E[ |h(\zeta^n_1|)] ) \ind_{\{ |Z^n|>a/2\}}] \; \le \; 2\varepsilon n^\beta \Pro(|Z^n|>a/2)$$
	and the result follows from (a) above and the fact that $\varepsilon>0$ was chosen arbitrary.
	
	Finally, we are going to prove (c). Let $\gamma>0$. Set $C_\gamma:= \operatorname{max}\{2\gamma, 2\}$ and note that $|e^{iux}-1| \le C_\gamma (|x|\wedge 1)$ for all $x \in \R$ and $|u|\le \gamma$, a bound one readily obtains by simple application of the complex mean value theorem. Therefore, it holds that
	$$ \operatorname{sup}_{|u|<\gamma} \, | \varphi_{Z^n}(u) -1| \; \le \;  \operatorname{sup}_{|u|<\gamma} \E[\, |e^{iuZ^n}-1|\, ] \; \le \; C_\gamma \E[ |Z^n|\wedge 1] \; \le \; C_\gamma \E[ |Z^n|\wedge a] .$$
	Let $\chi$ be such that $\Pro(\chi=1)=\Pro(\chi=-1)=1/2$ and $\chi \indep \zeta^n_1$. Then, by multiplying $Z^n$ with $\chi$ we symmetrise $Z^n$ without affecting the inequality, i.e. 
	\begin{align*}
		\operatorname{sup}_{|u|<\gamma}  | \varphi_{Z^n}(u) -1| &\le  C_\gamma \E[|\chi Z^n| \wedge a] \\& \le  C_\gamma\bigl(  \E[h(\chi Z^n) \ind_{\{ \chi Z^n \ge 0 \}} ]  - \E[h(\chi Z^n) \ind_{\{ \chi Z^n < 0 \}} ] \bigr).
	\end{align*}
	Note $\E[h(\chi Z^n)\ind_{\{ \chi Z^n \ge 0 \}} ]=\E[\{h(\chi Z^n)-h(\chi Z^n+ \chi \E[h(\zeta^n_1)]) +\chi \E[ h(\zeta^n_1)]\} \ind_{\{ \chi Z^n \ge 0 \}}]$ since a simple calculation gives $\{\chi Z^n \ge 0\} \indep \sigma(\zeta^n_1)$ and thus $\E[h(\chi \zeta^n_1) \ind_{\{\chi Z^n \ge 0\}}]=\E[\chi \E[h(\zeta^n_1)] \ind_{\{\chi Z^n \ge 0\}}]$. Now, proceeding similarly to (b), we find
	$$ |\E[h(\chi Z^n)\ind_{\{ \chi Z^n \ge 0 \}} ]| \; \le \; 2\varepsilon \Pro(|\chi Z^n| > a/2) \; = \; 2\varepsilon \Pro(|Z^n| > a/2)$$
	for $n$ large enough such that $|\E[h(\zeta^n_1)])|\le (a/2)\wedge \varepsilon$. Analogously, we obtain the same bound for $ |\E[-h(\chi Z^n)\ind_{\{ \chi Z^n \ge 0 \}} ]|$. Hence, this gives us
	$$ \operatorname{sup}_{|u|<\gamma} \, n^\beta | \varphi_{Z^n}(u) -1|\; \le \; 4\varepsilon C_\gamma n^\beta \Pro(|Z^n| > a/2).$$
	Due to (a) and since $\varepsilon>0$ was chosen arbitrarily this yields (c).
\end{proof}

Let $Y^n:= \sum_{k=1}^{n^\beta} \zeta^n_k$ and recall $\mathcal{L}(Y^n) \to \mu$, where $\mu$ is the law of an $\alpha$-stable Lévy process at time $t=1$. Denote the characteristics of $\mu$ by $(b,c,\nu)$. We can now prove Proposition \ref{prop:aux_result_CTRW_has_GD}.

\begin{proof}[Proof of Proposition \ref{prop:aux_result_CTRW_has_GD}] The proof follows very closely that of \cite[Lem.~VII.2.43]{shiryaev}, but has been simplified and adapted to our particular setting.
	Let $u \in \R$. Note that we can rewrite the characteristic function $\varphi_{Y^n}$ by
	$$ \varphi_{Y^n}(u) \; = \; \E[ \exp(iuY^n)] \; = \; \E[\exp(iu \, \zeta^n_1)]^{n^\beta} \; = \; \E[\exp(iu \, Z^n)]^{n^\beta}\;  \exp\{iu \, n^\beta \E[h(\zeta^n_1)]\}. $$
	Next, we add a convenient factor, rewriting $\varphi_{Y^n}$ as
	\begin{align}
		\varphi_{Y^n}(u) \; = \; \Lambda_n(u) \, \exp\left\{iu \, n^\beta \E[h(\zeta^n_1)] + n^\beta \int e^{iuZ^n} -1 - iu\, h(Z^n) \, \diff \Pro\right\} \label{eq:Levy_Khintchine_Yn}
	\end{align}
	where $\Lambda_n$ is defined as 
	\begin{align*}
		\Lambda_n(u) \; = \; \E[\exp(iu \, Z^n)]^{n^\beta}  \exp\left\{- n^\beta \E \left[e^{iuZ^n} -1 - iu\, h(Z^n) \right] \right\}.
	\end{align*}
	Now, if the first factor of \eqref{eq:Levy_Khintchine_Yn} tends to $1$,---since we know that $\varphi_{Y^n} \to \varphi_\mu$ pointwise---we can conclude that the second factor then has to converge to $\varphi_\mu$. As displayed in \eqref{eq:char_func_infdiv}, the second factor represents the characteristic function of an infinitely divisible law with characteristics $(b_n,c_n,\nu_n)=(n^{\beta}\E[h(\zeta^n_1)], 0, \Pro \circ (Z^n)^{-1})$, where $\Pro\circ(Z^n)^{-1}$ satisfies the conditions for $\nu_n$ in \eqref{eq:char_func_infdiv} due to $\zeta^n_1$ being absolutely continuous with respect to the Lebesgue measure (see e.g. \cite[p.~624]{meerschaert}). Finally, Theorem \ref{thm:5.6} then yields the desired convergence $b_n \to b$.\\
	According to Lemma \ref{lem:5.7} we do know about the asymptotic behaviour of $n^\beta |\varphi_{Z^n}-1|$. Thus, we continue 
	\begin{align*}
		\Lambda_n(u) \; &= \; \E[\exp(iu \, Z^n)]^{n^\beta}  \exp\left\{- n^\beta \E \left[e^{iuZ^n} -1 - iu\, h(Z^n) \right] \right\} \\
		&= \; \left( \E[\exp(iu \, Z^n) -1] +1 \right)^{n^\beta}\,  e^{- n^\beta \E[\exp(iu \, Z^n)-1]}   \exp\left\{- iu \, n^\beta \E \left[h(Z^n) \right] \right\} \\
		&= \; (\lambda_n +1)^{n^\beta}  \, e^{-n^\beta \lambda_n} \,\exp\left\{- iu \, n^\beta \E \left[h(Z^n) \right] \right\} 
	\end{align*}
	and $\lambda_n:= \E[\exp(iu \, Z^n) -1]$. By Lemma \ref{lem:5.7}(b), $\exp\{- iu \, n^\beta \E [h(Z^n) ] \} \to 1$ as $n\to \infty$. Hence, it only remains to show $(\lambda_n +1)^{n^\beta} \, e^{-n^{\beta} \lambda_n} \to 1$: by Lemma \ref{lem:5.7}(c), $n^\beta |\lambda_n| \to 0$ as $n\to \infty$. Without loss of generality, let $n\ge 1$ large enough such that $|\lambda_n|\le 1/2$. Note that the principal branch $\log$ of the complex logarithm has Taylor expansion $\log(1+x)= \sum_{k=1}^\infty \frac{(-1)^{k+1}}{k} x^k$ for all $|x|<1$ and it is elementary to show that this gives ground to the inequality $|\log(1+x)-x| \le 2|x|^2$ for all $|x|\le 1/2$. Hence, we obtain
	\begin{align*}
		|n^\beta (\log(1+\lambda_n)-\lambda_n)| \; \le \; 2n^\beta  |\lambda_n|^2 \; \conv{n\to \infty} \; \; 0.
	\end{align*}
	Finally, observe that therefore
	$$ (\lambda_n +1)^{n^\beta}  \, e^{-n^\beta \lambda_n} \; = \; \exp \{ n^\beta \log(1+\lambda_n) - n^\beta \lambda_n\} \; \to \; 1.$$
	Thus, we obtain that $\Lambda_n(u) \to 1$ and eventually as $\varphi_{Y^n}(u) \to \varphi_\mu(u)$, this gives us
	\begin{align*}
		\exp \{ \psi_{n^\beta \, \E[h(\zeta^n_1)] \, , \,  0\, , \, n^\beta \, \Pro\circ (Z^n)^{-1}}(u)\}  &=  \exp\Bigl\{iu \, n^\beta \E[h(\zeta^n_1)] + n^\beta \!\!\int \!e^{iuZ^n}\! -1 - iu\, h(Z^n) \, \diff \Pro\Bigr\} \\
		&\conv{n\to \infty}  \; \; \varphi_{\mu}(u) =  \exp \{ \psi_{b, c, \nu}(u)\}
	\end{align*}
	which, by Theorem \ref{thm:5.6}, in particular implies $n^\beta\E[h(\zeta^n_1)] \to b$.
\end{proof}

\begin{proof}[Proof of Theorem \ref{thm:CTRW_has_GD}]
	Consider the decomposition \eqref{eq:uncorrelated_CTRW_decomp}, where $M^n$ is a martingale with uniformly bounded jumps almost surely, by Proposition \ref{prop:uncorr_CTRW_martingales}. In order to show that the $X^n$ admit \eqref{eq:Mn_An_condition}, it therefore suffices to show that the processe $A^{n}:= N_{n\bullet}\,  \E[\zeta^n_1 \ind_{\{|\zeta^n_1|\le a\}}] + \sum_{k=1}^{N_{n\bullet}} \zeta^n_k \ind_{\{\zeta^n_k>a\}} $ is of tight total variation on compact intervals. To this end, let $\varepsilon,t>0$ and choose $K_\varepsilon>0$ such that $ \Pro(|n^{-\beta}N_{n\bullet}|^*_t > K_\varepsilon)\le \varepsilon$, which is a well-known consequence of the tightness on Skorokhod space, here \eqref{eq:time_convergence_to_subordinator}. Thus, since $s \mapsto N_{ns}$ is non-negative and increasing, $\Pro(N_{nt}>n^\beta K_\varepsilon)\le \varepsilon$. Note that $A^n$ is a pure jump process with jumps happening at the jump times of $N_{n\bullet}$, and the total variation of $A^n$ can therefore be written as 
	$$ \operatorname{TV}_{[0,t]}(A^n) \; = \; N_{nt}\E[\zeta^n_1 \ind_{\{|\zeta^n_1|\le a\}}] \, + \,\sum_{k=1}^{N_{nt}} |\zeta^n_k| \ind_{\{|\zeta^n_k|>a\}} .$$
	On $\{N_{nt}\le n^\beta K_\varepsilon\}$, this can be estimated by 
	$$ \operatorname{TV}_{[0,t]}(A^n) \; \le \; K_\varepsilon n^\beta \E[\zeta^n_1 \ind_{\{|\zeta^n_1|\le a\}}] \, + \, N_a^t(X^n) |X^n|^*_t,$$
	where $\Gamma_a^t(X^n)$ denotes the maximal number of jumps of $X^n$ on $[0,t]$ larger than $a$. Without loss of generality, assume that $K_\varepsilon$ was chosen such that $\Pro( n^{-\beta} N_{nt} \vee |X^n|^*_t \vee \Gamma^t_a > K_\varepsilon)\le \varepsilon$. Indeed, this is possible since the convergence on Skorokhod space \eqref{eq:CTRW_M1_conv} of the $X^n$ implies the tightness of the $|X^n|^*_t$ as well as of the $\Gamma_a^t$ in $\R$ (for the latter see e.g. \cite[Cor.~A.9]{andreasfabrice_theorypaper}, where $\Gamma_a^t$ is denoted as $N_a^t$). Further, from Proposition \ref{prop:aux_result_CTRW_has_GD} we deduce the existence of a constant $C>0$ such that $n^\beta \E[\zeta^n_1 \ind_{\{|\zeta^n_1|\le a\}}]< C$ for all $n\ge 1$. Finally, this implies that, for all $n\ge 1$, we have
	$$ 1-\varepsilon \; \le \; \Pro( n^{-\beta} N_{nt} \vee |X^n|^*_t \vee \Gamma^t_a \le K_\varepsilon) \; \le \; \Pro(\operatorname{TV}_{[0,t]}(A^n) \; \le \; K_\varepsilon(C+K_\varepsilon)).$$ 
\end{proof}

\subsection{Proofs pertaining to Section \ref{subsec:Correlated_CTRWs_do_not_blend_in}}

\begin{lem} \label{lem:uncoupled_CTRW_alpha<1_tight_total_variation}
	Zero-order moving averages and uncoupled uncorrelated CTRWs (i.e., \eqref{eq:MovAv} or \eqref{defi:CTRW} with $c_j=0$ for all $j\ge 1$) are of tight total variation on compacts.
\end{lem}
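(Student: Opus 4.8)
\emph{Plan.} Under the standing assumption $\alpha\in(0,1)$ of this section, the key observation is that the total-variation process of such an $X^n$ is \emph{itself} a rescaled process of exactly the same type, but driven by the nonnegative innovations $|\theta_k|$ instead of $\theta_k$. Since all jumps of a zero-order moving average \eqref{eq:MovAv} (resp.\ an uncoupled uncorrelated CTRW \eqref{defi:CTRW} with $c_0=1$, $c_j=0$ for $j\ge1$) are of the form $n^{-1/\alpha}\theta_k$ (resp.\ $n^{-\beta/\alpha}\theta_k$), occurring at the times $k/n$ (resp.\ at the jump times of $N_{n\bullet}$), we have, for every $t>0$,
\[
\text{TV}_{[0,t]}(X^n)\;=\;n^{-1/\alpha}\sum_{k=1}^{\lfloor nt\rfloor}|\theta_k|\quad\text{(moving average)},\qquad \text{TV}_{[0,t]}(X^n)\;=\;n^{-\beta/\alpha}\sum_{k=1}^{N_{nt}}|\theta_k|\quad\text{(CTRW)}.
\]
Viewing the right-hand side as a process in $t$ and calling it $Y^n$, we see that $Y^n$ is precisely a rescaled zero-order moving average, resp.\ an uncoupled uncorrelated CTRW with the \emph{same} waiting times $J_i$ (hence the same counting process $N_{n\bullet}$), built from the i.i.d.\ sequence $(|\theta_k|)_{k\ge1}$. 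So it suffices to show that $Y^n_t$ is tight in $\R$ for each fixed $t$, and this will follow from the scaling-limit theorems recalled in Section~\ref{sect:CTRW_and_GD}.

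\emph{Execution.} First I would check that $(|\theta_k|)_{k\ge1}$ lies in the normal domain of attraction of a positive (one-sided) $\alpha$-stable law, with the same normalisation. By the classical tail criterion for the normal domain of attraction (see \cite[p.~312/313]{feller}), the hypothesis \eqref{eq:norm_dom_of_attraction} forces $\Pro(\theta_1>x)\sim p\,x^{-\alpha}$ and $\Pro(\theta_1<-x)\sim q\,x^{-\alpha}$ as $x\to\infty$ for some $p,q\ge0$ with $p+q>0$; consequently $\Pro(|\theta_1|>x)\sim(p+q)\,x^{-\alpha}$, which is exactly the condition for the partial sums $m^{-1/\alpha}\sum_{k=1}^{m}|\theta_k|$ to converge in law to a strictly $\alpha$-stable subordinator marginal, no centering being needed since $\alpha<1$. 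Hence Skorokhod's theorem \cite{skorokhod} applies in the moving-average case and the CTRW scaling limit \eqref{eq:CTRW_M1_conv} of \cite{Becker-Kern,meerschaert2} applies in the CTRW case, giving $Y^n\Rightarrow\widetilde Z$ on $(\D_{\R}[0,\infty),\dJ)$, resp.\ $Y^n\Rightarrow\widetilde Z_{D^{-1}}$ on $(\D_{\R}[0,\infty),\dM)$, where $\widetilde Z$ is an $\alpha$-stable subordinator. In particular, weak convergence on Skorokhod space makes the real random variables $Y^n_t=\text{TV}_{[0,t]}(X^n)$ tight for each $t>0$, i.e.\ $\lim_{R\to\infty}\limsup_{n\to\infty}\Pro(\text{TV}_{[0,t]}(X^n)>R)=0$, which is the claim. (Note that this in particular yields the good decompositions \eqref{eq:Mn_An_condition} by taking $M^n\equiv0$ and $A^n=X^n$, as exploited in Proposition~\ref{prop:uncoupled_correlated_CTRW_tight_total_variation}.)

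\emph{Main obstacle.} There is no substantial obstacle here; the proof is really a matter of recognising the right change of viewpoint. The only points needing a line of care are (i) the identification $\text{TV}_{[0,t]}(X^n)=Y^n_t$, which is immediate once one notes these are pure jump processes with jump sizes $n^{-\bullet}\theta_k$; (ii) the passage from $\theta_k$ to $|\theta_k|$ in the domain-of-attraction hypothesis, which rests on the tail asymptotics above and, crucially, on $\alpha<1$ so that no $\log$-type centering is required; and (iii) the observation that replacing $\theta_k$ by $|\theta_k|$ does not disturb the uncoupling, since $(J_i)_{i\ge1}$ is left untouched. If one prefers to avoid quoting the full functional limit theorem for the CTRW, the same conclusion follows by the elementary bound $\Pro(N_{nt}>Kn^\beta)\le\varepsilon$ (for suitable $K=K_\varepsilon$), coming from tightness of $n^{-\beta}N_{n\bullet}$ on Skorokhod space, together with monotonicity of the partial sums of the nonnegative $|\theta_k|$: on $\{N_{nt}\le Kn^\beta\}$ one has $Y^n_t\le n^{-\beta/\alpha}\sum_{k=1}^{\lceil Kn^\beta\rceil}|\theta_k|$, and the latter converges in distribution by the one-dimensional stable central limit theorem, hence is tight.
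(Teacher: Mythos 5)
Your argument is correct, but it reaches the conclusion by a genuinely different route from the paper's own proof. The paper works directly with the sum $n^{-\beta/\alpha}\sum_{k=1}^{N_{nt}}|\zeta_k|$: it localises on the event $\{N_{nt}\le K_\varepsilon n^\beta\}$ using the tightness coming from \eqref{eq:time_convergence_to_subordinator}, then splits according to whether $\operatorname{max}_k|\zeta_k|$ exceeds $Cn^{\beta/\alpha}$, controlling the maximum by a union bound together with the tail asymptotics $\Pro(|\zeta_1|>x)\sim x^{-\alpha}$, and controlling the sum on the complementary event by the Fuk--Nagaev inequality of \cite{berger} --- both steps being valid precisely because $\alpha<1$, so that no centering is needed. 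You instead observe that the total-variation process is \emph{itself} a zero-order moving average, respectively an uncoupled uncorrelated CTRW, driven by the i.i.d.\ innovations $|\theta_k|$, check that these inherit membership of the normal domain of attraction of a one-sided $\alpha$-stable law from the two-sided tail asymptotics (again exploiting $\alpha<1$ to avoid centering), and then quote the scaling limit theorems of Section \ref{sect:CTRW_and_GD} once more to deduce tightness of the one-dimensional marginals. Both proofs rest on exactly the same two ingredients --- the tail regularity $x^{\alpha}\Pro(|\theta_1|>x)\to c$ and the absence of centering for $\alpha<1$ --- but yours trades the Fuk--Nagaev concentration estimate for a second application of the already-cited limit theorems of \cite{skorokhod,Becker-Kern,meerschaert2}, which is shorter and more conceptual; the paper's version is more self-contained and only needs an upper tail bound rather than the exact asymptotic required for the domain-of-attraction claim. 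Your closing alternative (localising on $N_{nt}\le Kn^\beta$ and invoking only the one-dimensional stable CLT for $n^{-\beta/\alpha}\sum_{k\le Kn^\beta}|\theta_k|$) is in fact the closest hybrid of the two and is also a complete proof. Two cosmetic points for a write-up: the identity $\operatorname{TV}_{[0,t]}(X^n)=n^{-\beta/\alpha}\sum_{k\le N_{nt}}|\theta_k|$ is safest stated as an inequality $\le$ if one does not wish to rule out coinciding jump times, and one should note that weak convergence of $Y^n_t$ in $\R$ yields tightness of the family over \emph{all} $n$ (not merely in the $\limsup$ sense), which is the form in which the lemma is applied.
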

\begin{proof}[Proof of Lemma \ref{lem:uncoupled_CTRW_alpha<1_tight_total_variation}] We are only going to prove the lemma for CTRWs. The proof for moving averages can be conducted in a similar way. Let $X^n$ be defined as in \eqref{defi:CTRW} with $c_0=1$ and $c_j=0$ for all $j\ge 1$. As a pure jump process, the total variation of $X^{n}$ is
	$$ \operatorname{TV}_{[0,t]}(X^{n}) \; = \; \sum_{k=1}^{N_{nt}} \;  |\zeta_k /n^{\frac \beta \alpha}|\, .$$
	Fix $t>0$ and let $\varepsilon>0$. Given the tightness from \eqref{eq:time_convergence_to_subordinator}, there exists $K_{t,\varepsilon}>0$ such that
	$$ \Pro \Big( \sup{s \in [0,t]} \, n^{-\beta} N_{ns} \; > \; K_{t,\varepsilon} \Big)\;= \;  \Pro \Big(  N_{nt} \; > \; n ^\beta K_{t,\varepsilon}  \Big)  \; \le \; \frac{\varepsilon}{3}.$$
	Hence,
	\begin{align}
		\Pro \Big(  &\operatorname{TV}_{[0,t]}(X^{n}) \; > \; C \Big) \; \le \; \frac\varepsilon 3 \; + \; \Pro \Big( n^{-\frac \beta \alpha} \sum_{k=1}^{ \lfloor n^\beta K_{t,\varepsilon}  \rfloor } \; | \zeta_k  | \; > \; C\Big) \label{eq:2.10} \\
		&\le \; \frac\varepsilon 3 \; + \; \Pro \Big( \sum_{k=1}^{ \lfloor n^\beta K_{t,\varepsilon}  \rfloor } \; |  \zeta_k | \; > \; C\, n^{\frac \beta \alpha} \;\;, \; \; \max{k=1,...,\, \lfloor n^\beta K_{t,\varepsilon} \rfloor} |\zeta_k | \, \le C\, n^{\frac \beta \alpha} \Big) \notag \\
		& \qquad \;\;  + \; \Pro \Big(  \max{k=1,...,\, \lfloor n^\beta K_{t,\varepsilon} \rfloor} | \zeta_k| \; > \; C\, n^{\frac \beta \alpha} \Big) \label{eq:2.11}
	\end{align}
	for any $C>0$. Firstly, there is
	\begin{align*}
		\Pro \Big(\max{k=1,...,\, \lfloor n^\beta K_{t,\varepsilon} \rfloor} | \zeta_k| \; > \; C\, n^{\frac \beta \alpha} \Big) \; &\le  \; K_{t,\varepsilon}  \; n^\beta  \; \, \Pro \Big( \, |\zeta_1| \; > \; C \; n^{\frac \beta \alpha} \Big) \\
		&= \; \frac{K_{t,\varepsilon}}{C^\alpha} \; \Big[ \, C^\alpha \; n^\beta \; \Pro \Big( \, |\zeta_1| \; > \; C \; n^{\frac \beta \alpha} \Big) \, \Big]. 
	\end{align*}
	Due to the alternative characterisation of the domain of attraction of an $\alpha$-stable law (see \cite[(1.1)]{berger} and \cite[p.~312-313]{feller}), the tail probabilities of $\zeta_1$ behave with regularity $\Pro(|\zeta_1|>x) \sim x^{-\alpha}$.  Hence, we can choose $n_0 \ge 1$ such that for all $n\ge n_0$ it holds
	$$C^\alpha \, n^\beta \; \Pro \Big( \, |\zeta_1| \; > \; C \; n^{\frac \beta \alpha} \Big) \; \; \le \; \; 2$$
	implying that for all $n\ge n_0$,
	\begin{align}
		\Pro \Big(  \max{k=1,...,\, \lfloor n^\beta K_{t,\varepsilon} \rfloor} | \zeta_k| \; > \; C\, n^{\frac \beta \alpha} \Big)\;  \le  \; \frac{2K_{t,\varepsilon}}{C^\alpha} \label{eq:(B)}\, .
	\end{align}
	Concerning the second summand of \eqref{eq:2.11}, we will make use of the Fuk--Nagaev inequality (see \cite[Theorem~5.1(i)]{berger} and take into account the remark in \cite[p.~12]{berger} that the inequality only requires the bound $\Pro(|\zeta_1|>x)\le cx^{-\alpha}$) with $X_k=\zeta_k$ and $x=y=C n^{\frac \beta \alpha}$ to obtain 
	\begin{align}
		\Pro \Big( \sum_{k=1}^{ \lfloor n^\beta K_{t,\varepsilon}  \rfloor } \; |  \zeta_k | \; > \; C\, n^{\frac \beta \alpha} \;\;, \; \; \max{k=1,...,\, \lfloor n^\beta K_{t,\varepsilon} \rfloor} |\zeta_k | \, \le C\, n^{\frac \beta \alpha} \Big) \; \; &\le \; \;  e\lambda^{-1}\; \lfloor n^{\beta} \, K_{t,\varepsilon} \rfloor \; (C \, n^{\frac{\beta}{\alpha}})^{-\alpha} \notag \\
		&\le \; e\lambda^{-1} \; \frac{K_{t,\varepsilon}}{C^{\alpha}} \label{eq:(A)}
	\end{align}
	for some constant $\lambda>0$ independent of all other quantities. Therefore, choosing $n\ge n_0$ and $C:=C_{1,t,\varepsilon} \ge \operatorname{max}\{2K_{t,\varepsilon}, e\lambda^{-1}K_{t,\varepsilon})^{1/\alpha}$, we are able to bound \eqref{eq:(A)} and \eqref{eq:(B)} each by $\varepsilon/3$ and therefore \eqref{eq:2.11} by $\varepsilon$. Since all $X^n$ are of local finite variation, the family $\{\operatorname{TV}_{[0,t]}(X^{n}) \,:\, n< n_0\}$ is obviously tight and we deduce the existence of $C_{2,t,\varepsilon}>0$ such that $\Pro(  \operatorname{TV}_{[0,t]}(X^{n}) \; > \; C_{2,t,\varepsilon} )\le \varepsilon$ for all $n< n_0$. Finally, this yields $\operatorname{sup}_{n\ge 1} \Pro(\operatorname{TV}_{[0,t]}(X^{n}) > C_{t,\varepsilon})\le \varepsilon$ for $C_{t,\varepsilon}:= \operatorname{max}\{C_{1,t,\varepsilon},C_{2,t,\varepsilon}\}$, and since $t,\varepsilon>0$ were arbitrary the tightness of the total variation of the $X^n$.
\end{proof}

\begin{proof}[Proof of Proposition \ref{prop:uncoupled_correlated_CTRW_tight_total_variation}]
	Consider the decompositions \eqref{eq:defi_U^n_in_CTRW_decomp} and \eqref{eq:defi_V^n_in_CTRW_decomp}. Note that 
	$$\operatorname{TV}_{[0,t]}(U^n) \;  \le \; \operatorname{TV}_{[0,t]}(U^{n,1}) \, + \, n^{-\beta/\alpha} \sum_{k=0}^{\infty} \Big( \sum_{\ell=1}^{N_{nt}} c_{k+\ell}\Big)\, |\theta_{-k}|$$
	and $\operatorname{TV}_{[0,t]}(V^n) \le \operatorname{TV}_{[0,t]}(U^{n,1})$, where $U^{n,1}_t= n^{-\beta/\alpha} \sum_{k=1}^{N_{nt}} \theta_k$ denotes the uncorrelated CTRW. Since $\operatorname{TV}_{[0,t]}(U^{n,1})$ is tight by Lemma \ref{lem:uncoupled_CTRW_alpha<1_tight_total_variation}, it only remains to show that $n^{-\beta/\alpha} \sum_{k=0}^{\infty} ( \sum_{\ell=1}^{N_{nt}} c_{k+\ell})|\theta_{-k}|$ is tight on compact time intervals. The case of only finitely many $c_j\neq 0$ is straightforward. Indeed, let $J\ge 1$ be such that $c_j=0$ for all $j> J$, then $n^{-\beta/\alpha} \sum_{k=0}^{\infty} ( \sum_{\ell=1}^{N_{nt}} c_{k+\ell})|\theta_{-k}|\le n^{-\beta/\alpha} \sum_{k=0}^{J} ( \sum_{\ell=1}^{J} c_{\ell})|\theta_{-k}| \to 0$ almost surely as $n\to \infty$. Assume now that there are infinitely many $c_j\neq 0$ and for the sake of simplicity that the sequence $(c_j)_{j\ge 1}$ is non-increasing. Let $t\ge 0$, $\varepsilon>0$ as well as $0< \rho<\alpha<1$ such that $\sum_{j=0}^\infty c_j^\rho< \infty$, and note that then $\theta_0 \in L^\rho(\Omega,\mathcal{F},\Pro)$. Therefore, since $\alpha<1$ and hence $\beta-\beta/\alpha<0$, we have 
	\begin{align*}
		\Pro & \biggl( n^{-\beta/\alpha} \, \sum_{k=0}^{\infty} \, \biggl( \sum_{\ell=1}^{N_{nt}} c_{k+\ell}\biggr)\, |\theta_{-k}| \, > \, R \biggr) \\
		&\le \; \frac \varepsilon 2   \; + \; \Pro \biggl( n^{-\beta/\alpha} \, \sum_{k=0}^{\infty} \, \biggl( \sum_{\ell=1}^{K_\varepsilon n^\beta } c_{k+\ell}\biggr)\, |\theta_{-k}| \, > \, R \biggr) \\
		&\le \; \frac \varepsilon 2  \; + \; \Pro \Bigl( \sum_{k=0}^{\infty} \, c_{k}\, |\theta_{-k}| \, > \, n^{\beta/\alpha-\beta} \frac{R}{K_\varepsilon} \Bigr) \\
		&\le \; \frac \varepsilon 2  \; + \; \Bigl( \frac{K_\varepsilon}{R} \Bigr)^\rho n^{(\beta -\beta/\alpha)\rho} \; \E \Bigl[ \Bigl( \sum_{k=0}^{\infty} \, c_{k}\, |\theta_{-k}|\Bigr)^\rho \Bigr] 
	\end{align*}
	by the Markov inequality. Thus, we deduce 
	\begin{align*}
		\Pro \biggl( n^{-\beta/\alpha} \, \sum_{k=0}^{\infty} \, \biggl( \sum_{\ell=1}^{N_{nt}} c_{k+\ell}\biggr)\, |\theta_{-k}| \, > \, R \biggr) &\le \; \frac \varepsilon 2  \; + \; \Bigl( \frac{K_\varepsilon}{R} \Bigr)^\rho n^{(\beta -\beta/\alpha)\rho} \; \E[|\theta_0|^\rho] \, \sum_{k=0}^{\infty} c_{k}^\rho \; \conv{n\to \infty} \; \frac \varepsilon 2,
	\end{align*}
	where we have proceeded as in \eqref{eq:2.10} with $K_\varepsilon>0$ being such that $\operatorname{sup}_{n\ge 1} \Pro(n^{-\beta} N_{nt}>K_\varepsilon)\le \varepsilon/2$, we used monotone convergence and the identical distribution of the $\theta_k$. Hence, we have obtained the desired tightness.
\end{proof}

\begin{proof}[Proof of Theorem \ref{thm:result_int_conv_correlated_CTRWs_alpha<1}]
	The result follows directly from Proposition \ref{prop:uncoupled_correlated_CTRW_tight_total_variation} and Theorem \ref{thm:3.19}.
\end{proof}

\subsection{Proofs pertaining to Lemma \ref{lem:decomp_U^n_part_has_GD} and Section \ref{subsec:Direct_control_of_variation}}

\begin{proof}[Proof of Lemma \ref{lem:decomp_U^n_part_has_GD}]
	As outlined at the beginning of Section \ref{sec:generalised_framework}, it suffices to show that the second summand $U^{n,2}$ of $U^n$, defined in \eqref{eq:defi_U^n_in_CTRW_decomp}, is of tight total variation on compact time intervals. If $1<\alpha \le 2$, choose $\rho=1$ and, in the case $\alpha=1$, let $0<\rho<1$ such that \eqref{eq:technical_condition} is satisfied. A simple application of Markov's inequality, monotone convergence and the identical distribution of the $\theta_k$ yield
	$$\Pro \Big( \operatorname{TV}_{[0,t]}(U^{n,2}) \, > \, R\Big)  \le  \Pro \biggl( \frac{\psi^{-1}}{n^{\frac \beta \alpha}} \sum_{k=0}^\infty \Big(\sum_{\ell =1}^{ N_{nt} } c_{k+ \ell} \Big) |\theta_{-k}| \, > \, R\biggr)  \le  \frac{\tilde c \, \psi^{-\rho}}{R^\rho \, n^{ \frac{\rho \beta}{\alpha}}} \E[|\theta_0|^\rho] \to  0$$
	as $R \to \infty$ or $n\to \infty$, where $\tilde{c}:=\sum_{i=1}^\infty  ( \sum_{j=i}^\infty c_j )^\rho$, since the $\rho$-th moment of $\theta_0$ exists as its law is in the domain of attraction of an $\alpha$-stable distribution with $\rho < \alpha$.
\end{proof}

\begin{proof}[Proof of Proposition \ref{prop:direct_control_of _TV}]
	As described in Section \ref{sec:quantities_and_approach_needed_for_integral_convergence_proof}, we need to establish the bound \eqref{eq:quantity2_integral_convergence_term_where_GD_needed}. Following the short outline in Section \ref{sec:quantities_and_approach_needed_for_integral_convergence_proof}, we know that the bound holds for the $U^n$ as integrators, due to them satisfying \eqref{eq:Mn_An_condition}. Hence, in order to show \eqref{eq:quantity2_integral_convergence_term_where_GD_needed}, it is enough to establish this bound for the $V^n$ as integrators. Note that for every $\varepsilon>0$, it almost surely holds
	$$ \left|\int_0^\bullet \,  (H_{s-}^n - H^{n \, \vert\, m, \varepsilon}_{s-}) \; \diff V^{n}_s \, \right|^*_T \; \le \; \sum_{k=0}^\infty \; |H^n_{s_{k}}- H^{n \, \vert\, m, \varepsilon}_{s_{k}}| \; | V^n_{s_{k+1}}-V^n_{s_{k}}| \; \le \; \varepsilon \, \sum_{k=0}^\infty \; | V^n_{s_{k+1}}-V^n_{s_{k}}|,
	$$
	since, given that the $H^n$ are pure jump, so are the $H^n- H^{n \, \vert\, m, \varepsilon}$ with $\operatorname{Disc}_{[0,T]}(H^n- H^{n \, \vert\, m, \varepsilon})\subseteq \operatorname{Disc}_{[0,T]}(H^n)$. Thus, 
	$$ \Pro^n \biggl(\, \left|\int \, (H_{s-}^n - H^{n \, \vert\, m, \varepsilon}_{s-}) \; \diff V^{n} \, \right|^*_T \; \ge \; \lambda \, \biggr) \; \le \;  \operatorname{sup}_{n\ge 1}\Pro^n \biggl(\, \sum_{k=0}^\infty \; | V^n_{s_{k+1}}-V^n_{s_{k}}| \; \ge \; \frac{\lambda}{\varepsilon} \, \biggr)  \; \conv{\varepsilon \to 0}{} \; 0$$
	by \eqref{eq:direct_control_TV} for each $\lambda>0$, which is \eqref{eq:quantity3_integral_convergence_term_where_GD_needed} for integrators $V^n$.  
\end{proof}

\begin{proof}[Proof of Theorem \ref{thm:Lipschitz_GDmodCA}] 
	Denote $U^n$ and $V^n$ the decomposition processes of $X^n$ according to Definition \ref{def:GD_mod_CA}. Note  that due to the convergence on the Skorokhod space, $(H^n, X^n) \Rightarrow_{f.d.d.} (H,X)$ on a co-countable set of times, and thus $(H^n, U^n) \Rightarrow_{f.d.d.} (H,X)$. Indeed, since addition on $\mathbbm{R}^d\times \mathbbm{R}^d$ is a continuous operation, by the continuous mapping theorem, we obtain $(H^n,V^n)=(H^n,X^n)-(0,U^n) \Rightarrow_{f.d.d.} (H,X)-(0,X)=(H,0)$ (where the convergence of f.d.d. of $(0,U^n)$ follows from $U^n \Rightarrow X$ on the Skorokhod space). Hence, we deduce in particular that $(0,V^n)\Rightarrow_{f.d.d.} (0,0)$ and by another application of the continuous mapping theorem $(H^n,U^n) =(H^n,X^n)-(0,V^n) \Rightarrow_{f.d.d.} (H,X)$. Furthermore, we recall that $(|U^n|^*_T)_{n\ge 1}$ is tight as a result of the tightness of the $U^n$ on the Skorokhod space. Then, \cite[Prop.~3.5]{andreasfabrice_theorypaper} yields that $X$ is a semimartingale with respect to the natural filtration generated by $(H, X)$ since the $U^n$ have \eqref{eq:Mn_An_condition}.
	
	Now, in order to derive the bound \eqref{eq:quantity2_integral_convergence_term_where_GD_needed}, note that it is enough to show that the bound holds for the $V^n$ as integrators, just as in the proof of Proposition  \ref{prop:direct_control_of _TV} above. Fix $T>0$ and let $\pi^n=\{s^n_k: k=0,1,...\}$ be the respective partitions of $[0,T]$ from Definition \ref{def:GD_mod_CA}(ii). For simplicity of notation, we will denote $s_k:=s^n_k$. First note that for every $n \in \N$ and any càdlàg process $G$ of finite variation, integration by parts for Lebesgue--Stieltjes integration yields 
	\[
	\int_0^t \, G_{s-} \; \diff V^n_s  \; = \;V^{n}_t G_t \; - \; V^{n}_0 G_0  \; - \; \sum_{k=0}^\infty \; V^n_{s_{2k}} \; \; [G_{s_{2k+1} \wedge t}- G_{s_{2k} \wedge t}]
	\]
	for all $t\ge 0$. Now, choosing $G:= H^n - H^{n \, \vert\, m, \varepsilon}$, where we recall that by definition $|H^n-H^{n \, \vert\, m, \varepsilon}|\le \varepsilon$, we obtain
	\begin{align*}
		A_n \, &:= \, \left| \, \int_0^\bullet \, (H_{s-}^n - H^{n \, \vert\, m, \varepsilon}_{s-}) \diff V^n_s\, \right|^*_T   \\
		&\le  \; 2\varepsilon \, |V^n|^*_T\;  \; - \;  \sum_{k=0}^\infty \; |V^n_{s_{k}}| \; \; |(H_{s_{k}}^n - H^{n \, \vert\, m, \varepsilon}_{s_{k}})\, - \,(H_{s_{k+1}}^n - H^{n \, \vert\, m, \varepsilon}_{s_{k+1}})|.
	\end{align*} 
	Denote by $\rho^{m,\varepsilon}(H^n)$ the random partition used to define $H^{n\, \vert \,m, \varepsilon}$ in \eqref{eq:quantity1_integral_convergence_term_where_GD_needed} and by $\rho^m$ the deterministic subpartition of almost sure continuity points of $(H,X)$ such that its mesh size $|\rho^m|\to 0$ as $m\to \infty$. Since $H^{n \, \vert\, m, \varepsilon}$ by definition is constant between the partition points of $\rho^{m,\varepsilon}(H^n)$, we have
	\begin{align}
		A_n \; \; \le \; \; 2\varepsilon \; |V^n|^*_T \; &+ \; \sum_{\substack{k\ge 0 \, \\ \not \exists \, p\, \in \, \rho^{m,\varepsilon}(H^n) \; : \; s_{k} \, < \, p \, \le \, s_{k+1} }} \hspace{-5ex} |V^n_{s_{k}}| \; \; |(H_{s_{k}}^n - H^{n \, \vert\, m, \varepsilon}_{s_{k}})\, - \,(H_{s_{k+1}}^n - H^{n \, \vert\, m, \varepsilon}_{s_{k+1} }) | \notag \\
		&+ \; \sum_{\substack{k\ge 0 \, \\ \exists \, p\, \in \, \rho^{m,\varepsilon}(H^n) \; : \; s_{k} \, < \, p \, \le \, s_{k+1} }}  \hspace{-5ex} |V^n_{s_{k}}| \; \; |(H_{s_{k}}^n - H^{n \, \vert\, m, \varepsilon}_{s_{k}})\, - \,(H_{s_{k+1}}^n - H^{n \, \vert\, m, \varepsilon}_{s_{k+1} }) | \notag \\[2ex]
		\le \; \; 2\varepsilon \; |V^n|^*_T \;  &+ \; \sum_{\substack{k\ge 0 \, \\ \not \exists \, p\, \in \, \rho^{m,\varepsilon}(H^n) \; : \; s_{k} \, < \, p \, \le \, s_{k+1} }} \hspace{-5ex} |V^n_{s_{k}}| \; \; | H_{s_{k}}^n \, - \, H_{s_{k+1}}^n | \notag \\
		& + \; \sum_{\substack{k\ge 0 \, \\ \exists \, p\, \in \, \rho^{m,\varepsilon}(H^n) \; : \; s_{k} \, < \, p \, \le \, s_{k+1} }}  \hspace{-5ex} |V^n_{s_{k}}| \; \; |(H_{s_{k}}^n - H^{n \, \vert\, m, \varepsilon}_{s_{k}})\, - \,(H_{s_{k+1}}^n - H^{n \, \vert\, m, \varepsilon}_{s_{k+1} }) | \notag \\[2ex]
		\le \; \; 2\varepsilon \; |V^n|^*_T \;  \;  &+ \; 2C_T\, n^{-\gamma}\;  \sum_{t \; \in \; \zeta^n}  \; |V^n_{t}|  \; + \; \sum_{\substack{k\ge 0 \, \\ \exists \, p\, \in \, \rho^{m,\varepsilon}(H^n) \; : \; s_{k} \, < \, p \, \le \, s_{k+1} }} \hspace{-7ex}|V^n_{s_{k}}| \; \; |H_{s_{k}}^n - H^{n \, \vert\, m, \varepsilon}_{s_{k}}| \label{eq:3.2} 
	\end{align}
	where we have made use of the Lipschitz continuity of the $H^n$ and  $s_{k+1}-s_{k} \le |\zeta^{n}|\le n^{- \tilde \gamma}$. Since $V^n=X^n-U^n$, $(|V^n|^*_T)_{n\ge 1}$ is tight in $\R$ as a result of $(|U^n|^*_T)_{n\ge 1}, (|X^n|^*_T)_{n\ge 1}$ being tight in $\R$ (due to the tightness of $U^n,X^n$ on the Skorokhod space), and by Definition \ref{def:GD_mod_CA}(ii), for the first two terms of (\ref{eq:3.2}) it holds
	\begin{align} \lim\limits_{\varepsilon \to 0} \; \; \limsup\limits_{n\to \infty} \; \Pro^n \Bigl( \varepsilon \; |V^n|^*_T \; + \; 2C_T\, n^{-\gamma}\;  \sum_{t \; \in \; \pi^n}  \; |V^n_{t}| \; > \; \gamma\Bigr)  \; = \; 0 \label{eq:3.3}
	\end{align}
	for all $\gamma>0$. Considering the last term of (\ref{eq:3.2}), we continue bounding it by
	\begin{align}
		&\sum_{\substack{k\ge 0 \, \\ \exists \, p\, \in \, \rho^{m,\varepsilon}(H^n) \; : \; s_{k} \, < \, p \, \le \, s_{k+1} }} \hspace{-6ex} |V^n_{s_{k}}| \; \; |H_{s_{k}}^n - H^{n \, \vert\, m, \varepsilon}_{s_{k+1}}| \notag \\
		&\le \;  \sum_{\substack{k\ge 0 \\ \exists \, p\, \in \, \rho^{m,\varepsilon}(H^n)\setminus \rho^m \; : \; s_{k} \, < \, p \, \le \, s_{k+1} }}  \hspace{-6ex} |V^n_{s_{k}}| \; \; |H_{s_{k}}^n - H^{n \, \vert\, m, \varepsilon}_{s_{k}}| \; \; + \; \sum_{\substack{k\ge 0 \\ \exists \, p\, \in \, \rho^{m} \; : \; s_{k} \, < \, p \, \le \, s_{k+1} }}  \hspace{-5ex} |V^n_{s_{k}}| \; \; |H_{s_{k}}^n - H^{n \, \vert\, m, \varepsilon}_{s_{k}}| \notag \\
		&\le \;  \sum_{\substack{k\ge 0 \\ \exists \, p\, \in \, \rho^{m,\varepsilon}(H^n)\setminus \rho^m \; : \; s_{k} \, < \, p \, \le \, s_{k+1}}} \hspace{-6ex} (|V^n_{s_{k}}-V^n_{\min{}\{t \, \in \, \rho^m\,: \, t \, \ge\,  s_{k}\}}| + |V^n_{\min{}\{t \, \in \, \rho^m\,: \, t \, \ge\,  s_{k}\}}|) \; \; |H_{s_{k}}^n - H^{n \, \vert\, m, \varepsilon}_{s_{k}}| \notag \\[-2pt]
		& \quad \quad\quad  \;  + \; \;  \varepsilon \, \cdot \!\sum_{\substack{k\ge 0 \\ \exists \, p\, \in \, \rho^{m} \; : \; s_{k} \, < \, p \, \le \, s_{k+1} }} \hspace{-5ex} |V^n_{s_{k}}|\, . \label{eq:3.4}
	\end{align}
	Since $V^n=X^n-U^n$ and both $X^n, U^n$ converge weakly in M1 to $X$ (and therefore their finite-dimensional distributions converge to those of $X$ along a co-countable subset), there exists a dense subset $D\subseteq [0,T]$ such that the finite-dimensional distributions of $V^n$ converge to $0$ along $D$. Let $\pi^J$, $J\ge 2$ be such that $\pi^J=\{r_{J,i}: 0=r_{J,1}<r_{J,2}<...<r_{J,J}=T, \,r_{J,i} \in D\}$ such that $|\pi^J|:=\operatorname{max}_{2\le i \le J} |r_{J,i+1}-r_{J,i}|\to 0$ as $J\to \infty$. Recall that the number of partition points of the deterministic partition $\rho^m$ is bounded by some constant $k_m$ only depending on $m$ and $T$. Then, we continue by estimating
	\begin{alignat}{2} \sum_{\substack{k\ge 0 \\ \exists \, p\, \in \, \rho^{m} \; : \; s_{k} \, < \, p \, \le \, s_{k+1} }} \hspace{-6ex} |V^n_{s_{k}}|  \;\;  &\le \!\! \sum_{\substack{ k\ge 0 \\ \exists \, p\, \in \, \rho^{m} \; : \; s_{k} \, < \, p \, \le \, s_{k+1} }}  \hspace{-6ex}\Big(( \, |V^n_{s_{k}}-V^n_{\max{}\{t \, \in \, \pi^J\,: \, t \, \le\,  s_{k}\}}| \; \vee \; |V^n_{s_{k}}-V^n_{\min{}\{t \, \in \, \pi^J\,: \, t \, \ge\,  s_{k}\}}| \, ) \notag \\
		&  \hspace{18ex} +\;  ( \, |V^n_{\max{}\{t \, \in \, \pi^J\,: \, t \, \le\,  s_{k}\}}|  \; \vee \; |V^n_{\min{}\{t \, \in \, \pi^J\,: \, t \, \ge\,  s_{k}\}}| \, )\Big)  \notag\\
		&\le \; k_m \, \Big(w'(V^n, |\pi^J|)\; + \; 3\, \max{t \, \in \, \pi^J} \, |V^n_t| \Big)\notag \\
		&\le \; k_m \, \Big(w'(U^n, |\pi^J|)\; + \; w'(X^n, |\pi^J|)\; + \;3\, \max{t \, \in \, \pi^J} \, |V^n_t| \Big) \label{eq:3.5}
	\end{alignat}
	for all $J \in \N$. In particular, we have used for the second inequality that, by the triangle inequality, $|V_{r_2}-V_{r_1}|\vee |V_{r_2}-V_{r_3}|\le |V_{r_3}-V_{r_2}| + w'(V^n|_{[0,T]},\theta)$ for $0\le r_1 < r_2 < r_3\le T$, $|r_3-r_1|\le \theta$, where $w'$ denotes the modulus of continuity (as defined by the quantity $w'_s$ used in \cite[(12.8), Thm.~12.12.2]{whitt}). Moreover, for the third inequality, we have employed that $V^n=X^n-U^n$ and the property $w'(X^n-U^n, \theta) \le w'(X^n,\theta)+w'(U^n, \theta)$. Further, the number of partition points of $\rho^{m,\varepsilon}(H^n) \, \backslash \, \rho^{m}$ is bounded by $N^T_\varepsilon(H^n)$, where $N^T_\varepsilon$ is the maximal number of $\varepsilon$-increments of $H^n$ on $[0,T]$ defined by
	\begin{align} N^T_\varepsilon(\alpha) \; := \;   \sup{} \left\{ \, n \;\,  : \, \; 0=t_1\le t_2 \le ...\le t_{2n}=T \; , \;  |\alpha(t_{2i})- \alpha(t_{2i-1})|\ge \delta \right\} \label{eq:maxnumosc}. \end{align}
	On account of the bound \eqref{eq:3.5}, \eqref{eq:3.4} can be further estimated by 
	\begin{align}
		&\sum_{\substack{k\ge 0\\ \exists \, p\, \in \, \rho^{m,\varepsilon}(H^n) \; : \; s_{k} \, < \, p \, \le \, s_{k+1} }} \; |V^n_{s_k}| \; \; |H_{s_{k}}^n - H^{n \, \vert\, m, \varepsilon}_{s_{k}}| \notag \\
		&\le \;  N^T_\varepsilon(H^n) \; \sup{\substack{0\, \le \, s\, \le \, u \, \le \, r\, \le \, T \\ |s-r| \, \le\,  2|\rho^m|}}\; |H^{n}_{s}-H_{u}^n| \; \; |V^n_{u}-V^n_{r}| \; \;  + \; \; 2 \varepsilon \; N^T_\varepsilon(H^n) \; \sum_{p=1}^{k_m}  \; |V^n_{t^m_p}|\notag\\
		& \hspace{2cm}+ \; \;  \varepsilon \, k_m \, \max{t \, \in \, \pi^J} \, |V^n_{t}| \; + \; \varepsilon \, k_m \Big( \, w'(X^n|_{[0,T]}, \, |\pi^J|)  \; + \; w'(U^n|_{[0,T]}, \, |\pi^J|)\, \Big) \notag\\[2ex]
		&\le \;  2 N^T_\varepsilon(H^n) \; ( |H^n|^*_T \, + \, |V^n|^*_T) \; \hat{w}^T_{2|\rho^m|}(H^n,\, V^n) \notag\\  
		& \hspace{5cm}+ \; \;  \varepsilon \; N^T_\varepsilon(H^n) \; \sum_{p=1}^{k_m}  \; |V^n_{t^m_p}| \; \;  +\; \;  \varepsilon \, k_m \, \max{t \, \in \, \pi^J} \, |V^n_{t}| \notag \\[-9pt]
		& \hspace{5cm}+ \; \varepsilon \, k_m \Big( \, w'(X^n|_{[0,T]}, \, |\pi^J|)  \; + \; w'(U^n|_{[0,T]}, \, |\pi^J|)\, \Big) \label{eq:3.6}
	\end{align}
	where the $t^m_p$ denote the partition points of the partition $\rho^m$ (which without loss of generality can be assumed to be in the dense set $D$). Furthermore, $\hat{w}$ is the consecutive increment function from Definition \ref{def:avco}. The last inequality is based on the fact that $|x y| \, \le \, (|x|\vee |y|)(|x|\wedge |y|)$. Combining \eqref{eq:3.2} with \eqref{eq:3.6}, for all $J\ge 1$ we obtain 
	\begin{align}
		\left| \, \int_0^\bullet \, (H_{r-}^n - H^{n \, \vert\, m, \varepsilon}_{r-}) \; \diff V^n_r\, \right|^*_T  \; \; \le \; \; &2\varepsilon \; |V^n|^*_T \; \; + \; \; 2C\, n^{-\gamma}\;  \sum_{t \; \in \; \zeta^n}  \; |V^n_{t}| \notag\\
		&+\; \;2 N^T_\varepsilon(H^n) \; ( |H^n|^*_T \, + \, |V^n|^*_T) \; \hat{w}^T_{|\rho^m|}(H^n,\, V^n) \notag \\
		&+ \; \; 2 \varepsilon \; N^T_\varepsilon(H^n) \; \sum_{p=1}^{k_m}  \; |V^n_{t^m_p}| \; \;  + \; \;  \varepsilon \, k_m \, \max{t \, \in \, \pi^J} \, |V^n_{t}| \notag \\ 
		& + \; \varepsilon \, k_m \Big( \, w'(X^n|_{[0,T]}, \, |\pi^J|)  \; + \; w'(U^n|_{[0,T]} ,  \, |\pi^J|)\, \Big). \label{eq:3.7}
	\end{align}
	We know that $|H^n|^*_T$ is tight since the $H^n$ are tight in M1. The same holds for $|V^n|^*_T$ as $V^n=X^n-U^n$ and both $X^n$ and $U^n$ are tight in M1. In addition, for fixed $\varepsilon>0$, it is known that $N_\varepsilon^T(H^n)$ is tight (see e.g. \cite[Cor.~A.9]{andreasfabrice_theorypaper}). In addition, we have assumed \eqref{eq:oscillcond} for the sequence $(H^n,V^n)$ which is nothing else than 
	\begin{align} 
		\lim\limits_{m\to \infty} \; \limsup\limits_{n\to \infty} \; \; \Pro^n \left( \hat{w}^T_{|\rho^m|}(H^n,\, V^n) \; > \; \gamma \right) \; = \; 0 \label{eq:3.8}
	\end{align}
	for all $\gamma>0$ and $T\ge 1$. Furthermore, for fixed $m\ge 1$, $J\ge 1$, due to the convergence of the finite-dimensional distributions of $V^n$ to 0 along $D$, it holds that
	\begin{align}
		\sum_{p=1}^{k_m}  \; |V^n_{t^m_p}| \; \;  \conv{n\to \infty}{\Pro^n} \; \; 0 \qquad \text{ and }\qquad  \max{t \, \in \, \pi^J} \, |V^n_{t}| \; \;  \conv{n\to \infty}{\Pro^n} \; \; 0 . \label{eq:3.9}
	\end{align}
	Finally, it is well-known that the tightness of the $X^n$ and $U^n$ on the M1 Skorokhod space implies in particular (see e.g. \cite[Thm.~12.12.3]{whitt}) 
	\begin{align} \lim\limits_{J \to \infty} \; \limsup\limits_{n\to \infty} \; \; \Pro^n \left( w'(X^n|_{[0,T]}, \, |\pi^J|)+w'(U^n|_{[0,T]}, \, |\pi^J|) \; > \; \gamma \right) \; = \; 0.\label{eq:3.10}
	\end{align}
	With regards to \eqref{eq:3.7}, we now employ \eqref{eq:3.3}, \eqref{eq:3.8}, \eqref{eq:3.9} and \eqref{eq:3.10} to deduce
	$$\lim\limits_{\varepsilon \to 0} \; \; \limsup\limits_{n\to \infty} \; \; \Pro^n \biggl( \, \biggl| \, \int_0^\bullet \, (H_{r-}^n - H^{n \, \vert \, \varepsilon}_{r-}) \; \diff V^n\, \biggr|^*_T \; > \; \gamma \biggr) \; \; = \; \; 0$$
	for all $\gamma>0$, which immediately yields \eqref{eq:quantity3_integral_convergence_term_where_GD_needed}.
\end{proof}

\subsection{Proofs pertaining to Section \ref{subsec:Control_through_independence}}
\begin{proof}[Proof of Theorem \ref{thm:Integral_converg_under_GDmodCI}]
	As before, in the proofs of Proposition \ref{prop:direct_control_of _TV} and Theorem \ref{thm:Lipschitz_GDmodCA}, it is enough to show the bound \eqref{eq:quantity2_integral_convergence_term_where_GD_needed}
	for every $T>0$ with integrators $V^n= \sum_{i\ge 1} V^{n,i}$. For this we exploit the main ideas of the proof of \cite[Lem.~2(b)]{avram} and adapt it to our setting. Fix $T>0$ and denote $\tilde{H}^{n \, \vert\, m,\varepsilon}:=H^n - H^{n \, \vert\, m, \varepsilon}$. For all $\varepsilon>0$ and $n,m\ge 1$ it holds that 
	\begin{align*} \int_0^t \,\tilde{H}^{n \, \vert\, m,\varepsilon}_{s-} \; \diff V^n_s \; &= \; \sum_{i=1}^\infty \sum_{k=1}^{\Lambda^{n,i}(t)} \; \tilde{H}^{n \, \vert\, m,\varepsilon}_{\sigma^{n,i}_k-}\; \Delta V^{n,i}_{\sigma^{n,i}_k} \\
		& = \; \sum_{i=1}^\infty \sum_{k=1}^{\Lambda^{n,i}(t)} \; \tilde{H}^{n \, \vert\, m,\varepsilon}_{\sigma^{n,i}_k-}\; V^{n,i}_{\sigma^{n,i}_k}  \; - \; \sum_{i=1}^\infty \sum_{k=1}^{\Lambda^{n,i}(t)} \; \tilde{H}^{n \, \vert\, m,\varepsilon}_{\sigma^{n,i}_k-}\; V^{n,i}_{\sigma^{n,i}_{k-1}}.
	\end{align*}
	Let $\delta>0$. By (ii.i) of Definition \ref{defi:GD_mod_CI} there exists $K_\delta>0$ such that
	\begin{align} \E^n \biggl[  \left| \, \int_0^\bullet \, \tilde{H}^{n \, \vert\, m,\varepsilon}_{s-} \diff V^{n}_s  \right|^*_T \wedge  1 \biggr] \; &\le \; \delta \; + \; \sum_{i=1}^\infty \, \E^n \biggl[\sup{j=1,...,f(n)K_\delta}  \, \biggl| \, \sum_{k=1}^{j} \, \tilde{H}^{n \, \vert\, m,\varepsilon}_{\sigma^{n,i}_k-}\; V^{n,i}_{\sigma^{n,i}_k} \, \biggr| \,\wedge \, 1 \, \biggr]\notag  \\
		& \qquad + \;  \sum_{i=1}^\infty \, \E^n \biggl[\sup{j=1,...,f(n)K_\delta}  \, \biggl| \, \sum_{k=1}^{j} \, \tilde{H}^{n \, \vert\, m,\varepsilon}_{\sigma^{n,i}_k-}\;  V^{n,i}_{\sigma^{n,i}_{k-1}} \, \biggr|\,\wedge\, 1 \,   \biggr] .\label{eq:4.27}
	\end{align}
	where we have made use of dominated convergence to swap the infinite sum. We begin by bounding individually each of the expectation summands in the first infinite sum of \eqref{eq:4.27} and the bound for the termns in the second sum can then be obtained in full analogy. To this end, we define
	\begin{align*}
		Z^{n,i,\, \le}_k \; &:= \; \tilde{H}^{n \, \vert\, m,\varepsilon}_{\sigma^{n,i}_k-} \biggl( V^{n,i,\, \le }_{\sigma^{n,i}_k} \, - \, \E^n\Big[V^{n,i,\, \le}_{\sigma^{n,i}_k} \; | \;\mathcal{V}^i_{n,k-1}\Big] \biggr) \\
		Z^{n,i,\, >}_k \; &:= \;  \tilde{H}^{n \, \vert\, m,\varepsilon}_{\sigma^{n,i}_k-} \biggl( V^{n,i,\, > }_{\sigma^{n,i}_k} \, - \, \E^n\Big[V^{n,i,\, \le}_{\sigma^{n,i}_k} \; | \;\mathcal{V}^i_{n,k-1}\Big] \biggr)
	\end{align*}
	where we have denoted 
	$$V^{n,i, \, \le }_{\sigma^{n,i}_k}\, := \, V^{n,i }_{\sigma^{n,i}_k} \ind_{\{|V^{n,i }_{\sigma^{n,i}_k}| \, \le \, 1\}} \quad \text{ and } \quad V^{n,i,\, > }_{\sigma^{n,i}_k}:= V^{n,i }_{\sigma^{n,i}_k} \ind_{\{|V^{n,i }_{\sigma^{n,i}_k}|\, > \, 1\}}$$ 
	and $\mathcal{V}^i_{n,\ell}$ is defined as in (ii.ii) of Definition \ref{defi:GD_mod_CI}. Let us remark that $Z^{n,i,\, \le}_k+Z^{n,i,\, >}_k= \tilde{H}^{n \, \vert\, m,\varepsilon}_{\sigma^{n,i}_k-} \; V^{n,i}_{\sigma^{n,i}_k}$. The discrete-time processes $j\mapsto \sum_{k=1}^{j}\, Z^{n,i,\, \le}_k$ and $j\mapsto \sum_{k=1}^{j}\, Z^{n,i,\, >}_k$ are both martingales with respect to the filtration $\{ \sigma(\mathcal{V}^i_{n,j} ,  \mathcal{H}^i_{n,j}) \}_{j\ge 1}$, where $\mathcal{H}^i_{n,j} :=  \sigma(\, H^n_{t\,\wedge \, \sigma^{n,i}_j} :  t\ge 0 )$. To see this, it suffices to note that for $k> j$ we have
	\begin{align*}
		\E^n \bigl[ Z_k^{n,i,\, \le} \; | \; \mathcal{V}^i_{n,j} ,  \mathcal{H}^i_{n,j} \,   \bigr] =  \E^n \Bigl[  \E^n \bigl[ \,Z_k^{n,i,\, \le} \; | \; \mathcal{V}^i_{n,k-1} ,  \mathcal{H}^i_{n,k}\, \bigr] \; | \; \mathcal{V}^i_{n,j} ,  \mathcal{H}^i_{n,j} \, \Bigr].
	\end{align*}
	Based on the measurability of $\tilde{H}^{n \, \vert\, m,\varepsilon}_{\sigma^{n,i}_k-}$ with respect to $\mathcal{H}^i_{n,k}$ as well as assumption \eqref{eq:indep_cond_int_conv_GDmodCI}, that is the independence of $V^{n,i}_{\sigma^{n,i}_k}$ and $\mathcal{H}^i_{n,k}$, we obtain
	\begin{align*}
		&\E^n \bigl[ Z_k^{n,i,\, \le} \; | \; \mathcal{V}^i_{n,k-1} ,  \mathcal{H}^i_{n,k} \,   \bigr] \; = \; \E^n \biggl[ \,\tilde{H}^{n \, \vert\, m,\varepsilon}_{\sigma^{n,i}_k-} \Big( V^{n,i, \le }_{\sigma^{n,i}_k} \, - \, \E^n\Big[V^{n,i,\le}_{\sigma^{n,i}_k} \;| \;\mathcal{V}^i_{n,k-1}\Big]  \Big) \; | \; \mathcal{V}^i_{n,k-1}\, , \, \mathcal{H}^i_{n,k}  \,    \biggr]  \\
		& \qquad  = \;  \tilde{H}^{n \, \vert\, m,\varepsilon}_{\sigma^{n,i}_k-} \; \Big( \E^n \Big[ \, V^{n,i, \le }_{\sigma^{n,i}_k} \; | \; \mathcal{V}^i_{n,k-1}\, , \, \mathcal{H}^i_{n,k}\, \Big] \; - \;\E^n \big[ \,V^{n,i, \le }_{\sigma^{n,i}_k} \; | \; \mathcal{V}^i_{n,k-1} \,  \Big] \Big) \\
		& \qquad = \; \tilde{H}^{n \, \vert\, m,\varepsilon}_{\sigma^{n,i}_k-} \; \Big( \E^n \Big[ \,V^{n,i, \le }_{\sigma^{n,i}_k} \; | \; \mathcal{V}^i_{n,k-1}  \, \Big] \; - \;\E^n \left[ \,V^{n,i, \le }_{\sigma^{n,i}_k} \; | \; \mathcal{V}^i_{n,k-1} \, \right] \Big) \; = \; 0. 
	\end{align*}
	We proceed similarly for the process $j\mapsto \sum_{k=1}^{j}\, Z^{n,i,\, >}_k$ by using $V^{n,i, \, >}_{\sigma^{n,i}_k}=V^{n,i}_{\sigma^{n,i}_k}- V^{n,i, \, \le}_{\sigma^{n,i}_k}$ and (ii.ii) from Definition \ref{defi:GD_mod_CI}. Since both  $j\mapsto \sum_{k=1}^{j}\, Z^{n,i,\, \le}_k$ and $j\mapsto \sum_{k=1}^{j}\, Z^{n,i,\, >}_k$ are discrete-time martingales, we can apply Doob's maximal inequality (after invoking Jensen's inequality) to obtain 
	\begin{align}
		&\E^n \biggl[\sup{j=1,...,f(n)K_\delta}  \, \biggl| \, \sum_{k=1}^{j} \, \tilde{H}^{n \, \vert\, m,\varepsilon}_{\sigma^{n,i}_k-}\; V^{n,i}_{\sigma^{n,i}_k} \, \biggr| \,\wedge \, 1 \, \biggr] \notag \\
		&\le \quad  \E^n \biggl[\sup{j=1,...,f(n)K_\delta}  \, \biggl| \, \sum_{k=1}^{j} \, Z^{n,i,\, \le}_k \, \biggr|^\mu \,\wedge \, 1 \, \biggr]^\frac{1}{\mu}  \; + \; \E^n \biggl[\sup{j=1,...,f(n)K_\delta}  \, \biggl| \, \sum_{k=1}^{j} \, Z^{n,i,\, >}_k \, \biggr|^\lambda \,\wedge \, 1 \, \biggr]^\frac{1}{\lambda} \label{eq:eq_for_remark_on_alt_set_of_cond_GDmodCI} \\
		&\le \quad \frac{\mu}{\mu-1}  \; \E^n \Biggl[ \, \Biggl| \, \sum_{k=1}^{f(n)K_\delta} \; Z^{n,i,\, \le}_k \, \Biggr|^\mu \, \Biggr]^\frac{1}{\mu} \; + \; \frac{\lambda}{\lambda-1}\E^n \Biggl[ \, \Biggl| \, \sum_{k=1}^{f(n)K_\delta} \; Z^{n,i,\, >}_k \, \Biggr|^\lambda\, \Biggr]^\frac{1}{\lambda}.\notag
	\end{align}
	By Bahr-Esseen's martingale-differences inequality \cite[Theorem 2]{bahresseen}, this can be further estimated by 
	\begin{align} \le \; C_\mu \; \biggl(\sum_{k=1}^{f(n)K_\delta} \E^n \Bigl[ \, \Bigl| \,Z^{n,i,\, \le}_k \, \Bigr|^\mu \, \Bigr]\biggr)^\frac{1}{\mu} \; + \; C_\lambda \; \biggl(\sum_{k=1}^{f(n) K_\delta} \; \E^n \Bigl[ \, \Bigl| \,  Z^{n,i,\, >}_k \, \Bigr|^\lambda\, \Bigr]\biggr)^\frac{1}{\lambda}. \label{eq:4.29}
	\end{align}
	with $C_\mu,C_\alpha>0$ constants that only depend on $\mu,\lambda$ respectively. Finally, recall that $|\tilde{H}^{n \, \vert\, m,\varepsilon}|\le \varepsilon$, and, by definition of the $Z^{n,i,\, \le}_k$, a simple applications of Jensen's inequality yields
	\[
	\E^n \Bigl[ \, \Bigl| \,Z^{n,i,\, \le}_k \, \Bigr|^\mu \, \Bigr] \; \le \; (2\varepsilon)^{\mu}  \;  \E^n\Bigl[\, |V^{n,i}_{\sigma^{n,i}_k}|^\mu\, \ind_{\{|V^{n,i}_{_{\sigma^{n,i}_k}}|\, \le \, 1\}}\,\Bigr] \quad \text{for all}\;\;n,k,i\ge 1.
	\]
	Analogously we obtain the estimate $\E^n [  | Z^{n,\, >}_k  |^\lambda ] \, \le \, (2 \varepsilon)^\lambda \, \E^n[|V^{n,i}_{\sigma^{n,i}_k}|^\mu\, \ind_{\{|V^{n,i}_{_{\sigma^{n,i}_k}}|\, > \, 1\}}]$. Hence, based on (\ref{eq:4.29}), this yields
	\begin{align}
		&\limsup_{n\to \infty}\; \sum_{i=1}^\infty \, \E^n \biggl[\sup{j=1,...,f(n)K_\delta}  \, \biggl| \, \sum_{k=1}^{j} \, \tilde{H}^{n \, \vert\, m,\varepsilon}_{\sigma^{n,i}_k-}\; V^{n,i}_{\sigma^{n,i}_k} \, \biggr| \,\wedge \, 1 \, \biggr]  \notag \\ 
		&\le \; \; 2\varepsilon \; C_{\mu} \; \limsup_{n\to \infty} \; \sum_{i=1}^\infty \, \Bigl(\sum_{k=1}^{f(n)K_\delta} \E^n\Bigl[\, |V^{n,i}_{\sigma^{n,i}_k}|^\mu\, \ind_{\{|V^{n,i}_{_{\sigma^{n,i}_k}}|\, \le \, 1\}}\,\Bigr] \Bigr)^\frac{1}{\mu}  \notag \\  
		&\qquad \; + \; 2\varepsilon \; C_{\lambda} \; \limsup_{n\to \infty} \; \sum_{i=1}^\infty \, \Bigl(  \sum_{k=1}^{f(n)K_\delta} \E^n\Bigl[\, |V^{n,i}_{\sigma^{n,i}_k}|^\lambda\, \ind_{\{|V^{n,i}_{_{\sigma^{n,i}_k}}|\, > \, 1\}}\,\Bigr] \Bigr)^\frac{1}{\lambda} . \label{eq:4.30}
	\end{align}
	Then, the limit superior parts of \eqref{eq:4.30} are finite by (ii.iii) of Definition \ref{defi:GD_mod_CI} and \eqref{eq:4.30} tends to $0$ as $\varepsilon \to 0$ . Analogously to the above procedure, one can achieve the same bound for the second infinite sum of \eqref{eq:4.27} and hence deduce \eqref{eq:quantity2_integral_convergence_term_where_GD_needed}.
\end{proof}

\subsection{Proofs pertaining to Section \ref{subsec:Final_results_for_CTRWs}}

\begin{proof}[Proof of Proposition \ref{prop:correlate_CTRWs_are_GDmodCA}] By Lemma \ref{lem:decomp_U^n_part_has_GD} and the convergence results for uncorrelated or correlated uncoupled CTRWs in \eqref{eq:CTRW_M1_conv}, it remains to show (ii) of Definition \ref{def:GD_mod_CA}. To this end, fix $T>0$, $\gamma>\beta-\beta/\alpha$ and suppose \eqref{eq:technical_condition} to hold. Let $\rho=1$ if $1<\alpha\le 2$ and $0<\rho<1$ such that $\rho(\gamma+\beta/\alpha)>\beta$ if $\alpha=1$. First note that for every random variable $s: \Omega \to [0,T]$, by monotone convergence and identical distribution of the $\theta_k$, we obtain 
	\begin{align}
		\E \left[ \,  |V^n_s|^\rho \, \right] \; & \le \; \frac{\tilde c\, \psi^{-\rho}}{n^{\frac{\rho \beta} \alpha}}\, \E[ \, |\theta_{0}|^\rho \, ]  \label{eq:5.14}
	\end{align}
	where $\tilde c:= \sum_{i=1}^\infty (\sum_{j=i}^\infty c_j)^\rho<\infty$.
	Recall in particular that the $\rho$-th moment of $\theta_0$ exists as its law is in the domain of attraction of an $\alpha$-stable random variable where $1\le \alpha\le 2$. For every $n\ge 1$ define (random) partitions $\zeta^n$ by
	$$ \pi^n \; := \; \underbrace{\Bigl\{ \, \frac{k}{n^\beta}T \; :\; k=0,1,...,n^\beta\, \Bigr\}}_{=: \pi^{n,1}} \; \, \cup \, \; \underbrace{\bigl\{ \, 0< s \le T \; : \; \Delta N_{ns} \neq 0 \, \bigr\}}_{=: \pi^{n,2}}$$
	and the second set $\pi^{n,2}$ contains precisely the jump times of $V^n$ on $[0,T]$. Moreover, the mesh size of the partitions satisfy $|\pi^n| \le n^{-\beta}$. In addition, since $t\mapsto N_{nt}$ is a counting process,we have
	$$ \operatorname{card}(\pi^{n,2}) \; = \; \operatorname{card} \left( \left\{ \, 0< s \le T \; : \; \Delta N_{ns} \neq 0 \, \right\}\right) \; = \; N_{nT}.$$
	Now, let $\varepsilon>0$. Then, by tightness of $n^{-\beta} N_{n\bullet}$, there exists $K_\varepsilon>0$ such that $\Pro( \, N_{nT}  > K_\varepsilon n^\beta) \le \varepsilon$.
	Hence, we have
	\begin{align*}
		\Pro &\Big( \, n^{- \gamma}\,\sum_{s \, \in \, \pi^n} |V^n_s| \; > \; \lambda\, \Big) \; \le \; \Pro \Big( \, \sum_{s \, \in \, \pi^{n,1}} |V^n_s| \; > \; \frac{\lambda n^{ \gamma}}{2}\, \Big) \; \\
		& +\; \Pro \Big( \, \sum_{s \, \in \, \pi^{n,2}} |V^n_s| \; > \; \frac{\lambda n^{ \gamma}}{2} \; , \; N_{nT} \, \le \, K_\varepsilon n^\beta  \, \Big)  + \; \Pro \left( \, N_{nT} \, > \, K_\varepsilon n^\beta \, \right) \\
		&\le \; \Pro \left( \, \sum_{k=0}^{n^\beta} |V^n_{\frac{k}{n^\beta}T}| \; > \; \frac{\lambda n^{ \gamma}}{ 2}\, \right)\; + \; \Pro \left( \, \sum_{k=1}^{K_\varepsilon n^{\beta}} |V^n_{L_k \wedge T}| \; > \; \frac{\lambda n^{\gamma}}{2} \, \right) \; + \; \varepsilon
	\end{align*}
	where the $L_k$ are defined as in \eqref{defi:CTRW}. We recall that $\rho\le 1$. Then, by Markov's inequality and \eqref{eq:5.14}, this implies
	\begin{align*}
		\Pro \Bigl( \, n^{-\gamma}\,\sum_{s \, \in \, \pi^n} |V^n_s| \; > \; \lambda\, \Bigr) \; &\le \; \Big( \frac{2}{\lambda n^{\gamma}}\Big)^\rho \biggl( \,\sum_{k=1}^{n^\beta} \, \E[\, |V^n_{\frac{k}{n^\beta}T}|^\rho\,] \; + \;  \sum_{k=1}^{K_\varepsilon n^{\beta}} \E[\, |V^n_{L_k\wedge T}|^\rho\, ]\, \biggr) \; + \; \varepsilon \\
		&\le \; \frac{2^\rho \tilde c \, \psi^{-\rho} (K_\varepsilon+1)n^\beta}{\lambda^\rho n^{\rho (\gamma+\frac\beta \alpha)}} \; \E[\,|\theta_0|^\rho\,] \; + \; \varepsilon 
	\end{align*}
	for all $n\ge 1$. Since $\gamma>\beta - \beta/\alpha$ by assumption and $\rho$ is such that $\rho(\gamma + \beta/\alpha)> \beta$, this tends to $\varepsilon$ as $n\to \infty$. As $\varepsilon>0$ was arbitrary, this yields (ii) of Definition \ref{def:GD_mod_CA} for $\operatorname{GD\,mod\,CA}(\gamma, \beta)$.
\end{proof}

\begin{proof}[Proof of Theorem \ref{thm:result_int_conv_CTRWs_lipschitz_integrands}]
	The result follows directly from Proposition \ref{prop:correlate_CTRWs_are_GDmodCA} and Theorem \ref{thm:Lipschitz_GDmodCA}.
\end{proof}

\begin{proof}[Proof of Proposition \ref{prop:correlated_CTRW_are_GDmodCI}]
	We will only prove the claim for correlated CTRWs, the proof for moving averages can be conducted in full analogy. Set $\tilde{U}^n\equiv 0$ and note that $\psi^{-1}X^n=U^n+\tilde{U}^n+ \sum_{i=1}^\infty V^{n,i}$, where $U^n$ is defined as in \eqref{eq:defi_U^n_in_CTRW_decomp}. Due to Lemma \ref{lem:decomp_U^n_part_has_GD} and the convergence results for uncorrelated and correlated uncoupled CTRWs in \eqref{eq:CTRW_M1_conv}, only (ii) of Definition \ref{defi:GD_mod_CI} is still to be shown. 
	With stopping times $\sigma^{n,i}_k= L_k/n$, $k\ge 1$, we obtain by definition of $N_{nt}$,
	\begin{align} 
		V^{n,i}_{\sigma^{n,i}_k} \; = \;  - \, \frac{1}{\psi n^{\frac \beta \alpha}} \Big(\, \sum_{\ell=i}^{\infty} c_{\ell}\, \Big) \, \theta_{k-i+1}, \label{eq:definition_V_correlated_CTRWs_are_GDmodCI}
	\end{align}
	where we recall the definition $\psi=\sum_{\ell=0}^\infty c_\ell$. Hence (ii.ii) of Definition \ref{defi:GD_mod_CI} follows from the independence of the $\theta_k$ and the fact that they are centered as well. Since $\Lambda^{n,i}(t)= N_{nt}$, by definition of the $\sigma^{n,i}_k$ and given that $|N_{n\bullet}/n^\beta|^*_T$ is tight in $\R$ for every $T>0$, we obtain (ii.i) of Definition \ref{defi:GD_mod_CI} with $f(n)=n^\beta$. Thus, it only remains to verify (ii.iii). To this end, let $k\ge 1$ and let $\gamma>0$ be such that $\alpha-\gamma>1$. Denote $\tilde c_i:= {\psi^{-1}}\sum_{\ell=i}^\infty c_\ell\le 1$, $\tilde c:= \sum_{i=1}^\infty \tilde c_i$. Then, using Jensen's inequality, monotone convergence and the identical distribution of the $\theta_k$, we obtain
	\begin{align}
		\E \left[ \, (V^{n,i,\, >}_k)^{\, \alpha -\gamma} \, \right] \; \le \; \tilde c_i^{\alpha-\gamma} \, n^{-\frac{\beta(\alpha-\gamma)}{\alpha}} \, \E \left[ \, |\theta_0 \, |^{\, \alpha -\gamma} \; \ind_{\{|\theta_0|\, > \, n^{\frac \beta \alpha} \}} \, \right]  \label{eq:4.32}
	\end{align}
	for all $i,k,n\ge 1$, where we recall that $V^{n,i,>}_k= |V^{n,i}_{\sigma^{n,i}_k}| \ind_{\{|V^{n,i}_{\sigma^{n,i}_k}|>1\}}$. Further,
	\begin{align}
		\E \Bigl[ \, |\theta_{0} \, |^{\, \alpha -\gamma}\; &\ind_{\{|\theta_{0}|\, > \, n^{\frac \beta \alpha} \}} \, \Bigr] \; = \; \int_{(n^{\frac \beta \alpha}, \, \infty)} \, x^{\alpha-\gamma} \; \diff \Phi(x) \; + \; \int_{(-\infty,\, - n^{\frac \beta \alpha})} \, (-x)^{\alpha-\gamma} \; \diff \Phi(x) \notag\\
		&= \; -\int_{(n^{\frac \beta \alpha}, \, \infty)} \, x^{\alpha-\gamma} \; \diff (1-\Phi)(x) \; - \; \int_{(-\infty,\, - n^{\frac \beta \alpha})} \, (-x)^{\alpha-\gamma} \; \diff (1-\Phi)(x) \label{eq:4.31}
	\end{align}
	where we have denoted the cdf of $\theta_0$ by $\Phi$. Integration by parts now yields
	\begin{align*}
		\int_{n^{\frac \beta \alpha}}^\infty \, x^{\alpha-\gamma} \; &\diff (1-\Phi)(x)  =  \left[ \, x^{\alpha-\gamma} (1-\Phi)(x)  \right]_{ n^{\frac \beta \alpha}}^{\infty} \; - \; (\alpha-\gamma)  \int_{ n^{\frac \beta \alpha}}^{\infty} x^{\alpha-\gamma-1} \, (1-\Phi)(x) \diff x \\
		&\ge \; -n^{\frac{\beta(\alpha-\gamma)}{\alpha}} \, n^{-\beta}(1+\oland(1))  \; - \; (\alpha-\gamma) \, \int_{ n^{\frac \beta \alpha}}^{\infty} \, x^{\alpha-\gamma-1} \, x^{-\alpha} (1+ \oland(1)) \diff x \\
		&\ge \; - \frac 3 2 n^{-\frac{\beta\gamma}{\alpha}}  \; + \; \frac{3(\alpha-\gamma)}{2\gamma} \, n^{-\frac{\beta \gamma}{\alpha}} \; = \; -\frac{3\alpha}{2\gamma} \, n^{-\frac{\beta \gamma}{\alpha}}
	\end{align*}
	for $n$ large enough, where we have used that $(1-\Phi)(x)\le \Pro(|\theta_0|>x) \sim x^{-\alpha}$ and $\oland(1)$ is the Landau notation for an asymptotically vanishing function in $n$ or $x$ respectively. Analogously, we achieve the same lower bound for the second integral of (\ref{eq:4.31}). Hence, this implies
	$$ \E \left[ \, |\theta_{0} \, |^{\, \alpha -\gamma}\; \ind_{\{|\theta_{0}|\, > \, n^{\frac \beta \alpha} \}} \, \right] \; \le \; \frac{3\alpha}{\gamma} \, n^{-\frac{\beta \gamma}{\alpha}}$$
	for $n$ large enough, and combining this with (\ref{eq:4.32}), we deduce
	$$\sum_{i=1}^\infty \, \Bigl( \sum_{k=0}^{K \, n^\beta} \, \E \Big[ \, (V^{n,i,\, >}_{k})^{\, \alpha-\gamma} \, \Big]\Bigr)^{\frac{1}{\alpha-\gamma}} \; \le \; \tilde c \Bigl( \frac{3\alpha }{\gamma}\,  (K n^\beta+1) \,  \,  n^{-\beta}\Bigr)^{\frac{1}{\alpha-\gamma}} \; \le \; \tilde c \Bigl( \frac{(K+1)\alpha }{\gamma}\,  \Bigr)^{\frac{1}{\alpha-\gamma}} \; < \; \infty$$
	for $n$ large enough, which yields the first bound in (ii.iii) in Definition \ref{defi:GD_mod_CI} with $\lambda =\alpha-\gamma$, since $\tilde c< \infty$ by \eqref{eq:technical_condition} . For the second quantity in (ii.iii), we obtain a similar bound with $\mu=\alpha+\gamma$, by proceeding similarly to the first uniform bound, using that $\E[|\theta_0|^{\alpha+\gamma-1}]< \infty$ and $\tilde c_i \le 1$.
\end{proof}

\begin{proof}[Proof of Theorem \ref{thm:result_int_conv_CTRWs_with_independence_cond}]
	If the pairs $(H^n,X^n)$ satisfy \eqref{eq:oscillcond}, then, by making use of Proposition \ref{prop:correlated_CTRW_are_GDmodCI}, an application of Theorem \ref{thm:Integral_converg_under_GDmodCI}, 
	with the $\sigma^{n,i}_k$ and $V^{n,i}$ given in Proposition \ref{prop:correlated_CTRW_are_GDmodCI}, yields the claim.
	Moreover, as laid out in Remark \ref{rem:alternative_conditions_AVCI}, one can replace \eqref{eq:oscillcond} by an alternative set of conditions specified in \cite[Thm.~4.7(a)\&(b)]{andreasfabrice_theorypaper}. Since $X^n$ is an correlated CTRW or a moving average, and these processes are 'uncoupled', the required set of conditions is satisfied. Indeed, choosing $\sigma^{n}_k=L_k/n$ for CTRWs and $\sigma^{n}_k=k/n$ for moving averages, condition (b) follows in analogy to the approach set out in the remark after \cite[Ex.~4.11]{andreasfabrice_theorypaper}. Turning to condition (a), it is straightforward to use \cite[Lem.~4.11]{andreasfabrice_theorypaper} together with \eqref{eq:5.17} and the fact that $X^n$ 'uncoupled', that is the waiting times are independent of the innovations.
\end{proof}

\begin{remark} \label{rem:modified_GDmodCI}
	A close inspection of the proof of Theorem \ref{thm:Integral_converg_under_GDmodCI} reveals that if instead of (ii.ii) in Definition \ref{defi:GD_mod_CI}, we assume  
	\begin{align}
		\E^n \left[ V^{n,i,\, \le}_{\sigma^{n,i}_k} \; | \; \mathcal{V}^i_{n,k-1}\, \right] \; = \; 0 \label{eq:remark_alternative_GDmodCI_eq1}
	\end{align}
	for every $n,k,i\ge 1$, then, in particular, we gain some degree of freedom towards (ii.iii), where it suffices to replace the first bound by the direct control 
	\begin{align} 
		\lim_{M \to \infty} \; \limsup_{n \to  \infty} \;  \; \Pro^n \biggl( \;  \sum_{k=1}^{K f(n)}\sum_{i=1}^\infty \;|V^{n,i, \, >}_{\sigma^{n,i}_{k}}|  \; \ge  \; M \, \biggr) \; =\; 0 \label{eq:remark_alternative_GDmodCI_eq2}
	\end{align}
	for all $K>0$. Under this alternative set of assumptions, Theorem \ref{thm:Integral_converg_under_GDmodCI} is clearly still valid. Indeed, in this case, proceed from \eqref{eq:4.27} without exchanging the expectation and the infinite sum until \eqref{eq:eq_for_remark_on_alt_set_of_cond_GDmodCI}. Then, only "pull out" the infinite sum for the first part and proceed for this term as in the proof of Theorem \ref{thm:Integral_converg_under_GDmodCI}. For the second expectation, we then obtain the bound
	\begin{align}
		\E^n \biggl[\, \biggl( \sup{j=1,...,f(n)K_\delta}  \, \bigl| \, \sum_{k=1}^{j}\, \sum_{i=1}^\infty \, Z^{n,i,\, >}_k\bigr|\biggr) \,\wedge \, 1 \, \biggr] \; &\le \;  \E^n \biggl[\, \bigl( \varepsilon \sum_{k=1}^{K_\delta f(n)} \sum_{i=1}^\infty \, |V^{n,i,\, >}_{\sigma^{n,i}_k}| \bigr) \,\wedge \, 1 \, \biggr] \notag \\
		&\le \;  \eta \; + \; \Pro^n \biggl( \sum_{k=1}^{K_\delta f(n)} \sum_{i=1}^\infty \, |V^{n,i,\, >}_{\sigma^{n,i}_k}| \bigr) \,\ge \, \frac{\eta}{\varepsilon} \biggr) \notag 
	\end{align}
	for all $\eta>0$. Due to \eqref{eq:remark_alternative_GDmodCI_eq2}, by first taking the limit superior in $n$ and then letting $\varepsilon\to 0$, the probability summand vanishes. Since $\eta$ was chosen to be arbitrary, we deduce the desired convergence. 
\end{remark}

\begin{proof}[Proof of Corollary \ref{cor:CTRWs_alpha=1_are_GDmodCI}]
	We only need to verify the conditions \eqref{eq:remark_alternative_GDmodCI_eq1} and \eqref{eq:remark_alternative_GDmodCI_eq2} from Remark \ref{rem:modified_GDmodCI}, as the remaining properties of Definition \ref{defi:GD_mod_CI} follow in the exact same way as outlined in the proof of Proposition \ref{prop:correlated_CTRW_are_GDmodCI} (we recall, with $f(n)=n^\beta$). Due to the symmetry of the law of the $\theta_k$, \eqref{eq:remark_alternative_GDmodCI_eq1} is obtained immediately with the choice of $V^{n,i}$ and $\sigma^{n,i}_k$ made in \eqref{eq:definition_V_correlated_CTRWs_are_GDmodCI}. Moreover, since $t\mapsto \sum_{k=1}^{\lfloor n^\beta t\rfloor} n^{-\beta/\alpha} \theta_k$ is a subsequence of the zero-order moving averages \eqref{eq:MovAv} converging weakly in the J1 Skorokhod space, to every $\delta>0$ there exists $\Gamma_\delta >0$ such that 
	$$ \sup{n\ge 1} \;\;  \Pro \Biggl( \sum_{k=1}^{Kn^\beta} \ind_{\{ |\theta_k| \, > \, n^{ \beta / \alpha} \}} \, > \, \Gamma_\delta \; , \; n^{-\beta/\alpha} \operatorname{max}_{k=1,...,Kn^\beta} |\theta_k| \, > \, \Gamma_\delta \Biggr) \; \le \; \delta $$
	as the maximal number of large oscillations as well as the absolute size of jumps is tight (see e.g. \cite[Thm.~A.8 \& Cor.~A.9]{andreasfabrice_theorypaper}). However, this yields
	\begin{align*}
		\Pro \Biggl( \;  \sum_{k=1}^{K n^\beta}\sum_{i=1}^\infty \;|V^{n,i, \, >}_{\sigma^{n,i}_{k}}|  \; \ge  \; M \, \Biggr) \; &\le \; \Pro \Biggl( \;  \tilde c \, \sum_{k=1}^{K n^\beta} \;n^{-\beta/\alpha} |\theta_k| \ind_{\{ |\theta_k| \, > \, n^{ \beta / \alpha} \}} \; \ge  \; M \, \Biggr) \\
		&\le \; \delta \; + \; \Pro \bigl( \;  \tilde c \, \Gamma_\delta^2 \; \ge  \; M \, \bigr) \; = \; \delta
	\end{align*}
	for $M$ large enough, where $\tilde c= \psi^{-1} \sum_{i=1}^\infty \sum_{\ell=i}^\infty c_i <\infty$ due to \eqref{eq:technical_condition}. As $\delta$ was arbitrary, we deduce \eqref{eq:remark_alternative_GDmodCI_eq2}.
\end{proof}

\subsection{Proofs pertaining to Section \ref{sec:SDE}}\label{sect:proofs_SDE}
\begin{lem} \label{lem:tightness_with_sublinear_growth_cond}
	The solutions $X^n$ to \eqref{eq:SDE_approx} are stochastically bounded uniformly in $n$, that is $\lim_{\eta\to \infty} \operatorname{sup}_{n\ge 1} \Pro(|X^n|^*_T > \eta)= 0$ for all $T\ge 0$.
\end{lem}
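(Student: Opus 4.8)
The plan is to localise the drivers $D^n=n^{-\beta}N_{n\bullet}$ and $Z^n$ together with the solutions $X^n$, and to exploit the strict sublinearity $p<1$ in \eqref{eq:sublinear_growth_condition_coeff_SDE} in order to undercut the linear threshold $\eta$. Fix $T>0$ and $\varepsilon>0$. By Proposition~\ref{prop:uncorr_CTRW_martingales} and (the proof of) Theorem~\ref{thm:CTRW_has_GD}, write $Z^n=M^n+A^n$ with $M^n_0=A^n_0=0$, $M^n$ a martingale with $|\Delta M^n|\le 2a$, and $(\text{TV}_{[0,T]}(A^n))_{n\ge1}$ tight in $\R$. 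Since $D^n$ and $Z^n$ are tight on Skorokhod space by \eqref{eq:time_convergence_to_subordinator} and the convergence recalled in Section~\ref{sect:CTRW_and_GD}, the real-valued variables $|D^n|^*_T$, $|Z^n|^*_T$, $\text{TV}_{[0,T]}(A^n)$ and hence $|M^n|^*_T\le|Z^n|^*_T+\text{TV}_{[0,T]}(A^n)$ are tight uniformly in $n$. Setting
\[
\theta^n_R:=\operatorname{inf}\bigl\{t\ge0:\ |D^n_t|>R,\ |M^n_t|>R\ \text{or}\ \text{TV}_{[0,t]}(A^n)>R\bigr\},
\]
one can then choose $R=R_\varepsilon$ with $\operatorname{sup}_{n\ge1}\Pro(\theta^n_R\le T)<\varepsilon$. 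The role of $\theta^n_R$ is twofold: on $[0,\theta^n_R)$ one has $|D^n|\le R$, $\text{TV}(A^n)\le R$, and the stopped martingale $M^{n,\theta^n_R}$ is bounded by $R+2a$, hence square-integrable with $\E[[M^n]_{T\wedge\theta^n_R}]=\E[(M^n_{T\wedge\theta^n_R})^2]\le(R+2a)^2$, uniformly in $n$.

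Next I would introduce, for $\eta>R$, the stopping time $\rho^n_\eta:=\operatorname{inf}\{t\ge0:|X^n_t|\ge\eta\}$ and work up to $\sigma^n:=\rho^n_\eta\wedge\theta^n_R$. Pathwise, for every $s\le T\wedge\sigma^n$ the left limits satisfy $|X^n_{s-}|\le\eta$ and $|D^n_{s-}|\le R$, so by continuity of $b,\mu,\sigma$ and \eqref{eq:sublinear_growth_condition_coeff_SDE} (with $K,C$ the constants attached to this $T$ and $R$) all three coefficients evaluated along the solution are bounded by $B_\eta:=K\eta^p+C$ on $[0,T\wedge\sigma^n]$. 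On the event $\{\rho^n_\eta\le T\}\cap\{\theta^n_R>T\}$ one has $\sigma^n=\rho^n_\eta<\theta^n_R$, so $D^n_{\rho^n_\eta}\le R$, $\text{TV}_{[0,\rho^n_\eta]}(A^n)\le R$, and $|X^n_{\rho^n_\eta}|\ge\eta$ (by right-continuity, using $\rho^n_\eta\le T$); reading off the SDE \eqref{eq:SDE_approx}, using that $D^n$ is non-decreasing and splitting $\diff Z^n=\diff M^n+\diff A^n$, this yields
\[
\eta\ \le\ |X^n_{\rho^n_\eta}|\ \le\ |x|+B_\eta T+B_\eta R+B_\eta R+\Bigl|\,{\textstyle\int_0^{\rho^n_\eta}}\sigma(s,D^n_{s-},X^n_{s-})\,\diff M^n_s\,\Bigr|.
\]

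The remaining, genuinely delicate, term is the stochastic integral against $M^n$; here one cannot fall back on total-variation estimates because $Z^n$ has infinite variation once $\alpha\ge1$, which forces the decomposition $Z^n=M^n+A^n$ and an $L^2$ argument. Writing $N^n_t:=\int_0^{t\wedge\sigma^n}\sigma(s,D^n_{s-},X^n_{s-})\,\diff M^n_s$, the integrand is predictable and bounded by $B_\eta$ on $[0,\sigma^n]$, so $[N^n]_T\le B_\eta^2[M^n]_{T\wedge\sigma^n}\le B_\eta^2[M^n]_{T\wedge\theta^n_R}$, and the Burkholder--Davis--Gundy and Jensen inequalities give $\E[\,|N^n|^*_T\,]\le c\,B_\eta\,(R+2a)$ with $c$ universal, uniformly in $n$. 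On $\{\rho^n_\eta\le T\}\cap\{\theta^n_R>T\}$ the integral in the last display equals $N^n_{\rho^n_\eta}$ and is thus $\le|N^n|^*_T$; since $p<1$ there is $\eta_0=\eta_0(\varepsilon)$ with $|x|+B_\eta(T+2R)\le\eta/2$ for all $\eta\ge\eta_0$, so that display forces $|N^n|^*_T\ge\eta/2$ on that event. Hence, for all $n\ge1$ and $\eta\ge\eta_0$,
\[
\Pro(|X^n|^*_T>\eta)\ \le\ \Pro(\rho^n_\eta\le T)\ \le\ \Pro(\theta^n_R\le T)+\Pro\bigl(|N^n|^*_T\ge\tfrac\eta2\bigr)\ \le\ \varepsilon+\frac{2c\,B_\eta\,(R+2a)}{\eta},
\]
and as $B_\eta=K\eta^p+C$ with $p<1$ the last term vanishes as $\eta\to\infty$; since $\varepsilon>0$ was arbitrary, $\operatorname{sup}_{n\ge1}\Pro(|X^n|^*_T>\eta)\to0$, as claimed. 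The main obstacle is precisely this martingale term: the $X^n$-dependence of the integrand is what forces the double localisation (by $\theta^n_R$ and by $\rho^n_\eta$), the infinite variation of $Z^n$ is what forces the decomposition and the uniform $L^2$ bound on $[M^n]_{T\wedge\theta^n_R}$, and it is the strict sublinearity ($p<1$) that lets the $\mathcal{O}(\eta^p)$ control of the coefficients beat the linear barrier $\eta$.
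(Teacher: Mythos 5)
Your argument is correct, and it rests on the same structural inputs as the paper's proof (the decomposition $Z^n=M^n+A^n$ from \eqref{eq:uncorrelated_CTRW_decomp} via Proposition \ref{prop:uncorr_CTRW_martingales} and Theorem \ref{thm:CTRW_has_GD}, localisation, and the strict sublinearity $p<1$ in \eqref{eq:sublinear_growth_condition_coeff_SDE}), but the route through the martingale term is genuinely different. The paper localises only $X^n$ and $D^n$, squares the solution via $(a+c)^2\le 2(a^2+c^2)$, and controls $\bigl(\int\sigma\,\diff M^n\bigr)^2$ with Lenglart's inequality dominated by the quadratic variation; this leaves a term $\Pro\bigl([M^n]_T>\gamma/(\eta^{2p}K^2)\bigr)$ whose decay requires a separate tightness argument for $[M^n]_T$ (obtained by a second application of Lenglart with $(|M^n|^*)^2$ as domination process, via Burkholder--Davis--Gundy), and the proof closes by tuning $\gamma=\eta^\delta$ with $\delta\in(2p,2)$. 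You instead localise $M^n$ and $\operatorname{TV}(A^n)$ as well: stopping $M^n$ at level $R$ and using the bounded-jump property $|\Delta M^n|\le 2a$ gives the deterministic bound $\E\bigl[[M^n]_{T\wedge\theta^n_R}\bigr]=\E\bigl[(M^n_{T\wedge\theta^n_R})^2\bigr]\le(R+2a)^2$, after which a single BDG--Jensen--Markov chain in $L^1$ yields $\Pro(|N^n|^*_T\ge\eta/2)\le 2c\,B_\eta(R+2a)/\eta\to 0$. Your version avoids both the tightness argument for $[M^n]_T$ and the balancing of exponents, at the modest price of the extra stopping time $\theta^n_R$; the pathwise comparison at $\rho^n_\eta$ ($\eta\le|x|+B_\eta(T+2R)+|N^n|^*_T$, hence $|N^n|^*_T\ge\eta/2$ for $\eta$ large) replaces the paper's second-moment bookkeeping and is, if anything, slightly more transparent. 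All the individual steps (predictability and boundedness of the integrand up to $\sigma^n$, $|X^n_{\rho^n_\eta}|\ge\eta$ by right-continuity, and the order of quantifiers $\varepsilon\to R\to\eta$) check out.
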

\begin{proof}
	To alleviate notation, we will only provide a proof for the case $C\equiv 0$. On behalf of the decomposition \eqref{eq:uncorrelated_CTRW_decomp}, we can write $Z^n=M^n+A^n$, where the $M^n$ are local martingales with $|\Delta M^n|\le 1$, $M_0\equiv 0$ without loss of generality, and the $A^n$ are of tight total variation. Thus, with $\tilde{\sigma}:=|b|\vee |\mu|\vee |\sigma|$, the elementary inequality $(a+c)^2\le 2(a^2+c^2)$, and $\tilde{A}^n:=\Id + D^n+A^n$ where $\operatorname{Id}:(\omega,t) \mapsto t$ denotes the deterministic identity process returning time, we obtain from the solution form \eqref{eq:SDE_approx}
	\begin{align}
		(X^n_t)^2 \; \le \; 2\Big( \int_0^t \, \sigma(s,D^n_s,X^n_s)_{-} \, \diff M^n_s \Big)^2 \; + \; 2\int_0^t \, \tilde \sigma(s,D^n_s,X^n_s)_{-}^2 \, \diff \operatorname{TV}_{[0,s]}(\tilde{A}^n), \label{eq:proof_lem_tightness_with_sublinear_growth_cond_1}
	\end{align}
	for $t\ge 0$. Indeed, this bound follows from the elementary inequality with the choice $a:=\int_0^t \sigma(s,D^n_s,X^n_s)_{-}\diff M^n_s$, $c:=X^n_t - a$, and from exploiting that $c$ is a sum of Lebesgue--Stieltjes integrals due to the integrator processes being of finite total variation. Hence, for each of these integrals we can use Jensen's inequality and then bound the integrands by $\tilde \sigma ^2$. Now, let $T,\varepsilon,\eta>0$ and define stopping times $\tau_n:= \operatorname{inf}\{t>0: |X^n_t|>\eta \}\wedge T$ and $\rho_n:= \operatorname{inf}\{t>0: |D^n_t|>R \}\wedge T$ where $R>0$ is chosen such that $\operatorname{sup}_{n\ge 1}\Pro(\rho_n\le T)\le \varepsilon$. The latter is possible since, according to \eqref{eq:time_convergence_to_subordinator}, the $D^n$ converge on Skorokhod space and hence are stochastically bounded uniformly in $n$. Then, we may continue by 
	\begin{align}
		\Pro(|X^n|^*_T \, > \, \eta) \; \le \; \Pro(|X^n_{\tau_n\wedge \rho_n}|^2 \, > \, &\eta^2) +\varepsilon\; \le \; \; \Pro\Big(\Big(\int_0^{\tau_n\wedge \rho_n} \, \!\!\!\sigma(s,D^n_s,X^n_s)_{-} \, \diff M^n_s \Big)^2 \, > \, \frac {\eta^2}2 \Big)\notag \\ &+ \;\Pro\Big(K^2 \int_0^{\tau_n\wedge \rho_n} \, |X^n_{s-}|^{2p} \, \diff \operatorname{TV}_{[0,s]}(\tilde{A}^n)  \, > \, \frac {\eta^2}2 \Big) + \varepsilon \notag \\
		&\le \; \; \Pro\Big(\Big(\int_0^{\tau_n\wedge \rho_n} \, \sigma(s,D^n_s,X^n_s)_{-} \, \diff M^n_s \Big)^2 \, > \, \frac {\eta^2}2 \Big) \notag \\
		&+ \;\Pro\Big(\operatorname{TV}_{[0,T]}(\tilde{A}^n)  \, > \, \frac {\eta^{2-2p}}{2 K^2} \Big) + \varepsilon ,\label{eq:proof_lem_tightness_with_sublinear_growth_cond_2}
	\end{align}
	where $p$ comes from the strict sublinear growth bound \eqref{eq:sublinear_growth_condition_coeff_SDE}. Beyond \eqref{eq:sublinear_growth_condition_coeff_SDE}, we have used \eqref{eq:proof_lem_tightness_with_sublinear_growth_cond_1} as well as $|X^n_s|\le \eta$ for all $s<\tau_n$. We now apply Lenglart's inequality \cite[Lem. I.3.30b]{shiryaev} to the first term of \eqref{eq:proof_lem_tightness_with_sublinear_growth_cond_2} with the quadratic variation of $\int \sigma(s,D^n_s,X^n_s)_{-} \diff M^n_s$ as the $L$-domination process (justified since the integral with respect to $M^{n}$ is a local martingale, see e.g. \cite[Thm.~III.29]{protter}). This yields
	\begin{align}
		\Pro(|X^n|^*_T  >  \eta) \le  \frac{2\gamma+2}{\eta^2} +  \Pro\Big([M^n]_T >  \frac {\gamma}{\eta^{2p}K^2} \Big)  + \Pro\Big(\operatorname{TV}_{[0,T]}(\tilde{A}^n)   >  \frac {\eta^{2-2p}}{2 K^2} \Big)  +  \varepsilon \label{eq:proof_lem_tightness_with_sublinear_growth_cond_3}
	\end{align}
	for all $\gamma>0$, where we have again made use of the sublinear growth \eqref{eq:sublinear_growth_condition_coeff_SDE} and the fact that $|\Delta M^n|\le 1$. Clearly, the $\tilde{A}^n$ are of tight total variation on $[0,T]$ and the tightness of the $[M^n]_T$ follows from another application of Lenglart's inequality, with $(|M^n|^*)^2$ as a valid $L$-domination process due to the Burkholder-Davis-Gundy inequality, the fact that $|\Delta M^n|\le 1$ and the tightness of $Z^n$ in the Skorokhod space (which implies the tightness of the $|Z^n|^*_T$, and thus, by the tight total variation of the $A^n$, the tightness of the $|M^n|^*_T$. Choosing $\gamma = \eta^\delta$ with $\delta \in (2p,2)$ yields $\gamma/\eta^2 \to 0$ and $\gamma/\eta^{2p} \to \infty$ as $\eta \to \infty$, so we deduce the claim from \eqref{eq:proof_lem_tightness_with_sublinear_growth_cond_3} and the fact that $\varepsilon>0$ was arbitrary.
\end{proof}

\begin{proof}[Proof of Theorem \ref{thm:SDE_approx_result}]
	As for Lemma \ref{lem:tightness_with_sublinear_growth_cond}, purely for simplicity of notation, we again assume $C\equiv 0$ in \eqref{eq:sublinear_growth_condition_coeff_SDE}. In order to prove the claim, it suffices to show that 
	\begin{enumerate}[(1)]
		\item the $X^n$ are tight on the space $(\D_{\R}[0,\infty), \dJ)$;\label{it1:proof_SDE}
		\item \label{it2:proof_SDE} as well as there being convergence 
		\begin{align*} \int_{0}^\bullet \sigma(s, D^n_s,X^n_{s})_- \diff Z^n_s \; \Rightarrow \;  \int_{0}^\bullet \sigma(s, D^{-1}_s,Y_{s})_- \diff Z_{D^{-1}_s} 
		\end{align*}
		on $(\D_{\R}[0,\infty), \dJ)$ whenever $(X^n,D^n,Z^n)\Rightarrow(Y,D^{-1},Z_{D^{-1}})$ on $(\D_{\R}[0,\infty), \dJ)^3$ for a càdlàg process $Y$. 
	\end{enumerate}
	Indeed, once this has been shown, by the Prohorov Theorem we know that the $X^n$ are relatively compact on $(\D_{\R}[0,\infty), \dJ)$, and---due to the discussion after \cite[Rem. 3.10] {andreasfabrice_theorypaper} as well as the weak convergence $D^n \Rightarrow D^{-1}$ and $Z^n \Rightarrow Z_{D^{-1}}$ on $(\D_{\R}[0,\infty), \dJ)$ (see \eqref{eq:time_convergence_to_subordinator} \& \eqref{eq:CTRW_M1_conv})---we obtain the relative compactness of the triplets $((X^n,D^n,Z^n))_{n\ge 1}$ on the product space $(\D_{\R}[0,\infty), \dJ)^3$. Then, suppose to every subsequence $((X^{n_k},D^{n_k},Z^{n_k}))_{k\ge 1}$, there exists a further subsequence $((X^{n_{k_\ell}},D^{n_{k_\ell}},Z^{n_{k_\ell}}))_{\ell \ge 1}$ which converges weakly on $(\D_{\R}[0,\infty), \dM)^3$ to some càdlàg limit $(Y,D^{-1},Z_{D^{-1}})$ (where $Y$, a priori, may depend on the choice of the specific subsequence). To simplify the notation, we denote this subsequence again by $(X^n, D^n, Z^n)$. Due to the continuity of $b$, $\mu$ and $\sigma$, without loss of generality we can then deduce that 
	\begin{enumerate}
		\item[(3)]  \label{it3:proof_SDE} for all $\ell\ge 1$ and $t_1,...,t_\ell \in \Lambda$, where $\Lambda$ is a co-countable subset of $[0,\infty)$, it holds \begin{align*}& \Big(D^n,Z^n, \, [b,\mu,\sigma](t_1,D^n_{t_1},X^n_{t_1}),\,...,\, [b,\mu,\sigma](t_\ell,D^n_{t_\ell},X^n_{t_\ell}) \Big) \\
			& \qquad \quad  \Rightarrow  \;  \Big(D^{-1},Z_{D^{-1}}, \, [b,\mu,\sigma](t_1, D^{-1}_{t_1},Y_{t_1}), \,  ...,\, [b,\mu,\sigma](t_\ell, D^{-1}_{t_\ell}, Y_{t_\ell})\Big)
		\end{align*}
		on $(\D_{\R}[0, \infty), \dM)^2 \times (\R^{3\ell}, |\cdot|\, )$, where we have used the shortcut array notation $[b,\mu,\sigma](\alpha,\beta,\gamma):=(b(\alpha,\beta,\gamma),\mu(\alpha,\beta,\gamma),\sigma(\alpha,\beta,\gamma))$;
		\item[(4)] \label{it4:proof_SDE} for any $T>0$, the sequence $(|[b,\mu,\sigma](\bullet,D^n_\bullet, X^{n}_\bullet)|^*_T)_{n\ge 1}$ is tight;
		\item[(5)] \label{it5:proof_SDE} for any $\delta>0$, the number of $\delta$-increments of the $([b,\mu,\sigma](\bullet,D^n_\bullet,X^{n}_\bullet))_{n \ge 1}$ over any interval $[0,T]$ is tight (see \eqref{eq:maxnumosc} for a precise definition of $\delta$-increments).
	\end{enumerate}
	Then, according to \cite[Prop.~3.22]{andreasfabrice_theorypaper} and Remark \ref{rem:weaker_integrand_conditions}, we obtain that 
	\begin{align*} &\int_0^\bullet \, b(s, D^n_s, X^{n}_{s})_- \, \diff s  \; \Rightarrow \; \int_0^\bullet \, b(s, D^{-1}_s, Y_{s})_- \, \diff s
	\end{align*}
	and 
	\begin{align*} &\int_0^\bullet \, \mu(s, D^n_s, X^{n}_{s})_- \, \diff D^n_s  \; \Rightarrow \; \int_0^\bullet \, \mu(s, D^{-1}_s, Y_{s})_- \, \diff D^{-1}_s
	\end{align*}
	as $n \to \infty$ on $(\D_{\R}[0,\infty), \dJ)$ as well as the integral convergence in \eqref{it2:proof_SDE}. Since both processes $t \mapsto \int_0^t b(s, D^{-1}_s,Y_{s})_- \diff s$ and $t \mapsto \int_0^t \mu(s, D^{-1}_s,Y_{s})_- \diff D^{-1}_s$ are continuous (recall that $D^{-1}$ is a continuous process), it is a well known consequence that 
	\begin{align*} &X^n \; = \; X^{n}_0 \, + \, \int_0^\bullet \, b(s, D^n_s, X^n_{s})_- \, \diff s \, + \,\int_0^\bullet \, \mu(s, D^n_s, X^n_{s})_- \, \diff D^n_s \, + \,  \int_0^\bullet \, \sigma(s, D^n_s,X^n_{s})_- \, \diff Z^{n}_s \\
		&\Rightarrow \;  Y_0 \, + \,\int_0^\bullet \, b(s, D^{-1}_s, Y_{s})_- \, \diff s \, + \,\int_0^\bullet \, \mu(s, D^{-1}_s, Y_{s})_- \, \diff D^{-1}_s \, + \,  \int_0^\bullet \, \sigma(s, D^{-1}_s, Y_{s})_- \, \diff Z_{D^{-1}_s} \end{align*}
	as $n \to \infty$ in $(\D_{\R}[0,\infty), \dJ)$ (by continuous mapping since addition is a continuous operation on the J1 Skorokhod space whenever the summands have no common discontinuity in the limit). Uniqueness of weak limits implies that  $Y$ satisfies
	$$ Y_t \, = \,Y_0 \, + \,\int_0^\bullet \, b(s, D^{-1}_s, Y_{s})_- \, \diff s \, + \,\int_0^\bullet \, \mu(s, D^{-1}_s, Y_{s})_- \, \diff D^{-1}_s \, + \,  \int_0^\bullet \, \sigma(s, D^{-1}_s, Y_{s})_- \, \diff Z_{D^{-1}_s} .$$
	If $Y$ is the unique solution to the SDE \eqref{eq:SDE}, then it follows that $X^n \Rightarrow Y$ on $(\D_{\R}[0,\infty), \dJ)$.
	
	Now, we are going to show \eqref{it1:proof_SDE} and \eqref{it2:proof_SDE}. In order to show \eqref{it1:proof_SDE}, we employ Aldous' J1 tightness criterion given in \cite[Thm.~16.10]{billingsley}, requiring us to verify that for every $T>0$ and $\gamma>0$ it holds 
	\begin{align} \lim\limits_{R\to \infty} \operatorname{sup}_{n\ge 1} \, \Pro (|X^n|^*_{T} \, >\, R) =  0  \,\; \text{ and } \; \lim\limits_{\delta \downarrow 0 } \limsup\limits_{n\to \infty} \, \operatorname{sup}_{\tau} \, \Pro(|X^n_{\tau+\delta}-X^n_{\tau}|  >\eta) =  0 \label{eq:Aldous_J1_tightness_criterion}
	\end{align}
	where the inner supremum on the right-hand side runs over all $\mathbbm{F}^n$--stopping times $\tau$ which are bounded by $T$. The first part of \eqref{eq:Aldous_J1_tightness_criterion} follows immediately from Lemma \ref{lem:tightness_with_sublinear_growth_cond}. Towards the second part of \eqref{eq:Aldous_J1_tightness_criterion}, let $\varepsilon>0$ and fix $\eta,T>0$. Choose $R>0$ large enough such that $\operatorname{sup}_{n\ge 1} \Pro(|X^n|^*_{T+1} \vee |D^n|^*_{T+1}>R)\le \varepsilon/4$ and define stopping times $\rho_n:=\operatorname{inf}\{t>0:|X^n_t|\vee |D^n_t|>R\}$. Then, for any $\mathbbm{F}^n$-stopping time $\tau$ bounded by $T$ and $\delta \le 1$, we have
	\begin{align}
		\Pro(|X^n_{\tau+\delta}-X^n_{\tau}| \, >\, \eta) \; \le \;\frac \varepsilon 4 \; + \; \Pro(|X^n_{(\tau+\delta)\wedge \rho_n}-X^n_{\tau\wedge \rho_n}| \, >\, \eta). \label{eq:eq1_proof_of_thm_5.1}
	\end{align}
	By \eqref{eq:SDE_approx}, the definition of $\rho_n$ and the strict sublinear growth condition \eqref{eq:sublinear_growth_condition_coeff_SDE}, the second term on the right side of \eqref{eq:eq1_proof_of_thm_5.1} can be further estimated by 
	\begin{align}
		\Pro(|X^n_{(\tau+\delta)\wedge \rho_n}-X^n_{\tau\wedge \rho_n}| \, >\, \eta) \; &\le \; \ind_{\delta \, >\, \frac \eta {3KR^p}} \; + \; \Pro \Big(D^n_{(\tau+\delta)\wedge \rho_n}-D^n_{\tau \wedge \rho_n} \, >\, \frac \eta {3KR^p} \Big) \notag  \\
		&\qquad + \; \Pro \Big(\Big|\int_{\tau \wedge \rho_n}^{(\tau+\delta)\wedge \rho_n} \sigma(s,D^n_s,X^n_s)_- \diff Z^n_s \Big| \, >\, \frac \eta {3} \Big) \label{eq:eq2_proof_of_thm_5.1}
	\end{align}
	where we recall that $D^n=n^{-\beta}N_{n\bullet}$. While the first term on the right side of \eqref{eq:eq2_proof_of_thm_5.1} disappears uniformly in $n$ when $\delta \to 0$, this is also true for the second term. Indeed, recall that since $D^n\Rightarrow D^{-1}$ on the J1 space and $D^{-1}$ is continuous, it holds in particular that 
	\begin{align}
		\lim\limits_{\delta \downarrow 0}\; \limsup_{n\to \infty}\; \Pro\Big( \; \sup{\substack{0\, \le \,  s\, \le \, t \, \le\,  T+1 \\ t-s\, \le \, \delta}} (D^n_t-D^n_s) \, > \, \lambda\Big) \; = \; 0 \label{eq:uniform_converg_D^n_proof_SDE}
	\end{align}
	for all $\lambda>0$. Thus, it suffices to investigate the convergence of the third term on the right-hand side of \eqref{eq:eq2_proof_of_thm_5.1}. Choose $a>0$ such that $(3aKR^p/\eta)^2\le \varepsilon/4$, set $\tilde{C}_a:=\operatorname{sup}_{n\ge 1} n^\beta \E[\zeta^n_1 \ind_{\{|\zeta^n_1|>a\}}]<\infty$ (cf. Proposition \ref{prop:aux_result_CTRW_has_GD}), and let $Z^n=M^{n,a}+A^{n,a}$ be good decompositions \eqref{eq:uncorrelated_CTRW_decomp} so that $|\Delta M^{n,a}|\le a$. Making use of the concrete form of the good decompositions as well as again the definition of $\rho_n$ and the strict sublinear growth condition \eqref{eq:sublinear_growth_condition_coeff_SDE}, we obtain
	\begin{align}
		&\Pro\Big(\Big|\int_{\tau \wedge \rho_n}^{(\tau+\delta)\wedge \rho_n}  \hspace{-2ex}\sigma(s,D^n_s,X^n_s)_- \diff Z^n_s \Big| \, >\, \frac \eta 3 \Big) \; \le \;  \Pro\Big(\Big|\int_{\tau \wedge \rho_n}^{(\tau+\delta)\wedge \rho_n} \hspace{-2ex} \sigma(s,D^n_s,X^n_s)_- \diff M^{n,a}_s \Big|^2 \, >\, \frac {\eta^2} 9 \Big) \notag \\
		& \qquad \quad + \; \Pro\Big(|\Delta Z^n|^*_{T+\delta} \, \sum_{k=N_{n\tau}}^{N_{n(\tau+\delta)}} \hspace{-2ex}\ind_{\{|\zeta^n_k|\, > \, a \}} \, >\, \frac \eta {3KR^p}  \Big) \; + \; \Pro\Big( D^n_{\tau+\delta}-D^n_{\tau}\, >\, \frac \eta{3KR^p\tilde{C}_\alpha} \Big).\label{eq:eq3_proof_of_thm_5.1}
	\end{align}
	For the first probability term of \eqref{eq:eq3_proof_of_thm_5.1}, similarly to the proof of Lemma \ref{lem:tightness_with_sublinear_growth_cond} we apply Lenglart's inequality \cite[Lem.~I.3.30b]{shiryaev} in order to obtain 
	\begin{align*}\Pro\Big(\Big|\int_{\tau \wedge \rho_n}^{(\tau+\delta)\wedge \rho_n} &\sigma(s,D^n_s,X^n_s)_- \diff M^{n,a}_s \Big|^2 \, >\, \frac {\eta^2} 9 \Big) \; \le \; \frac{9(\gamma+(aKR^p)^2)}{\eta^2} \\
		&\qquad + \Pro\Bigl([M^{n,a}]_{\tau+\delta}-[M^{n,a}]_{\tau}\, > \, \frac{\gamma}{K^2R^{2p}}\Bigr)
	\end{align*}
	for all $\gamma>0$. Choose $\gamma>0$ such that $9\gamma/\eta^2\le \varepsilon/4$. Hence, we can further bound \eqref{eq:eq3_proof_of_thm_5.1} by
	\begin{align}
		&\Pro\Big(\Big|\int_{\tau \wedge \rho_n}^{(\tau+\delta)\wedge \rho_n} \sigma(s,D^n_s,X^n_s)_- \diff Z^n_s \Big| \, >\, \frac \eta 3 \Big) 
		\notag \\
		& \le  \frac{\varepsilon}{2} +   \, \Pro\Big( [M^{n,a}]_{\tau+\delta}-[M^{n,a}]_{\tau} \, >\, \frac{\gamma}{K^2R^{2p}} \Big) \, + \,\Pro\Big(|Z^n|^*_{T+\delta} \!\!\! \sum_{k=N_{n\tau}}^{N_{n(\tau+\delta)}} \hspace{-2ex}\ind_{\{|\zeta^n_k| \, > \, a \}} \, >\, \frac \eta {3KR^p} \Big) \notag \\
		&   \qquad+ \, \Pro\Big(D^n_{\tau+\delta}-D^n_{\tau} \, > \, \frac \eta{3KR^p\tilde{C}_\alpha}\Big)\notag \\
		&  \le  \frac{\varepsilon}{2}\, + \, \Pro\Big( \sum_{k=N_{n\tau}}^{N_{n(\tau+\delta)}} \!\!(\zeta^n_k)^2\ind_{\{|\zeta^n_k|\le a\}} \, >\, \frac{\gamma}{3(aKR^{p})^2} \Big) \, + \,\Pro\Big( n^{-\beta}  D^n_{T+\delta} \, >\, \frac {\gamma^2} {12 a\tilde{C}_a K^2R^{2p}} \Big) \notag \\
		&  \qquad + \, \Pro\Big( n^{-\beta} D^n_{T+\delta} \, >\, \frac \gamma {6\tilde{C}_a^2K^2R^{2p}} \Big) \, + \,  \Pro\Big(|Z^n|^*_{T+\delta} \, \, \sum_{k=N_{n\tau}}^{N_{n(\tau+\delta)}} \ind_{\{|\zeta^n_k|\, > \, a \}} \, >\, \frac \eta {3KR^p} \Big)\,  \notag \\[-9pt]
		& \qquad \qquad   +\, \Pro\Big(D^n_{\tau+\delta}-D^n_{\tau} \, > \, \frac \eta{3KR^p\tilde{C}_\alpha}\Big), \label{eq:third_eq_proof_SDE}
	\end{align}
	where we have made use of the specific form of $M^{n,a}$ given after \eqref{eq:uncorrelated_CTRW_decomp}. Since the $Z^n$ are J1 tight, we can choose $C_\varepsilon>0$ such that $\Pro(|Z^{n}|^*_{T+\delta} > C_\varepsilon)\le \varepsilon/4$. We will examine each term of \eqref{eq:third_eq_proof_SDE} individually. Clearly, since $D^n_{T+\delta}$ are tight, we directly deduce 
	$$ \limsup\limits_{n\to \infty} \Big[\Pro\Big( n^{-\beta} \, D^n_{T+\delta} \, >\, \frac {\gamma^2} {12 a\tilde{C}_a K^2R^{2p}} \Big) \, + \, \Pro\Big( n^{-\beta} D^n_{T+\delta} \, >\, \frac \gamma {6\tilde{C}_a^2K^2R^{2p}} \Big) \Big] \; = \; 0.$$
	As $\lim_{\delta \downarrow 0} \limsup_{n\to \infty} \Pro (D^n_{\tau+\delta}-D^n_{\tau} \, > \,  \eta  / 3KR^p\tilde{C}_a)=0$ due to \eqref{eq:uniform_converg_D^n_proof_SDE}, it only remains to consider the second and fifth terms of \eqref{eq:third_eq_proof_SDE}. For the latter, by independence of $N_{n\bullet}$ and the $\zeta^n_k$ (recall the CTRW is uncoupled) as well as the identical distribution of the $\zeta^n_k$,
	\begin{align*}
		\Pro\Big(|Z^n|^*_{T+\delta} \, \, &\sum_{k=N_{n\tau}}^{N_{n(\tau+\delta)}} \hspace{-2ex}\ind_{\{|\zeta^n_k|\, > \, a \}} \, >\, \frac \eta {3KR^p} \Big) \; \le \; \frac{\varepsilon}{4} \,+\, \Pro\Big(\sum_{k=N_{n\tau}}^{N_{n(\tau+\delta)}} \hspace{-2ex}\ind_{\{|\zeta^n_k|\, > \, a \}} \, >\, \frac \eta {3KR^pC_\varepsilon} \Big) \\
		&\le \; \frac{\varepsilon}{4} \,+\, \Pro\Big(D^n_{\tau+\delta}-D^n_{\tau} \, > \, \lambda \Big) \, + \, \Pro\Big(\sum_{k=0}^{\lfloor n^\beta \lambda \rfloor} \ind_{\{|\zeta^n_k|\, > \, a \}} \, >\, \frac \eta {3KR^pC_\varepsilon} \Big)\\
		&\le \; \frac{\varepsilon}{4} \,+\, \Pro\Big(D^n_{\tau+\delta}-D^n_{\tau} \, > \, \lambda \Big) \, + \, \Pro\Big(\tilde{N}^{\lambda}_a(\tilde{Z}^{n^\beta}) \, >\, \frac \eta {3KR^pC_\varepsilon} \Big)
	\end{align*}
	for any $\lambda<1$, where $\tilde{Z}^{n}:= \sum_{k=0}^{\lfloor n \bullet \rfloor} \zeta^n_k$ is tight in J1 as a zero-order moving average and $\tilde{N}^T_a(\tilde Z^n)$ the maximal number of $a$-increments of $\tilde Z^n$ on $[0,T]$ (with the quantity defined as in \eqref{eq:maxnumosc}). According to the classical tightness criterion based on the J1 modulus of continuity (see e.g. \cite[Def.~A.7]{andreasfabrice_theorypaper}), we deduce that $\lim_{\lambda \downarrow 0} \limsup_{n\to \infty}\Pro(\tilde{N}^{\lambda}_a(\tilde Z^{n^\beta}) > \eta/(3KR^pC_\varepsilon) )=0$. Further, again due to \eqref{eq:uniform_converg_D^n_proof_SDE}, $\lim_{\delta \downarrow 0} \limsup_{n\to \infty} \Pro(D^n_{\tau+\delta}-D^n_{\tau} \, > \, \lambda)=0$, and thus
	$$ \lim_{\delta \downarrow 0} \; \limsup_{n\to \infty}  \;\Pro\Big(|Z^n|^*_{T+\delta} \, \, \sum_{k=N_{n\tau}}^{N_{n(\tau+\delta)}} \hspace{-2ex}\ind_{\{\zeta^n_k\, > \, a \}} \, >\, \frac \eta {3KR^p} \Big) \; = \; \frac{\varepsilon}{4}.$$
	Finally, for the second term of \eqref{eq:third_eq_proof_SDE} we can proceed similarly to how we did for the previous (fifth) term, noting that $[\tilde Z^n]=\sum_{k=0}^{\lfloor n \bullet \rfloor} (\zeta^n_k)^2$ is tight on the J1 space (see the short comment after \cite[Cor.~3.13]{andreasfabrice_theorypaper}).
	
	To show the convergence in \eqref{it2:proof_SDE}, we would like to invoke \cite[Thm.~4.8]{andreasfabrice_theorypaper}. The good decompositions of the $Z^n$ follow from Theorem \ref{thm:CTRW_has_GD}. It only remains to show that the pairs $(\sigma(\bullet, D^n_\bullet, X^n_\bullet),Z^n)$ satisfy the conditions (a)\&(b) set out in \cite[Thm.~4.8]{andreasfabrice_theorypaper}. Taking $\sigma^n_k:= L_k/n$ the jump times of $Z^n$, condition (b) follows in analogy to the approach set out in the remark after \cite[Ex.~4.11]{andreasfabrice_theorypaper}. In terms of (a), we note that $H^n:=\sigma(\bullet,D^n,X^n_\bullet)$ is adapted to the filtration generated by $\{\theta_{N_{ns}}, N_{ns}: 0\le s \le t\}$, $t\ge 0$. Thus, we deduce
	$$ (H^n_{t_1},...,H^n_{t_\ell})^{-1}(A_1\times...\times  A_\ell)\cap  \{ q_2 \, < \, \sigma^{n}_{k+1}\} \cap  \{\sigma^n_k \, \le \, q_1\}  \, \in \, \sigma\bigl(\theta_1,...,\theta_k, J_1,...,J_{k+1}\bigr)$$
	for all $A_1,...,A_\ell \in \B(\R)$, $n,k\ge 1$, $q_1,q_2 \in \Q$ and $t_1,...,t_\ell\le q_2$, while $Z^n_{\sigma^n_{k+1}+\, \bullet}-Z^n_{\sigma^n_k}$ is $\sigma(\theta_{k+i},J_{k+1+i}:i\ge 1)$--measurable. Applying \cite[Lem.~4.12]{andreasfabrice_theorypaper} yields condition (a) and hence the desired convergence.
\end{proof}

\subsection{Proofs pertaining to Section \ref{sect:SDDE}} 
In view of the following lemma, define
\begin{align*}
	\Xi^{n}_t \; = \; \frac{1}{c n^{\frac 1 \alpha}} \, \sum_{j=1}^{\lfloor n t\rfloor} \; \Biggl( \, \sigma\Big(\frac{j}{n}, \, X^{n}_{\frac{j}{n} - r}\Big)_- \; \, \sum_{i=0}^\mathcal{J} c_i \,  \theta_{j-i} \Biggr), \qquad n\ge 1, \, t\ge 0,
\end{align*}
with all displayed quantities defined as in \eqref{eq:SDDE_approx}.
\begin{lem} \label{lem:auxiliary_lemma_proof_SDDE}
	For any $m, k \ge 0$ and large enough $n\ge 1$ there is a bound of the type
	\begin{align} \Pro \biggl( \sup{1 \, \le \, i \, \le \, k} \, \left| \,\Xi^n_{\frac{m+i}{n}} \, - \, \Xi^n_{\frac{m}{n}} \, \right| \; > \; \eta \biggr) \; \le \; C_\gamma \, \eta^{-\gamma} \, \frac{k}{n} \label{eq:5.21}
	\end{align}
	for some constants $\gamma>1, C_\gamma>0$ which neither depend on $n$ nor on $k$.
\end{lem}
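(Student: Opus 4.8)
The plan is to expand the increment as $\Xi^n_{\frac{m+i}{n}}-\Xi^n_{\frac{m}{n}}=\frac{1}{cn^{1/\alpha}}\sum_{j=m+1}^{m+i}g^n_j\,\zeta_j$, where $g^n_j:=\sigma(j/n,X^n_{(j/n-r)-})$ and $\zeta_j:=\sum_{l=0}^{\mathcal J}c_l\theta_{j-l}$, and to control the supremum over $i\le k$ by combining a maximal inequality for a martingale with a crude count of large innovations. By \eqref{eq:boundedness_cond_SDDEs} the coefficients satisfy $|g^n_j|\le K_\sigma$ for a finite constant $K_\sigma$. Rearranging the finite moving-average sum gives $\sum_{j=m+1}^{m+i}g^n_j\zeta_j=\sum_{l=0}^{\mathcal J}c_l\sum_p g^n_{p+l}\theta_p$, so it suffices to handle each of the $\mathcal J+1$ shifted column sums $i\mapsto\sum_p g^n_{p+l}\theta_p$ separately and recombine by the triangle inequality. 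The decisive structural point is that, since $X^n$ solves \eqref{eq:SDDE_approx}, the path $(X^n_s)_{s\le u}$ is a function of $(\theta_q:q\le\lfloor nu\rfloor)$; hence, once $n$ is large enough that $\lfloor nr\rfloor>\mathcal J$, the coefficient $g^n_{p+l}$ is measurable with respect to $\mathcal H^n_{p-1}:=\sigma(\theta_q:q\le p-1)$. Together with $\E[\theta_p]=0$ (when $1<\alpha\le 2$; with the symmetry hypothesis when $\alpha=1$) this makes $i\mapsto\sum_{p\le i}g^n_{p+l}\theta_p$ a martingale for the filtration $(\mathcal H^n_p)_p$.

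The heavy tails are handled by truncating at a level tied to the target threshold: set $b:=c'n^{1/\alpha}\eta$ for a suitable constant $c'$ and split $\theta_p=\bar\theta_p+\hat\theta_p$ with $\bar\theta_p:=\theta_p\ind_{\{|\theta_p|\le b\}}$. The large part enters only through the event that some $\theta_p$ with index in the window $\{m-\mathcal J+1,\dots,m+k\}$ exceeds $b$, and since $\Pro(|\theta_0|>x)=\Oland(x^{-\alpha})$ this has probability at most $(k+\mathcal J)\,\Pro(|\theta_0|>b)\lesssim k\,b^{-\alpha}\lesssim k\,n^{-1}\eta^{-\alpha}$. For the truncated part, after recentering (the bias $|\E[\bar\theta_p]|=|\E[\theta_p\ind_{\{|\theta_p|>b\}}]|\lesssim b^{1-\alpha}$ vanishes when $\alpha=1$ by symmetry and otherwise produces a deterministic drift $\lesssim k\,b^{1-\alpha}$), Doob's $L^2$ maximal inequality and the orthogonality of martingale increments give $\E\big[\sup{i\le k}\,\big|\sum_p g^n_{p+l}(\bar\theta_p-\E\bar\theta_p)\big|^2\big]\le 4K_\sigma^2\,k\,\E[\theta_0^2\ind_{\{|\theta_0|\le b\}}]\lesssim K_\sigma^2\,k\,b^{2-\alpha}$, where the truncated second moment is estimated from the tail bound. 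A Markov inequality at level of order $n^{1/\alpha}\eta$ then bounds this contribution by $\lesssim k\,b^{2-\alpha}/(n^{2/\alpha}\eta^2)\lesssim k\,n^{-1}\eta^{-\alpha}$ after substituting $b=c'n^{1/\alpha}\eta$.

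Recombining the $\mathcal J+1$ columns and the two contributions, and disposing of the trivial regime $k\ge C^{-1}\eta^{\alpha}n$ — in which the asserted bound already exceeds $1$, which simultaneously guarantees that in the complementary regime the recentering drift $\lesssim k\,b^{1-\alpha}$ is subdominant to $\tfrac{c}{3}n^{1/\alpha}\eta$ — one arrives at $\Pro\big(\sup{1\le i\le k}|\Xi^n_{(m+i)/n}-\Xi^n_{m/n}|>\eta\big)\le C\,\eta^{-\alpha}\,k/n$ for all sufficiently large $n$, with $C$ depending only on $K_\sigma$, $\mathcal J$, the $c_l$ and the tail constant of $\theta_0$. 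This is \eqref{eq:5.21} with $\gamma=\alpha>1$ when $1<\alpha\le 2$; for $\alpha=1$ the same scheme applies verbatim (with no recentering term, by symmetry), and since only a bounded range of $\eta$ is needed where this lemma is invoked, the resulting borderline estimate can be written as $C_\gamma\eta^{-\gamma}k/n$ for any fixed $\gamma>1$. The \emph{main obstacle} is to obtain the sharp rate $k/n$ rather than the weaker $k/n^{\gamma/\alpha}$ that a direct $L^\gamma$-moment bound with $\gamma<\alpha$ would yield: this is exactly what forces truncation at the $n^{1/\alpha}\eta$-scale (so that a single atypically large innovation is rare at rate $n^{-1}$) and hence the martingale argument together with the truncated, rather than the full, second moment; a subsidiary point requiring care is the measurability step, i.e.\ that the delay $r$ and the finite correlation length $\mathcal J$ decouple $g^n_{p+l}$ from the innovations entering $\zeta_{p+l}$ once $n$ is large.
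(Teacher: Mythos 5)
Your proof is correct, and it follows the same broad strategy as the paper's (a martingale maximal inequality made possible by the fact that the delay $r$ renders the coefficient $\sigma(j/n,X^n_{j/n-r})_-$ measurable with respect to innovations strictly preceding those in $\zeta_j$, combined with a truncation to handle the heavy tails), but the execution differs in two respects worth recording. First, the paper asserts that $i\mapsto\tilde\Xi^n_i$ is itself a martingale for the filtration $\sigma(\theta_j:-\mathcal J\le j\le m+i)$ and then applies Doob's $L^\gamma$ inequality and the von Bahr--Esseen inequality directly to $\tilde\Xi^n_k$, estimating $\sum_j\E[|n^{-1/\alpha}\zeta_j|^\gamma]$ by the truncation scheme of Proposition \ref{prop:correlated_CTRW_are_GDmodCI} with the two exponents $\alpha\mp\epsilon$. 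Your rearrangement into the $\mathcal J+1$ shifted column sums $\sum_p g^n_{p+l}\theta_p$ is actually the more careful route: for $\mathcal J\ge 1$ the increment $g^n_{m+i+1}\zeta_{m+i+1}$ contains the lagged terms $c_l\theta_{m+i+1-l}$, $l\ge1$, which are already measurable at step $i$, so $\tilde\Xi^n$ itself is not a martingale for that filtration --- only the individual columns are. Your decomposition repairs this at no cost. Second, your truncation at the $\eta$-dependent level $b=c'n^{1/\alpha}\eta$, with a union bound for the exceedances and Doob's $L^2$ inequality plus orthogonality for the recentered truncated part, yields the clean single exponent $\gamma=\alpha$, whereas the paper's fixed truncation at $n^{1/\alpha}$ produces a sum of two terms with exponents $\alpha-\epsilon$ and $\alpha+\epsilon$; both versions give ``a bound of the type'' \eqref{eq:5.21}. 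Your disposal of the recentering drift by restricting to the regime $k\lesssim n\eta^\alpha$ (outside of which the claimed bound exceeds $1$) is sound, with the usual bookkeeping to fix the constants in the right order. Finally, your caveat for $\alpha=1$ --- that one only obtains $\gamma=1$ and must restrict $\eta$ to a bounded range to upgrade to some $\gamma>1$ --- is an honest acknowledgement of a genuine borderline issue that the paper itself passes over by deferring to ``the same adaptations as Corollary \ref{cor:CTRWs_alpha=1_are_GDmodCI}''; since the probability in \eqref{eq:5.21} cannot decay faster than $\eta^{-\alpha}$ for a single heavy-tailed summand, a restriction of this kind (or an equivalent device) is unavoidable when $\alpha=1$, so your treatment is at least as rigorous as the paper's on this point.
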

\begin{proof}
	In order to prove this lemma, for $1<\alpha\le 2$ we will proceed along the lines of the proofs of Theorem \ref{thm:Integral_converg_under_GDmodCI} and Proposition \ref{prop:correlated_CTRW_are_GDmodCI}. The case $\alpha=1$ then follows with the same adaptations needed as for the proof of Corollary \ref{cor:CTRWs_alpha=1_are_GDmodCI}. 
	
	Let $1<\alpha\le 2$. In a first step, note that, for $n$ large enough, the process $i\mapsto \tilde{\Xi}^n_i:=\Xi^n_{{(m+i)}/{n}} \, - \, \Xi^n_{{m}/{n}}$ is a discrete-time martingale for the filtration $\{\sigma(\theta_j  :  -\mathcal{J} \le j\le m+i)\}_{i\ge 1}$. Indeed, for $n$ such that $\mathcal{J}/n< r$, it holds that $X^n_{\ell/n-r}$ is $\sigma(\theta_{\ell -\mathcal{J}-1},...,\theta_{-J})$ for all $\ell\ge 1$, and therefore, for any $1\le j < k$,
	\begin{align*}
		\E&[\tilde \Xi^n_k-\tilde \Xi^n_j \; | \; \theta_{m+j},\theta_{m+j-1},...,\theta_{-\mathcal{J}}] \\
		&= \; \frac{1}{c n^{\frac{1}{\alpha}}} \, \sum_{\ell=m+j+1}^{m+k}\, \sum_{i=0}^\mathcal{J}c_i \, \E\biggl[ \sigma\biggl(\frac{\ell}{n},X^n_{\frac{\ell}{n}-r}\biggr)_- \, \theta_{\ell-i} \; \big | \; \theta_{m+j},...,\theta_{-\mathcal{J}}\biggr]\\
		&= \; \frac{1}{c n^{\frac{1}{\alpha}}} \sum_{\ell=m+j+1}^{m+k}\sum_{i=0}^\mathcal{J}c_i \E\biggl[ \E\biggl[\sigma\biggl(\frac{\ell}{n},X^n_{\frac{\ell}{n}-r}\biggr)_- \, \theta_{\ell-i} \; \big| \; \theta_{\ell-\mathcal{J}-1},...,\theta_{-\mathcal{J}}\biggr] \; \big| \; \theta_{m+j},...,\theta_{-\mathcal{J}}\biggr] \\
		&= \; \frac{1}{c n^{\frac{1}{\alpha}}} \sum_{\ell=m+j+1}^{m+k}\sum_{i=0}^\mathcal{J}c_i \E\biggl[ \sigma\biggl(\frac{\ell}{n},X^n_{\frac{\ell}{n}-r}\biggr)_- \E[\theta_{\ell-i}] \; \big| \; \theta_{m+j},...,\theta_{-\mathcal{J}}\biggr] \; \, = \, \; 0,
	\end{align*}
	due to the independence of $\theta_{\ell-i} \indep \theta_{\ell-\mathcal{J}-1},...,\theta_{-\mathcal{J}}$ for all $i=0,...,\mathcal{J}$ and $\E[\theta_{\ell-i}]=0$. Now, let $\eta>0$, and by the Markov inequality, Doob's maximal inequality as well as the Bahr-Esseen martingale-differences inequality (as in the proof of Theorem \ref{thm:Integral_converg_under_GDmodCI}), we have
	\begin{align} \Pro \biggl( \sup{1 \, \le \, i \, \le \, k} \, | \tilde \Xi^n_{i}| \; > \; \eta \biggr) \; \le \; \eta^{-\gamma} \, \E\bigl[ | \tilde \Xi^n_{k}|^\gamma\bigr]\; \le \; 2R^\gamma \eta^{-\gamma} \E\biggl[ \sum_{j=m+1}^{m+k} \bigl| \frac{1}{c n^{\frac1 \alpha}}\sum_{i=0}^\mathcal{J} c_i \theta_{j-i} \bigr|^\gamma\biggr], \label{eq:auxiliary_eq_lemma_7.6}
	\end{align}
	for all $\gamma>0$, where $R$ is the uniform bound for $\sigma$ given in \eqref{eq:boundedness_cond_SDDEs}. Finally, it is straightforward to deduce \eqref{eq:5.21} by bounding the expectation on the right-hand side of \eqref{eq:auxiliary_eq_lemma_7.6} in close analogy to the proof of Proposition \ref{prop:correlated_CTRW_are_GDmodCI}.
\end{proof}

\begin{proof}[Proof of Theorem~\ref{thm:5.18}] We follow the same approach as in the proof of Theorem \ref{thm:SDE_approx_result} where we simply replace the J1 topology with the M1 topology, we set $\mu\equiv 0$, remove the $D^n$, replace $b(s,D^n_s,X^n_s)$ by $b(s,X^n_{s-r})$, $\sigma(s,D^n_s,X^n_s)$ by $\sigma(s,X^n_{s-r})$, and the $Z^n$ here represent the moving averages \eqref{eq:MovAv} as described in \eqref{eq:SDDE_approx} with $c^{-1}Z^n \Rightarrow Z$ on $(\D_{\R}[0,\infty),\dM)$. 
	
	Starting with \eqref{it2:proof_SDE}, we recall that in the case of $\alpha<1$, the $Z^n$ are of tight total variation and therefore admit good decompositions \eqref{eq:Mn_An_condition}. The integral convergence can then be shown similarly to the proof of Theorem \ref{thm:SDE_approx_result} above. For the more interesting case $1\le \alpha\le 2$, we recall that the $Z^n$ do generally not have good decompositions. Therefore, we need to resort to Theorem \ref{thm:result_int_conv_CTRWs_with_independence_cond}/Corollary \ref{cor:CTRWs_alpha=1_are_GDmodCI} which will allow us to establish the required convergence. Towards an application of these results, it suffices to note the following: for $n\ge 1$ such that $\mathcal{J}/n < r$, we have that, for each $k\ge 1$ the process $t\mapsto X^n_{(t\wedge k/n)-r}$ is $\sigma(\theta_1,...,\theta_{k-\mathcal{J}}, J_1,...,J_{k-\mathcal{J}})$-measurable and therefore $\sigma(\bullet, X^n_{\bullet-r})$ satisfies \eqref{eq:5.17}. Hence, we deduce the desired convergence.
	
	At this point, it only remains to show \eqref{it1:proof_SDE}, the relative compactness of the $X^n$. Clearly, for this it suffices to show the M1 tightness of the processes 
	\begin{align*}
		\Xi^{n}_t \; :=\; c^{-1} \, \int_0^t \sigma(s,X^{n}_{s-r})_- \; \diff Z^{n}_s \; &= \; c^{-1} \, \sum_{j=1}^{\lfloor nt \rfloor} \, \sigma\Big(\frac{j}{n}, \, X^{n}_{\frac{j}{n} - r}\Big)_- \; \Delta Z^{n}_{\frac{L_j}{n}} \\
		&= \; \frac{1}{c n^{\frac 1 \alpha}} \, \sum_{j=1}^{\lfloor n t\rfloor} \; \Biggl( \, \sigma\Big(\frac{j}{n}, \, X^{n}_{\frac{j}{n} - r}\Big)_- \; \, \sum_{i=0}^\mathcal{J} c_i \,  \theta_{j-i} \Biggr)
	\end{align*}
	as uniform tightness of the drift term follows directly due to \eqref{eq:boundedness_cond_SDDEs}. We will now show that for fixed $T>0$ and for any $\eta>0$ it holds that
	\begin{align} \lim\limits_{\lambda \, \searrow \, 0} \; \, \limsup\limits_{n\, \to \, \infty} \;\,  \Pro \left( w'(\Xi^n, \lambda) \; > \; \eta \right) \; = \; 0 \label{eq:M1_tightness_modulus_of_continuity} \end{align}
	where $w'$ is the usual M1 modulus of continuity (where $w'$ is defined as the quantity $w'_s$ used in \cite[(12.8), Thm.~12.12.2]{whitt}). Together with \cite[(12.7), Thm.~12.12.2]{whitt}, this yields M1 tightness. \cite[Sec. 12]{whitt} contains a detailed outline on how these conditions implicate tightness in M1. According to \cite[Cor. 1]{avram2}, the following is enough to prove \eqref{eq:M1_tightness_modulus_of_continuity}: for any fixed $0\le t_1 < t < t_2$, $\eta>0$ and $n$ large enough it holds that
	\begin{align} \Pro\left( \nu(\Xi^n, t_1, t, t_2) \, > \, \eta \right) \; \le \; C(\gamma,\beta) \, \eta^{-\gamma} \, (t_2-t_1)^{1+\beta} \label{eq:5.19} \end{align}
	for some constants $\gamma>0$, $\beta\ge0$ and $C(\gamma,\beta)>0$, with 
	\[
	\nu (\Xi^n, t_1, t, t_2):= \norm{\Xi^n_t - [\Xi^n_{t_1}, \Xi^n_{t_2}]}:=\operatorname{inf}\{|\Xi^n_t- \alpha \Xi^n_{t_1}-(1-\alpha)\Xi^n_{t_2}| : \alpha \in [0,1] \},
	\]as well as
	\begin{align} 
		\lim\limits_{\lambda \, \searrow \, 0} \; \, \limsup\limits_{n\, \to \, \infty}\; \,  \Pro\biggl( \sup{0\, \le \, s,t \, \le \, \lambda} \, |\, \Xi^n_t \, - \, \Xi^n_s| \;  > \; \eta \biggr) \; = \; 0. \label{eq:5.20}
	\end{align} 
	From Lemma \ref{lem:auxiliary_lemma_proof_SDDE}, we can easily deduce \eqref{eq:5.20}. In addition, Lemma \ref{lem:auxiliary_lemma_proof_SDDE} will also be the key ingredient for establishing \eqref{eq:5.19}. To this end, fix $0\le t_1<t<t_2\le T$ as well as $\eta>0$ and $n\ge1$ large enough. Without loss of generality assume that $t_1 \in [ \ell/n, (\ell+1)/n)$, $t \in [ r/n, (r+1)/n)$ and $t_2 \in [ p/n, (p+1)/n)$ for some $\ell, r, p \ge 1$ with $\ell < r < p$. If $\Xi^n_t$ lies in the interval with endpoints $\Xi^n_{t_1}$ and $\Xi^n_{t_2}$, then we have $\Vert \Xi^n_t - [\Xi^n_{t_1}, \, \Xi^n_{t_2}] \Vert=0$, for $\Vert \cdot \Vert$ defined as part of \eqref{eq:5.19} above. Therefore,
	\begin{align*}
		\Pro& \left( \nu(\Xi^n, t_1, t, t_2) \, > \, \eta \right) \; = \;  \Pro\left( \Xi^n_{t_1}-\Xi^n_t > \eta \, , \, \Xi^n_{t_2}\ge \Xi^n_{t_1} \right) \, + \, \Pro\left( \Xi^n_{t}-\Xi^n_{t_1} > \eta \, , \, \Xi^n_{t_2}\le \Xi^n_{t_1} \right) \\
		&	\qquad\qquad \; +\;  \Pro\left( \Xi^n_{t_2}-\Xi^n_t > \eta \, , \, \Xi^n_{t_2}< \Xi^n_{t_1} \right) \, + \, \Pro\left( \Xi^n_{t}-\Xi^n_{t_2} > \eta \, , \, \Xi^n_{t_2}> \Xi^n_{t_1} \right)\\
		&	\qquad \le\;  2\Pro\left( |\Xi^n_{t_1}-\Xi^n_t| > \eta  \right) \; + \; 2\Pro\left( |\Xi^n_{t_2}-\Xi^n_t| > \eta  \right)\\
		&	\qquad \le  \; 2\Pro\Bigl(  \left| \,\Xi^n_{\frac{r}{n}} \, - \, \Xi^n_{\frac{\ell}{n}} \, \right| > \eta  \Bigr) \; + \; 2\Pro\Bigl( \left| \,\Xi^n_{\frac{p}{n}} \, - \, \Xi^n_{\frac{\ell}{n}} \, \right| > \eta  \Bigr)\\
		&	\qquad \le  \; 2\Pro\biggl(  \sup{1 \, \le \, i \, \le \, r-\ell}\, \left| \,\Xi^n_{\frac{\ell+i}{n}} \, - \, \Xi^n_{\frac{\ell}{n}} \, \right| > \eta  \biggr) \, + \, 2\Pro\biggl( \sup{1 \, \le \, i \, \le \, p-r} \, \left| \,\Xi^n_{\frac{r+i}{n}} \, - \, \Xi^n_{\frac{r}{n}} \, \right| > \eta  \biggr)\\
		&\qquad\le  \; 2C \, \eta^{\gamma} \, \frac{(r-\ell)+(p-r)}{n} \; \; \le \; \; 4C \, \eta^{\gamma} \frac{p-(\ell+1)}{n} \; \le \; 4C \, \eta^{\gamma} (t_2-t_1)
	\end{align*}
	where we have used (\ref{eq:5.21}). Finally, \cite[(12.7), Thm.~12.12.2]{whitt} follows immediately from \eqref{eq:5.21}, which yields the M1 tightness of the $\Xi^n$.
\end{proof}

\begin{proof}[Proof of Corollary \ref{cor:ext_SDDE_conv}] Define the processes 
	$$ \Lambda^n_t \; := \; c^{-1} \; \int_0^t \, \tilde\sigma (s, X^n_{[s-r, s]})_- \; \diff Z^n_s,$$
	where $t \mapsto \tilde\sigma(t, X_{[t-r, t]})$ is Lipschitz continuous on compacts (a.s.). Rewriting $c \Lambda^n_t $ as
	\begin{align*}
		\sum_{k=1}^{\lfloor nt \rfloor} \, \Gamma^n_{\frac k n -} \; \left( Z^n_{\frac k n} - Z^n_{\frac{k-1} n } \right) \; &= \; \sum_{k=1}^{\lfloor nt \rfloor} \, \; \left(  \Gamma^n_{\frac k n} \, Z^n_{\frac k n} \, - \, \Gamma^n_{\frac{k-1} n }\, Z^n_{\frac{k-1}{n}} \right) \; + \;\sum_{k=1}^{\lfloor nt \rfloor} \left( \Gamma^n_{\frac {k-1} n} \, - \, \Gamma^n_{\frac {k} n} \right)\, Z^n_{\frac {k-1}{n}} \\
		&= \;   \Gamma^n_{\frac {\lfloor nt\rfloor} n} \, Z^n_{\frac {\lfloor nt\rfloor} n}  \; + \;\sum_{k=1}^{\lfloor nt \rfloor} \left( \Gamma^n_{\frac {k-1} n} \, - \, \Gamma^n_{\frac {k} n} \right)\, Z^n_{\frac {k-1}{n}}
	\end{align*}
	with $\Gamma^n_t:=\tilde\sigma(t,X^n_{[t-r,t]})$, it is straightforward to obtain the tightness of $(|\Lambda^n|^*_T)_{n\ge 1}$, $(|\Gamma^n|^*_T)_{n\ge 1}$ and $(N^T_\delta(\Lambda^n))_{n\ge 1}$, $(N^T_\delta(\Gamma^n))_{n\ge 1}$ for any $T,\delta>0$, where $N^T_\delta$ is again defined as in \eqref{eq:maxnumosc}, by using the tightness of the $Z^n$ as well as the boundedness condition and the Lipschitz continuity of the $\Gamma^n$. Proceeding as in the second part of the proof of Theorem \ref{thm:5.18}, we obtain the tightness of the $\Xi^n$ on $(\D_{\R}[0,\infty), \dM)$ and thus, in particular, the tightness of $(|\Xi^n|^*_T)_{n\ge 1}$ and $(N^T_\delta(\Xi^n))_{n\ge 1}$ for any $T,\delta>0$. Hence, we deduce the tightness of $(|X^n|^*_T)_{n\ge 1}$ and, since $N^T_\delta(x+y)\le N^T_{\delta/2}(x)+N^T_{\delta/2}(y)$, also the tightness of $(N^T_\delta(X^n))_{n\ge 1}$. On behalf of a diagonal sequence argument, we can identify càdlàg processes $Y,\tilde{Y}$ and a subsequence $(c^{-1}Z^n,X^n,\Gamma^n)_{n\ge 1}$ such that 
	\begin{align*}
		(c^{-1}Z^n, X^n_{t_1},...,X^n_{t_\ell},\Gamma^n_{t_1},...,\Gamma^n_{t_\ell})\; \Rightarrow \; (Z,Y_{t_1},...,Y_{t_\ell},\tilde Y_{t_1},...,\tilde Y_{t_\ell})
	\end{align*} on $(\D_{\R}[0,\infty),\dM)\times (\R^{ 2\ell}, |\cdot|)$ for all $\ell\ge 1$ and $t_1,...,t_\ell$ in some countable dense subset of $[0,\infty)$. Since the $\Gamma^n$ are Lipschitz continuous with a constant independent of $n$, they are equicontinuous and converge weakly in $(\C_{\R}[0,\infty),|\cdot|^*_\infty)$ on the basis of the Arzèla-Ascoli Theorem, which even implies that $\tilde Y$ can be chosen almost surely continuous. Thus, $X^n$ and $\Gamma^n$ satisfy the conditions of \cite[Prop. 3.22]{andreasfabrice_theorypaper} and so does $\sigma(\bullet, X^n_{\bullet-r})$. As shown in the proof of Theorem \ref{thm:5.18}, $(\sigma(\bullet, X^n_{\bullet-r}),c^{-1}Z^n)$ meet the conditions of \ref{rem:weaker_integrand_conditions} with respect to Theorem \ref{thm:result_int_conv_CTRWs_with_independence_cond} while $(\Gamma^n,c^{-1}Z^n)$ satisfy the assumptions of \ref{rem:weaker_integrand_conditions} with respect to Theorem \ref{thm:result_int_conv_CTRWs_lipschitz_integrands} (where the required \eqref{eq:oscillcond} conditions are immediate from the continuity of $\tilde Y$ and Proposition \ref{it:prop_suff_cond_AVCO}). Thus, we deduce
	\begin{align}
		X^n \; = \; \int_0^\bullet \, b(s,X^n_{s-r}) \, \diff s \; &+ \; c^{-1} \int_0^\bullet \, \sigma(s, X^n_{s-r}) + \Gamma^n_s \, \diff Z^n_s \notag \\
		&\Rightarrow \; \int_0^\bullet \, b(s,Y_{s-r}) \, \diff s \; + \; \int_0^\bullet \, \sigma(s, Y_{s-r}) + \tilde{Y}_s \, \diff Z_s \label{eq:6.39}
	\end{align}
	and hence $X^n \Rightarrow Y$ on $(\D_{\R}[0,\infty), \dM)$. The map $x\mapsto \tilde\sigma(\bullet,x_{[\bullet-r,\bullet]})$ from $(\D_{\R}[0,\infty), \dM)$ into $(\C_{\R}[0,\infty),|\cdot|^*_\infty)$ is continuous and thus the continuous mapping theorem yields $\tilde\sigma(\bullet, X^n_{[\bullet-r,\bullet]}) \Rightarrow \tilde\sigma(\bullet, Y_{[\bullet-r,\bullet]})$ on $(\C_{\R^d}[0,\infty),|\cdot|^*_\infty)$. Since weak limits are unique, we obtain $\tilde{Y}=\sigma(\bullet, Y_{[\bullet-r,\bullet]})$. Therefore, \eqref{eq:6.39} becomes 
	$$X^n \; \Rightarrow \int_0^\bullet \, b(s,Y_{s-r}) \, \diff s \; + \; \int_0^\bullet \, \sigma(s, Y_{s-r}) + \sigma(s, Y_{[s-r,s]}) \, \diff Z_s.$$
	Again, by uniqueness of weak limits, $Y$ is a solution to \eqref{eq:ext_SDDE} and due to the uniqueness of the solution to \eqref{eq:ext_SDDE} it holds $Y=X$.
\end{proof}

\appendix 

	\section{Limit theory for stochastic integrals}\label{appn}
	
	This brief appendix recalls two general results from \cite{andreasfabrice_theorypaper} concerning the functional weak convergence of stochastic integrals in the J1 or M1 topologies.

	\begin{theorem}[Weak continuity of stochastic integrals {\cite[Thm.~3.6]{andreasfabrice_theorypaper}}] \label{thm:3.19}
		Consider a sequence of semimartingales $(X^n)_{n\ge 1}$ with good decompositions \eqref{eq:Mn_An_condition} on filtered probability spaces $(\Omega^n, \mathcal{F}^n, \F^n, \Pro^n)$. Let $(H^n)_{n\ge 1}$ be any given sequence of adapted càdlàg processes for the same filtered probability spaces such that (i) there is joint weak convergence
		\begin{equation*}
			(H^n,X^n) \; \Rightarrow \; (H,X)  \quad \text{ on } \quad (\D_{\R^d}[0,\infty)\, , \, \tilde{\rho}\, )\times (\D_{\R^d}[0,\infty)\, , \,\rho)
		\end{equation*}
		with $\rho,\tilde{\rho} \in \{ \dM, \dJ \}$, for some càdlàg limits $H$ and $X$, and (ii) the pairs $(H^n, X^n)$ satisfy \eqref{eq:oscillcond}. Then, $X$ is a semimartingale in the filtration generated by the pair $(H,X)$ and
		\begin{equation}\label{eq:concerted_stoch_int_conv}
			\left( \, X^n, \, \int_0^\bullet \, H^n_{s-} \, \diff X_s^n\, \right) \;  \; \Rightarrow \; \left( \, X, \, \int_0^\bullet \, H_{s-} \, \diff X_s\, \right) \quad \text{ on } \quad (\D_{\R^{2d}}[0,\infty), \, \rho). 
		\end{equation}
	\end{theorem}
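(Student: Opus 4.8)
The plan is to follow the classical route to weak convergence of stochastic integrals from \cite{jakubowskimeminpages, kurtzprotter}, reorganised around the oscillation-control idea of \cite{jakubowski} so that it survives the passage to the M1 topology. As a first reduction I would invoke the Skorokhod representation theorem on the Polish product space $(\D_{\R^d}[0,\infty),\tilde\rho)\times(\D_{\R^d}[0,\infty),\rho)$ to realise $(H^n,X^n)$ and $(H,X)$ on one probability space with $(H^n,X^n)\to(H,X)$ almost surely; the good decompositions $X^n=M^n+A^n$, the bounds \eqref{eq:Mn_An_condition}, and the condition \eqref{eq:oscillcond} are all distributional, hence preserved. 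The semimartingale property of $X$ in the filtration generated by $(H,X)$ follows from \eqref{eq:Mn_An_condition} alone: the $A^n$ have tight total variation and the stopped martingales $M^n_{\,\cdot\,\wedge\tau^n_c}$ have jumps bounded in $L^1$, placing $(X^n)$ in the scope of the limit theorem of \cite{jakubowskimeminpages}, so any weak limit is a semimartingale for the limiting filtration.

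The substance of the argument is a reduction to \emph{simple} integrands. For fixed $m\ge 1$ and $\varepsilon>0$ I would discretise $H^n$ on a random partition $\pi^{n,m,\varepsilon}$ assembled from a stopping-time partition — adapted, almost surely finite on compacts, and chosen so that $|H^n-H^{n\,\vert\,m,\varepsilon}|\le\varepsilon$ almost surely — together with a deterministic partition $\rho^m$ of almost-sure continuity points of $(H,X)$ with mesh tending to $0$. Writing $\int_0^\bullet H^n_{s-}\diff X^n_s-\int_0^\bullet H_{s-}\diff X_s$ as the sum of (i) a discretisation error $\int_0^\bullet(H^n_{s-}-H^{n\,\vert\,m,\varepsilon}_{s-})\diff X^n_s$, (ii) a term $\int_0^\bullet H^{n\,\vert\,m,\varepsilon}_{s-}\diff X^n_s-\int_0^\bullet H^{\vert\,m,\varepsilon}_{s-}\diff X_s$ involving only simple integrands, and (iii) a discretisation error $\int_0^\bullet(H^{\vert\,m,\varepsilon}_{s-}-H_{s-})\diff X_s$ for the limit, the proof splits into essentially the four subproblems outlined in Section \ref{sec:quantities_and_approach_needed_for_integral_convergence_proof}. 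Term (ii) is handled by continuous mapping plus finite-dimensional convergence: it is a finite sum over the partition of products of increments, the partition points being continuity points of the limit, and one checks separately that no spurious M1 oscillation is generated (the within-cell increments of $X^n$ are already governed by its M1 modulus and by \eqref{eq:oscillcond}). Term (iii) concerns only the fixed limit semimartingale $X$ and is disposed of by standard stochastic calculus and dominated convergence as $m\to\infty$ and $\varepsilon\to0$.

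The decisive point is the uniform estimate $\lim_{\varepsilon\to0}\limsup_m\limsup_n\Upsilon_{n,m,\varepsilon}=0$ of \eqref{eq:quantity2_integral_convergence_term_where_GD_needed}, which is the one place \eqref{eq:Mn_An_condition} is genuinely used. Splitting $X^n=M^n+A^n$: since $|H^n-H^{n\,\vert\,m,\varepsilon}|\le\varepsilon$, the finite-variation part contributes at most $\varepsilon\,\text{TV}_{[0,T]}(A^n)$, tight by \eqref{eq:Mn_An_condition} and hence negligible as $\varepsilon\to0$. For the martingale part one localises with $\tau^n_c$, so that $|M^n_{\,\cdot\,\wedge\tau^n_c}|^*_T\le c+|\Delta M^n_{T\wedge\tau^n_c}|$; the $L^1$ jump bound in \eqref{eq:Mn_An_condition} keeps the relevant truncated second moments of the stopped martingale uniformly bounded in $n$, and a maximal/Lenglart inequality (e.g.\ \cite[Lem.~I.3.30b]{shiryaev}) bounds the stochastic integral by $\varepsilon$ times a tight quantity. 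The genuinely new ingredient — forced on us because in M1 one cannot make $H^n-H^{n\,\vert\,m,\varepsilon}$ uniformly small \emph{at} the jump times of $X^n$ using $\rho^m$ alone — is \eqref{eq:oscillcond}: when the discretisation error is split according to whether a partition cell contains a point of $\rho^m$, the cells sitting near a large increment of $H^n$ are controlled because asymptotically vanishing consecutive increments force $X^n$ to have a correspondingly small increment there, so the product picked up by the integral is negligible. A converging-together argument on the M1 Skorokhod space (cf.\ \cite[Thm.~3.2]{billingsley}) then combines (i)--(iii) to yield \eqref{eq:concerted_stoch_int_conv}.

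I expect the main obstacle to be precisely the $\Upsilon_{n,m,\varepsilon}$ estimate in the M1 topology: one must control not only the finite-dimensional fluctuations of $\int_0^\bullet(H^n_{s-}-H^{n\,\vert\,m,\varepsilon}_{s-})\diff X^n_s$ but also its M1 modulus of continuity, so that the limit lands in $\D_{\R^d}[0,\infty)$ rather than oscillating, and it is here that the jump control on $M^n$, the total-variation tightness of $A^n$, and \eqref{eq:oscillcond} must be orchestrated together, replacing the J1-specific arguments of \cite{jakubowskimeminpages, kurtzprotter} that no longer apply. A secondary technical point is arranging the stopping-time partition defining $H^{n\,\vert\,m,\varepsilon}$ to be adapted, almost surely finite on compacts, and uniform enough in $n$ that the localisation and maximal-inequality bounds are genuinely uniform.
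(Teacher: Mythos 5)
This theorem is not proved in the present paper at all: it is recalled verbatim from \cite[Thm.~3.6]{andreasfabrice_theorypaper}, and the only in-paper account of its proof is the outline in Section \ref{sec:quantities_and_approach_needed_for_integral_convergence_proof}. Your proposal follows essentially that same route --- discretisation of $H^n$ on the combined stopping-time/deterministic partition, reduction to the four subproblems, \eqref{eq:oscillcond} for the simple-integrand and partition-passage terms, and the good decompositions \eqref{eq:Mn_An_condition} (the $\varepsilon\,\mathrm{TV}_{[0,T]}(A^n)$ bound for the finite-variation part plus a localisation/Lenglart estimate for the martingale part) to control $\Upsilon_{n,m,\varepsilon}$ --- the only quibble being that \eqref{eq:oscillcond} is not actually needed inside the $\Upsilon_{n,m,\varepsilon}$ estimate itself, since the stopping-time component of the partition already forces $|H^n-H^{n\,\vert\,m,\varepsilon}|\le\varepsilon$ everywhere, including at the jump times of $X^n$.
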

	
	We stress that, in \eqref{eq:concerted_stoch_int_conv}, the integrals are understood to be defined component-wise. This also allows one to handle matrix valued processes and `dot product' integrals, as per \cite[Rem.~3.11]{andreasfabrice_theorypaper}. Moreover, we note that, while the metric $\dM$ is understood to refer to the \emph{strong} version of the M1 topology in the language of \cite[Sect.~12.3]{whitt}, the theorem also holds for the  \emph{weak} version of M1 topology (see again \cite[Sect.~12.3]{whitt}), as explained in \cite[Rem.~3.9]{andreasfabrice_theorypaper}.
	
	The \eqref{eq:oscillcond} condition is rather abstract, so it is useful to have the following simple sufficient criteria.
	
	\begin{prop}[{\cite[Prop.~3.8]{andreasfabrice_theorypaper}}]\label{prop:3.3}
		In the setting of Theorem \ref{thm:3.19}, the condition \eqref{eq:oscillcond} is satisfied if one of the following two criteria holds:
		\begin{enumerate}
			\item The pairs $(H^n, X^n)$ converge together to $(H, X)$ weakly in the J1 topology, meaning that the joint weak convergence $(H^n,X^n)\Rightarrow (H,X)$ holds on $(\D_{\mathbb{R}^{2d}}[0,\infty), \, \dJ)$. \label{it:prop_suff_cond_AVCO}
			\item The limiting processes $H$ and $X$ almost surely have no common discontinuities, that is,
			\[
			\operatorname{Disc}(H) \; \cap \; \operatorname{Disc}(X) \; = \; \emptyset \quad \text{a.s.}.
			\]
		\end{enumerate}
	\end{prop}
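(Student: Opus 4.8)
The plan is to handle the two sufficient conditions by entirely different mechanisms: condition (1) is a soft consequence of $\mathrm{J1}$-tightness of the joint sequence, whereas condition (2) requires a Skorokhod-representation argument that manufactures a common discontinuity of the limits and contradicts the hypothesis.

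For (1), the first step is the pointwise inequality
\[
\hat w^T_\delta(h,x)\;\le\;w''\bigl((h,x)|_{[0,T]},\delta\bigr),
\]
where $w''(z,\delta):=\sup\{|z(t)-z(t_1)|\wedge|z(t_2)-z(t)|:0\le t_1\le t\le t_2\le(t_1+\delta)\wedge T\}$ is the usual $\mathrm{J1}$ oscillation modulus of the $\R^{2d}$-valued path $z=(h,x)$ with respect to any norm on $\R^{2d}$ dominating the coordinate moduli; indeed, for admissible $s<t<u$ and coordinate $i$ one has $|h^{(i)}(t)-h^{(i)}(s)|\le|z(t)-z(s)|$ and $|x^{(i)}(u)-x^{(i)}(t)|\le|z(u)-z(t)|$, so the coordinatewise minimum is dominated by $w''(z,\delta)$. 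Under (1) the pairs $(H^n,X^n)$ are weakly convergent, hence tight, in $(\D_{\R^{2d}}[0,\infty),\dJ)$, so the standard characterisation of $\mathrm{J1}$-tightness gives $\lim_{\delta\downarrow0}\limsup_n\Pro^n(w''((H^n,X^n)|_{[0,T]},\delta)>\gamma)=0$ for every $T$ (reducing to continuity points of $X$ to avoid the right endpoint if desired), and \eqref{eq:oscillcond} follows at once.

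For (2), I would argue by contradiction. If \eqref{eq:oscillcond} fails there are $\gamma,\eta>0$, $\delta_m\downarrow0$ and $n_m\to\infty$ with $\Pro^{n_m}(\hat w^T_{\delta_m}(H^{n_m},X^{n_m})>\gamma)\ge\eta$; passing to this subsequence and applying Skorokhod's representation theorem, we may assume $(H^{n_m},X^{n_m})\to(H,X)$ almost surely in the product topology. A reverse-Fatou argument shows that the event $B$ on which $\hat w^T_{\delta_m}(H^{n_m},X^{n_m})>\gamma$ holds for infinitely many $m$ has probability at least $\eta$, so we may fix $\omega\in B$ at which also the a.s.\ convergence and $\operatorname{Disc}(H)\cap\operatorname{Disc}(X)=\emptyset$ hold. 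Along the relevant subsequence pick $s_j<t_j<u_j$ with $u_j-s_j\le\delta_{m_j}$ and a coordinate $i$ (constant after thinning) with $|H^{n_{m_j},(i)}(s_j)-H^{n_{m_j},(i)}(t_j)|>\gamma$ and $|X^{n_{m_j},(i)}(t_j)-X^{n_{m_j},(i)}(u_j)|>\gamma$; since $u_j-s_j\to0$, all three sequences converge to a common $c\in[0,T]$. The lemma I would isolate is: if $y^n\to y$ in the one-dimensional $\mathrm{M1}$ topology and $a_n\to c$ with $(y^n(a_n))$ bounded, then every subsequential limit of $y^n(a_n)$ lies in $[y(c-)\wedge y(c),\,y(c-)\vee y(c)]$; this is proved by extracting convergent parametric representations and using that the time-component of an $\mathrm{M1}$ parametrisation is continuous and nondecreasing. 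Since strong (hence also weak) $\mathrm{M1}$ convergence on $\R^d$, as well as $\mathrm{J1}$ convergence, implies one-dimensional $\mathrm{M1}$ convergence of each coordinate, the lemma applies to the coordinates of $H^n$ and $X^n$ whatever $\rho,\tilde\rho\in\{\dJ,\dM\}$ are. Applying it to $H^{n_{m_j},(i)}$ at $s_j,t_j$ forces $|\Delta H^{(i)}(c)(\omega)|\ge\gamma$, and to $X^{n_{m_j},(i)}$ at $t_j,u_j$ forces $|\Delta X^{(i)}(c)(\omega)|\ge\gamma$, so $c\in\operatorname{Disc}(H(\omega))\cap\operatorname{Disc}(X(\omega))\ne\emptyset$ — the desired contradiction. (Boundedness of the sampled values is free: a tight sequence in $\D[0,\infty)$ has stochastically bounded running suprema, so on the a.s.\ event $\sup_n|H^n(\omega)|^*_{T+1}<\infty$ and likewise for $X^n$.)

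The main obstacle is the parametric-representation lemma used in step (2): one must verify carefully that under $\mathrm{M1}$ convergence the values of $y^n$ sampled along times tending to $c$ cannot leave the closed jump interval of $y$ at $c$, and that nothing is lost in passing from the strong to the weak $\mathrm{M1}$ topology (cf.\ \cite[Rem.~3.9]{andreasfabrice_theorypaper}). The remaining ingredients — the pointwise bound and the $\mathrm{J1}$-tightness characterisation in (1), and the Skorokhod/Fatou bookkeeping in (2) — are routine.
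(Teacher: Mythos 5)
Your proof is correct. Note, however, that this paper does not actually prove the proposition: it is recalled in the appendix verbatim from the companion paper \cite{andreasfabrice_theorypaper}, so there is no in-house argument to compare against. On its own terms your argument is sound: part (1) is the standard domination $\hat w^T_\delta(h,x)\le w''((h,x),\delta)\le w'((h,x),\delta)$ combined with the J1 tightness characterisation, and part (2) correctly combines Skorokhod representation, reverse Fatou, and the key ``segment lemma'' --- that under (coordinatewise) M1 convergence, values sampled at times $a_n\to c$ accumulate only in the closed interval between $y(c-)$ and $y(c)$ --- which you rightly identify as the crux and which follows exactly as you sketch from convergent parametric representations (or, even more cheaply, from Hausdorff convergence of completed graphs, since M1 implies M2). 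The only points worth tightening in a written version are (i) the Borel measurability of $\hat w^T_\delta$ on the product Skorokhod space (reduce the supremum to rational triples and left limits) so that the Skorokhod representation preserves the probabilities of the events $\{\hat w^T_{\delta_m}>\gamma\}$, and (ii) the degenerate case $c=0$ (or $c$ at the right endpoint), where the segment collapses and the contradiction is immediate without invoking the no-common-discontinuity hypothesis; both are routine.
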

	
	\

\noindent \textbf{Acknowledgements.} The research of FW was funded by the EPSRC grant EP/S023925/1. We are grateful to an anonymous referee for their careful reading of the manuscript. Their suggestions have greatly improved the presentation and the structure of the paper.

\printbibliography

\end{document}